\numberwithin{equation}{section}
\newtheorem{thm}{Theorem}[section]
\newtheorem*{thm*}{Theorem}
\newtheorem*{qtn*}{Question}
\newtheorem*{prop*}{Proposition}
\newtheorem*{lem*}{Lemma}
\newtheorem*{rem*}{Remark}
\newtheorem{lem}[thm]{Lemma}
\newtheorem{prop}[thm]{Proposition}
\newtheorem{cor}[thm]{Corollary}
\newtheorem{defn}[thm]{Definition}
\newtheorem{rem}[thm]{Remark}
\newcommand\N{{\mathbb N}}
\newcommand\R{{\mathbb R}}
\newcommand\Z{{\mathbb Z}}
\newcommand{\CC}{\mathbb{C}}
\newcommand{\EE}{\mathbb{E}}
\newcommand{\LL}{\mathbb{L}}
\newcommand{\NN}{\mathbb{N}}
\newcommand{\RR}{\mathbb{R}}
\newcommand{\ZZ}{\mathbb{Z}}
\newcommand{\cC}{\mathcal C}
\newcommand{\cE}{\mathcal E}
\newcommand{\cF}{\mathcal F}
\newcommand{\cL}{\mathcal L}
\newcommand{\cT}{\mathcal T}
\newcommand{\sL}{\mathscr{L}}
\newcommand{\sT}{\mathscr{T}}
\newcommand{\sR}{\mathscr{R}}
\newsavebox{\@brx}
\newcommand{\llangle}[1][]{\savebox{\@brx}{\(\m@th{#1\langle}\)}%
  \mathopen{\copy\@brx\kern-0.5\wd\@brx\usebox{\@brx}}}
\newcommand{\rrangle}[1][]{\savebox{\@brx}{\(\m@th{#1\rangle}\)}%
  \mathclose{\copy\@brx\kern-0.5\wd\@brx\usebox{\@brx}}}
\colorlet{symbols}{black}
\colorlet{testcolor}{green!60!black}
\def\1{\mathbf{{1}}}
\definecolor{dblue}{rgb}{0.1, 0.1, 0.9}
\tikzset{
	root/.style={circle,fill=testcolor,inner sep=0pt, minimum size=2mm},		
	dot/.style={circle,fill=black,draw=black, solid,inner sep=0pt,minimum size=0.75mm},
	bdot/.style={circle,fill=blue,draw=dblue, solid,inner sep=0pt,minimum size=0.75mm},
		}
\colorlet{symbols}{blue!90!black}
\def\DeclareSymbol#1#2#3{\expandafter\gdef\csname MH@symb@#1\endcsname{\tikz[baseline=#2,scale=0.15]{#3}}%
\expandafter\gdef\csname MH@symb@#1s\endcsname{\scalebox{0.6}{\tikz[baseline=#2,scale=0.15]{#3}}}}
\def\<#1>{\csname MH@symb@#1\endcsname}
\let\Re=\undefined\DeclareMathOperator*{\Re}{Re}
\let\Im=\undefined\DeclareMathOperator*{\Im}{Im}
\let\P= \undefined
\newcommand{\P}{\mathbf{P}}
\newcommand{\E}{\mathbb{E}}
\newcommand{\eps}{\varepsilon}
\newcommand{\V}{\mathcal{V}}
\newcommand{\U}{\mathcal{U}}
\newcommand{\Bot}{{\rm B}}
\newcommand{\Top}{{\rm T}}
\newcommand{\Left}{{\rm L}}
\newcommand{\Right}{{\rm R}}
\newcommand{\conn}{\longleftrightarrow}
\newcommand{\connstar}{\overset{*}{\longleftrightarrow}}
\newcommand{\ising}{{\rm Ising}}
\newcommand{\bbE}{\mathbb{E}}
\newcommand{\Y}{{\bf Y}}
\renewcommand{\Re}{\operatorname{Re}}
\renewcommand{\Im}{\operatorname{Im}}
\definecolor{darkred}{rgb}{0.9,0.1,0.1}
\definecolor{darkergreen}{rgb}{0.0, 0.5, 0.0}
\DeclareMathAlphabet{\pazocal}{OMS}{zplm}{m}{n}
\newcommand{\crit}{{\text{tric}}}
\newcommand{\Sym}{\mathrm{Sym}}
\begin{document}

\title{ Existence of a tricritical point for the Blume-Capel model on $\ZZ^d$}

\author{Trishen S. Gunaratnam\footnotemark[1]\footnote{Universit\'e de Gen\`eve,~trishen.gunaratnam@unige.ch}, \, Dmitrii Krachun\footnotemark[2]\footnote{Princeton University,~dk9781@princeton.edu}, \, and\, Christoforos Panagiotis\footnotemark[3]\footnote{University of Bath,~cp2324@bath.ac.uk}}

\maketitle

\begin{abstract}

We prove the existence of a tricritical point for the Blume-Capel model on $\mathbb{Z}^d$ for every $d\geq 2$. The proof in $d\geq 3$ relies on a novel combinatorial mapping to an Ising model on a larger graph, the techniques of Aizenman, Duminil-Copin, and Sidoravicious (Comm. Math. Phys, 2015), and the celebrated infrared bound. In $d=2$, the proof relies on a quantitative analysis of crossing probabilities of the dilute random cluster representation of the Blume-Capel. In particular, we develop a quadrichotomy result in the spirit of Duminil-Copin and Tassion (Moscow Math. J., 2020), which allows us to obtain a fine picture of the phase diagram in $d=2$, including asymptotic behaviour of correlations in all regions. Finally, we show that the techniques used to establish subcritical sharpness for the dilute random cluster model extend to any $d\geq 2$.
\end{abstract}

\section{Introduction}

Let $\LL^d = (\ZZ^d, \E^d)$ be the standard $d$-dimensional hypercubic lattice with vertex set $\ZZ^d$ and nearest-neighbour edges $\E^d$. Given a finite subgraph $G=(V,E)$ of $\LL^d$, the Blume-Capel model on $G$ with inverse temperature $\beta>0$, crystal field strength $\Delta \in \RR$, and boundary condition $\eta \in \{-1,0,1\}^{\ZZ^d}$ is the probability measure $\mu_{G,\beta,\Delta}^\eta$ defined on spin configurations $\sigma \in \{-1,0,1\}^V$ by  
\begin{equs}
\mu_{G,\beta,\Delta}^\eta(\sigma)
=
\frac{1}{Z_{G,\beta,\Delta}^\eta} e^{-H_{G,\beta,\Delta}^\eta(\sigma)},
\end{equs}
where
\begin{equs}
H^\eta_{G,\beta,\Delta}(\sigma)
=
-\beta\sum_{xy \in E} \sigma_x\sigma_y -\Delta\sum_{x \in V} \sigma_x^2 - \beta \sum_{\substack{xy \in \E^d \\ x \in V,\, y \in \ZZ^d\setminus V }} \sigma_x \eta_y
\end{equs} 
is the Hamiltonian, and $Z_{G,\beta,\Delta}^\eta$ is the partition function. By convention, we write $\mu^{\eta}_{V,\beta,\Delta}$ to denote the Blume-Capel measure on the subgraph of $\LL^d$ spanned by $V$, and we denote expectations by $\langle \cdot \rangle^{\eta}_{V,\beta,\Delta}$. We denote by $0$ (resp. $+$, $-$) the boundary conditions $\eta_x=0$ (resp. $\eta_x=+1$, $\eta_x=-1$) for all $x\in \ZZ^d$. 

The Blume-Capel model is closely related to one of the most famous models in statistical physics, the Ising model. The latter is defined analogously, with spins taking values in $\{\pm 1\}$ and the Hamiltonian only consisting of the terms involving $\beta$. In particular, one can view the Blume-Capel model as an Ising model on an annealed random environment, i.e.\ on the (random) set of vertices for which $\sigma_x \neq 0$. In the limit $\Delta\rightarrow \infty$, the underlying random environment becomes deterministically equal to $\ZZ^d$, and the Blume-Capel model converges to the classical Ising model on $\ZZ^d$. 

The model was introduced independently by Blume \cite{BL} and Capel \cite{Capel} in 1966 to study the magnetisation of uranium oxide and an Ising system consisting of triplet ions, respectively. Both papers were trying to explain first-order phase transitions that are driven by mechanisms other than external magnetic fields. Since then, it has been studied extensively by physicists due to its particularly rich phase diagram, i.e.\ as an archetypical example of a model that exhibits a multicritical point, see \cite{ButeraPernici} and references therein. Indeed, for each value of the parameter $\Delta\in \RR$, the Blume-Capel model undergoes a phase transition at a critical\footnote{In the physics literature, $\beta_c$ is sometimes called a transition point or triple point when the phase transition is discontinuous, to distinguish from a critical point corresponding to continuous phase transition. In this article we adopt percolation terminology and call $\beta_c$ a critical point.} parameter $\beta_c(\Delta)$, which is defined as 
\begin{equs}
\beta_c(\Delta)=\inf\{\beta>0 \mid \langle \sigma_0 \rangle^+_{\beta,\Delta}>0\},
\end{equs}
where $\langle \cdot \rangle^+_{\beta,\Delta}$ denotes the plus measure at infinite volume, which is defined as the limit of the finite volume measures $\langle \cdot \rangle^+_{G,\beta,\Delta}$ as $G$ tends to $\LL^d$. It is expected that the critical behaviour of the model depends strongly on $\Delta$: as $\Delta$ increases, the phase transition changes from discontinuous, when $\langle \sigma_0 \rangle^+_{\beta_c(\Delta),\Delta}>0$, to continuous, when $\langle \sigma_0 \rangle^+_{\beta_c(\Delta),\Delta}=0$, and this transition happens as $\Delta$ crosses a single point $\Delta_{\crit}$. See Figure \ref{fig: critical line}.

The model at the so-called tricritical point $(\Delta_\crit, \beta_c(\Delta_{\crit}))$ is of particular interest. Indeed, in some dimensions it is expected to exhibit vastly different behaviour from the points on the critical line when $\Delta>\Delta_{\crit}$, even though in both cases the phase transition is expected to be continuous. In particular, in $d=2$, the scaling limit of the model at criticality for $\Delta > \Delta_{\crit}$ is expected to be in the Ising and $\phi^4$ universality class. On the other hand, at $\Delta_\crit$, the scaling limit of the model is expected to be in a distinct universality class corresponding to the $\phi^6$ minimal conformal field theory with central charge $c=7/10$ (whilst the Ising universality class is of central charge $c=1/2$) -- see Mussardo \cite{Mussardo}. A rigorous glimpse of this distinct universality class appears in the work of Shen and Weber \cite{ShenWeber}, where near-critical scaling limits of related models with so-called Kac interactions are considered. On the other hand, in $d=3$, the scaling limit of the Blume-Capel model at $\Delta > \Delta_{\crit}$ is expected to be nontrivial, as conjecturally for Ising, which is supported by e.g.\ conformal bootstrap methods -- see Rychkov, Simmons-Duffin, and Zan \cite{3D-Ising-Slava}. Whereas for $\Delta_\crit$, $d=3$ is predicted to be the upper-critical dimension for the model and one expects triviality of the scaling limit. This is supported by renormalisation group heuristics, which have recently been made rigorous for the $\phi^6$ model in the weak coupling regime by Bauerschmidt, Lohmann, and Slade \cite{BLS20}. See also \cite{BS20} for a mean-field random
walk model exhibiting a tricritical behaviour. In dimensions $d \geq 5$, one expects that the model is trivial throughout the continuous phase transition regime $\Delta \geq \Delta_\crit$ -- c.f. the case of Ising, where triviality was shown by Aizenman \cite{A82} and Fr\"ohlich \cite{F82}. We also refer to the review book \cite{BBS19} for an account of renormalisation group approaches to this problem. In $d=4$, for $\Delta=\Delta_\crit$ one also expects a triviality result, whereas for $\Delta > \Delta_{\crit}$ one expects a marginal triviality result as in the case for Ising, which was recently shown by Aizenman and Duminil-Copin \cite{4D-Ising-triviality}.

Despite the interest of this model in physics and the interesting predictions about the tricritical point, there is a lack of rigorous understanding of the phase diagram of the Blume-Capel model. Indeed, many rigorous results about the model have been focused well within the discontinuous transition regime, where it is a good test case for Pirogov-Sinai theory  -- see \cite[Chapter 7]{FV} and \cite{BS}. This is in contrast to the case of the Ising model, where much of the phase diagram is now well-understood \cite{HDC-IP}. Stochastic geometric methods have been at the heart of many recent developments, in particular probabilistic representations of spin correlations via the random cluster and random current representations. For the Blume-Capel model, an analogous random cluster representation, called the dilute random cluster representation, has been developed by Graham and Grimmett \cite{GG}. In this article, we show that the underlying philosophy of the recent techniques used to analyse the Ising model and its random cluster representation can be adapted to rigorously analyse the phase diagram of the Blume-Capel model in dimensions $d\geq 2$. 

Our first result establishes fundamental properties of the phase diagram of the model that has been up until now folklore.

\begin{thm} \label{thm: BC phase diagram}
Let $d \geq 2$. For every $\Delta \in \mathbb{R}$, we have that $\beta_c(\Delta) \in (0,\infty)$. In other words, there is a nontrivial phase transition. Moreover, the function $\Delta \mapsto \beta_c(\Delta)$ is continuous and decreasing with limits $\lim_{\Delta\rightarrow-\infty}\beta_c(\Delta)=+\infty$ and $\lim_{\Delta\rightarrow +\infty} = \beta_c({\rm Ising})$, where $\beta_c({\rm Ising})$ is the critical point of the Ising model on $\mathbb{Z}^d$. 
\end{thm}

The next theorem is the main result of this paper. It establishes the existence of at least one separation point between the points of continuous and discontinuous phase transitions, i.e.\ the existence of a tricritical point. Namely, we prove the following.

\begin{thm}\label{thm: existence} 
Let $d \geq 2$. Then, there exist $\Delta^-(d) \leq \Delta^+(d)$ such that
\begin{itemize}
\item for any $\Delta < \Delta^-(d)$, $\langle \sigma_0 \rangle^+_{\beta_c(\Delta),\Delta} > 0$ 
\item for any $\Delta \geq  \Delta^+(d)$, $\langle \sigma_0 \rangle_{\beta_c(\Delta),\Delta}^+ = 0$.
\end{itemize}
Moreover, one can take $\Delta^+(d):=-\log{2}$ for $d\geq 3$, and $\Delta^+(d):=-\log{4}$ for $d=2$.
\end{thm}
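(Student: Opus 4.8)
The plan is to prove the two halves of the theorem by completely different routes, reflecting the dichotomy in the abstract between the combinatorial/infrared-bound approach and the two-dimensional crossing-probability approach.

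For the first bullet (discontinuous transition for $\Delta$ very negative): I would show that as $\Delta \to -\infty$ the Blume-Capel model is a small perturbation of a degenerate model in which the spins are overwhelmingly $0$, and that the surviving $\pm 1$ spins behave like a low-density, high-interaction Ising-type system for which a first-order transition can be established by a Peierls-type / Pirogov-Sinai contour argument. Concretely, one compares the plus and zero boundary condition free energies: for $\Delta \ll 0$ the energetic cost $-\Delta \sigma_x^2$ heavily penalises non-zero spins, so the relevant excitations are islands of $\pm$ spins, and a standard contour expansion shows $\langle \sigma_0\rangle^+_{\beta_c(\Delta),\Delta}>0$ for all $\beta$ at which a magnetised state can survive, i.e.\ the magnetisation jumps at $\beta_c(\Delta)$. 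Alternatively, and more in the spirit of the paper, one can use the dilute random cluster representation of Graham--Grimmett \cite{GG}: for $\Delta\ll 0$ the edge/site weights force the model into a regime where the techniques establishing discontinuity of the phase transition (as for the $q$ large random cluster model) apply. I expect this direction to be comparatively soft; the point is existence of \emph{some} threshold $\Delta^-(d)$, not its optimal value, so crude bounds suffice.

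For the second bullet (continuous transition for $\Delta \geq \Delta^+(d)$), which carries the explicit constants $-\log 2$ (for $d\geq 3$) and $-\log 4$ (for $d=2$), the strategy is to exhibit, for such $\Delta$, an exact or stochastic comparison between the Blume-Capel model and a genuine Ising model on a related graph — this is the ``novel combinatorial mapping to an Ising model on a larger graph'' advertised in the abstract. The idea is that at $\Delta = -\log 2$ the single-site weight $e^{\Delta \sigma_x^2}$ equals $1/2$ on $\sigma_x = \pm 1$, so a $\{-1,0,1\}$ spin at $x$ can be encoded by a pair of $\pm 1$ Ising spins (the value $0$ corresponding to an antialigned pair, $\pm 1$ to an aligned pair, with the factor $1/2$ exactly accounting for the two antialigned configurations); the factor $4$ in $d=2$ presumably reflects a refinement of this encoding using the planar structure. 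Once such a mapping is in place, one transfers to the Blume-Capel model the sharpness/continuity results known for Ising: in $d\geq 3$ one invokes the reflection-positivity infrared bound together with the Aizenman--Duminil-Copin--Sidoravicious machinery \cite{ADS15} to conclude $\langle \sigma_0\rangle^+_{\beta_c,\Delta}=0$; in $d=2$ one instead runs the quadrichotomy / Duminil-Copin--Tassion-type argument for the dilute random cluster representation developed elsewhere in the paper to rule out a discontinuous transition.

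The main obstacle is establishing the combinatorial mapping cleanly enough that the image model is an honest ferromagnetic Ising (or random cluster) model to which the imported theorems literally apply: one must check that the encoding of a $0$ spin as an antialigned pair of Ising spins does not destroy reflection positivity / FKG, that the induced couplings on the enlarged graph are nonnegative, and that the magnetisation observable $\langle\sigma_0\rangle^+$ on the Blume-Capel side corresponds to a monotone observable on the Ising side whose vanishing at criticality is exactly what ADS15 (in $d\geq 3$) or the planar analysis (in $d=2$) delivers. Getting the boundary conditions to match — so that ``$+$'' for Blume-Capel maps to ``$+$'' or ``free'' for the enlarged Ising model in a way compatible with the infrared bound, which is a torus/periodic statement — is the delicate bookkeeping step, and the value of $\Delta^+(d)$ is essentially dictated by the largest $\Delta$ for which this encoding remains a valid (nonnegative-weight, reflection-positive) transformation.
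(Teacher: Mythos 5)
Your proposal captures the structure of the paper's argument for the first bullet and for the threshold $\Delta^+ = -\log 2$: the discontinuous side is indeed established by Pirogov--Sinai theory (the paper simply invokes the classical result of Bricmont--Slawny after a change of parametrisation), and the continuous side in $d\geq 3$ proceeds exactly as you describe via the lift of a $\{-1,0,1\}$-spin to a pair of $\pm 1$-spins on $\ell(\ZZ^d) = \ZZ^d\times\{0,1\}$, with $0$ encoded as an antialigned pair; the induced intra-pair coupling is $\frac{\Delta+\log 2}{2}$ and is nonnegative precisely when $\Delta\geq -\log 2$, the infrared bound (applied to the Blume--Capel model directly, which is reflection positive in its own right) gives the averaged decay hypothesis, and the Aizenman--Duminil-Copin--Sidoravicius machinery (extended to general amenable transitive graphs via Raoufi's work) finishes. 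In $d=2$ the paper also gets $\Delta^+ = -\log 2$ by feeding the quadrichotomy's conclusion ($\varphi^0[0\leftrightarrow x]\to 0$ at $p_c$ in every case) through the same mapping.

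However, there is a genuine gap in how you account for $\Delta^+ = -\log 4$ in $d=2$. You guess that ``the factor $4$ in $d=2$ presumably reflects a refinement of this encoding using the planar structure,'' i.e.\ an improved combinatorial mapping. This is not what happens, and I don't see how any refinement of the spin-doubling encoding could work: the intra-pair coupling $\frac{\Delta+\log 2}{2}$ becomes strictly antiferromagnetic for any $\Delta < -\log 2$, and there is no local recoding of a three-state spin into Ising spins that buys you a different threshold — the value $-\log 2$ is forced by the ratio of single-site weights $e^{\Delta}\cdot 1$ (for $\pm 1$) against $e^{0}\cdot 2$ (for the two preimages of $0$). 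The paper's route from $-\log 2$ down to $-\log 4$ in $d=2$ bypasses the mapping entirely: it introduces a sufficient criterion $\mathbf{(H)}$ for continuity (convergence of the weak-plus measure $\mu^{+,\varepsilon}$ to the plus measure), shows via the quadrichotomy and an exponential-cost rewiring comparison (Lemma~\ref{comparison}) that $\mathbf{(H)}$ is incompatible with $\mathbf{(DiscontCrit)}$, and then establishes $\mathbf{(H)}$ for $\Delta \geq -\log 4$ by a Lee--Yang theorem on the complex zeros of partition functions $Z^h_{G,\beta,\Delta}[\sigma_A\neq 0]$ (adapted from Dunlop's duplicated-variable method). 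The value $-\log 4$ is exactly the threshold for the positivity of the single-site sums $\sum_\nu \nu^n\bar\nu^{n'} e^{\Delta|\nu|^2}$ in that expansion, and it coincides with the mean-field tricritical point. You would need to add the Lee--Yang and weak-plus-measure ingredients, which are an independent mechanism, before your proposal delivers the stated constant in two dimensions.
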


The proof of the existence of a discontinuous critical phase is a standard application of Pirogov-Sinai theory and is given in Section \ref{sec:basic-facts-phase-transition}. The crux of the article is to establish the existence of a continuous critical phase, for which we use different techniques depending on the dimension. In the case $d\geq 3$, the proof relies on a new representation of the model as an Ising model on a larger (deterministic) graph, which is ferromagnetic only when $\Delta\geq -\log{2}$ -- see Section~\ref{sec: mapping bc-ising}. This representation has the advantage that it naturally relates the correlation functions between the Blume-Capel and the Ising model, and allows us to use techniques and tools developed for the latter to study the former. We use the celebrated infrared/Gaussian domination bound of Fr\"ohlich, Simon, and Spencer \cite{FSS} to show that under the free boundary conditions, the two-point correlations decay, in an averaged sense, to $0$ for every $\beta\leq \beta_c(\Delta)$. This allows us to use the breakthrough result of Aizenmann, Duminil-Copin and Sidoravicius \cite{ADCS} on the continuity of the Ising model on $\ZZ^d$ (see also \cite{AR} for a relevant result for general amenable graphs) to conclude that the phase transition of the Blume-Capel model is continuous in $d\geq 3$ when the corresponding Ising model is ferromagnetic, namely, when $\Delta\geq -\log{2}$. 

In dimension $2$, the infrared bound is not sufficient to deduce that the two-point correlations under free boundary conditions decay. Instead, we prove this by using a representation of the correlation function in terms of connection events in the dilute random cluster model. It is well-known that in two-dimensional percolation theory the relevant observables that keep track of continuous versus discontinuous phase transitions are crossing probabilities of macroscopic rectangles. We analyse crossing probabilities in the dilute random cluster model using the renormalisation strategy of \cite{DCT}. As output, we obtain that the two-point correlation functions under free boundary conditions (associated to the Blume-Capel model) decay to $0$ for every $\beta \leq \beta_c(\Delta)$. In turn, by the same arguments discussed in the preceding paragraph, we obtain that in dimension $d= 2$ the phase transition is continuous when $\Delta\geq -\log 2$. The estimate on $\Delta^+$ can be improved in $d=2$ to obtain continuity for $-\log 4\leq \Delta< -\log 2$, where the corresponding Ising model is not ferromagnetic. Instead we can directly analyse the difference between the spin models with free and plus boundary conditions by interpolating between them using arbitrarily small positive boundary conditions (that we call weak plus). The comparison between the free and the weak plus boundary conditions is enabled by our quantitative analysis of crossing probabilities, whereas the comparison between weak plus and plus boundary conditions is enabled by a Lee-Yang type theorem on the complex zeros of the Blume-Capel partition function.

\begin{rem}
There does not seem to be an explicit formula for the tricritical point in any dimension. In $d=2$ numerics \cite{MFH24} suggest that in our parametrisation it is located approximately at $\Delta \approx -3.23$ and $\beta \approx 1.64$. Our bound of $\Delta^+ = -\log 4 \approx -1.39$ is therefore not sharp. In dimensions $d\geq 3$ we do not expect our current bound $\Delta^+ = -\log 2$ to be sharp. However, we note that $\Delta = -\log 4 $ and $\beta = 1/N$ correspond to the location of the tricritical point for the Blume-Capel model on the complete graph on $N$ vertices in the limit $N\rightarrow \infty$, see \cite{EllisOttoTouchette, ShenWeber}. It would be interesting to see whether the improved threshold $\Delta^+(2) = -\log 4$ can be extended to any dimension. Indeed, we expect that the tricritical point on $\ZZ^d$ converges to that of the complete graph as $d \rightarrow \infty$, i.e.\ in the mean-field limit. In support of this, we point out that the Lee-Yang type theorem that we establish holds for $\Delta \geq -\log 4$ on any graph. 
\end{rem}

Proving uniqueness of the tricritical point, which is the main omission of Theorem \ref{thm: existence}, amounts to showing that one can take $\Delta^-(d)=\Delta^+(d)$ in Theorem \ref{thm: existence}. Unfortunately, it is unclear whether there is monotonicity along the critical line and, in the absence of integrability, to the best of our knowledge all known techniques for showing discontinuity are intrinsically perturbative. Nevertheless, in dimension $2$, the quantitative estimates on crossing probabilities, and other considerations that we describe shortly, allow us to obtain a more precise picture of the phase diagram, although we fall short of proving uniqueness of the tricritical point.

To state our next result, we first recall the definition of the truncated $2$-point correlation: 
\begin{equs}
\langle \sigma_0 ; \sigma_x \rangle^+ = \langle \sigma_0  \sigma_x \rangle^+ - \langle \sigma_0 \rangle^+ \langle \sigma_x \rangle^+.
\end{equs}
In dimension $d=2$ we obtain a fine picture of the phase diagram by showing that the truncated $2$-point correlation decays exponentially everywhere except for the continuous critical regime, where it decays polynomially. We also give an alternative characterisation of the points of continuity in terms of the percolation of $0$ and $-$ spins (equivalently, $\ast$-percolation properties of $+$ spins, where $\ast$-percolate refers to percolation on $\ZZ^2$ union the diagonals).
\begin{thm}\label{thm: truncated}
Let $d=2$. Then the following hold.
\begin{labeling}{{\bf (DiscontCrit)}}
\item[{\bf (OffCrit)}] For all $\beta > 0$ and $\Delta \in \RR$ such that $\beta \neq \beta_c(\Delta)$, there exists $c=c(\beta,\Delta)>0$ such that
\begin{equs}
\hspace{-1.4em}\langle  \sigma_0 ; \sigma_x \rangle^+_{\beta,\Delta}\leq e^{-c\lVert x \rVert_{\infty}}, \qquad \forall x \in \ZZ^2.
\end{equs}
\item[{\bf (Discont)}] For all $\Delta \in \RR$ such that $\langle \sigma_0 \rangle^+_{\beta_c(\Delta),\Delta} > 0$, there exists $c=c(\Delta)>0$ such that
\begin{equs}
\langle  \sigma_0 ; \sigma_x \rangle^+_{\beta_c(\Delta),\Delta}\leq e^{-c\lVert x \rVert_{\infty}}, \qquad \forall x \in \ZZ^2.
\end{equs}
\item[{\bf (Cont)}] For all $\Delta \in \RR$ such that $\langle \sigma_0 \rangle^+_{\beta_c(\Delta),\Delta} = 0$, there exist $c_1=c_1(\Delta),c_2=c_2(\Delta)$, such that for all $x\in \ZZ^2$ with $\lVert x \rVert_{\infty}$ large enough
\begin{equs}
\hspace{-2.6em}\frac{1}{\lVert x \rVert_{\infty}^{c_1}}
\leq 
\langle  \sigma_0 \sigma_x \rangle^+_{\beta_c(\Delta),\Delta}
\leq 
\frac{1}{\lVert x \rVert_{\infty}^{c_2}}.
\end{equs}
\item[{\bf (TriCrit)}] The set of $\Delta\in \RR$ such that $\langle \sigma_0 \rangle^+_{\beta_c(\Delta),\Delta}>0$ is open.
\item[{\bf (Perc)}] For all $\Delta\in \RR$, $\langle \sigma_0 \rangle^+_{\beta_c(\Delta),\Delta}=0$ if and only if there is no infinite site cluster of $\{0,-1\}$ spins under $\langle \cdot \rangle^0_{\beta_c(\Delta),\Delta}$.
\end{labeling}
\end{thm}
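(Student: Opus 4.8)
The plan is to deduce every item from the dilute random-cluster (DiRC) representation of Graham--Grimmett \cite{GG}, the quadrichotomy for crossing probabilities obtained by the renormalisation strategy of \cite{DCT}, and subcritical sharpness of the DiRC. Write $\phi^{1}_{\beta,\Delta}$, $\phi^{0}_{\beta,\Delta}$ for the infinite-volume wired and free DiRC measures. The Edwards--Sokal-type coupling gives $\langle\sigma_0\sigma_x\rangle^{+}_{\beta,\Delta}=\phi^{1}_{\beta,\Delta}(0\conn x)$ and $\langle\sigma_0\rangle^{+}_{\beta,\Delta}=\phi^{1}_{\beta,\Delta}(0\conn\infty)$, while $\langle\cdot\rangle^{0}_{\beta,\Delta}$ couples to $\phi^{0}_{\beta,\Delta}$; thus $\beta_c(\Delta)$ is the percolation threshold of the DiRC. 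In this language, the transition is continuous precisely in the ``critical'' branch of the quadrichotomy, where crossing probabilities of boxes and of their duals stay bounded away from $0$ and $1$ by a universal constant $c_0$ under both $\phi^0$ and $\phi^1$ (RSW), and discontinuous precisely when $\phi^0$-crossings tend to $0$ and $\phi^1$-crossings to $1$ exponentially fast; for $\beta\neq\beta_c(\Delta)$ the off-critical branches apply.

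\emph{(OffCrit) and (DiscontCrit).} For $\beta<\beta_c(\Delta)$, subcritical sharpness gives $\phi^{0}_{\beta,\Delta}=\phi^{1}_{\beta,\Delta}$, exponential decay of connectivities, and $\langle\sigma_0\rangle^{+}=0$, so $\langle\sigma_0;\sigma_x\rangle^{+}=\langle\sigma_0\sigma_x\rangle^{+}=\phi^{0}_{\beta,\Delta}(0\conn x)\le e^{-c\|x\|_\infty}$. For $\beta>\beta_c(\Delta)$, and likewise at $\beta=\beta_c(\Delta)$ in the discontinuous case, one controls $\langle\sigma_0;\sigma_x\rangle^{+}$, using the uniqueness of the infinite cluster and the FKG inequality, by the $\phi^{1}$-probability that $x$ lies in a large but finite cluster, plus a decoupling estimate for the events $\{0\conn\infty\}$ and $\{x\conn\infty\}$; in $d=2$ both ingredients follow from the exponential decay of dual crossing probabilities (the relevant branch of the quadrichotomy, together with planar duality of the DiRC), exactly as in the supercritical Ising model. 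This yields $\langle\sigma_0;\sigma_x\rangle^{+}\le e^{-c\|x\|_\infty}$.

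\emph{(ContCrit) and (Perc).} At a continuous critical point the quadrichotomy supplies the uniform RSW bounds, so $\langle\sigma_0\sigma_x\rangle^{+}=\phi^{1}_{\beta_c(\Delta),\Delta}(0\conn x)$ satisfies, by FKG together with a standard conditioning/quasi-multiplicativity argument, an upper bound $\phi^{1}(0\conn x)\le\phi^{1}(0\conn\partial B_{\|x\|_\infty/2})\le\|x\|_\infty^{-c_2}$ (force a dual circuit in each of the order $\log\|x\|_\infty$ dyadic annuli around $0$) and a lower bound $\phi^{1}(0\conn x)\ge\|x\|_\infty^{-c_1}$ (chain order $\log\|x\|_\infty$ RSW crossings into a connection from $0$ to $x$). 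For (Perc), the planar duality of the DiRC and the coupling of $\langle\cdot\rangle^{0}$ with $\phi^{0}$ translate ``$\{0,-\}$ spins percolate under $\langle\cdot\rangle^{0}_{\beta_c(\Delta),\Delta}$'' into percolation of the dual DiRC; then $\langle\sigma_0\rangle^{+}_{\beta_c(\Delta),\Delta}=0$ holds iff we are in the critical RSW branch iff neither $\phi^{0}$ nor its dual percolates iff $\{0,-\}$ does not percolate, whereas in the discontinuous branch $\phi^0$-crossings tend to $0$, the dual percolates, and so does $\{0,-\}$.

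\emph{(TriCrit), and the main obstacle.} It suffices to show that $\{\Delta:\langle\sigma_0\rangle^{+}_{\beta_c(\Delta),\Delta}=0\}$ is closed. If $\Delta_n\to\Delta_\infty$ with each $\Delta_n$ continuous, then at $(\beta_c(\Delta_n),\Delta_n)$ the universal RSW bounds hold at every scale; since the crossing probability of a fixed box is continuous in $(\beta,\Delta)$ and $\Delta\mapsto\beta_c(\Delta)$ is continuous, these bounds pass to the limit, hold at $(\beta_c(\Delta_\infty),\Delta_\infty)$ at every scale, and the quadrichotomy then forces $\Delta_\infty$ to be continuous. The delicate point of the whole theorem is precisely this step: it relies on the RSW constant $c_0$ at continuous critical points being genuinely $\Delta$-independent (this is what makes the quadrichotomy quantitative), and on the continuity of $\beta\mapsto\beta_c(\Delta)$. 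Monotonicity of $\beta_c$ in $\Delta$ — a consequence of a stochastic monotonicity of the DiRC in the crystal field — reduces the latter to excluding jumps of $\beta_c$, and ruling those out is where most of the effort concentrates; the remaining parts are routine once the quadrichotomy and sharpness are available.
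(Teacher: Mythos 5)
Your high‑level framework — DiRC coupling, quadrichotomy, subcritical sharpness, RSW — is the one the paper uses, and your treatments of {\bf (OffCrit)}, {\bf (DiscontCrit)}, {\bf (ContCrit)} are along the paper's lines (the paper bounds $\langle\sigma_0\sigma_x\rangle^+$ above by $(\varphi^1_{\Lambda_{2n}}[0\conn\partial\Lambda_{2n}])^2$, and $\langle\sigma_0\rangle^+$ below by $(1-e^{-cn})\varphi^1_{\Lambda_{2n}}[0\conn\partial\Lambda_{2n}]-e^{-cn}$ using the annulus circuit $\mathcal C_n$; for the critical lower bound it uses two one-arm events plus two circuits rather than a chain of $\log n$ RSW crossings, but both are standard). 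However, two of the five items are not correctly argued.

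For {\bf (TriCrit)}, your proposed route — closedness of the continuous set via an RSW constant $c_0$ that is ``genuinely $\Delta$-independent'' — rests on an assumption the paper never makes and that is not established anywhere: the constant $c$ in the quadrichotomy (Proposition~\ref{prop:quad}) depends on $(p,a)$. The paper instead shows the complementary set where {\bf (DiscontCrit)} holds is open, via Proposition~\ref{prop: pc behaviour}, which relies on the renormalisation inequality of Lemma~\ref{lem: renormalisation inequality for strip densities}. The point is that this inequality produces a \emph{finite-size criterion}: if the strip densities $p_{n_0}$, $q_{n_0}$ are below an explicit threshold at a single scale, exponential decay at all scales follows. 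Being a strict inequality at a fixed finite scale, this criterion is preserved under small perturbations of $(p,a)$ by continuity of finite-box crossing probabilities, and one then combines this with continuity of $a\mapsto p_c(a)$ (Proposition~\ref{prop: line cont}). No uniformity of RSW constants is needed; your argument has a genuine gap here.

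For {\bf (Perc)}, the claim that ``$\{0,-\}$ spins percolate under $\mu^0$ iff the dual DiRC percolates'' is not a step the paper makes, and it is not correct as stated. Under the Edwards--Sokal coupling, a vertex has $\sigma_x\in\{0,-\}$ if $\psi_x=0$ or if $\psi_x=1$ and its cluster is assigned spin $-1$; this is site percolation of a random recolouring of the primal DiRC, not percolation of $\omega^*$ on the dual lattice, and the paper establishes no duality for the DiRC that would identify the two. The paper proves the two directions separately: $\{0,-\}$ not percolating implies $\langle\sigma_0\rangle^+=0$ by a Werner-type conditioning on the outermost $+$ circuit (Proposition~\ref{prop:0-}); conversely, $\langle\sigma_0\rangle^+=0$ puts us in the {\bf (ContCrit)} branch, one colours RSW-crossings $+$ to show $\mu^0[H^{0,-}_n]\le 1-c/2$, and then uses mixing of $\mu^0$ at criticality (Corollary~\ref{free mixing}), a Burton--Keane uniqueness argument (Lemma~\ref{Burton-Keane}), and a Zhang-type crossing lemma (Lemma~\ref{cross_1}) to conclude the $\{0,-\}$ cluster cannot be infinite. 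None of this is a translation to DiRC duality, and the step from ``dual percolates'' to ``so does $\{0,-\}$'' in your sketch does not follow.
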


\begin{figure}
    \centering
    \includegraphics[width=0.7\textwidth]{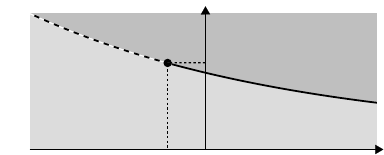}
    \put(2,0){$\Delta$}
    \put(-145,125){$\beta$}
    \put(-145,73){$\beta_c(\Delta_{\rm tric})$}
    \put(-183,-8){$\Delta_{\rm tric}$}
    \caption{Conjectural  phase diagram of the Blume-Capel model. The dark grey region is the supercritical phase $\beta > \beta_c(\Delta)$, where the spontaneous magnetisation does not vanish, i.e.\ $\langle \sigma_0 \rangle^+_{\beta(\Delta),\Delta}>0$. The light grey region is the subcritical phase $\beta < \beta_c(\Delta)$, where the spontaneous magnetisation vanishes, i.e.\ $\langle \sigma_0 \rangle^+_{\beta(\Delta),\Delta}=0$. The union of the dashed and solid lines, together with the black dot, indicate the critical curve $\Delta \mapsto \beta_c(\Delta)$. Note that it is a continuous decreasing curve (the convexity of the curve is purely for illustration purposes and the actual curve may not be convex). The curve of points of continuous phase transition is depicted solid, and the curve of discontinuous phase transition appears as dashed. The tricritical point is the boundary point between these two behaviours along the critical curve and is marked as a solid black dot. Theorem \ref{thm: existence} rigorously establishes all parts of this phase diagram \emph{except} for the behaviour on the critical curve in the neighbourhood of the (unique) tricritical point. We establish thresholds $\Delta^- < \Delta^+$ such that the curve is dashed for $\Delta \leq \Delta^-$ and solid for $\Delta \geq \Delta^+$, but we cannot say what happens along the critical curve for $\Delta \in (\Delta^-,\Delta^+) $.}
    \label{fig: critical line}
\end{figure}

The proof of the behaviour at the critical points is based on the quadrichotomy for crossing probabilities for the dilute random cluster representation of the Blume-Capel model mentioned earlier. The fact that {\bf (TriCrit)} holds has the implication that at any separation point (i.e.\ tricritical point) on the line of critical points, the phase transition is continuous and hence satisfies {\bf (Cont)}. The percolation characterisation {\bf (Perc)} is an adaptation of an elegant geometric argument for the continuity of nearest-neighbour Ising on $\ZZ^2$, see \cite{W09}. The proof of the subcritical behaviour relies on a generalisation of the OSSS inequality for monotonic measures by Duminil-Copin, Raoufi and Tassion \cite{DCRT} to the dilute random cluster model, and more generally to \textit{weakly monotonic measures}, which we define in Section~\ref{sec: OSSS}. The original OSSS inequality was obtained by O'Donell et.\ al.\ \cite{OSSS} for product measures. In fact, the technique for showing subcritical sharpness is robust enough to extend to all dimensions:
\begin{thm}\label{thm: exp dec}
Let $d\geq 2$ and $\Delta\in \RR$. Then for every $\beta<\beta_c(\Delta)$ there exists $c=c(\beta,\Delta,d)>0$ such that 
\begin{equs}
\langle  \sigma_0  \sigma_x \rangle^+_{\beta,\Delta}\leq e^{-c\lVert x \rVert_{\infty}}
\end{equs}
for every $x\in \ZZ^d$.
\end{thm}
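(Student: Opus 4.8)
The statement will be proved by passing to the dilute random cluster representation of Graham and Grimmett \cite{GG} and running the randomized-algorithm method of Duminil-Copin, Raoufi, and Tassion \cite{DCRT}, in the form adapted to \emph{weakly monotonic} measures that is developed in Section~\ref{sec: OSSS}. The first step is a graphical reduction: in the dilute random cluster coupling one has $\langle \sigma_0\sigma_x\rangle^+_{\beta,\Delta}=\phi^{1}_{\beta,\Delta}[0\leftrightarrow x]$, where $\phi^{1}$ is the infinite-volume wired dilute random cluster measure and $\leftrightarrow$ is its connection event. Since $\{0\leftrightarrow x\}$ forces the cluster of $0$ to have $\ell^\infty$-radius at least $\lVert x\rVert_\infty$, it suffices to show that for every $\beta<\beta_c(\Delta)$ the radius of the cluster of the origin has an exponential tail under $\phi^{1}_{\beta,\Delta}$, equivalently under the finite-volume measures $\phi^{1}_{\Lambda_m,\beta,\Delta}$ with $\Lambda_m=\{-m,\dots,m\}^d$. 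Along the way I would record the properties of these measures that the argument uses: finite energy, monotonicity in the boundary conditions, and the weak monotonicity property of Section~\ref{sec: OSSS}.

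\textbf{OSSS inequality and differential inequality.} Set $\theta_n(\beta)=\phi^{1}_{\Lambda_{2n},\beta,\Delta}[0\leftrightarrow \partial\Lambda_n]$ and $\Sigma_n(\beta)=\sum_{k=1}^{n}\theta_k(\beta)$. The core input is the OSSS inequality for the dilute random cluster measure, proved in Section~\ref{sec: OSSS}: for a Boolean function $f$ and a randomized decision tree $\mathcal T$, $\mathrm{Var}_\phi(f)\le \sum_{e}\delta_e(\mathcal T)\,\mathrm{Inf}_e(f)$, where $\delta_e$ is the revealment of $e$ under $\mathcal T$ and $\mathrm{Inf}_e(f)$ its influence, both computed with respect to the conditional marginals of $\phi$. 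Applying this to $f=\mathbf 1\{0\leftrightarrow\partial\Lambda_n\}$ and the decision tree that, for a uniformly chosen $k\in\{1,\dots,n\}$, reveals the cluster of $0$ inside $\Lambda_k$ and then the rest of the configuration outside $\Lambda_k$, the revealment of an edge near $\partial\Lambda_j$ is $O(\Sigma_n/n)$; combined with a Russo-type formula for $\partial_\beta\theta_n$ — this is where weak monotonicity is used, to dominate each influence by a $\beta$-derivative of $\theta_n$ and to control the conditional couplings, just as FKG is used in \cite{DCRT} — this yields a differential inequality of the shape
\[ \theta_n'(\beta)\ \ge\ \frac{c\,n}{\Sigma_n(\beta)}\,\theta_n(\beta)\,\bigl(1-\theta_n(\beta)\bigr). \]

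\textbf{Integration and identification of the threshold.} The analysis of this differential inequality is by now routine (see \cite{DCRT}): there is $\tilde\beta_c=\tilde\beta_c(\Delta)$ such that for $\beta<\tilde\beta_c$ one has $\sum_n\theta_n(\beta)<\infty$, which upgrades via finite energy to $\theta_n(\beta)\le e^{-c(\beta,\Delta,d)\,n}$, while for $\beta>\tilde\beta_c$ one has $\liminf_n\theta_n(\beta)>0$, i.e.\ $\phi^{1}_{\beta,\Delta}[0\leftrightarrow\infty]>0$. It remains to identify $\tilde\beta_c$ with $\beta_c(\Delta)$: by the Graham--Grimmett coupling, $\phi^{1}_{\beta,\Delta}[0\leftrightarrow\infty]>0$ is equivalent to $\langle\sigma_0\rangle^+_{\beta,\Delta}>0$, so exponential decay (which forces $\phi^{1}[0\leftrightarrow\infty]=0$) cannot hold for $\beta>\beta_c(\Delta)$ and percolation cannot hold for $\beta<\beta_c(\Delta)$, hence $\tilde\beta_c=\beta_c(\Delta)$. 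Therefore, for every $\beta<\beta_c(\Delta)$, $\theta_n\le e^{-cn}$, and so $\langle\sigma_0\sigma_x\rangle^+_{\beta,\Delta}=\phi^{1}_{\beta,\Delta}[0\leftrightarrow x]\le\phi^{1}_{\beta,\Delta}[0\leftrightarrow\partial\Lambda_{\lVert x\rVert_\infty}]\le e^{-c\lVert x\rVert_\infty}$, which is the claim.

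\textbf{Main obstacle.} The delicate part is neither the ODE analysis nor the back-and-forth with the spin correlations, which are standard, but the proof of the OSSS inequality outside the product-measure and even the FKG setting. One must pin down the precise weak monotonicity property that (i) genuinely holds for the dilute random cluster measure — this uses the specific structure of the Blume-Capel single-site weights and of the Graham--Grimmett edge/vertex couplings, and it must be checked to survive conditioning on the explored region and the choice of boundary conditions — and (ii) is still strong enough to push the DCRT coupling argument and the Russo-type differentiation through. Getting this notion right in Section~\ref{sec: OSSS} is where essentially all the work of Theorem~\ref{thm: exp dec} lies.
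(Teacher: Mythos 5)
Your overall architecture matches the paper's: reduce to the wired dilute random cluster measure via the Edwards--Sokal coupling, prove an OSSS inequality for weakly monotonic measures with admissible decision trees, derive a differential inequality for $\theta_n$, and integrate it \`a la Duminil-Copin--Raoufi--Tassion. You also correctly locate the main difficulty in establishing the OSSS inequality outside the FKG/product setting. However, there is a genuine gap at the step where you claim that ``a Russo-type formula for $\partial_\beta\theta_n$'' converts the OSSS bound into the differential inequality. The OSSS inequality here is an inequality on $\{0,1\}^D$ with $D=\Lambda\sqcup E_\Lambda$, so its right-hand side is $\sum_{d\in D}\delta_d\,\mathrm{Cov}(f,\eta_d)$, which contains covariances with the \emph{vertex} variables $\psi_x$ as well as with the edge variables $\omega_e$. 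Differentiating in $\beta$ (equivalently in $p$) with $a$ fixed produces only $\frac{1}{p(1-p)}\sum_{e}\mathrm{Cov}(f,\omega_e)$; the vertex covariances, which are nonnegative and cannot be discarded, are unaccounted for, so the differential inequality $\theta_n'\ge \frac{cn}{\Sigma_n}\theta_n(1-\theta_n)$ does not follow as you state it.

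The paper's resolution is a substantive extra idea that your proposal does not contain: one introduces a three-parameter family $\varphi^\xi_{\Lambda,p,a,r}$ in which the per-edge weight $r$ is decoupled from $p$ (rather than $r=\sqrt{1-p}$), freezes $r=r_0=\sqrt{1-p_0}$, and differentiates along the curve $a(p)=\frac{p}{p+c_0(1-p)}$, chosen precisely so that $a'(p)=\frac{a(1-a)}{p(1-p)}$ and hence $\frac{d}{dp}\varphi[A]=\frac{1}{p(1-p)}\sum_{d\in D}\mathrm{Cov}(A,\eta_d)$, capturing both vertex and edge covariances at once. Weak monotonicity is then verified for this generalized measure under the condition $r\ge \frac{2(1-p)}{2-p}$, which holds in a neighbourhood of $p_0$ with $r=r_0$ frozen. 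A final comparison (via conditional probabilities on single edges, as in the continuity-of-the-critical-line argument) is needed to transfer the sharp threshold for the generalized family back to the genuine dilute random cluster measure and conclude that the threshold exceeds $p_0$ for every $p_0<p_c(a)$; this return step is also missing from your writeup. Without the auxiliary parameter $r$ and the curve $(p,a(p))$, the argument as you propose it stalls at the passage from the OSSS bound to a usable differential inequality.
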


We end on a natural follow up question on Theorem \ref{thm: truncated}, which relates the percolative properties of the underlying random environment to the nature of the critical point.
\begin{qtn*}
In $d=2$, is it true that, for all $\Delta \in \RR$, $\langle \sigma_0 \rangle^+_{\beta_c(\Delta),\Delta}=0$ if and only if $\{0\}$ spins do not $\ast$-percolate under $\langle \cdot \rangle^0_{\beta_c(\Delta),\Delta}$? Is there a higher dimensional analogue of this?
\end{qtn*}

\subsection{Paper organisation}

In Section \ref{sec: drc} we define the dilute random cluster model and develop the necessary tools that we require for the rest of this article. In Section \ref{sec:basic-facts-phase-transition} we establish fundamental folklore facts about the phase transition for the Blume-Capel model, in particular that $\beta_c(\Delta)$ is continuous and decreasing (i.e.\ we prove Theorem \ref{thm: BC phase diagram}), and show the existence of a discontinuous critical phase (the first half of Theorem~\ref{thm: existence}). In Section \ref{sec: mapping bc-ising} we introduce a combinatorial mapping from the Blume-Capel model on $\ZZ^d$ to an Ising model on an enlarged graph, and we use it to prove the existence of a continuous critical phase  for $d\geq 3$ in Section \ref{sec: pf in 3d} (this corresponds to proving the second half of Theorem \ref{thm: existence} in $d\geq 3$). In Section \ref{sec: quantitative} we establish a quadrichotomy result for crossing probabilities under the dilute random cluster model in $d=2$ and derive some basic consequences of it. In Section \ref{sec: tric 2} we then apply the quadrichotomy result, together with Lee-Yang type arguments, to establish the second half of Theorem \ref{thm: existence} concerning the existence of a continuous critical phase for $d=2$. The rest of the paper concerns Theorem \ref{thm: truncated} and \ref{thm: exp dec}. In Section \ref{sec: subcrit} we prove Theorem \ref{thm: exp dec} via a generalisation of the OSSS argument to weakly monotonic measures. Finally, in Section \ref{sec: further} we use the results obtained in Sections~\ref{sec: quantitative} and \ref{sec: subcrit} to prove Theorem \ref{thm: truncated}.

\subsection*{Acknowledgements}

Above all we thank Hugo Duminil-Copin for encouraging this collaboration. We thank Hendrik Weber for introducing us to this model. We thank Amol Aggarwal, Roman Koteck\'y, Vieri Mastropietro, Romain Panis, S\'ebastien Ott, Tom Spencer and Yvan Velenik for exciting discussions at various stages of this project. This project was completed when all three authors were at the University of Geneva. TSG was supported by the Simons Foundation, Grant 898948, HDC. DK and CP were supported by the Swiss National Science Foundation and the NCCR SwissMAP.

\section{The dilute random cluster model} \label{sec: drc}

\subsection{Definitions}

We begin by defining the sample space. Let $\Psi=\{0,1\}^{\ZZ^d}$ and $\Omega=\{0,1\}^{\E^d}$. Note that elements of $\Psi$ correspond to site percolation configurations and elements of $\Omega$ correspond to bond percolation configurations. We say that $\psi=(\psi_x)\in \Psi$ and $\omega=(\omega_e)\in \Omega$ are compatible if $\omega_{xy}=0$ whenever $\psi_x=0$ or $\psi_y=0$. Denote by $\Theta \subset \Psi \times \Omega$ the set of all possible compatible configurations, equipped with the $\sigma$-algebra generated by cylinder events, $\cF$. We denote a generic element of $\Theta$ by $\theta$. Any such element has an associated site percolation and bond percolation corresponding to projections onto the first and second coordinates, respectively denoted by $\theta_\Psi$ and $\theta_\Omega$.

Let $\theta = (\psi,\omega) \in \Theta$ be a compatible configuration. A vertex $x\in V$ is called open in $\theta$ if it is open in $\psi$, i.e.\ if $\psi_x=1$, and it is called called closed in $\theta$ (equivalently $\psi$) otherwise. An edge $e$ is called open in $\theta$ if it is open in $\omega$, i.e.\ if $\omega_e=1$, and closed in $\theta$ (equivalently $\omega$) otherwise. Define $V_{\psi}=\{x\in \ZZ^d : \psi_x=1\}$ to be the set of open vertices in $\psi$ and $E_{\psi}=\{xy\in \E^d:
\psi_x=\psi_y=1\}$ to be the set of induced edges of $\psi$. In addition, define $o(\omega)$ to be the set of
open edges in $\omega$. Observe that, given $\psi \in \Psi$ and $\omega \in \Omega$, then $(\psi,\omega)\in\Theta$ if
and only if $o(\omega)\subseteq E_\psi$. When clear from context, we simply refer to vertices and edges as open or closed. 

For $\Lambda \subset \ZZ^d$ finite, let $\partial \Lambda = \{ x \in \ZZ^d\setminus \Lambda \text{ such that there exists } y \in \Lambda \text{ incident to } x\}$ denote the exterior boundary of $\Lambda$, and set $\overline{\Lambda} := \Lambda \cup \partial \Lambda$. In addition, write\footnote{This is not to be confused with the set of induced edges of the infinite volume configuration $\psi \in \Psi$.} $E_{\Lambda}$ for the set of edges with at least one endpoint in $\Lambda$, and $b(\Lambda)=\{xy\in \E^d : x,y\in \Lambda\}$ for the set of edges induced by $\Lambda$. We call the pair $(\Lambda, E_\Lambda)$ a \textit{region} and stress that it is not a graph, but $(\overline{\Lambda}, E_\Lambda)$ is a graph. Finally, given $\psi \in \Psi$, write $E_{\psi,\Lambda} = E_\psi \cap E_\Lambda$. 

Let $\xi=(\kappa,\rho)\in\Theta$ and $\Lambda \subset \ZZ^d$ finite. Define $\Theta_\Lambda^\xi$ to be the set of $\theta=(\psi,\omega) \in \Omega$ such that
$\psi_x=\kappa_x$ for $x\in \ZZ^d\setminus \Lambda$ and $\omega_e=\rho_e$ for $e\in \E^d\setminus E_{\Lambda}$. The set of elements in $\Theta_\Lambda^\xi$ correspond to configurations on $\Lambda$ with boundary condition $\xi$. 
Note that $\Theta_\Lambda^\xi$ inherits a sub $\sigma$-algebra of $\cF$ corresponding to configurations that are measurable with respect to the vertices and edges in the region $(\Lambda, E_\Lambda)$. Finally, we write $k(\theta,\Lambda)$ for the number of connected components of the graph $(V_\psi,o(\omega))$ that intersect $\overline{\Lambda}$. Here, $k(\theta,\Lambda)$ implicitly depends on $\xi$ through the condition that $\theta \in \Theta_\Lambda^\xi$. 

We now define the finite volume dilute random cluster model with parameters $a \in (0,1)$ and $p \in (0,1)$, and fixed boundary conditions.
\begin{defn}[Dilute random cluster model]
Let $p,a \in (0,1)$ and write $r = \sqrt{1-p}$. For $\Lambda \subset \ZZ^d$ finite and $\xi \in \Theta$, let $\varphi^{\xi}_{\Lambda,p,a}$ denote the probability measure on $\Theta_\Lambda^\xi$ defined for $\theta=(\psi,\xi) \in \Theta_\Lambda^\xi$ by
\begin{equs} \label{defRC}
\varphi^{\xi}_{\Lambda,p,a} [\theta] 
=
\frac{\1[\theta \in \Theta_\Lambda^\xi]}{Z^{\xi}_{\Lambda,p,a}}
\prod_{x\in \Lambda} \left(\frac{a}{1-a}\right)^{\psi_x}
\prod_{e\in E_{\psi,\Lambda}} \left(\frac{p}{1-p}\right)^{\omega_e}
 r^{|E_{\psi,\Lambda}|} 2^{k(\theta,\Lambda)}
\end{equs}
where $Z^{\xi}_{\Lambda,p,a}$ is the normalisation constant. 
\end{defn}
\noindent When the parameters are either clear from context or irrelevant, we simply write $\varphi^{\xi}_{\Lambda}=\varphi^{\xi}_{\Lambda,p,a}$.

Let us now define the usual random cluster model on the graph induced by $E_{\psi,\Lambda}$ with parameters $p\in (0,1)$ and $q=2$, and fixed boundary conditions.

\begin{defn}[Random cluster model]
Let $p\in (0,1)$. For $\Lambda \subset \ZZ^d$ finite, $\xi=(\kappa,\rho)\in \Theta$, and $\psi\in \Psi$ such that $\psi_x=\kappa_x$ for $x\in \ZZ^d\setminus \Lambda$, let
\begin{equs}\label{eq: rc def}
\phi^{\textup{RC},\xi}_{\Lambda,\psi,p}[\omega]
=
\frac{\1[\theta \in \Theta_\Lambda^\xi]}{Z^{\textup{RC},\xi}_{\Lambda,\psi,p}}2^{k(\theta,\Lambda)}
\prod_{e\in E_{\psi,\Lambda}} \left(\frac{p}{1-p}\right)^{\omega_e},
\end{equs}
where $Z^{\textup{RC},\xi}_{\Lambda,\psi,p}$ is the normalisation constant, and $\theta=(\psi,\omega)$.
\end{defn}

\begin{rem}
We can extend the definition of the dilute random cluster measure to include the cases $p,a \in \{0,1\}$. First, fix $a =0$. By convention, for any $p \in [0,1]$, we set: $\varphi^\xi_{\Lambda, p, 0} = \delta_{(0,0)}$, the Dirac mass on the configuration of only closed vertices and edges. Now, fix $a =1$. For any $p \in [0,1]$, we define $\varphi^\xi_{\Lambda,p,1}$ to be the usual  random cluster measure $\phi^{\textup{RC},\xi}_{\Lambda,\psi,p}$, where $\psi$ is equal to $1$ on $\Lambda$ and coincides with $\kappa$ on $\ZZ^d\setminus \Lambda$.  

On the other hand, for any $a \in (0,1]$, we set: $\varphi^\xi_{\Lambda,0,a}$ to be the tensor product of Bernoulli site percolation of parameter $a$ and the Dirac measure centred at $0 \in \Omega$; and, $\varphi^\xi_{\Lambda,1,a} = \delta_{(1,1)}$, the Dirac mass on the configuration of only open vertices and bonds. 
\end{rem}

On any finite subset of $\ZZ^d$, there are two natural candidates for extremal\footnote{In the sense of Proposition \ref{prop: domination}.} dilute random cluster measures. These measures play a central role in this paper, so we formalise them in the following definition. 
\begin{defn}\label{def: 0,1}
Let $p \in [0,1)$ and $a \in [0,1)$. For $\Lambda \subset \ZZ^d$ finite, we define
\begin{itemize}
    \item the free measure $\varphi^0_{\Lambda,p,a} := \varphi^{(0,0)}_{\Lambda,p,a}$
    \item and, the wired measure $\varphi^1_{\Lambda,p,a} := \varphi^{(1,1)}_{\Lambda,p,a}$
\end{itemize}
where we recall that $(0,0)$ is the configuration consisting of only closed vertices and edges, and $(1,1)$ is the configuration consisting of only open vertices and edges. 
\end{defn}

Finally, we end with two more definitions. The first introduces a probability measure on $\Psi$ which captures the law of the (dependent) site percolation/ vertex marginal induced by the dilute random cluster measure -- i.e.\ the pushforward measure under the projection $\theta \mapsto \theta_\Psi$. 
\begin{defn}
Let $p,a \in [0,1]$. For $\Lambda \subset \ZZ^d$ and $\xi = (\kappa,\rho) \in \Theta$, let $\Psi^\xi_{\Lambda,p,a}$ denote the probability measure on $\Psi$ defined, for $\psi \in \Psi$, by
\begin{equs}
\Psi^{\xi}_{\Lambda,p,a}[\psi]
=
\sum_{\omega\in \Omega} \varphi^{\xi}_{\Lambda,p,a}[(\psi,\omega)] \1[(\psi,\omega)\in \Theta^{\xi}_{\Lambda}].
\end{equs}
We call $\Psi^{\xi}_{\Lambda,p,a}$ the vertex marginal and write $\Psi^{\xi}_{\Lambda}=\Psi^\xi_{\Lambda,p,a}$ when clear from context.
\end{defn}

Similarly we define the pushforward measure under the projection $\theta \mapsto \theta_\Omega$.

\begin{defn}
Let $p,a \in [0,1]$. For $\Lambda \subset \ZZ^d$ and $\xi = (\kappa,\rho) \in \Theta$, let $\Omega^\xi_{\Lambda,p,a}$ denote the probability measure on $\Omega$ defined, for $\omega \in \Omega$, by
\begin{equs}
\Omega^{\xi}_{\Lambda,p,a}[\omega]
=
\sum_{\psi \in \Psi} \varphi^{\xi}_{\Lambda,p,a}[(\psi,\omega)]\1[(\psi,\omega)\in \Theta^{\xi}_{\Lambda}].
\end{equs}    
We call $\Omega^{\xi}_{\Lambda,p,a}$ the edge marginal and write $\Omega^{\xi}_{\Lambda}=\Omega^\xi_{\Lambda,p,a}$ when clear from context.
\end{defn}

\subsection{Basic properties}\label{sec:basic-properties}

We now describe some fundamental properties of the dilute random cluster model. The proofs of these properties follow from straightforward adaptations of the proofs of the analogous properties for the random cluster model, found in \cite[Section 1.2]{HDC-IP}, but we include them for the reader's convenience.

The first property tells us that, conditional on the random environment $\psi \in \Psi$, the dilute random cluster measure coincides with the usual random cluster measure on the random graph induced by $\psi$, $(V_\psi,E_\psi)$.
\begin{prop}
Let $p,a \in [0,1]$, $\Lambda \subset \ZZ^d$ finite, and $\xi=(\kappa,\rho) \in \Theta$. For every $\psi\in \Psi$ such that $\psi_x = \kappa_x$ for all $x \in \ZZ^d \setminus \Lambda$, we have
\begin{equs}\label{eq: conditioning}
\varphi^{\xi}_{\Lambda,p,a}[(\psi,\omega) \mid \theta_\Psi = \psi]
=
\phi^{\textup{RC},\xi}_{\Lambda,\psi,p}[\omega], \qquad \forall \omega \in \Omega: (\psi,\omega) \in \Omega^\xi_\Lambda
\end{equs}
where we recall $\theta_\Psi$ denotes the projection of $\theta$ onto its component in $\Psi$.
\end{prop}

\begin{proof}
Let $\omega \in \Omega$ be such that $(\psi,\omega) \in \Omega^\xi_\Lambda$. Then \eqref{eq: conditioning} is immediate from the following computation:
\begin{equs}
\varphi^{\xi}_{\Lambda,p,a}[(\psi,\omega) \mid \theta_\Psi = \psi]
&=
\frac{\varphi^{\xi}_{\Lambda,p,a}[(\psi,\omega)]}{\sum_{\tilde \omega \in \Omega: (\tilde\omega,\psi) \in \Omega^\xi_\Lambda} \varphi^{\xi}_{\Lambda,p,a}[(\psi,\tilde \omega)] }
\\
&=
\frac
{\prod_{e\in E_{\psi,\Lambda}} \left(\frac{p}{1-p}\right)^{\omega_e} 2^{k\big((\psi,\omega),\Lambda\big)}}
{\sum_{\tilde \omega \in \Omega: (\tilde\omega,\psi) \in \Omega^\xi_\Lambda}\prod_{e\in E_{\psi,\Lambda}} \left(\frac{p}{1-p}\right)^{\tilde\omega_e} 2^{k\big((\psi,\tilde\omega),\Lambda\big)}}.
\end{equs}
\end{proof}

The second property concerns automorphism invariance of the measure. Recall that any automorphism of $\LL^d$ has a natural action on vertices and edges, and hence any event in the $\sigma$-algebra $\cF$. Below, we have that: $\tau\Lambda = \{ \tau x : x \in \Lambda \}$; $\tau\xi = (\tau\kappa,\tau\rho)$ where $\tau\kappa = (\kappa_{\tau x})_{x \in \ZZ^d}$ and $\tau\rho = (\rho_{\tau e})_{e \in \E^d}$; and, $\tau A$ denotes the image of $A$ under the action of $\tau$. 
\begin{prop}
Let $\tau$ be an automorphism of $\LL^d$. Let $p,a \in [0,1]$, $\Lambda \subset \ZZ^d$ finite, and $\xi=(\kappa,\rho) \in \Theta$.  Then, for every event $A$ depending on the vertices and edges in $(\Lambda,E_{\Lambda})$, we have
    \begin{equs}
      \varphi_{\tau\Lambda}^{\tau \xi}[\tau A]
      =
      \varphi_{\Lambda}^{\xi}[A].
    \end{equs}
\end{prop}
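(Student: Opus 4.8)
The statement to prove is the automorphism invariance of the dilute random cluster measure. Let me sketch a proof.

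\textbf{Proof proposal.}

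The plan is to verify the identity directly from the defining formula \eqref{defRC}, by checking that every ingredient of the measure transforms compatibly under the action of the automorphism $\tau$. First I would set up the bijection: $\tau$ induces a bijection $\theta = (\psi,\omega) \mapsto \tau\theta := (\psi\circ\tau^{-1}, \omega\circ\tau^{-1})$ from $\Theta$ to itself, which restricts to a bijection from $\Theta_{\Lambda}^{\xi}$ to $\Theta_{\tau\Lambda}^{\tau\xi}$ (since $\tau$ maps $\Lambda$ to $\tau\Lambda$, $\ZZ^d\setminus\Lambda$ to $\ZZ^d\setminus\tau\Lambda$, and $E_\Lambda$ to $E_{\tau\Lambda}$, and respects the compatibility constraint $o(\omega)\subseteq E_\psi$). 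It then suffices to show that $\varphi^{\tau\xi}_{\tau\Lambda}[\tau\theta] = \varphi^{\xi}_{\Lambda}[\theta]$ for each single configuration $\theta\in\Theta_\Lambda^\xi$, since both sides of the claimed event identity are obtained by summing this over $\theta\in A$.

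Next I would check the four factors in \eqref{defRC} one by one. The vertex product $\prod_{x\in\Lambda}(a/(1-a))^{\psi_x}$ equals $\prod_{x\in\tau\Lambda}(a/(1-a))^{(\tau\psi)_x}$ because $\tau$ is a bijection $\Lambda\to\tau\Lambda$ and $(\tau\psi)_{\tau x} = \psi_x$. Similarly $\tau$ maps $E_{\psi,\Lambda}$ bijectively onto $E_{\tau\psi,\tau\Lambda}$ (this uses that $\tau$ preserves the edge relation of $\LL^d$), so the edge product $\prod_{e\in E_{\psi,\Lambda}}(p/(1-p))^{\omega_e}$ and the power $r^{|E_{\psi,\Lambda}|}$ are both invariant. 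Finally, $\tau$ maps connected components of $(V_\psi,o(\omega))$ to connected components of $(V_{\tau\psi},o(\tau\omega))$ and maps $\overline{\Lambda}$ to $\overline{\tau\Lambda}$, so $k(\tau\theta,\tau\Lambda) = k(\theta,\Lambda)$, giving invariance of $2^{k(\theta,\Lambda)}$. Multiplying, the unnormalised weights agree; summing over all $\theta\in\Theta_\Lambda^\xi$ shows the partition functions satisfy $Z^{\tau\xi}_{\tau\Lambda} = Z^{\xi}_{\Lambda}$, and the result follows.

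I do not expect any genuine obstacle here — this is a routine ``everything is natural'' verification. The only point requiring a little care is bookkeeping the indices correctly (i.e.\ consistently using $(\tau\psi)_{\tau x} = \psi_x$ rather than $(\tau\psi)_x = \psi_x$) and confirming that the boundary-condition constraint defining $\Theta_\Lambda^\xi$ is transported to the constraint defining $\Theta_{\tau\Lambda}^{\tau\xi}$, which is immediate from $\tau(\ZZ^d\setminus\Lambda) = \ZZ^d\setminus\tau\Lambda$ and $\tau(\E^d\setminus E_\Lambda) = \E^d\setminus E_{\tau\Lambda}$. In the writeup I would state the configuration-level identity $\varphi^{\tau\xi}_{\tau\Lambda}[\tau\theta]=\varphi^\xi_\Lambda[\theta]$ as the key lemma and then note that the event version follows by linearity, since $\tau A = \{\tau\theta : \theta\in A\}$.
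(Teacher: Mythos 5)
Your proof is correct and follows exactly the route the paper intends: the paper gives no explicit proof of this proposition, stating only that it "follows directly from the definition," and your configuration-by-configuration verification that each factor of \eqref{defRC} (vertex weights, edge weights, $r^{|E_{\psi,\Lambda}|}$, and $2^{k(\theta,\Lambda)}$) is transported by $\tau$ is precisely that direct check. No gaps.
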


\begin{proof}
Note that $\theta \in A$ if and only if $\tau \theta \in \tau A$, and thus it suffices to show that, for any $\theta \in \Omega^\xi_\Lambda$, $ \varphi_{\tau\Lambda}^{\tau \xi}[\tau \theta ] =\varphi_{\Lambda}^{\xi}[\theta]$. For any such $\theta=(\psi,\omega)$, we have
\begin{equs}
\varphi_{\tau\Lambda}^{\tau \xi}[\tau \theta ]
&=
\frac{1}{Z_{\tau \Lambda}^{\tau \xi}} \prod_{x \in \tau\Lambda} \left( \frac{a}{1-a}\right)^{\tau \psi_x} \prod_{e \in E_{\tau\psi, \tau \Lambda}} \left( \frac{p}{1-p}\right)^{\tau \omega_e} r^{|E_{\tau\psi,\tau \Lambda}|} 2^{k(\tau\theta,\tau\Lambda)}
\\
&=
\frac{1}{Z_{\tau \Lambda}^{\tau\xi}} \prod_{x \in \Lambda} \left( \frac{a}{1-a}\right)^{\psi_x} \prod_{e \in E_{\psi, \Lambda}} \left( \frac{p}{1-p}\right)^{\omega_e} r^{|E_{\psi,\Lambda}|} 2^{k(\tau\theta,\tau\Lambda)}
\\
&=
\frac{1}{Z_{\tau \Lambda}^{\tau\xi}} \prod_{x \in \Lambda} \left( \frac{a}{1-a}\right)^{\psi_x} \prod_{e \in E_{\psi, \Lambda}} \left( \frac{p}{1-p}\right)^{\omega_e} r^{|E_{\psi,\Lambda}|} 2^{k(\theta,\Lambda)}
\end{equs}
where the first equality is by definition of the dilute random cluster measure, the second equality is by definition of $\tau$, and the third equality is because graph automorphisms preserve the number of connected components. The proposition then follows from the above calculation together with the observation that the same computations show $Z_{\tau\Lambda}^{\tau \xi} = Z_{\Lambda}^\xi$.
\end{proof}

Next, we turn to the spatial Markov property. We require the following generalisation of a compatible configuration. Let $\Lambda' \subset \Lambda$ be finite subsets of $\ZZ^d$ and let $\xi=(\kappa,\rho) \in \Theta$. We say that $\xi' = (\kappa',\rho') \in \{0,1\}^{\Lambda \setminus \Lambda'} \times \{0,1\}^{E_\Lambda \setminus E_{\Lambda'}}$ is outer-compatible with $\xi$ if and only if $o(\rho')$ is a subset of the edges induced by $\kappa'$ union the edge set $E_\kappa \setminus b(\Lambda)$. Denote by $\xi\cup\xi'\in \Theta$ the boundary condition which is equal to $\xi$ on $(\ZZ^d \setminus \Lambda, \E^d \setminus E_\Lambda)$, $\xi'$ on $(\Lambda\setminus\Lambda')\times(E_{\Lambda}\setminus E_{\Lambda'})$, and $(1,0)$ on $(\Lambda', E_{\Lambda'})$. Note that the choice of $(1,0)$ on the inner region ensures that the entire configuration is compatible and works for all choices of $\xi'$. This is purely for technical and notational convenience - any other choice of compatible extension does not affect the configuration space $\Theta^{\xi \cup \xi'}_{\Lambda'}$. We write
\begin{equs} \label{eq: outer compatible config space}
\Theta^\xi_{\Lambda}(\xi', \Lambda')
=
\{
\theta=(\psi,\omega) \in \Theta^\xi_\Lambda : \psi_x=\kappa'_x \; \forall \, x\in \Lambda\setminus\Lambda',\omega_e=\rho'_e \; \forall \, e\in E_{\Lambda}\setminus E_{\Lambda'}
\}.
\end{equs}
Note that there is a natural bijection $\Theta^\xi_{\Lambda}(\xi',\Lambda') \rightarrow \Theta^{\xi \cup \xi'}_{\Lambda'}$.

\begin{prop} \label{prop: SMP}
Let $p,a \in [0,1]$, $\Lambda'\subset \Lambda$ finite subsets of $\ZZ^d$, and $\xi=(\kappa,\rho)\in \Theta$. For every configuration $\xi'=(\kappa',\rho')\in \{0,1\}^{\Lambda\setminus\Lambda'}\times \{0,1\}^{E_{\Lambda}\setminus E_{\Lambda'}}$ that is outer-compatible with $\xi$, we have
\begin{equs}\label{eq:DMPRC}
\varphi_{\Lambda}^\xi[\cdot \mid \Theta^\xi_\Lambda(\xi',\Lambda')]
=
\varphi_{\Lambda'}^{\xi\cup\xi'}.
\end{equs}
\end{prop}

\begin{proof}
Let $\xi'$ be as above. Observe that, for all $\theta \in \Theta^\xi_\Lambda(\xi',\Lambda')$,
\begin{equs}
\varphi_{\Lambda}^\xi[\theta]
=
\prod_{x \in \Lambda \setminus \Lambda'} \left(\frac{a}{1-a} \right)^{\kappa'_x} 
	&\prod_{x \in  \Lambda'} \left(\frac{a}{1-a} \right)^{\psi_x}
	\prod_{e \in E_{(\xi \cup \xi')_\Psi, \Lambda} \setminus E_{\lambda'}} \left( \frac{p}{1-p}\right)^{\rho'_e} 
	\\
	& \times	\prod_{e \in E_{(\xi \cup \xi')_\Psi, \Lambda'} } \left( \frac{p}{1-p} \right)^{\omega_e}
	r^{|E_{(\xi \cup \xi')_\Psi, \Lambda} \setminus E_{\lambda'}| + | E_{\theta, \Lambda'}|} 
	2^{k(\theta,\Lambda')}.
\end{equs}
Hence,
\begin{equs}
\frac{\varphi_{\Lambda}^\xi[\theta]}{\varphi_{\Lambda}^\xi[ \Theta^\xi_\Lambda(\xi',\Lambda')]}
&=
\frac
{ 
	\prod_{x \in  \Lambda'} \left(\frac{a}{1-a} \right)^{\psi_x} 
	\prod_{e \in E_{(\xi \cup \xi')_\Psi, \Lambda'} } \left( \frac{p}{1-p} \right)^{\omega_e}
	r^{| E_{(\xi \cup \xi')_\Psi, \Lambda'}|} 
	2^{k(\theta,\Lambda')}
}
{
	\sum_{\theta'=(\psi',\omega') \in \Theta^\xi_\Lambda(\xi',\Lambda')} 
	\prod_{x \in  \Lambda'} \left(\frac{a}{1-a} \right)^{\psi'_x} 
	\prod_{e \in E_{(\xi \cup \xi')_\Psi, \Lambda'} } \left( \frac{p}{1-p} \right)^{\omega'_e}
	r^{| E_{\theta, \Lambda'}|} 
	2^{k(\theta,\Lambda')}
}
\end{equs}
from which \eqref{eq:DMPRC} then follows.
\end{proof}

Finally, we consider the finite energy property of the measure.
\begin{prop}\label{prop: finite energy}
Let $p \in (0,1)$, $a \in (0,1)$, $\Lambda \subset \ZZ^d$ finite, and $\xi=(\kappa,\rho)\in \Theta$. For every $x\in \Lambda$ and every configuration $\xi'=(\kappa',\rho')\in \{0,1\}^{\Lambda\setminus\{x\}}\times \{0,1\}^{E_{\Lambda}\setminus E_{\{x\}}}$ that is outer-compatible with $\xi$, we have that for all $\theta = (\psi,\omega) \in \Theta^\xi_\Lambda$,
\begin{equs} \label{eq: finite energy 1}
0
<
\varphi_{\Lambda}^\xi[\psi_x=1 \mid \Theta^\xi_\Lambda(\xi',\{x\})]
<
1.
\end{equs}
 Moreover, for every edge $xy \in b(\Lambda)$ or $xy \in E_{\Lambda} \setminus b(\Lambda)$ such that $x \not\in\Lambda$ and $\kappa_x = 1$, and for every configuration $\xi'=(\kappa',\rho')\in \{0,1\}^{\Lambda\setminus\{x,y\}}\times \{0,1\}^{E_{\Lambda}\setminus \{xy\}}$ that is outer-compatible with $\xi$, we have that for all $\theta = (\psi,\omega) \in \Theta^\xi_\Lambda$,
\begin{equs} \label{eq: finite energy 2}
0
<
\varphi_{\Lambda}^\xi[\omega_{xy}=1 \mid \Theta^\xi_\Lambda(\xi',\{x,y\})]
<
1.
\end{equs}
\end{prop}

\begin{proof}
Note that, by Proposition \ref{prop: SMP}, we have that 
\begin{equs}
\varphi_{\Lambda}^\xi[\psi_x=1 \mid \Theta^{\xi}_{\Lambda}(\xi',\{x\})]
&=
\varphi_{\{x\}}^{\xi \cup \xi'}[\psi_x = 1]
\end{equs}
and 
\begin{equs}
\varphi_{\Lambda}^\xi[\omega_{xy}=1 \mid \Theta^{\xi}_{\Lambda}(\xi',\{x,y\})]
=
\varphi_{\{x,y\}}^{\xi \cup \xi'}[\omega_{xy}=1].
\end{equs}
Equations \eqref{eq: finite energy 1} and \eqref{eq: finite energy 2} follow from the above displays together with the respective conditions on $p$ and $a$ outlined in the proposition.

\end{proof}

\subsection{Edwards-Sokal coupling with the Blume-Capel model}

In this section, we introduce the coupling between the Blume-Capel and the dilute random cluster model. For simplicity, we will restrict to the case where $\Lambda$ is the box $[-n,n]^d\cap \ZZ^d$, which we denote $\Lambda_n$, as some care is required when working with domains whose complement is not connected.

Recall the definitions of the measures $\varphi^0_{\Lambda_n,p,a}$ and $\varphi^1_{\Lambda_n,p,a}$ in Definition~\ref{def: 0,1}, and of the measures $\mu^0_{\Lambda_n,\beta,\Delta}$ and $\mu^+_{\Lambda_n,\beta,\Delta}$ in the Introduction. 
Let $\Delta\in \RR$ and $\beta>0$. Set $p = 1-e^{-2\beta}$ and $a = \frac{e^\Delta}{1+e^{\Delta}}$. For any $n\geq 1$, let $\Sigma_{\Lambda_n}=\{-1,0,1\}^{\Lambda_n}$. We define the measures $\P^0_{\Lambda_n}$ and $\P^1_{\Lambda_n}$ on $\Sigma_{\Lambda_n} \times \Psi\times\Omega$ by
\begin{equs}
\P^0_{\Lambda_n}[ (\sigma,\psi,\omega) ]
=
\frac{\1_{(\sigma,\psi,\omega)\in \mathcal{S}^0}}{Z^0_{\Lambda_n,p,a}}
r^{|E_{\psi,\Lambda_n}|} \prod_{x\in \Lambda_n} \left(\frac{a}{1-a}\right)^{\psi_x}
\prod_{e\in E_{\psi,\Lambda_n}} \left(\frac{p}{1-p}\right)^{\omega_e}
\end{equs}
and 
\begin{equs}
\P^1_{\Lambda_n}[ (\sigma,\psi,\omega) ]
=
\frac{\1_{(\sigma,\psi,\omega)\in \mathcal{S}^1}}{Z^1_{\Lambda_n,p,a}}
r^{|E_{\psi,\Lambda_n}|} \prod_{x\in \Lambda_n} \left(\frac{a}{1-a}\right)^{\psi_x}
\prod_{e\in E_{\psi,\Lambda_n}} \left(\frac{p}{1-p}\right)^{\omega_e}, 
\end{equs}
where for $\#\in \{0,1\}$,
$\mathcal{S}^{\#}$ is the set of triples $(\sigma,\psi,\omega)\in \Sigma_{\Lambda_n}\times \Psi\times \Omega$ such that the following hold:
\begin{enumerate}[(1)]
    \item $(\psi,\omega)\in \Theta^{\#}_{\Lambda_n}$,
    \item $\sigma_x^2=\psi_x$ for every $x\in \Lambda_n$,
    \item for every edge $xy \in E_{\psi,\Lambda_n}$, if $\sigma_x \neq \sigma_y$ then $\omega_{xy}=0$.
\end{enumerate}
Note that the latter condition is equivalent to $\sigma$ being constant on the clusters of $\omega$. 

\begin{prop} \label{prop: es coupling}
Let $\Delta \in \RR$, $\beta>0$ and $n\geq 1$. The marginal measure of $\P^0_{\Lambda_n}$ on $\Sigma_{\Lambda_n}$ is $\mu^0_{\Lambda_n,\beta,\Delta}$, whilst the marginal on $\Psi\times \Omega$ is $\phi^0_{\Lambda_n,p,a}$. Moreover, the marginal measure of $\P^1_{\Lambda_n}$ on $\Sigma_{\Lambda_n}$ is $\mu^+_{\Lambda_n,\beta,\Delta}$, whilst the marginal on $\Psi\times \Omega$ is $\phi^1_{\Lambda_n,p,a}$.
\end{prop}

\begin{proof}
The first assertion is proved in \cite[Theorem 3.7]{GG}. We only prove the second. We first consider the marginal on $\Psi \times \Omega$. Summing over spin configurations in $\Sigma_{\Lambda_n}$, we find that
\begin{equs} 
\begin{split} \label{eq: ES 1}
\sum_{\sigma \in \Sigma_{\Lambda_n}} \P^1_{\Lambda_n}[(\sigma,\psi,\omega)]
&=
\frac{1}{Z^1_{\Lambda_n,p,a}}r^{|E_{\psi,\Lambda_n}|} \prod_{x \in \Lambda_n} \left( \frac{a}{1-a}\right)^{\psi_x} \prod_{e \in E_{\psi,\Lambda_n}} \left( \frac{p}{1-p} \right)^{\omega_e} \sum_{\sigma \in \Sigma_{\Lambda_n}} \1_{(\sigma,\psi,\omega)\in \mathcal{S}^1}
\\
&=
\frac{\1_{(\psi,\omega) \in \Theta^1_{\Lambda_n}}}{Z^1_{\Lambda_n,p,a}}r^{|E_{\psi,\Lambda_n}|} \prod_{x \in \Lambda_n} \left( \frac{a}{1-a}\right)^{\psi_x} \prod_{e \in E_{\psi,\Lambda_n}} \left( \frac{p}{1-p} \right)^{\omega_e} 
\\
&\qquad \qquad \qquad \times
\sum_{\sigma \in \Sigma_{\Lambda_n} : \sigma^2 = \psi} \prod_{\mathcal{C} \in \tilde k(\omega,\Lambda_n)}\1_{\sigma_x = \sigma_y  \, \forall xy \in \mathcal{C}} 
\\
&=
\frac{\1_{(\psi,\omega) \in \Theta^1_{\Lambda_n}}}{Z^1_{\Lambda_n,p,a}}r^{|E_{\psi,\Lambda_n}|} \prod_{x \in \Lambda_n} \left( \frac{a}{1-a}\right)^{\psi_x} \prod_{e \in E_{\psi,\Lambda_n}} \left( \frac{p}{1-p} \right)^{\omega_e} 2^{k(\theta,\Lambda_n)}
=
\phi^1_{\Lambda_n}[(\psi,\omega)],
\end{split}
\end{equs}
where $\tilde k(\theta,\Lambda_n)$ is the set of connected clusters of $\theta=(\omega,\psi)$ which do not intersect $\ZZ^d\setminus \Lambda_n$.

We now turn to the marginal on $\Sigma_{\Lambda_n}$. First, observe that, for any $\sigma \in \Sigma_{\Lambda_n}$ and $\psi \in \Psi$ such that $\sigma^2 = \psi$, we have
\begin{equs} \label{eq: ES 2}
\begin{split}
\prod_{xy \in E_{\psi,\Lambda_n}} e^{\beta \sigma_x \sigma_y}
&=
\prod_{xy \in E_{\psi,\Lambda_n}} e^{\beta (2\1_{\sigma_x = \sigma_y} - 1)} 
=
e^{-\beta |E_{\psi,\Lambda_n}|} \prod_{xy \in E_{\psi,\Lambda_n}} \left( e^{2\beta} \1_{\sigma_x = \sigma_y } + \1_{\sigma_x \neq \sigma_y} \right)
\\
&=
e^{-\beta|E_{\psi,\Lambda_n}|} \prod_{xy \in E_{\psi,\Lambda_n}} \Big((e^{2\beta}-1) \1_{\sigma_x = \sigma_y} + 1 \Big)
\\
&=
r^{|E_{\psi,\Lambda_n}|} \prod_{xy \in E_{\psi,\Lambda_n}} \Big(\frac{p}{1-p} \1_{\sigma_x = \sigma_y} + 1 \Big)
\\
&=
r^{|E_{\psi,\Lambda_n}|} \sum_{\omega \in \Omega} \1_{(\sigma,\psi,\omega)\in\mathcal{S}^1} \prod_{e \in E_{\psi,\Lambda_n}}\left( \frac{p}{1-p} \right)^{\omega_e} 
\end{split}
\end{equs}
Above, in the first line, we use that $\sigma^2 \equiv 1$ on $E_{\psi,\Lambda_n}$ and a trivial rewriting of the exponential of the indicator function; in the second line, we reorder terms; in the third line, we use the definition of $r$ and $p$ in terms of $\beta$; and in the fourth line, we expand the product in terms of a sum over $\omega$ such that $o(\omega) \subset E_{\psi,\Lambda_n}$ and use that, for any $\omega$ in the sum, $E_{\psi,\Lambda_n} = o(\omega) + c(\omega)$, where $c(\omega)$ denotes the set of closed edges in $\omega$.

Second, observe that, for any $\sigma \in \Sigma_{\Lambda_n}$,
\begin{equs} \label{eq: ES 3}
\begin{split}
\prod_{x \in \Lambda_n} e^{\Delta \sigma_x^2}
&=
\prod_{x \in \Lambda_n}\left( \frac{a}{1-a} \right)^{\psi_x}
\end{split}.
\end{equs}

Combining \eqref{eq: ES 2} and \eqref{eq: ES 3}, together with considerations made in the summation over $\sigma$ in \eqref{eq: ES 1}, allows us to deduce the equality of the Blume-Capel and dilute random cluster partition functions
\begin{equs}
Z^1_{\Lambda_n,\beta,\Delta}
=
Z^{1}_{\Lambda_n,p,a}
\end{equs}
and, hence, for all $\sigma \in \Sigma_{\Lambda_n}$,
\begin{equs}
\sum_{(\psi,\omega) \in \Psi \times \Omega} \P^1_{\Lambda_n}[(\sigma,\psi,\omega)]
=
\mu^+_{\Lambda_n,\beta,\Delta}(\sigma). 
\end{equs}
\end{proof}

\begin{rem}
For $\Lambda\subset \ZZ^d$ finite such that $\LL^d\setminus (\overline{\Lambda},E_{\Lambda})$ is disconnected, the same argument gives a coupling between $\mu^+_{\Lambda,\beta,\Delta}$ and a modification of the wired measure, where all clusters intersecting $\ZZ^d\setminus \Lambda$ are counted as one.
\end{rem}

The computations in the proof of Proposition \ref{prop: es coupling} have useful probabilistic interpretations. Conditionally on $(\psi,\omega)$, the following statements hold for the law of $\sigma$:
\begin{enumerate}[(a)]
    \item the spins are constant on the clusters of $\omega$, and the spin of each cluster which does not intersect $\ZZ^d\setminus \Lambda_n$ is uniformly distributed on the set $\{\pm 1\}$,
    \item the spins on different clusters are independent random variables.
\end{enumerate}
Conditionally on $\sigma$, the following statements hold for the law of $(\psi,\omega)$:
\begin{enumerate}[(i)]
    \item $\psi_x=0$ if and only if $\sigma_x=0$,
    \item the random variables $\omega_e$ are independent,
    \item for an edge $xy$, $\omega_{xy}=0$ if $\sigma_x\neq \sigma_y$, and $\omega_{xy}=1$ with probability $p$ if $\sigma_x=\sigma_y$.
\end{enumerate}

The couplings of Proposition \ref{prop: es coupling} allow to represent spin correlations in the Blume-Capel model geometrically in terms of connection probabilities in the dilute random cluster model. For $A,B \subset \Lambda$, denote by $A\longleftrightarrow B$ the event that there exists $x \in A$ and $y \in B$ such that $x$ is connected to $y$ by an open path in $\omega$ (that is to say, there is a sequence of edges emanating from $x$ and ending at $y$ such that each edge $e$ in this path satisfies $\omega_e = 1$). 

\begin{cor} \label{cor: ES correlations}
Let $\Delta \in \RR$ and $\beta>0$. For any $n\geq 1$, we have 
\begin{align}
\label{eq: es cor: 1}
\langle \sigma_x \rangle^+_{\Lambda_n,\beta,\Delta}
&=
\phi^1_{\Lambda_n,p,a}[x\longleftrightarrow\partial \Lambda_n], \qquad \forall x \in \Lambda_n
\\ \label{eq: es cor: 2}
\langle \sigma_x \sigma_y \rangle^+_{\Lambda_n,\beta,\Delta}
&=
\phi^1_{\Lambda_n,p,a}[x\longleftrightarrow y], \qquad \forall x,y \in \Lambda_n
\\ \label{eq: es cor: 3}
\langle \sigma_x \sigma_y \rangle^0_{\Lambda_n,\beta,\Delta}
&=
\phi^0_{\Lambda_n,p,a}[x\longleftrightarrow y], \qquad \forall x,y\in\Lambda_n.
\end{align}   
\end{cor}

\begin{proof}
Write $\mathbf{E}^1_{\Lambda_n}$ to denote expectation with respect to $\P_{\Lambda_n}^1$. By Proposition \ref{prop: es coupling}, we have
\begin{equs}
\langle \sigma_x \rangle^1_{\Lambda_n,\beta,\Delta}
&=
\mathbf{E}^1_\Lambda[\sigma_x \1_{x \longleftrightarrow \partial\Lambda_n}] + \mathbf{E}^1_\Lambda[\sigma_x \1_{x \centernot\longleftrightarrow \partial\Lambda}].
\end{equs}
The second term on the righthand side is $0$ by the (conditional) independence of spins on different connected clusters of $(\psi,\omega)$. Applying Proposition \ref{prop: es coupling} to the first term on the righthand side yields \eqref{eq: es cor: 1}. 

Equations \eqref{eq: es cor: 2} and \eqref{eq: es cor: 3} follow by similar considerations.
\end{proof}

\subsection{Stochastic ordering and the FKG inequality}

We introduce the following partial order on the set $\Theta$. For $\xi,\xi' \in \Theta$, write $\xi\leq \xi'$  to denote that $\psi_x\leq \psi'_x$ for every $x\in \ZZ^d$, and $\omega_e\leq \omega'_e$ for every $e\in \E^d$. A function $f:\Theta \mapsto \RR$ is called increasing if $\xi\leq \xi'$ implies that $f(\xi)\leq f(\xi')$. An event $A$ is said to be increasing if $\1_A$ is increasing. 

We are going to show that all the dilute random cluster measures satisfy the FKG inequality. In the proof, we use that the vertex marginals satisfy an analogous inequality, as formalised below.
\begin{lem}
Let $p,a \in [0,1]$, $\Lambda \subset \ZZ^d$ finite, and $\xi \in \Theta$. Then, 
\begin{equs} \label{eq: fkg vertex}
\Psi^{\xi}_{\Lambda}[A\cap B]
\geq 
\Psi^{\xi}_{\Lambda}[A]\Psi^{\xi}_{\Lambda}[B]
\end{equs}
for all increasing events $A$ and $B$ on $\Psi = \{0,1\}^{\ZZ^d}$ that are $\Lambda$-measurable.
\end{lem}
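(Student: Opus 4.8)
The plan is to prove that the vertex marginal $\Psi^\xi_\Lambda$ satisfies the FKG lattice condition (Holley's criterion), from which positive association of increasing $\Lambda$-measurable events follows by the standard Holley/FKG theorem. Concretely, for $\psi,\psi' \in \Psi$ agreeing with $\kappa$ outside $\Lambda$, I would show
\begin{equs}
\Psi^\xi_\Lambda[\psi \vee \psi']\,\Psi^\xi_\Lambda[\psi \wedge \psi']
\geq
\Psi^\xi_\Lambda[\psi]\,\Psi^\xi_\Lambda[\psi'],
\end{equs}
where $\vee,\wedge$ are coordinatewise max and min. Since $\Lambda$ is finite and the configurations are frozen to $\kappa$ outside $\Lambda$, this is a finite-dimensional lattice and Holley's inequality applies; the only real content is verifying the lattice (log-supermodularity) condition for the weights defining $\Psi^\xi_\Lambda$.

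The key step is to get a usable expression for $\Psi^\xi_\Lambda[\psi]$. By definition it is $\sum_\omega \varphi^\xi_\Lambda[(\psi,\omega)]$, and by the conditioning property \eqref{eq: conditioning} this factorises as $\varphi^\xi_\Lambda[(\psi,\omega)] = \Psi^\xi_\Lambda[\psi]\,\phi^{\textup{RC},\xi}_{\Lambda,\psi,p}[\omega]$, so summing the random cluster measure over $\omega$ and using \eqref{defRC} gives
\begin{equs}
\Psi^\xi_\Lambda[\psi]
\;\propto\;
\1[\psi \text{ compatible with } \kappa]\,
\prod_{x \in \Lambda}\left(\frac{a}{1-a}\right)^{\psi_x}
r^{|E_{\psi,\Lambda}|}\,
Z^{\textup{RC},\xi}_{\Lambda,\psi,p},
\end{equs}
where $Z^{\textup{RC},\xi}_{\Lambda,\psi,p} = \sum_\omega 2^{k(\theta,\Lambda)}\prod_{e\in E_{\psi,\Lambda}}(p/(1-p))^{\omega_e}$ is the random cluster partition function on the graph induced by $\psi$ with boundary condition $\xi$. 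The vertex weight $(a/(1-a))^{\psi_x}$ is trivially log-supermodular (it depends on one coordinate), and $r^{|E_{\psi,\Lambda}|}$ is log-\emph{modular} in the right sense because $|E_{\psi\vee\psi',\Lambda}| + |E_{\psi\wedge\psi',\Lambda}| = |E_{\psi,\Lambda}| + |E_{\psi',\Lambda}|$ (an edge is in $E_{\psi,\Lambda}$ iff both endpoints are open, so the indicator of $xy \in E_{\psi,\Lambda}$ is $\psi_x\psi_y$, which is supermodular, and adding the two sides cancels). So the entire burden falls on showing
\begin{equs}
Z^{\textup{RC},\xi}_{\Lambda,\psi\vee\psi',p}\,Z^{\textup{RC},\xi}_{\Lambda,\psi\wedge\psi',p}
\geq
Z^{\textup{RC},\xi}_{\Lambda,\psi,p}\,Z^{\textup{RC},\xi}_{\Lambda,\psi',p}.
\end{equs}

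The main obstacle is this last inequality on random cluster partition functions for nested vertex sets. I would prove it via the standard coupling/Holley argument at the level of the joint $(\psi,\omega)$ measure $\varphi^\xi_\Lambda$ itself: namely, it suffices to show that $\varphi^\xi_\Lambda$ on $\Theta^\xi_\Lambda$ satisfies the FKG lattice condition on the product lattice $\{0,1\}^\Lambda \times \{0,1\}^{E_\Lambda}$ restricted to compatible configurations, and then the vertex marginal of an FKG measure is FKG. Checking the lattice condition for $\varphi^\xi_\Lambda$ reduces, by the usual argument (verifying it for configurations differing in at most two coordinates), to a handful of cases: two vertex coordinates, two edge coordinates, or one of each. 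The edge–edge case is exactly the classical computation that the random cluster weight $2^{k}$ is supermodular (using $q=2 \geq 1$ and that $k(\theta)$ satisfies $k(\omega\vee\omega') + k(\omega\wedge\omega') \geq k(\omega) + k(\omega')$). The vertex–vertex and vertex–edge cases need care because of the compatibility constraint: when we try to take $\psi \vee \psi'$ or $\psi \wedge \psi'$ we must stay in $\Theta^\xi_\Lambda$, but this is automatic since closing a vertex forces closing its incident edges and the min/max operations respect this (if $(\psi,\omega)$ and $(\psi',\omega')$ are compatible, so are their coordinatewise min and max — for the max, $\omega_{xy}\vee\omega'_{xy}=1$ forces one of $\omega_{xy},\omega'_{xy}=1$ hence the corresponding endpoints open in one of them, and we need them open in $\psi\vee\psi'$, which holds; for the min it is even easier). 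Once compatibility is handled, the supermodularity of $\log$ of the weight in \eqref{defRC} in each of these pairs of coordinates is a direct check: the $r^{|E_{\psi,\Lambda}|}$ and $(a/(1-a))^{\psi_x}$ and $(p/(1-p))^{\omega_e}$ factors contribute modular (hence supermodular) terms, and the only nontrivial term $2^{k(\theta,\Lambda)}$ is supermodular by the connectivity inequality for $k$, which holds jointly in the $(\psi,\omega)$ variables since clusters are measured in $(V_\psi, o(\omega))$. Having established that $\varphi^\xi_\Lambda$ is monotone (FKG), positive association of its marginals — in particular \eqref{eq: fkg vertex} — follows, since the projection $(\psi,\omega)\mapsto\psi$ pushes increasing $\Lambda$-measurable events on $\Psi$ to increasing events on $\Theta$, and $\Psi^\xi_\Lambda[A\cap B] = \varphi^\xi_\Lambda[\tilde A \cap \tilde B] \geq \varphi^\xi_\Lambda[\tilde A]\varphi^\xi_\Lambda[\tilde B] = \Psi^\xi_\Lambda[A]\Psi^\xi_\Lambda[B]$ for the pullback events $\tilde A, \tilde B$.
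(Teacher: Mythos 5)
Your Route B — verifying the FKG lattice (Holley) condition for the joint measure $\varphi^\xi_\Lambda$ on $\Theta$ and then marginalising — cannot work, because $\varphi^\xi_\Lambda$ does not satisfy the lattice condition: the paper itself notes (Section~\ref{sec: OSSS}, citing \cite[p.12]{GG}) that the dilute random cluster measure is not monotonic. The culprit is exactly the factor $r^{|E_{\psi,\Lambda}|}$. Concretely, take $\Lambda = \{x,y\}$ with $xy\in\EE^d$, free boundary, and compare the four $\psi$-configurations with $\omega\equiv 0$: the ratio
\begin{equs}
\frac{\varphi^0_{\Lambda}[(1,1),\,\omega\equiv 0]\;\varphi^0_{\Lambda}[(0,0),\,\omega\equiv 0]}{\varphi^0_{\Lambda}[(1,0),\,\omega\equiv 0]\;\varphi^0_{\Lambda}[(0,1),\,\omega\equiv 0]}
= r < 1
\end{equs}
for $p>0$, so the lattice condition fails on this two-coordinate sublattice. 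Positive association of $\varphi^\xi_\Lambda$ (Proposition~\ref{prop: FKG}) does hold, but the paper obtains it the other way around — by \emph{using} FKG for the vertex marginal (i.e.\ this very lemma), conditioning on $\psi$, and invoking FKG for the usual random cluster. You have the logical arrows reversed.

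Your Route A also contains a concrete computational error. You claim $|E_{\psi\vee\psi',\Lambda}| + |E_{\psi\wedge\psi',\Lambda}| = |E_{\psi,\Lambda}| + |E_{\psi',\Lambda}|$, i.e.\ that $\psi \mapsto |E_{\psi,\Lambda}|$ is modular. It is not: taking $\psi=(1,0)$, $\psi'=(0,1)$ on a single edge gives $1 + 0 > 0 + 0$. The indicator $\psi_x\psi_y$ is strictly \emph{super}modular, so $|E_{\psi,\Lambda}|$ is supermodular, and since $r=\sqrt{1-p}<1$ the factor $r^{|E_{\psi,\Lambda}|}$ is log-\emph{sub}modular — the wrong direction. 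Consequently, showing $Z^{\textup{RC},\xi}_{\Lambda,\psi,p}$ alone is log-supermodular in $\psi$ is not enough; you would need the \emph{combined} quantity $r^{|E_{\psi,\Lambda}|}Z^{\textup{RC},\xi}_{\Lambda,\psi,p}$ to be log-supermodular, and the submodular $r$-contribution must be absorbed by extra supermodularity of $Z^{\textup{RC}}$. That combined argument is precisely the non-trivial content of \cite[Theorem 5.3]{GG} (see also \cite[Proposition 5.5]{GG}, which the paper invokes in the proof of Lemma~\ref{free-empty-vertex}), and the paper simply cites it rather than reproving it. Your factorisation is a sensible starting point, but the two key claims you built on it — modularity of the $r$-factor, and reducibility to supermodularity of $\log Z^{\textup{RC}}$ alone — are both false, and Route B does not rescue them.
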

\begin{proof}
This is established in \cite[Theorem 5.3]{GG}.
\end{proof}

\begin{prop}\label{prop: FKG}
Let $p,a \in [0,1]$, $\Lambda \subset \ZZ^d$ finite, and $\xi \in \Theta$. For all increasing events $A,B \in \cF$ that are $\Lambda$-measurable, we have
\begin{equs}\label{eq:FKGRC}
\varphi^{\xi}_{\Lambda,p,a}[A\cap B]
\geq
\varphi^{\xi}_{\Lambda,p,a}[A]\varphi^{\xi}_{\Lambda,p,a}[B].
\end{equs}
\end{prop}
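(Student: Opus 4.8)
The plan is to derive the FKG inequality for $\varphi^\xi_{\Lambda,p,a}$ by combining the FKG inequality for the vertex marginal $\Psi^\xi_\Lambda$ (Equation~\eqref{eq: fkg vertex}) with the fact that, conditionally on the environment $\psi$, the measure is an honest random cluster measure with $q=2$ (Equation~\eqref{eq: conditioning}), which satisfies FKG by the classical Fortuin--Kasteleyn--Ginibre theorem since $q \geq 1$. The key structural observation is that $\varphi^\xi_\Lambda$ is a mixture: $\varphi^\xi_\Lambda[\,\cdot\,] = \sum_\psi \Psi^\xi_\Lambda[\psi]\, \phi^{\textup{RC},\xi}_{\Lambda,\psi,p}[\,\cdot\,]$, where the outer law is a positively associated site-percolation-type measure and each conditional law satisfies FKG. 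One cannot simply invoke a general ``mixture of FKG measures is FKG'' statement — that is false in general — so the argument must exploit monotonicity in $\psi$ as well.

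Concretely, I would proceed as follows. First, reduce to checking the Holley/FKG lattice condition directly, or better, use the following two-step coupling/conditioning argument. Fix increasing events $A, B \in \cF$ that are $\Lambda$-measurable. For an increasing event $A$ and a fixed environment $\psi$, write $g_A(\psi) := \phi^{\textup{RC},\xi}_{\Lambda,\psi,p}[A]$. I claim (i) for each fixed $\psi$, $\phi^{\textup{RC},\xi}_{\Lambda,\psi,p}[A\cap B] \geq \phi^{\textup{RC},\xi}_{\Lambda,\psi,p}[A]\,\phi^{\textup{RC},\xi}_{\Lambda,\psi,p}[B]$, which is the classical random cluster FKG inequality on the graph $V_\psi\cap\overline\Lambda$; and (ii) $\psi \mapsto g_A(\psi)$ is an increasing function of $\psi$, because adding an open vertex to the environment only enlarges the underlying graph, and the random cluster measure with $q=2$ is stochastically monotone under adding edges/vertices (monotonicity in the domain, via the spatial Markov property and comparison between boundary conditions). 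Granting (i) and (ii), we compute
\begin{equs}
\varphi^\xi_\Lambda[A\cap B]
=
\sum_\psi \Psi^\xi_\Lambda[\psi]\, \phi^{\textup{RC},\xi}_{\Lambda,\psi,p}[A\cap B]
\geq
\sum_\psi \Psi^\xi_\Lambda[\psi]\, g_A(\psi) g_B(\psi)
=
\Psi^\xi_\Lambda\big[ g_A\, g_B \big].
\end{equs}
Since $g_A$ and $g_B$ are increasing functions of $\psi$ by (ii), the FKG inequality for the vertex marginal \eqref{eq: fkg vertex} (in its functional form, obtained from the event form by the standard approximation of increasing functions by nonnegative combinations of indicators of increasing events) gives $\Psi^\xi_\Lambda[g_A g_B] \geq \Psi^\xi_\Lambda[g_A]\,\Psi^\xi_\Lambda[g_B] = \varphi^\xi_\Lambda[A]\,\varphi^\xi_\Lambda[B]$, which is the desired inequality.

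The boundary cases $p, a \in \{0,1\}$ are handled separately and are immediate from the Remark's conventions: when $a \in \{0,1\}$ or $p \in \{0,1\}$ the measure degenerates to a Dirac mass, a product of Bernoulli site percolation with a Dirac mass, or an ordinary random cluster measure, all of which satisfy FKG trivially or classically.

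I expect the main obstacle to be step (ii): verifying that $\psi \mapsto \phi^{\textup{RC},\xi}_{\Lambda,\psi,p}[A]$ is genuinely increasing. The subtlety is that enlarging $\psi$ changes the \emph{state space} of the random cluster measure (more vertices become available, hence more edges), so this is not a comparison of two measures on a common space but requires a monotone coupling between random cluster measures on nested graphs with the same boundary condition $\xi$ and the same $p$. The cleanest route is: given $\psi \leq \psi'$, realize $V_\psi \cap \overline\Lambda \subseteq V_{\psi'}\cap\overline\Lambda$, and use that the random cluster measure with $q=2$ on the larger graph, conditioned to have all the ``extra'' edges (those not in $E_{\psi,\Lambda}$) set to their boundary-induced configuration, dominates via the spatial Markov property and comparison-of-boundary-conditions (wired-type domination) the random cluster measure on the smaller graph; equivalently one compares the two measures using the standard fact that for $q\ge 1$ the random cluster measure is increasing in the edge set of the underlying graph when the event is increasing. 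This is entirely standard for random cluster models (see \cite{DCT} and the references therein), so I would state it with a brief justification rather than a full proof, as the excerpt explicitly permits invoking ``straightforward modifications of standard arguments''.
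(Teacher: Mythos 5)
Your proposal is correct and follows essentially the same route as the paper's proof: condition on the environment $\psi$, apply the classical FKG inequality for the induced random cluster measure, observe that $\psi \mapsto \phi^{\textup{RC},\xi}_{\Lambda,\psi,p}[C_{\psi,A}]$ is increasing (via monotonicity of the random cluster measure in the underlying graph together with the inclusion $C_{\psi,A}\subset C_{\psi',A}$ for $\psi\leq\psi'$), and conclude with the positive association of the vertex marginal \eqref{eq: fkg vertex}. The subtlety you flag in step (ii) is exactly the one the paper handles, by the same two-inequality decomposition.
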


\begin{proof}
Let $A \in \cF$ be an increasing event that is $\Lambda$-measurable. For each $\psi\in \Psi$, let $C_{\psi,A}:=\{\omega \in \Omega : (\psi,\omega)\in A\}$. Note that this event is $V_\psi$-measurable. The fact that $A$ is increasing has two straightforward consequences: first, $C_{\psi,A}$ is an increasing event on the $\sigma$-algebra induced by the restriction of $\Omega$ to $E_{\psi}$; second, $C_{\psi,A}\subset C_{\psi',A}$ whenever $\psi\leq \psi'$.

Thus, by the conditioning equality \eqref{eq: conditioning} and the usual FKG inequality for the random cluster measure, we have that
\begin{equs} \label{eq: fkgproof}
\varphi^{\xi}_{\Lambda}[A \cap B]
&=
\Psi^{\xi}_{\Lambda}\Big[\phi^{\textup{RC},\xi \cap \mathbf{0}_\Lambda}_{\Lambda, \psi}[C_{\psi,A \cap B}]\Big]
\\
&\geq 
\Psi^{\xi}_{\Lambda}\Big[\phi^{\textup{RC},\xi \cap \mathbf{0}_\Lambda}_{\Lambda, \psi}[C_{\psi,A }]\phi^{\textup{RC},\xi \cap \mathbf{0}_\Lambda}_{\Lambda, \psi}[C_{\psi, B}]\Big].
\end{equs}

For any $\tilde A \in \cF$ increasing, the mapping $\psi \mapsto \phi^{\textup{RC},\xi}_{\Lambda, \psi}[C_{\psi,\tilde A }]$ is increasing. Indeed, for any $\psi,\psi' \in \Psi$ that coincide with $\xi$ outside $\Lambda$ and such that $\psi' \geq \psi$, we have
\begin{equs}
\phi^{\textup{RC},\xi \cap \mathbf{0}_\Lambda}_{\Lambda, \psi}[C_{\psi,\tilde A }]
\leq 
\phi^{\textup{RC},\xi \cap \mathbf{0}_\Lambda}_{\Lambda, \psi'}[C_{\psi,\tilde A }]
\leq 
\phi^{\textup{RC},\xi\cap \mathbf{0}_\Lambda}_{\Lambda, \psi'}[C_{\psi',\tilde A }].
\end{equs}
The fist inequality is subtle: it follows due to a special monotonicity of the random cluster measure in the domain corresponding to the pushing of free boundary conditions away -- see \cite[Proposition 5]{DCT} and also compare with Proposition \ref{prop: monotonicity in domain} below. The second inequality is due to inclusion of events. The desired result \eqref{eq:FKGRC} then follows from applying the inequality \eqref{eq: fkg vertex} in \eqref{eq: fkgproof}. 
\end{proof}

We now compare different dilute random cluster measures. For two measures $\varphi_1,\varphi_2$ on $\Theta$, we write $\varphi_1 \preceq \varphi_2$ if $\varphi_1(f)\leq \varphi_2(f)$ for every increasing function $f$. In this case, we say that $\varphi_2$ stochastically dominates $\varphi_1$.

The following proposition states that the dilute random cluster measures are stochastically increasing in $p,a$, and the boundary condition.
\begin{prop}\label{prop: domination}
For every $p,p',a,a'\in [0,1]$ and $\xi,\xi'\in \Theta$ such that $a\leq a'$, $p\leq p'$ and $\xi\leq \xi'$, 
\begin{equs}
\varphi^{\xi}_{\Lambda,p,a}
\preceq
\varphi^{\xi'}_{\Lambda,p',a'}.
\end{equs}
\end{prop}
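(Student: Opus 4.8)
The plan is to prove the three monotonicities in turn (in $a$, in $p$, and in the boundary condition), using the conditioning identity \eqref{eq: conditioning} to split the measure into the vertex marginal $\Psi^\xi_\Lambda$ and the conditional random cluster measure $\phi^{\textup{RC},\xi}_{\Lambda,\psi,p}$, and then invoking the known monotonicity of each piece. First I would record the elementary fact, parallel to the proof of Proposition~\ref{prop: FKG}, that for any increasing $A \in \cF$ and any fixed $\psi$ the section $C_{\psi,A} = \{\omega : (\psi,\omega)\in A\}$ is an increasing event on $\Omega$, and that $C_{\psi,A} \subseteq C_{\psi',A}$ whenever $\psi \leq \psi'$. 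Then, exactly as in the display before \eqref{eq:FKGRC}, the map $\psi \mapsto \phi^{\textup{RC},\xi}_{\Lambda,\psi,p}[C_{\psi,A}]$ is increasing in $\psi$ (for $\psi$ agreeing with $\kappa$ outside $\Lambda$), using \cite[Proposition 5]{DCT} for the dependence of the RC measure on the underlying graph; and it is increasing in $p$ by the standard monotonicity of the random cluster measure in $p$; and increasing in the RC boundary condition.

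With these observations, monotonicity in $p$ is the easiest: for $p \leq p'$ (same $a$, same $\xi$) one writes
\begin{equs}
\varphi^{\xi}_{\Lambda,p,a}[A]
=
\Psi^{\xi}_{\Lambda,p,a}\big[\phi^{\textup{RC},\xi}_{\Lambda,\psi,p}[C_{\psi,A}]\big],
\end{equs}
and notes two competing effects: the inner quantity increases pointwise in $p$, and the vertex marginal $\Psi^\xi_{\Lambda,p,a}$ also increases stochastically in $p$ (since raising $p$ reweights toward configurations $\psi$ with more induced edges and more clusters — this needs the standard Holley-criterion / FKG-lattice check on the vertex marginal weights, analogous to \cite[Theorem 5.3]{GG}). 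Combining, $\varphi^{\xi}_{\Lambda,p,a}[A] \leq \varphi^{\xi}_{\Lambda,p',a}[A]$. Monotonicity in $a$ is handled the same way: the conditional RC measure does not see $a$ at all, so one only needs that $a \mapsto \Psi^{\xi}_{\Lambda,p,a}$ is stochastically increasing, which is immediate from the factor $\prod_x (a/(1-a))^{\psi_x}$ and the Holley criterion. Monotonicity in $\xi$ combines both: increasing $\xi$ increases the RC boundary condition (hence the inner term) and stochastically increases the vertex marginal (by the argument already used implicitly in the proof of \eqref{eq:FKGRC}, via the two-step inequality $\phi^{\textup{RC},\xi}_{\Lambda,\psi,p} \preceq \phi^{\textup{RC},\xi'}_{\Lambda,\psi',p}$).

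The cleanest uniform way to organize all three cases is via Holley's inequality applied directly to the vertex marginal: one checks the lattice (FKG) condition for the weights defining $\Psi^{\xi}_{\Lambda,p,a}[\psi]$ as a function on $\{0,1\}^\Lambda$ — this is exactly the positive-association input cited from \cite[Theorem 5.3]{GG} — and simultaneously that these weights are monotone in the parameters $a$, $p$ and in $\kappa$; Holley's theorem then yields the stochastic domination of the vertex marginals, and coupling this with the pointwise monotonicity of $\psi \mapsto \phi^{\textup{RC},\xi}_{\Lambda,\psi,p}[C_{\psi,A}]$ in $(\psi, p, \xi)$ gives the claim. I expect the main obstacle to be the bookkeeping in verifying the Holley/lattice condition for the vertex marginal weights $w^{\xi}_{\Lambda,p,a}(\psi) = \prod_{x}(a/(1-a))^{\psi_x} r^{|E_{\psi,\Lambda}|} Z^{\textup{RC},\xi}_{\Lambda,\psi,p}$, since this weight includes the RC partition function $Z^{\textup{RC}}$, whose log-supermodularity in $\psi$ is precisely the content of \cite[Theorem 5.3]{GG} and must be invoked rather than re-derived; once that is granted, the monotonicity in the parameters is a short check and the rest is routine. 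Since the statement says these proofs follow "by straightforward modifications of standard arguments, see e.g. \cite{DCT}", I would keep the write-up brief, citing \cite{GG} for the vertex-marginal FKG/monotonicity and \cite{DCT} (and \cite{Grimmett}) for the RC-measure monotonicity facts.
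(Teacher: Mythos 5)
Your proposal is correct and follows essentially the route of the result the paper relies on: the paper gives no argument of its own here, simply citing \cite[Theorem 5.12]{GG}, whose proof is exactly the combination you describe (Holley's criterion for the vertex marginals, whose FKG-lattice condition rests on the log-supermodularity of $Z^{\textup{RC}}$ in $\psi$ from \cite[Theorem 5.3]{GG}, coupled with the pointwise monotonicity of $\psi\mapsto\phi^{\textup{RC},\xi}_{\Lambda,\psi,p}[C_{\psi,A}]$ in $\psi$, $p$, and $\xi$). You have correctly identified the one non-routine verification, namely the cross-parameter Holley check involving the ratio of random cluster partition functions, which is carried out in \cite{GG} and should be cited rather than re-derived.
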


\begin{proof}
See \cite[Theorem 5.12]{GG}. 
\end{proof}

It is important for our purposes to obtain a more refined stochastic domination which conveys that, among boundary conditions that induce no connections between boundary vertices, $0$ boundary conditions are the most favourable. In the remainder of this subsection, we fix $p,a \in [0,1]$.

We begin with some setup. First, we say that a measure $\Psi_1$ on $\{0,1\}^{\Lambda}$ is strictly positive if $\Psi_1(\psi)>0$ for every $\psi\in \{0,1\}^{\Lambda}$. Second, the notion of stochastic domination extends in the obvious way to probability measures on $\{0,1\}^\Lambda$. Third, we fix some notation. Let $\Lambda \subset \ZZ^d$ be finite. Given a configuration $\psi\in \{0,1\}^\Lambda$ and $T\subset \Lambda$, we define the configurations $\psi^{T}$ and $\psi_{T}$ by
\begin{equs}
    \psi^{T}_y
    = 
    \begin{cases}
        \psi_y, & y\not\in T \\
        1, & y\in T,
    \end{cases}
    \qquad
    (\psi_{T})_y
    = 
    \begin{cases}
        \psi_y, & y\not\in T \\
        0, & y\in T.
    \end{cases}
\end{equs}
Thus, $\psi^T$ corresponds to the configuration where the vertices of $\psi$ in $T$ are opened, and $\psi_T$ corresponds to the configuration where the vertices of $\psi$ in $T$ are closed.

We now recall a standard fact from the theory of monotonic measures.
\begin{lem} \label{lem: fkglattice}
Let $\Lambda \subset \ZZ^d$ be finite. Let $\Psi_1,\Psi_2$ be strictly positive probability measures on $\{0,1\}^\Lambda$. If 
\begin{equs}\label{eq:stoch1}
\Psi_2\left[\psi^{\{x\}}\right]\Psi_1\left[\psi_{\{x\}}\right]
\geq
\Psi_2\left[\psi_{\{x\}}\right]\Psi_1\left[\psi^{\{x\}}\right]
\end{equs}
and in addition if there exists $i \in \{1,2\}$ such that
\begin{equs}\label{eq:stoch2}
\Psi_i \left[\psi^{\{x,y\}}\right]\Psi_i \left[\psi_{\{x,y\}}\right]
\geq
\Psi_i \left[\psi^{\{x\}}_{\{y\}}\right]\Psi_i \left[\psi^{\{y\}}_{\{x\}}\right],
\end{equs}
then $\Psi_1 \preceq \Psi_2$.
\end{lem}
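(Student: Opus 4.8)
The plan is to prove this via Holley's criterion, the standard tool for establishing stochastic domination on a finite distributive lattice. Recall that Holley's theorem states that if $\mu_1, \mu_2$ are strictly positive measures on $\{0,1\}^V$ satisfying the lattice condition
\begin{equs}
\mu_2(\psi \vee \psi')\mu_1(\psi \wedge \psi') \geq \mu_2(\psi')\mu_1(\psi)
\end{equs}
for all $\psi, \psi' \in \{0,1\}^V$, then $\mu_1 \preceq \mu_2$. The content of the lemma is that it suffices to check this inequality only in two special "local" cases: the single-site flip \eqref{eq:stoch1} (which corresponds to taking $\psi, \psi'$ differing in a single coordinate), and the two-site condition \eqref{eq:stoch2} for one of the measures alone, which is precisely the FKG lattice condition (positive association) for that $\mu_i$. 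So the lemma is really the well-known reduction of Holley's full lattice condition to "single-site domination plus FKG of one measure".

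The key steps I would carry out are as follows. First, I would invoke Holley's theorem (or equivalently, prove it directly by constructing a monotone coupling via a continuous-time Markov chain on $\{0,1\}^V \times \{0,1\}^V$ whose marginals are $\mu_1$ and $\mu_2$ and which preserves the partial order $\{(\psi,\psi'): \psi \leq \psi'\}$); this reduces the problem to verifying the global lattice condition above. Second, assuming without loss of generality that $\mu_1$ is the measure satisfying \eqref{eq:stoch2}, I would show that the global lattice condition follows from \eqref{eq:stoch1} and the FKG lattice condition for $\mu_1$. The standard argument proceeds by a telescoping/induction on the Hamming distance between $\psi$ and $\psi \vee \psi'$: one factors the transition from $(\psi, \psi')$ to $(\psi\wedge\psi', \psi\vee\psi')$ into single-coordinate moves, using \eqref{eq:stoch1} to handle each coordinate where we raise the $\mu_2$-argument and where we lower the $\mu_1$-argument, and using the FKG lattice condition \eqref{eq:stoch2} for $\mu_1$ (applied iteratively) to control the fact that lowering several coordinates of $\mu_1$'s argument simultaneously is dominated by lowering them one at a time.

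More concretely, writing $D = \{v : \psi_v = 1, \psi'_v = 0\}$ (the coordinates where $\psi$ exceeds $\psi'$), one has $\psi \vee \psi' = \psi'^{D}$ and $\psi \wedge \psi' = \psi_D$, and the goal is $\mu_2(\psi'^D)\mu_1(\psi_D) \geq \mu_2(\psi')\mu_1(\psi)$. Enumerating $D = \{v_1, \dots, v_k\}$ and applying \eqref{eq:stoch1} at $v_1$ (with the configuration $\psi'$ on the appropriate coordinates) gives $\mu_2(\psi'^{\{v_1\}})\mu_1(\psi_{\{v_1\}}) \geq \mu_2(\psi')\mu_1(\psi)$ — but one must be careful that \eqref{eq:stoch1} as stated is for a single fixed background $\psi$, so the iteration requires re-deriving the single-site inequality with varying backgrounds, which is where \eqref{eq:stoch2} (the FKG condition for $\mu_1$) enters to push the argument through. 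I expect the main obstacle to be exactly this bookkeeping: cleanly setting up the induction so that at each step the hypothesis \eqref{eq:stoch1} can be applied with the correct configuration, and invoking the FKG lattice condition for $\mu_1$ the right number of times to absorb the discrepancy between one-at-a-time and simultaneous coordinate lowering. Since this is an entirely standard lemma (it appears in essentially this form in Grimmett's random-cluster book and in \cite{DCT}), I would in practice simply cite the relevant reference rather than reproduce the telescoping argument in full.
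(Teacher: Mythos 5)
The paper's proof is a one-line citation to Grimmett's book, so your fuller sketch is useful, and your overall strategy --- derive the global Holley lattice condition and then invoke Holley's inequality --- is a legitimate route (the cited reference instead builds a monotone Glauber coupling directly, without passing through the global condition). However, the telescoping you sketch does not go through as written. Let $D = \{v_1,\dots,v_k\}$ be the set where $\psi_v > \psi'_v$. Your first step, going from $(\psi,\psi')$ to $(\psi_{\{v_1\}}, (\psi')^{\{v_1\}})$, requires
\[
\frac{\mu_2\big((\psi')^{\{v_1\}}\big)}{\mu_2\big((\psi')_{\{v_1\}}\big)} \;\geq\; \frac{\mu_1\big(\psi^{\{v_1\}}\big)}{\mu_1\big(\psi_{\{v_1\}}\big)},
\]
which compares the single-site odds of $\mu_2$ at background $\psi'|_{V\setminus\{v_1\}}$ with those of $\mu_1$ at background $\psi|_{V\setminus\{v_1\}}$. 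When $k \geq 2$ these backgrounds are \emph{incomparable}: $\psi > \psi'$ on $\{v_2,\dots,v_k\}$ while $\psi' \geq \psi$ elsewhere. Neither \eqref{eq:stoch1} (which needs the \emph{same} background for both measures) nor the monotonicity supplied by \eqref{eq:stoch2} (which needs \emph{comparable} backgrounds) can bridge this, and the step inequality can genuinely fail even in the case $\mu_1 = \mu_2$.

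The telescoping that works goes in the opposite direction. Set $\eta := \psi\wedge\psi'$, $A := \{v: \psi'_v > \psi_v\}$, $B := D$, so that $\psi' = \eta^A$, $\psi = \eta^B$, $\psi\vee\psi' = \eta^{A\cup B}$, and factor
\[
\frac{\mu_2(\psi\vee\psi')\,\mu_1(\psi\wedge\psi')}{\mu_2(\psi')\,\mu_1(\psi)}
= \prod_{i=1}^k \frac{\mu_2\big(\eta^{A\cup\{v_1,\ldots,v_i\}}\big)\big/\mu_2\big(\eta^{A\cup\{v_1,\ldots,v_{i-1}\}}\big)}{\mu_1\big(\eta^{\{v_1,\ldots,v_i\}}\big)\big/\mu_1\big(\eta^{\{v_1,\ldots,v_{i-1}\}}\big)}.
\]
Now at every step the $\mu_2$-background $\eta^{A\cup\{v_1,\ldots,v_{i-1}\}}$ dominates the $\mu_1$-background $\eta^{\{v_1,\ldots,v_{i-1}\}}$, so each factor is $\geq 1$: chain \eqref{eq:stoch1} with the monotonicity of the single-site odds ratio for whichever of $\mu_1, \mu_2$ satisfies \eqref{eq:stoch2}. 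In other words, you should raise coordinates of $D$ simultaneously in \emph{both} arguments starting from $(\psi\wedge\psi',\,\psi')$, rather than lower one argument while raising the other starting from $(\psi,\psi')$. You are right that in practice one would simply cite, as the paper does; but taken at face value your decomposition breaks at the first step.
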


\begin{proof}
See\footnote{Note that the proof of \cite[Theorem 2.6]{GGBook} is based on \cite[Theorem 2.3]{GGBook}, the statement of which is not correct. A correct proof of \cite[Theorem 2.6]{GGBook} was pointed out in a subsequent correction \cite{GGBookErratum}.} \cite[Theorem 2.6]{GGBook}. 
\end{proof}

For $S\subset \partial \Lambda$, let $E(S,\Lambda)$ be the set of edges with one endpoint in $S$ and the other in $\Lambda$. Given $\xi\in \Theta$, we let $\xi\cap \mathbf{0}_S\in \Theta$ be the configuration, suitably extended to a compatible configuration if necessary, which is equal to $0$ on $S\times E_S$, and otherwise coincides with $\xi$. In addition, we let $\xi\cap \mathbf{0}_{E(S,\Lambda)}\in \Theta$ be the configuration whose edge configuration is equal to $0$ on $E(S,\Lambda)$, and otherwise coincides with $\xi$. Note that, in the latter, $\psi$ is not necessarily $0$ on $S$. 

We define 
\begin{equs}
\varphi^{\xi\cap \, \mathbf{0}_{E(S,\Lambda)}}_{\Lambda}
=
\varphi^{\xi}_{\Lambda}[\cdot \mid \omega_e=0 \textup{ for every } e\in E(S,\Lambda)].
\end{equs}
We write $\Psi^{\xi\cap \, \mathbf{0}_{S}}_{\Lambda}$ and $\Psi^{\xi\cap \, \mathbf{0}_{E(S,\Lambda)}}_{\Lambda}$ for the vertex marginals of $\varphi^{\xi\cap \, \mathbf{0}_{S}}_{\Lambda}$ and $\varphi^{\xi\cap \, \mathbf{0}_{E(S,\Lambda)}}_{\Lambda}$, respectively.

We now prove the desired refined stochastic domination result on the level of the vertex marginal. 
\begin{lem}\label{free-empty-vertex}
Let $\Lambda \subset \ZZ^d$ be finite and let $S\subset \partial \Lambda$. For every $\xi\in \Theta$, 
\begin{equs}
\Psi^{\xi\cap \, \mathbf{0}_{E(S,\Lambda)}}_{\Lambda}
\preceq
\Psi^{\xi\cap \, \mathbf{0}_S}_{\Lambda}.
\end{equs}
\end{lem}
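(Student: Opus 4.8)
The plan is to apply Lemma~\ref{lem: fkglattice} to the two vertex-marginal measures $\mu_1 = \Psi^{\xi\cap \, \mathbf{0}_{E(S,\Lambda)}}_{\Lambda}$ and $\mu_2 = \Psi^{\xi\cap \, \mathbf{0}_S}_{\Lambda}$, both viewed as measures on $\{0,1\}^{\Lambda}$ (with the configuration outside $\Lambda$ frozen, and noting that both are strictly positive by finite energy). The condition \eqref{eq:stoch2} should be immediate: both $\Psi^{\xi\cap \, \mathbf{0}_{E(S,\Lambda)}}_{\Lambda}$ and $\Psi^{\xi\cap \, \mathbf{0}_S}_{\Lambda}$ are vertex marginals of dilute random cluster measures (with, in the first case, a conditioning on an edge event), hence are positively associated by \cite[Theorem 5.3]{GG} (i.e.\ the lemma preceding Proposition~\ref{prop: FKG}), and positive association of a strictly positive measure on $\{0,1\}^\Lambda$ is equivalent to the FKG lattice condition \eqref{eq:stoch2}. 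So the crux is the single-site Holley-type inequality \eqref{eq:stoch1}.

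To verify \eqref{eq:stoch1}, I would compute the ratios $\Psi^{\xi\cap \, \mathbf{0}_{S}}_{\Lambda}[\psi^{\{x\}}]/\Psi^{\xi\cap \, \mathbf{0}_{S}}_{\Lambda}[\psi_{\{x\}}]$ and the analogous ratio for $\Psi^{\xi\cap \, \mathbf{0}_{E(S,\Lambda)}}_{\Lambda}$ directly from the definition \eqref{defRC}, summing over the edge configurations $\omega$. Fix a vertex $x\in\Lambda$ and a configuration $\psi$ on $\Lambda\setminus\{x\}$. Opening the vertex $x$ changes the measure by: the site weight factor $a/(1-a)$; the possibility of new edges at $x$ toward already-open neighbours, each contributing the edge weight $p/(1-p)$ against the cost $r = \sqrt{1-p}$ per potential edge; and a change in the cluster count $2^{k(\theta,\Lambda)}$. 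The only place the boundary conditions $\xi\cap\mathbf 0_S$ versus $\xi\cap\mathbf 0_{E(S,\Lambda)}$ differ is in which edges are available at vertices of $S$: under $\xi\cap\mathbf 0_{E(S,\Lambda)}$ the edges in $E(S,\Lambda)$ are forced closed, so a vertex $v\in S$ is effectively isolated from $\Lambda$; under $\xi\cap\mathbf 0_S$ the vertices in $S$ are closed, which also forbids those edges. The point is that these two conditionings induce the \emph{same} constraint ``no open edge from $S$ into $\Lambda$,'' but differ in the cluster-counting term: closing the vertices of $S$ removes them from $\overline\Lambda$-intersecting clusters, whereas merely closing the edges $E(S,\Lambda)$ keeps each $v\in S$ as its own singleton cluster (since under $\xi$ these vertices may be open, and may be connected to the rest of the configuration outside $\Lambda$). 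Opening $x$ can only \emph{merge} clusters, so the ratio $\varphi[\psi^{\{x\}}]/\varphi[\psi_{\{x\}}]$ — which carries a factor $2^{k(\psi^{\{x\}}\text{-clusters}) - k(\psi_{\{x\}}\text{-clusters})} \le 1$ — is, after summing over $\omega$, at least as large for $\mu_2 = \Psi^{\xi\cap\mathbf 0_S}_{\Lambda}$ as for $\mu_1$. Intuitively: the ``$0$ on $S$'' boundary condition, having fewer boundary clusters to merge into, penalises the opening of $x$ less than the ``$\mathbf 0$ on $E(S,\Lambda)$'' boundary condition does. Making this precise is the main technical step; it amounts to the standard comparison argument behind Proposition~\ref{prop: domination} (cf.\ \cite[Theorem 5.12]{GG}, \cite[Proposition 5]{DCT}), specialised to a \emph{local} modification at $S$, and should be carried out by a direct inspection of the $\omega$-sums rather than by invoking a general theorem.

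The main obstacle I anticipate is precisely the bookkeeping of the cluster-count term $2^{k(\theta,\Lambda)}$ in the ratio: one must argue at the level of the joint $(\psi,\omega)$-weights, not just the vertex marginal, because $k(\theta,\Lambda)$ depends on $\omega$. A clean way around this is to couple the edge-configurations: condition both $\varphi^{\xi\cap\mathbf 0_S}_\Lambda$ and $\varphi^{\xi\cap\mathbf 0_{E(S,\Lambda)}}_\Lambda$ on the vertex configuration $\psi$ (respectively $\psi$ with $S$ closed), in which case both reduce to random cluster measures by \eqref{eq: conditioning}, and then compare those random cluster measures using the standard domination results together with the observation that under $\xi\cap\mathbf 0_S$ one has additional boundary vertices removed; combining this with the vertex-marginal FKG inequality \eqref{eq: fkg vertex} and monotonicity of the map $\psi\mapsto\phi^{\textup{RC}}_{\Lambda,\psi}[C_{\psi,A}]$ (as in the proof of Proposition~\ref{prop: FKG}) then yields \eqref{eq:stoch1}. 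Once \eqref{eq:stoch1} and \eqref{eq:stoch2} are in hand, Lemma~\ref{lem: fkglattice} gives $\mu_1 \preceq \mu_2$, which is the claim.
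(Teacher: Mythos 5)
Your skeleton is the right one — the paper also proves this by verifying \eqref{eq:stoch1} and \eqref{eq:stoch2} and invoking Lemma~\ref{lem: fkglattice} — but the mechanism you propose for the crucial single-site inequality \eqref{eq:stoch1} is wrong. You attribute the difference between the two conditionings to the cluster-count term $2^{k(\theta,\Lambda)}$. In fact that term contributes \emph{identically} to both ratios: under either boundary condition every edge of $E(S,\Lambda)$ is closed, so opening $x\in\Lambda$ produces exactly the same cluster merges in both cases (no merge with any $S$-cluster is possible), and the extra singleton clusters at open vertices of $S$ present under $\xi\cap\mathbf{0}_{E(S,\Lambda)}$ appear in both numerator and denominator and cancel. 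Concretely, the conditional random cluster partition function ratios $Z^{\textup{RC}}_{\Lambda,\psi^{\{x\}}}/Z^{\textup{RC}}_{\Lambda,\psi_{\{x\}}}$ are \emph{equal} for the two boundary conditions. The true source of the inequality is the deterministic factor $r^{|E_{\psi,\Lambda}|}$ in \eqref{defRC}: $E_{\psi,\Lambda}$ counts edges of $E_\Lambda$ with both endpoints \emph{open}, regardless of whether $\omega$ is forced to vanish there. Under $\xi\cap\mathbf{0}_{E(S,\Lambda)}$ a vertex $v\in S$ may still be open, so opening $x$ adds the edge $xv$ to $E_{\psi,\Lambda}$ and incurs a penalty $r=\sqrt{1-p}<1$; under $\xi\cap\mathbf{0}_S$ it does not. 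Writing $N_x\ge N_{x,S}$ for the respective neighbour counts, the two ratios are $r^{N_x}\tfrac{a}{1-a}\cdot(\text{same }Z\text{-ratio})$ versus $r^{N_{x,S}}\tfrac{a}{1-a}\cdot(\text{same }Z\text{-ratio})$, and $r\le 1$ gives \eqref{eq:stoch1}. Without this observation your argument only yields equality of the cluster contributions, not the desired comparison; and your fallback (conditioning on $\psi$ and comparing the induced RC measures) is the step used to pass \emph{from} the vertex-marginal domination to the full measure (cf.\ Proposition~\ref{free-closed}), so it cannot be used to establish \eqref{eq:stoch1} itself.

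A secondary issue: you justify \eqref{eq:stoch2} by claiming that positive association of a strictly positive measure is \emph{equivalent} to the FKG lattice condition. The implication only goes one way (lattice condition implies positive association; the converse is false), so \cite[Theorem 5.3]{GG} does not give you \eqref{eq:stoch2}. The paper instead obtains \eqref{eq:stoch2} for $\Psi^{\xi\cap\,\mathbf{0}_S}_\Lambda$ directly from \cite[Proposition 5.5]{GG}, which suffices since Lemma~\ref{lem: fkglattice} only requires one of the two measures to satisfy it.
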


\begin{proof}
We prove that \eqref{eq:stoch1} is satisfied for every $x \in \Lambda$. For $\psi \in \Psi$, write $N_x(\psi)$ and $N_{x,S}(\psi)$ to denote the number of neighbours of $x$ in the graph $(V_{\psi^{\{x\}}}, E_{\psi^{\{x\}}})$ and $(V_{\psi^{\{x\}} \cap \mathbf{0}_S}, E_{\psi^{\{x\}}\cap\mathbf{0}_S})$, respectively. 

By direct calculation, we have
\begin{equs}
\frac
{\Psi^{\xi\cap \, \mathbf{0}_{E(S,\Lambda)} }_{\Lambda}[\psi^{\{x\}}]}
{\Psi^{\xi\cap \, \mathbf{0}_{E(S,\Lambda)}}_{\Lambda}[\psi_{\{x\}}]}
&=
r^{N_x(\psi)} \frac{a}{1-a}  \frac{Z^{\textup{RC},\xi\cap \, \mathbf{0}_{E(S,\Lambda)}}_{\Lambda, \psi^{\{x\}}}}
{Z^{\textup{RC}, \xi\cap \, \mathbf{0}_{E(S,\Lambda)}}_{\Lambda, \psi_{\{x\}}}}
\\
\frac{\Psi^{\xi\cap \, \mathbf{0}_S}_{\Lambda}[\psi^{\{x\}}]}{\Psi^{\xi\cap \, \mathbf{0}_S}_{\Lambda}[\psi_{\{x\}}]}
&=
r^{N_{x,S}(\psi)} \frac{a}{1-a} \frac{Z^{\textup{RC},\xi\cap \, \mathbf{0}_S}_{\Lambda, \psi^{\{x\}}}}{Z^{\textup{RC},\xi\cap \, \mathbf{0}_S}_{\Lambda, \psi_{\{x\}}}}
\end{equs}
where $Z^{\textup{RC},\xi\cap \, \mathbf{0}_{E(S,\Lambda)}}_{\Lambda, \psi^{\{x\}}}$ denotes the partition function of the random cluster model on $(V_{\psi^{\{x\}}}, E_{\psi^{\{x\}}})$ with boundary conditions $\xi \cap \mathbf{0}_\Lambda$, conditioned on $E(S,\Lambda)$ being closed, and $Z^{\textup{RC},\xi\cap \, \mathbf{0}_{E(S,\Lambda)}}_{\Lambda, \psi_{\{x\}}}$ is defined similarly.  Note that we stress that the edge-projection $(\mathbf{0}_\Lambda)_{\Omega}$ is measurable with respect to edges in $b(\Lambda)$ and does \emph{not} impose any condition on edges in $E(\Lambda)$ with one endpoint in $\partial \Lambda$.

Note that 
\begin{equs}
 \frac{Z^{\textup{RC},\xi\cap \, \mathbf{0}_{E(S,\Lambda)}}_{\Lambda, \psi^{\{x\}}}}
{Z^{\textup{RC}, \xi\cap \, \mathbf{0}_{E(S,\Lambda)}}_{\Lambda, \psi_{\{x\}}}}
= 
 \frac{Z^{\textup{RC},\xi\cap \, \mathbf{0}_S}_{\Lambda, \psi^{\{x\}}}}{Z^{\textup{RC},\xi\cap \, \mathbf{0}_S}_{\Lambda, \psi_{\{x\}}}}
\end{equs}
because in both cases, the edges in $E(S,\Lambda)$ are necessarily closed.
Therefore, since $r\leq 1$ and we trivially have $N_{x,S}(\psi)\leq N_x(\psi)$, it follows that \eqref{eq:stoch1} is satisfied with $\Psi_1 = \Psi^{\xi\cap \, \mathbf{0}_{E(S,\Lambda)}}_{\Lambda}$ and $\Psi_2 = \Psi^{\xi\cap \, \mathbf{0}_S}_{\Lambda}$.

It remains to establish \eqref{eq:stoch2}. In this case, we may apply \cite[Proposition 5.5]{GG} for $\Phi_1=\Phi_2=\Psi^{\xi\cap \, \mathbf{0}_S}_{\Lambda}$ to obtain that \eqref{eq:stoch2} is satisfied by $\Psi^{\xi\cap \, \mathbf{0}_S}_{\Lambda}$. The desired result now follows by Lemma \ref{lem: fkglattice}.
\end{proof}

We now turn to the desired refined stochastic domination result for the full dilute random cluster model. We emphasise: this result states that the maximal (in terms of stochastic domination) boundary condition that induces free boundary conditions is that of $0$ vertices. See Figure~\ref{fig:closed edges boundary}.

\begin{figure}
    \centering
    \includegraphics[width=.5\linewidth]{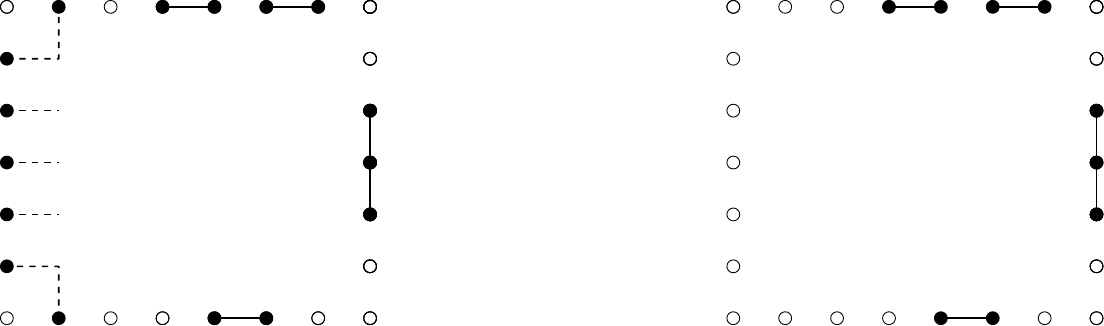}
    \put(-230,85){$\Lambda$}
    \put(-55,85){$\Lambda$}
    \put(-120,35){$\preceq$}
    \put(-255,35){$S$}
    \caption{An example illustrating the stochastic domination of Proposition~\ref{free-closed}. The dashed lines denote the edges of $S$ which are conditioned to be closed. The full dots denote open vertices and the dots without interior denote the closed vertices. Open edges are depicted in straight lines.}
    \label{fig:closed edges boundary}
\end{figure}

\begin{prop}\label{free-closed}
Let $\Lambda$ be a finite subset of $\ZZ^d$, and let $S\subset \partial \Lambda$. For every $\xi\in \Theta$,
\begin{equs}
\varphi^{\xi\cap \, \mathbf{0}_{E(S,\Lambda)}}_{\Lambda}
\preceq
\varphi^{\xi\cap \, \mathbf{0}_S}_{\Lambda}.
\end{equs}
\end{prop}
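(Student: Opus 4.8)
The plan is to follow the proof of the FKG inequality (Proposition \ref{prop: FKG}): condition on the vertex environment $\psi$, observe that under both measures the conditional law of the edges is one and the same random cluster measure, and then invoke the vertex-marginal domination just established in Lemma \ref{free-empty-vertex}. It suffices to prove $\varphi^{\xi\cap \mathbf{0}_{E(S,\Lambda)}}_{\Lambda}[A] \le \varphi^{\xi\cap \mathbf{0}_{S}}_{\Lambda}[A]$ for every increasing $\Lambda$-measurable event $A \in \cF$.

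Fix such an $A$ and, for $\psi\in\Psi$, let $C_{\psi,A}=\{\omega:(\psi,\omega)\in A\}$ as in the proof of Proposition \ref{prop: FKG}; recall that $C_{\psi,A}$ is increasing on $\Omega$ and, since $A$ is $\Lambda$-measurable, depends only on $\psi|_\Lambda$ and on $\omega|_{E_\Lambda}$. Conditioning on the environment and using \eqref{eq: conditioning},
\begin{equs}
\varphi^{\xi\cap \mathbf{0}_{E(S,\Lambda)}}_{\Lambda}[A]
&= \Psi^{\xi\cap \mathbf{0}_{E(S,\Lambda)}}_{\Lambda}\Big[\phi^{\textup{RC},\xi}_{\Lambda,\psi}\big[C_{\psi,A} \,\big|\, \omega_e=0 \ \forall e\in E(S,\Lambda)\big]\Big],\\
\varphi^{\xi\cap \mathbf{0}_{S}}_{\Lambda}[A]
&= \Psi^{\xi\cap \mathbf{0}_{S}}_{\Lambda}\Big[\phi^{\textup{RC},\xi\cap \mathbf{0}_S}_{\Lambda,\psi}\big[C_{\psi,A}\big]\Big].
\end{equs}
The key point is that the two integrands define the \emph{same} function $g$ of $\psi|_\Lambda$. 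Indeed, for an environment $\psi$ in the first expression (so $\psi$ agrees with $\xi$ on $\ZZ^d\setminus\Lambda$), the measure $\phi^{\textup{RC},\xi}_{\Lambda,\psi}[\,\cdot\mid \omega_e=0 \ \forall e\in E(S,\Lambda)]$ lives on the edges of $E_{\psi,\Lambda}$ with those incident to $S$ forced closed; the only vertices in which it differs from $\phi^{\textup{RC},\xi\cap\mathbf{0}_S}_{\Lambda,\psi}$ are the open vertices of $S$, and once $E(S,\Lambda)$ is closed these are decoupled from $\overline\Lambda$, so they contribute only a multiplicative constant to the partition function and affect neither the law of $\omega|_{E_\Lambda}$ nor $\phi[C_{\psi,A}]$. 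Hence both displayed quantities equal $\Psi^{\bullet}_\Lambda[g]$ for this common $g$, and exactly as in the proof of Proposition \ref{prop: FKG} one checks that $g$ is increasing in $\psi|_\Lambda$ (adding open vertices enlarges $E_{\psi,\Lambda}$ and the induced graph, the random cluster measure is monotone under this enlargement, and $C_{\psi,A}$ grows). Applying Lemma \ref{free-empty-vertex} to the increasing function $g$ then gives $\varphi^{\xi\cap \mathbf{0}_{E(S,\Lambda)}}_{\Lambda}[A] = \Psi^{\xi\cap \mathbf{0}_{E(S,\Lambda)}}_{\Lambda}[g] \le \Psi^{\xi\cap \mathbf{0}_{S}}_{\Lambda}[g] = \varphi^{\xi\cap \mathbf{0}_{S}}_{\Lambda}[A]$.

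The step requiring the most care is verifying that the two edge-conditional random cluster measures genuinely coincide as laws on $\{0,1\}^{E_\Lambda}$: one must confirm that the open vertices of $S$ retained in the $\mathbf{0}_{E(S,\Lambda)}$-measure but deleted in the $\mathbf{0}_S$-measure are truly decoupled from $\overline\Lambda$ — taking into account that the boundary condition $\xi$ may wire them to other boundary vertices — so that the count $k(\cdot,\Lambda)$ of clusters meeting $\overline\Lambda$ changes only by a configuration-independent constant that cancels in normalisation. Everything else is a routine transcription of the argument for Proposition \ref{prop: FKG}.
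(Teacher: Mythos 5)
Your argument is a faithful reproduction of the paper's: both condition on the vertex environment via \eqref{eq: conditioning}, assert that the two edge-conditional random cluster laws coincide, check that the integrand $\psi\mapsto\phi^{\textup{RC},\cdot}_{\Lambda,\psi}[C_{\psi,A}]$ is increasing, and invoke Lemma \ref{free-empty-vertex}. Where the paper merely asserts the coincidence in one line, you spell out a plausible reason, and you correctly isolate the delicate point — that $\xi$ may $\rho$-wire open vertices of $S$ to the rest of $\partial\Lambda$.

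The resolution you then claim, however — that this wiring only shifts $k(\cdot,\Lambda)$ by a configuration-independent constant — does not hold in general for $S\subsetneq\partial\Lambda$. Suppose $\rho$ connects an open $x\in S$ to two vertices $z_1,z_2\in\partial\Lambda\setminus S$ whose $\rho$-clusters are otherwise disjoint (this is allowed since edges between boundary vertices live outside $E_\Lambda$ and are therefore governed by $\rho$, not by the conditioning on $E(S,\Lambda)$). Deleting $x$ then splits the cluster $\{x,z_1,z_2,\dots\}$ into two, and whether this increments $k$ depends on whether $z_1$ and $z_2$ are already joined through $\omega|_{E_\Lambda}$. So the shift in $k$, and hence the ratio $\phi^{\textup{RC},\xi\cap\mathbf{0}_S}_{\Lambda,\psi}[\omega]/\phi^{\textup{RC},\xi\cap\mathbf{0}_{E(S,\Lambda)}}_{\Lambda,\psi}[\omega]$, is genuinely $\omega$-dependent; the two edge-conditional measures do not coincide, and the paper's proof is incomplete at exactly the same place. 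Both arguments are sound in the case $S=\partial\Lambda$ (which is what is actually used, e.g. after conditioning on a dual circuit in Lemma \ref{dual-circuit}): there, closing $E(S,\Lambda)$ severs every edge of $E_\Lambda$ from the exterior, so the exterior clusters contribute an $\omega$-independent constant to $k$ and the coincidence holds. If you want the general statement of Proposition \ref{free-closed}, you should either add a hypothesis ruling out $\rho$-bridges through $S$ between distinct components of $\partial\Lambda\setminus S$, or give a different argument.
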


\begin{proof}[Proof of Proposition~\ref{free-closed}]
Let $A \in \cF$ be $\Lambda$-measurable and increasing. Recall the notation introduced in the proof of Proposition~\ref{prop: FKG}. By \eqref{eq: conditioning},
\begin{equs}
\varphi^{\xi\cap \, \mathbf{0}_S }_{\Lambda}[A]
=
\Psi^{\xi\cap \, \mathbf{0}_S}_{\Lambda}\left[\phi^{\textup{RC},\xi\cap \, \mathbf{0}_S}_{\Lambda,\psi}[C_{\psi,A}]\right].
\end{equs}
As argued above, the mapping $\psi\mapsto \phi^{\textup{RC},\xi\cap \, \mathbf{0}_S}_{\Lambda, \psi}[C_{\psi,A}]$ is increasing. Moreover, the measures $\phi^{\textup{RC},\xi\cap \, \mathbf{0}_S}_{\Lambda, \psi}$ and $\phi^{\textup{RC},\xi\cap \, \mathbf{0}_{E(S,\Lambda)}}_{\Lambda, \psi}$ coincide. The desired result then follows by applying Lemma~\ref{free-empty-vertex} and undoing the conditioning.
\end{proof}

\subsection{Monotonicity in the domain}

We now develop stochastic comparison results for dilute random cluster measures on different domains. Let $\textup{Cyl}(\ZZ^d \times \Theta) = \{ (\Lambda, \xi) : \Lambda\subset \Z^d \text{ finite}, \xi\in\Theta\}$. We begin with two definitions. The motivation for the first definition is to capture the notion that pulling inwards wired boundary conditions increases the probability of increasing events. The motivation for the second definition is to capture the notion that pushing away free boundary conditions increases the probability of increasing events. See Figure~\ref{fig:0-1-partial orders}.

\begin{figure}
    \centering
    \includegraphics[width=.6\linewidth]{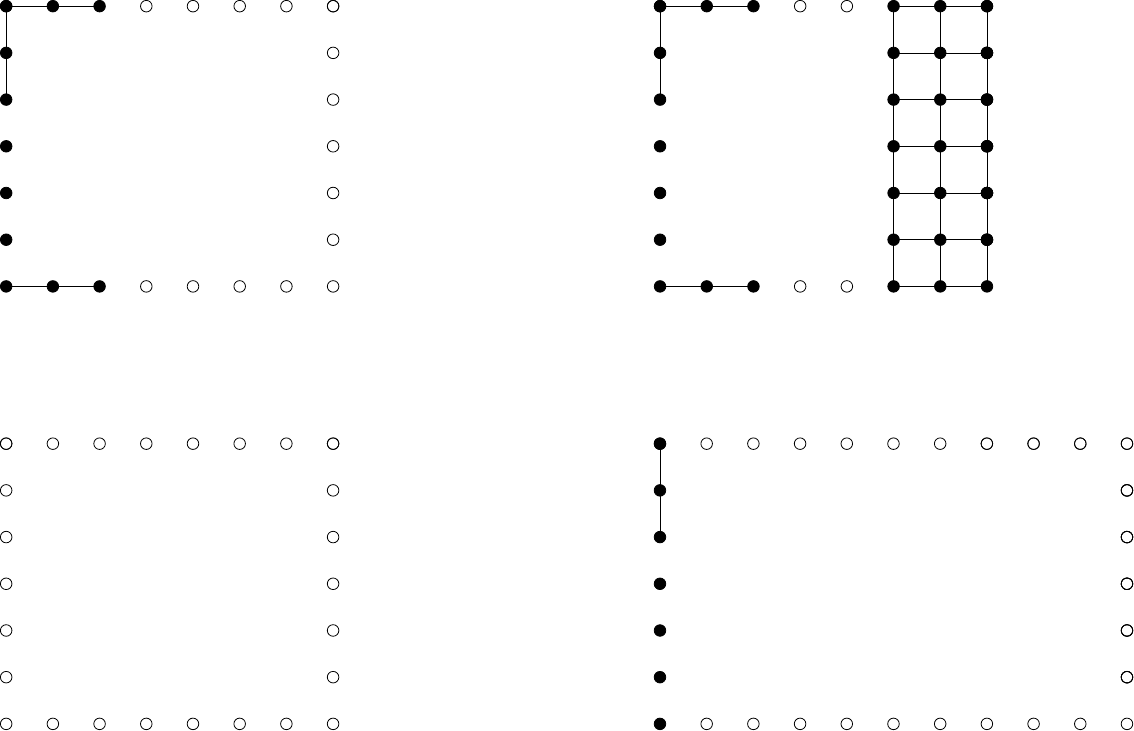}
    \put(-235,80){$\Lambda$}
    \put(-60,80){$\Lambda'$}
    \put(-158,35){$\preceq_0$}
    \put(-235,185){$\Lambda$}
    \put(-60,185){$\Lambda'$}
    \put(-158,137){$\preceq_1$}
    \caption{Two examples illustrating the $\preceq_1$ and $\preceq_0$ partial orders. The full dots denote open vertices and the dots without interior denote the closed vertices. Open edges are depicted in straight lines.}
    \label{fig:0-1-partial orders}
\end{figure}

\begin{defn}
Let $\preceq_1$ be the partial order on $\textup{Cyl}(\ZZ^d \times \Theta)$ defined, for $(\Lambda, \xi), (\Lambda',\xi') \in \textup{Cyl}(\ZZ^d \times \Theta)$, by the relation 
\begin{equs}
(\Lambda, \xi) \preceq_1 (\Lambda', \xi') 
\qquad \Longleftrightarrow \qquad
\Lambda'\subset \Lambda \quad \text{and}\quad \text{$\xi'\geq \xi\cup \1_{\Lambda,\Lambda'}$}
\end{equs}
where $\xi\cup\1_{\Lambda,\Lambda'}\in \Theta$ is the boundary condition which is equal to $1$ on $(\Lambda\setminus\Lambda')\times (E_{\Lambda}\setminus E_{\Lambda'})$, and otherwise coincides with $\xi$.
\end{defn}

\begin{defn}
Let $\preceq_0$ be the partial order on $\textup{Cyl}(\ZZ^d \times \Theta)$ defined, for $(\Lambda, \xi), (\Lambda',\xi') \in \textup{Cyl}(\ZZ^d \times \Theta)$, by the relation 
\begin{equs}
(\Lambda,\xi)\preceq_0(\Lambda',\xi')
\qquad \Longleftrightarrow \qquad
\Lambda\subset \Lambda' \quad \text{and}\quad \text{$\xi\leq \xi'\cap \mathbf{0}_{\Lambda,\Lambda'}$}
\end{equs}
where $\xi'\cap \mathbf{0}_{\Lambda,\Lambda'}\in \Theta$ is the boundary condition which is equal 
to $0$ on $(\Lambda\setminus\Lambda')\times (E_{\Lambda}\setminus E_{\Lambda'})$, and otherwise coincides with $\xi'$.
\end{defn}

With these definitions in hand, we now turn to the monotonicity in the domain of dilute random cluster probabilities. 
\begin{prop} \label{prop: monotonicity in domain}
Let $(\Lambda, \xi), (\Lambda',\xi') \in \textup{Cyl}(\ZZ^d \times \Theta)$ such that either $(\Lambda,\xi)\preceq_1(\Lambda',\xi')$ or $(\Lambda,\xi)\preceq_0(\Lambda',\xi')$. Then,
\begin{equs}
\varphi_\Lambda^\xi 
\preceq
\varphi_{\Lambda'}^{\xi'}
\end{equs}
in the sense that, for every increasing event $A \in \cF$ that is measurable with respect to $(\Lambda\cap \Lambda',E_{\Lambda}\cap E_{\Lambda'})$, we have
  \begin{equs}
    \label{eq:MON}
    \varphi_{\Lambda}^\xi[A] 
    \le 
    \varphi_{\Lambda'}^{\xi'}[A].
  \end{equs}
\end{prop}

\begin{proof}
We prove the assertion only in the case $(\Lambda,\xi)\preceq_1(\Lambda',\xi')$; the other case follows similarly.
By \eqref{eq:FKGRC} and \eqref{eq:DMPRC} we have
\begin{equs}
\varphi_{\Lambda}^{\xi}[A]
\leq 
\varphi_\Lambda^\xi[A \mid \theta|_{\Lambda\setminus\Lambda'} = \1_{\Lambda,\Lambda'}]
=
\varphi_{\Lambda'}^{\xi\cup\1_{\Lambda,\Lambda'}}[A].
\end{equs}
Note that the above could also be obtained by conditioning on the configuration in $\Lambda\setminus\Lambda'$ and applying \eqref{eq:DMPRC}.
To finish the proof, we use Proposition \ref{prop: domination} to obtain that 
\begin{equs}
\varphi_{\Lambda'}^{\xi\cup\1_{\Lambda,\Lambda'}}[A]
\leq
\varphi_{\Lambda'}^{\xi'}[A],
\end{equs}
as desired.
\end{proof}

\section{Basic facts about the phase transition}\label{sec:basic-facts-phase-transition}

In this section, we use the dilute random cluster representation of the Blume-Capel model to prove various fundamental properties about its phase diagram, in particular Theorem \ref{thm: BC phase diagram} and the discontinuity part of Theorem \ref{thm: existence}. First, we show that the critical point of the Blume-Capel model coincides with the critical percolation threshold of the dilute random cluster model and state some basic consequences of this. Then we prove Theorem \ref{thm: BC phase diagram} by proving that the curve of critical points $\{ (\Delta, \beta_c(\Delta)) : \Delta \in \R \}$ is continuous and decreasing. Finally, we show that  these properties together with classical results using Pirogov-Sinai theory establish that the phase transition is discontinuous at the critical point for sufficiently negative values of $\Delta$. 

We begin by collecting important facts about the natural infinite volume limits of the free and wired measures. 
\begin{prop}\label{prop: inf vol limits}
Let $p,a \in [0,1]$. Then, the weak limits of $\varphi^1_{\Lambda,p,a}$ and $\varphi^0_{\Lambda,p,a}$ as $\Lambda \Uparrow \ZZ^d$, denoted by $\varphi^1_{p,a}$ and $\varphi^0_{p,a}$, exist and satisfy the following Gibbs property: let $\varphi = \varphi^1_{p,a}$ or $\varphi^0_{p,a}$. Then, for all $\Lambda \subset \ZZ^d$ finite and $\varphi$-a.s. $\lambda \in \Theta$,
\begin{equs}
\varphi[A \mid \cF_{\Lambda^c}] (\lambda)
=
\varphi^\lambda_{\Lambda, p,a}[A],
 \qquad
A \in \cF_\Lambda
\end{equs}
where $\cF_\Lambda$ (resp. $\cF_{\Lambda^c}$) consists of events in $\cF$ that are $\Lambda$-measurable (resp. $\Lambda^c$-measurable).

Furthermore, both $\varphi^1_{p,a}$ and $\varphi^0_{p,a}$ are invariant under the group of automorphisms of $\LL^d$ and mixing (hence, ergodic) under the subgroup of translations, where by mixing we mean that, given any events $A,B \in \cF$ depending on finitely many edges and vertices, we have that
\begin{equs}
\lim_{|x| \rightarrow \infty}\varphi[A \cap \tau_x B]
=
\varphi[A]\varphi[B],
\end{equs}
with $\tau_x$ denoting translation by $x \in \ZZ^d$.
\end{prop}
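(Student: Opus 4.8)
The plan is to follow the classical development of the random cluster model (see Grimmett's monograph on the random-cluster model and \cite{DCT}) and to verify that dilution introduces no essential new difficulty: the ingredients needed --- the spatial Markov property \eqref{eq:DMPRC}, the \eqref{eq:FKGRC} inequality, the stochastic domination of Proposition~\ref{prop: domination}, and the monotonicity \eqref{eq:MON} --- are all already available.

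\textbf{Existence of the limits.} Fix an exhausting sequence $\Lambda_n\Uparrow\ZZ^d$. By \eqref{eq:MON} with the orders $\preceq_1$ and $\preceq_0$, for every increasing cylinder event $A$ the sequence $n\mapsto\varphi^1_{\Lambda_n}[A]$ is eventually decreasing and $n\mapsto\varphi^0_{\Lambda_n}[A]$ is eventually increasing, so both converge; a routine cofinality argument (comparing two exhaustions through their unions) shows the limit is independent of the exhaustion. Since the monomials $\prod_{x\in S}\psi_x\prod_{e\in F}\omega_e$ are indicators of increasing cylinder events and span the cylinder functions linearly, $\varphi^1_{\Lambda_n}(f)$ and $\varphi^0_{\Lambda_n}(f)$ converge for every cylinder function $f$; as $\Theta$ is compact metrisable this upgrades to weak convergence to probability measures $\varphi^1_{p,a}$ and $\varphi^0_{p,a}$.

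\textbf{Gibbs property.} For finite $\Delta\subset\Lambda$, the spatial Markov property \eqref{eq:DMPRC} says exactly that under $\varphi^\bullet_\Lambda$ ($\bullet\in\{0,1\}$) the conditional law of the restriction of the configuration to $(\Delta,E_\Delta)$ given the restriction to $(\Lambda\setminus\Delta,E_\Lambda\setminus E_\Delta)$ is $\varphi^\lambda_{\Delta,p,a}$; thus the claimed identity holds in finite volume and it remains to let $\Lambda\Uparrow\ZZ^d$. This is the step I expect to be the main obstacle. The only obstruction to a direct passage is that $\lambda\mapsto\varphi^\lambda_{\Delta,p,a}[A]$ is \emph{not} a cylinder function: it depends on $\lambda$ through its trace on $\partial\Delta$ and through the partition of the open vertices of $\partial\Delta$ induced by the clusters of the exterior configuration, and the latter is non-local. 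One resolves this exactly as for the random cluster model (cf.\ \cite{DCT} and Grimmett's monograph): for increasing $A$ one sandwiches $\lambda\mapsto\varphi^\lambda_{\Delta,p,a}[A]$ between increasing and decreasing limits of cylinder functions obtained by freezing the exterior connectivities to within larger and larger finite boxes (with free, resp.\ wired, conventions outside), applies the weak convergence of the previous step together with monotone convergence on these approximants, and identifies the two limits using uniqueness of the infinite cluster for translation-invariant finite-energy percolation (Burton--Keane). Dilution plays no role, since the required conditioning and monotonicity inputs are precisely \eqref{eq:DMPRC} and Proposition~\ref{prop: domination}.

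\textbf{Invariance, mixing, ergodicity.} Automorphism invariance is immediate from the automorphism invariance of the finite-volume measures established in Section~\ref{sec:basic-properties}: for an automorphism $\tau$ of $\LL^d$ one has $\tau_*\varphi^\bullet_{p,a}=\lim_n\tau_*\varphi^\bullet_{\Lambda_n}=\lim_n\varphi^\bullet_{\tau\Lambda_n}=\varphi^\bullet_{p,a}$, using $\tau(1,1)=(1,1)$, $\tau(0,0)=(0,0)$ and exhaustion-independence. For mixing it suffices to show the tail $\sigma$-algebra $\mathcal T=\bigcap_{\Lambda}\cF_{\Lambda^c}$ is trivial: given tail triviality, for a cylinder event $A$ and $\eps>0$ pick finite $\Lambda$ with $\lVert\varphi^\bullet_{p,a}[A\mid\cF_{\Lambda^c}]-\varphi^\bullet_{p,a}[A]\rVert_{L^1}<\eps$; then for any cylinder event $B$ and any $x$ with $\tau_xB\in\cF_{\Lambda^c}$ one gets $|\varphi^\bullet_{p,a}[A\cap\tau_xB]-\varphi^\bullet_{p,a}[A]\varphi^\bullet_{p,a}[B]|\le\eps$, which yields mixing (hence ergodicity) after the standard reduction from increasing cylinder events to all cylinder functions. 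To prove triviality of $\mathcal T$ for $\varphi^1_{p,a}$: for $A$ an increasing cylinder event, the Gibbs property and Proposition~\ref{prop: domination} give $\varphi^1_{p,a}[A\mid\cF_{\Lambda^c}]=\varphi^\lambda_{\Lambda,p,a}[A]\le\varphi^1_{\Lambda,p,a}[A]$ for $\varphi^1_{p,a}$-a.e.\ $\lambda$; the reverse-martingale convergence theorem along $\Lambda\Uparrow\ZZ^d$ gives $\varphi^1_{p,a}[A\mid\mathcal T]\le\lim_\Lambda\varphi^1_{\Lambda,p,a}[A]=\varphi^1_{p,a}[A]$ a.s., and since both sides have the same expectation they coincide a.s.; running over all increasing cylinder $A$, hence all cylinder functions, shows $\mathcal T$ is $\varphi^1_{p,a}$-trivial. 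For $\varphi^0_{p,a}$ the same argument works with the inequality reversed ($\varphi^\lambda_{\Lambda,p,a}[A]\ge\varphi^0_{\Lambda,p,a}[A]$).
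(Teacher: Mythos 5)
Your proposal is correct and follows the same standard random-cluster development that the paper outsources to \cite[Theorems~7.2 and~7.4]{GG} and \cite[Lemma~1.10]{DCT}; the existence step and the Gibbs-property sandwiching (with the identification of the two envelopes via uniqueness of the infinite cluster) are precisely what those references do, and you correctly flag that dilution adds nothing new once \eqref{eq:DMPRC}, \eqref{eq:FKGRC}, \eqref{eq:MON} and Proposition~\ref{prop: domination} are in hand.

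The one place where you take a slightly different route is mixing. You deduce mixing from tail triviality, and prove tail triviality from the infinite-volume Gibbs property, which you in turn prove via Burton--Keane. This is correct, but two remarks are in order. First, your dependency chain forces a re-ordering of the write-up: Burton--Keane needs translation invariance of $\varphi^\bullet_{p,a}$, which is established in your third paragraph, so automorphism invariance should be recorded before the Gibbs property, not after. Second, the argument in \cite[Lemma~1.10]{DCT} is lighter: for increasing cylinder events $A$ (depending on $\Lambda$) and $B$, one has $\varphi^1[A]\varphi^1[B]\le\varphi^1[A\cap\tau_xB]$ by \eqref{eq:FKGRC}, while for any fixed $\Lambda''\supset\Lambda$ and $|x|$ large, the finite-volume spatial Markov property and $\varphi^\lambda_{\Lambda''}\preceq\varphi^1_{\Lambda''}$ give $\varphi^1_{\Lambda_n}[A\cap\tau_xB]\le\varphi^1_{\Lambda''}[A]\,\varphi^1_{\Lambda_n}[\tau_xB]$, hence $\limsup_{|x|\to\infty}\varphi^1[A\cap\tau_xB]\le\varphi^1_{\Lambda''}[A]\,\varphi^1[B]$, and letting $\Lambda''\Uparrow\ZZ^d$ closes the squeeze. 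This yields mixing for increasing cylinder events (hence for all events, by density) \emph{without} the infinite-volume Gibbs property or Burton--Keane, so that the mixing and Gibbs statements can be proved independently of each other; your route instead makes mixing logically downstream of the hardest part of the proof. Both are valid, but the direct argument is cheaper and is what the cited lemma actually does.
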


\begin{proof}
When $p,a \in (0,1)$, the existence and Gibbs property follows from \cite[Theorems 7.2 and 7.4]{GG}. The invariance and mixing under translations is then a standard consequence of the convergence, \eqref{eq:FKGRC}, and Proposition \ref{prop: domination}. See for instance \cite[Lemma 1.11]{HDC-IP}. The cases when $a,p \in \{0,1\}$ are classical, see for instance \cite[Chapter 4]{GGBook}.
\end{proof}

\subsection{Definition of the critical point}

We begin by defining the critical point of the Blume-Capel model and the critical percolation threshold of the random cluster model.

\begin{defn} \label{def: crit point blume}
    Let $\Delta\in \mathbb{R}$. We define the critical temperature $\beta_c(\Delta)$ by 
    \begin{equs}
    \beta_c(\Delta)
    :=
    \inf\{\beta>0:\langle\sigma_0\rangle_{\beta,\Delta}^+>0\}.
    \end{equs}
\end{defn}
A consequence of the GKS inequalities \cite[Theorem 4.1.3]{GlimJaf} is that for each $\Delta$, the magnetisation is monotone increasing in $\beta$.

\begin{defn}
For $a\in (0,1)$,  define
\begin{equs}
p_c
:=
\inf\{p\in (0,1): \, \phi^1_{p,a}\left[0\longleftrightarrow\infty\right]>0\}
\end{equs}
where $\{0 \longleftrightarrow \infty\}$ denotes the event that the size of the connected component of $0$ is infinite.
Additionally, we set $p_c(0) = 1$ and $p_c(1) = p_c^\textup{Ising}(\ZZ^d)$, where the latter is the critical point for the usual random cluster model (i.e.\ FK-Ising model) on $\ZZ^d$.
\end{defn}

We now state the relation between the critical points of the dilute random cluster model and the Blume-Capel model.
\begin{prop} \label{prop: critical pts}
Let $\Delta\in \RR$ and $a_\Delta=\frac{e^{\Delta}}{1+e^{\Delta}}$. Then, 
\begin{equs}
p_c(a_\Delta)
=
1-e^{-2\beta_c(\Delta)}
\end{equs}
where we recall $\beta_c(\Delta)$ is defined in Definition \ref{def: crit point blume}.
\end{prop}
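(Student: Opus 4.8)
The plan is to establish the identity $p_c(a_\Delta) = 1 - e^{-2\beta_c(\Delta)}$ by showing the two quantities characterize the same threshold via the Edwards--Sokal coupling. Throughout, fix $\Delta \in \RR$ and write $p(\beta) = 1 - e^{-2\beta}$ and $a = a_\Delta = \frac{2e^\Delta}{1+2e^\Delta}$, so that $p \mapsto \beta$ is an increasing bijection. The key input is Corollary~\ref{cor: ES correlations}, which gives $\langle \sigma_0 \rangle^+_{\Lambda,\beta,\Delta} = \phi^1_{\Lambda,p,a}[0 \longleftrightarrow \partial\Lambda]$ for every finite $\Lambda$. Passing to the infinite-volume limit $\Lambda \Uparrow \ZZ^d$ (which exists on both sides: the left by the monotone limit defining $\langle \cdot \rangle^+_{\beta,\Delta}$, the right by Proposition~\ref{prop: inf vol limits} together with the fact that $\{0 \longleftrightarrow \partial\Lambda\}$ is a decreasing sequence of events whose intersection is $\{0 \longleftrightarrow \infty\}$), one obtains the clean relation
\begin{equs}
\langle \sigma_0 \rangle^+_{\beta,\Delta} = \varphi^1_{p,a}[0 \longleftrightarrow \infty].
\end{equs}
Here one must be slightly careful: $\langle \sigma_0 \rangle^+_{\Lambda,\beta,\Delta}$ uses the measure $\mu^+_{\Lambda,\beta,\Delta}$ whose random-cluster marginal is $\phi^1_{\Lambda,p,a}$ (by the Corollary following the Edwards--Sokal proposition), and along a sequence of boxes $\Lambda_n \Uparrow \ZZ^d$ both the Blume--Capel plus measure and the wired dilute random cluster measure converge; so $\varphi^1_{\Lambda_n,p,a}[0 \longleftrightarrow \partial\Lambda_n] \to \varphi^1_{p,a}[0 \longleftrightarrow \infty]$ and the left side converges to $\langle \sigma_0 \rangle^+_{\beta,\Delta}$. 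A standard argument (monotonicity and finite-energy, or just the FKG/domination already available) guarantees the limit on the right is indeed the probability of an infinite cluster under $\varphi^1_{p,a}$.

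Granting the displayed identity, the proof is immediate by unwinding the two definitions. By Definition~\ref{def: crit point blume}, $\beta_c(\Delta) = \inf\{\beta > 0 : \langle\sigma_0\rangle^+_{\beta,\Delta} > 0\}$, which by the identity equals $\inf\{\beta > 0 : \varphi^1_{p(\beta),a}[0 \longleftrightarrow \infty] > 0\}$. Since $\beta \mapsto p(\beta)$ is an increasing homeomorphism of $(0,\infty)$ onto $(0,1)$, this infimum over $\beta$ translates into an infimum over $p$: namely $p(\beta_c(\Delta)) = \inf\{p \in (0,1) : \varphi^1_{p,a}[0\longleftrightarrow\infty] > 0\}$, which is exactly $p_c(a)$ by definition. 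Hence $1 - e^{-2\beta_c(\Delta)} = p(\beta_c(\Delta)) = p_c(a_\Delta)$, as claimed. One should also note the degenerate cases $a \in \{0,1\}$ are handled by the stated conventions ($p_c(0)=1$ corresponds to $\Delta = -\infty$ and $p_c(1) = p_c^{\textup{Ising}}$ to $\Delta = +\infty$), but these are outside the range $\Delta \in \RR$ and need no argument here.

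The main obstacle is the limit-interchange step establishing $\langle \sigma_0 \rangle^+_{\beta,\Delta} = \varphi^1_{p,a}[0\longleftrightarrow\infty]$, and within it the subtlety that the wired dilute random cluster measure $\varphi^1_\Lambda$ appearing in the coupling is really the measure $\phi^1_{\Lambda,p,a}$ with a specific ``wired'' boundary prescription, matched to the $+$ boundary condition for Blume--Capel via the auxiliary enlarged domain $\Lambda'$ in the construction of $\P^+_\Lambda$. One needs to check that this sequence of coupled measures converges, along boxes, to the infinite-volume $\varphi^1_{p,a}$ from Proposition~\ref{prop: inf vol limits} — this is where monotonicity in the domain (the (MON) relation and Proposition~\ref{prop: domination}) does the work, sandwiching the finite-volume connection probabilities and forcing convergence to the infinite-cluster probability. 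The exchange of the $\Lambda \Uparrow \ZZ^d$ limit with the event $\{0 \longleftrightarrow \partial\Lambda\} \downarrow \{0 \longleftrightarrow \infty\}$ is then routine monotone convergence. Everything else — the bijectivity of $\beta \mapsto p$ and the resulting transfer of infima — is purely formal.
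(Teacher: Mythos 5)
Your proof is correct and follows exactly the route the paper intends: the paper's own proof is the single sentence ``This is a direct consequence of the convergence in Proposition~\ref{prop: inf vol limits} and Corollary~\ref{cor: ES correlations},'' and you have simply filled in the standard limit-interchange details (sandwiching $\varphi^1_{\Lambda_n}[0\leftrightarrow\partial\Lambda_n]$ between $\varphi^1[0\leftrightarrow\infty]$ and $\varphi^1[0\leftrightarrow\partial\Lambda_k]$ via monotonicity in the domain) and the formal transfer of infima under the increasing bijection $\beta\mapsto 1-e^{-2\beta}$. No gaps.
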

\begin{proof}
This is a direct consequence of the convergence in Proposition \ref{prop: inf vol limits} and Corollary \ref{cor: ES correlations}. 
\end{proof}

\begin{rem}
One important consequence of Proposition \ref{prop: critical pts} is that it allows to prove rigorously that the critical point of the Blume-Capel model coincides with the point at which the model undergoes a long-range order transition. This is implicitly used in Section \ref{sec: pf in 3d}. In addition, one may also show that the phase transition can be defined with respect to a uniqueness to non-uniqueness transition for the set of Gibbs measures. 
\end{rem}

\subsection{Non-triviality of critical points}

We now show that $p_c(a)$ is always nontrivial in dimensions $d\geq2$, i.e.\ $p_c(a)\in (0, 1)$, except when $a=0$. 
\begin{prop}\label{prop:existence}
For $a\in (0, 1)$, we have $p_c(a)\in (0, 1)$.
\end{prop}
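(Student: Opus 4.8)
The plan is to show the two bounds $p_c(a)<1$ and $p_c(a)>0$ separately, using the stochastic domination results of Section~\ref{sec: drc} to compare the vertex-marginal of the dilute random cluster model with independent Bernoulli site percolation, and then the random cluster measure on the random environment with an FK-Ising model on a fixed graph.

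\textbf{Upper bound $p_c(a)<1$.} First I would observe that, as $p\uparrow 1$, the dilute random cluster measure $\varphi^1_{p,a}$ increasingly favours open edges: by Proposition~\ref{prop: domination} the measures are stochastically increasing in $p$. The key point is that when $p$ is close to $1$ the set of open vertices $V_\psi$ percolates (site percolation of parameter roughly $a$ is subcritical-to-supercritical independent of $p$ only if $a$ is large, so this is \emph{not} immediate, and one must be slightly careful). More precisely, I would argue as follows: condition on $\psi$; on the event that $V_\psi$ contains an infinite cluster, the conditional law of $\omega$ is an FK-Ising measure on that infinite graph with parameter $p$, which percolates for $p$ close to $1$ by comparison with Bernoulli bond percolation (using $\phi^{\textup{RC}}_{\Lambda,\psi,p}$ dominates Bernoulli($p/(p+2(1-p))$) bond percolation via the standard comparison inequalities). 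So it suffices to show $V_\psi$ percolates under $\Psi^1_{p,a}$ for $p$ close to $1$; but by Proposition~\ref{prop: domination} and finite energy, the vertex marginal $\Psi^1_{p,a}$ stochastically dominates some Bernoulli site percolation whose parameter tends to $1$ as $p\to 1$ (because having many open edges incident to a vertex increases the conditional probability that the vertex is open — each open incident edge contributes a factor $1/r=1/\sqrt{1-p}\to\infty$ in the vertex-conditional odds). Hence for $p$ sufficiently close to $1$, $V_\psi$ percolates a.s.\ and then $\omega$ percolates, giving $p_c(a)<1$.

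\textbf{Lower bound $p_c(a)>0$.} For small $p$, I would simply dominate $\varphi^1_{p,a}$ from above. Conditioning on $\psi$, the FK-Ising measure $\phi^{\textup{RC}}_{\Lambda,\psi,p}$ is stochastically dominated by Bernoulli bond percolation of parameter $p$ on $E_\psi \subseteq \E^d$ (the $q=2$ FK measure is dominated by independent percolation with the same $p$). Ignoring $\psi$ entirely, $\omega$ is therefore dominated by Bernoulli($p$) bond percolation on all of $\E^d$, which does not percolate for $p<p_c^{\textup{bond}}(\ZZ^d)$. Hence $\varphi^1_{p,a}[0\longleftrightarrow\infty]=0$ for all such $p$, so $p_c(a)\ge p_c^{\textup{bond}}(\ZZ^d)>0$, uniformly in $a$.

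\textbf{Main obstacle.} The routine direction is the lower bound; the delicate part is the upper bound, specifically verifying that the vertex marginal $\Psi^1_{p,a}$ genuinely percolates for $p$ near $1$ for \emph{every} fixed $a\in(0,1)$, not merely for $a$ close to $1$. The subtlety is that the conditional probability that a vertex is open depends on the number of open incident edges, which in turn depends on the full configuration, so one cannot directly compare to an independent model without care; I would handle this by a careful application of the finite-energy / Holley-type criterion (Lemma~\ref{lem: fkglattice}) showing that $\Psi^1_{p,a}$ dominates Bernoulli site percolation of parameter $\rho(p,a)\to 1$ as $p\to 1$, using that the odds ratio for $\psi_x=1$ given the rest is at least $\tfrac{a}{1-a}\,r^{2d}$ in the worst case but, more usefully, that on the relevant percolating configurations many incident edges are open which boosts this. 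In fact the cleanest route is to first establish percolation of the pair $(\psi,\omega)$ jointly via a Peierls-type contour argument on $\LL^d$ at large $p$: a dual contour separating $0$ from $\infty$ forces a large number of closed edges or closed vertices, each costing a factor bounded away from $1$ when $p$ is large (closed edges cost $\asymp 1-p$, closed vertices cost $\asymp (1-a)$ but there cannot be more than a constant fraction of them along a contour without also creating many closed edges), so a standard summable-contour estimate closes the argument for $p$ close enough to $1$ depending on $a$.
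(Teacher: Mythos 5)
Your lower bound is correct and is essentially the paper's argument: conditionally on $\psi$, the random cluster measure on $(V_\psi,E_\psi)$ is dominated by a percolation model on all of $\ZZ^d$ that does not percolate for small $p$ (the paper compares to the wired FK-Ising random cluster measure on $\LL^d$ and gets $p_c(a)\geq p_c^{\operatorname{Ising}}(\ZZ^d)$; you compare to Bernoulli($p$) bond percolation and get $p_c(a)\geq p_c^{\textup{bond}}(\ZZ^d)$ --- both suffice).

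The upper bound, however, has a genuine gap, and you have correctly located the difficulty without resolving it. Your first route fails outright: the vertex marginal $\Psi^1_{p,a}$ does \emph{not} dominate Bernoulli site percolation with parameter tending to $1$. If all neighbours of $x$ are closed, the conditional odds of $\psi_x=1$ are exactly $\tfrac{2a}{1-a}$ (the factor $r^{N_x}$ and the ratio of random cluster partition functions both trivialise), so the worst-case conditional probability equals $\tfrac{2a}{1+a}$, which is bounded away from $1$ for every fixed $a<1$ and is independent of $p$. Your Peierls route inherits the same problem: a separating contour can be accounted for entirely by closed \emph{vertices}, each costing only a factor of order $1-a$, a constant independent of $p$ that does not beat the exponential growth of the number of contours when $a$ is small; and your parenthetical claim that many closed vertices must ``also create many closed edges'' is false, because an edge incident to a closed vertex is removed from the model by the compatibility constraint rather than incurring an independent cost of order $1-p$. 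The missing idea --- which is the paper's proof --- is to work with the \emph{edge} marginal and apply the Liggett--Schonmann--Stacey criterion to a single edge $e=xy$ with $\Lambda=\{x,y\}$: summing jointly over the states of $\psi_x$, $\psi_y$ and $\omega_e$, the configuration with both vertices and the edge open carries the weight $\left(\tfrac{a}{1-a}\right)^2\sqrt{1-p}\cdot\tfrac{p}{1-p}=\left(\tfrac{a}{1-a}\right)^2\tfrac{p}{\sqrt{1-p}}\to\infty$ as $p\to1$, while every competing configuration (including the closed-vertex ones) has bounded weight; hence $\Omega^{\xi}_{\{x,y\},p,a}[\omega_e=1]\to1$ uniformly in the boundary condition, the edge marginal dominates supercritical Bernoulli bond percolation, and $p_c(a)<1$. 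In short, the cost of a closed vertex must be measured against the alternative in which the vertex \emph{and its incident edges} are opened --- only then does it degenerate as $p\to1$ --- and your sketch never performs this joint accounting.
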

\begin{proof}
Let $a \in (0,1)$ and set $p_c:=p_c(a)$. Let $p < p_c^\textup{Ising}(\ZZ^d)$. Fix $n\geq 1$ and let $\Lambda \subset \ZZ^d$ finite such that $\Lambda \supset \Lambda_n$. Then,
\begin{equs}
\varphi^1_{\Lambda,p,a}[0 \longleftrightarrow \partial \Lambda_n]
=
\Psi^1_{\Lambda,p,a} \Big[ \phi^{\textup{RC,1}}_{\Lambda,\psi,p} [0 \longleftrightarrow \partial \Lambda_n ] \Big]
\leq
\phi^{\textup{RC},1}_{\Lambda,p}[0 \longleftrightarrow \partial \Lambda_n]
\end{equs}
where the equality is by \eqref{eq: conditioning} and the inequality is by monotonicity in the domain for the usual random cluster model, see \cite[Proposition 5]{DCT}. Taking limits as $\Lambda \uparrow \ZZ^d$ and then $n \rightarrow \infty$, we obtain that $\varphi^1_{p,a}[0\longleftrightarrow  \infty]=0$. Thus, $p_c\geq p_c^{\operatorname{Ising}}(\ZZ^d)>0$.

In order to show $p_c < 1$, it is convenient to consider the edge marginal of $\varphi^1_{p,a}$.
We show that, for any $q\in (0,1)$, if $p$ is sufficiently close to $1$, 
\begin{equs}
\mathbf{P}^\textup{Ber}_q
\preceq
\Omega^1_{p,a}
\end{equs}
where $\mathbf{P}^\textup{Ber}_q$ is the law of Bernoulli bond percolation on $\E^d$ with parameter $q$. Letting $q$ be in the supercritical regime for Bernoulli bond percolation we thus get $\varphi^1_{p,a}[0 \longleftrightarrow \infty]> 0$, and so $p_c < 1$.

It is classical \cite{LSS} that the stochastic domination follows once we show that for an edge $e:=xy$, $\Omega^{\xi}_{\Lambda,p,a}[\omega_e=1]\geq \rho$ for some $\rho$ close enough to $1$, where $\xi$ is any boundary condition and $\Lambda=\{x,y\}$.
Note that, by Proposition~\ref{prop: domination} and a direct calculation, 
\begin{equs}
\Omega^{\xi}_{\Lambda,p,a}[\omega_e=1]\geq \Omega^0_{\Lambda,p,a}[\omega_e=1] =\frac{\left(\frac{a}{1-a}\right)^2\sqrt{1-p}\left(\frac{p}{1-p}\right)}{\left(\frac{a}{1-a}\right)^2\sqrt{1-p}\left(\frac{p}{1-p}\right)(1+f(p,a))}
\end{equs}
where $f(p,a)=o(1)$ as $p \rightarrow 1$. Hence, $\Omega^{\xi}_{\Lambda,p,a}[\omega_e=1]$ converges to $1$ as $p$ tends to $1$ uniformly in $\xi$, as desired. 
\end{proof}

\subsection{Continuity and monotonicity of the critical line}

In order to prove continuity of the critical line, we need to show the following strengthening of Proposition \ref{prop: domination} about stochastic domination.

\begin{prop}\label{prop:stochastic-domination-2}
Let $d\geq 1$ and $p_1>p_2$. Then, for every $a\in (0,1)$ there exists $\eps=\eps(d,p_1,p_2,a)\in (0,1-a)$ such that, for any $a_1>a-\varepsilon$ and any $a_2<a+\varepsilon$,
\begin{equs}
\varphi^1_{p_2,a_2}
\preceq
\varphi^1_{p_1,a_1}.
\end{equs}
Moreover, there exists $\delta=\delta(d,p_1,p_2)>0$ such that for any $a\in (0,1-\delta)$,
\begin{equs}
\phi^{\textup{RC},1}_{p_2}
\preceq
\Omega^{1}_{p_1,a}.
\end{equs}
\end{prop} 
\begin{proof}
We start by showing the first assertion. We prove the stochastic domination for $\Lambda\subset \ZZ^d$ finite. The assertion then follows by taking the limit as $\Lambda$ tends to $\ZZ^d$. 

We wish to find $\varepsilon>0$ such that, for any edge $xy$ of $\LL^d$ 
and any boundary conditions $\xi_1\geq \xi_2$,
\begin{equs}\label{eq:comparison}
\varphi^{\xi_2}_{\Lambda',p_2,a_2}
\preceq
\varphi^{\xi_1}_{\Lambda',p_1,a_1}
\end{equs}
where $\Lambda'=\{x,y\}$, and $a_1$ and $a_2$ are as above. Then, by Strassen's theorem \cite{Strassen}, there exists a coupling $(\theta_1,\theta_2)$ between $\varphi^{\xi_2}_{\Lambda',p_2,a_2}$ and $\varphi^{\xi_1}_{\Lambda',p_1,a_1}$ such that $\theta_1\geq \theta_2$. A standard Markov chain argument (see, for instance, \cite[Lemma 1.5]{HDC-IP}) then gives that  
\begin{equs}
\varphi^{1}_{\Lambda,p_2,a_2}
\preceq
\varphi^{1}_{\Lambda,p_1,a_1}.
\end{equs}
In addition, 
since by Proposition~\ref{prop: domination} the measure $\phi^\xi_{\Lambda',p,a}$ is stochastically increasing in $\xi$, it suffices to prove \eqref{eq:comparison} for $\xi_1=\xi_2=\xi$.

First observe that, for any increasing and non-empty event $A \in \cF$ that is measurable with respect to $(\Lambda',E_{\Lambda'})$, the function $p\mapsto \varphi^{\xi}_{\Lambda',p,a}[A]$ is analytic and non-constant. Thus, by Proposition \ref{prop: domination}, we have the strict inequality 
\begin{equs}
\varphi^{\xi}_{\Lambda',p_1,a}[A]
>
\varphi^{\xi}_{\Lambda',p_2,a}[A].
\end{equs}
Note that there are finitely many distinct maps\footnote{\label{Note} Recall our measures depend only on the state of $\partial \Lambda'$ and which vertices of $\partial \Lambda'$ are connected to each other outside of $\Lambda'$.} $a\mapsto \varphi^{\xi}_{\Lambda',p_2,a}$ ranging over all choices of $\xi$ and over all possible $A$. Hence, by continuity 
and monotonicity (see Proposition \ref{prop: domination}) 
of the map $a\mapsto \varphi^{\xi}_{\Lambda',p_2,a}[A]$, we can choose $\eps=\eps(d,p_1,p_2,a)>0$ such that
\begin{equs}
\varphi^{\xi}_{\Lambda',p_1,a_1}[A]
>
\varphi^{\xi}_{\Lambda',p_2,a_2}[A]
\end{equs}
for all $a_1>a-\varepsilon$ and $a_2<a+\varepsilon$, and uniformly over all increasing, non-empty events $A$ and boundary conditions $\xi$ on $(\Lambda', E_{\Lambda'})$. The first assertion follows.

For the second assertion, we need to show that for any $a$ close enough to $1$ and any boundary conditions $\rho\in \{0,1\}^{\E^d}$, 
\begin{equs}
\phi^{\textup{RC},\rho}_{\Lambda',p_2}
\preceq
\Omega^1_{\Lambda,p_1,a}[\cdot \mid \omega_e=\rho_e, e\in \E^d\setminus E_{\Lambda}]
\end{equs}
for the restriction of the measures on events depending on $E_{\Lambda'}$.
Note that for any increasing, non-empty event $A$ depending on $E_{\Lambda'}$, we have the strict inequality 
\begin{equs}
\phi^{\textup{RC},\rho}_{\Lambda',p_1}[A]> \phi^{\textup{RC},\rho}_{\Lambda',p_2}[A],
\end{equs}
where $\Lambda'=\{x,y\}$ for some neighbours $x,y$.
By continuity of the map $a\mapsto\Omega^{\xi}_{\Lambda',p_1,a}[A]$, where $\xi=(\kappa,\rho_1)$ with $\kappa\equiv 1$, we have that 
\begin{equs}
\Omega^{\xi}_{\Lambda',p_1,a}[A]> \phi^{\textup{RC},\rho}_{\Lambda',p_2}[A]
\end{equs}
for every $a$ close enough to $1$. 

To relate $\Omega^{\xi}_{\Lambda',p_1,a}$ to $\Omega^1_{\Lambda,p_1,a}[\cdot \mid \omega_e=\rho_e, e\in \E^d\setminus E_{\Lambda}]$, consider a finite set of vertices $R\subset \ZZ^d\setminus \Lambda'$ such that for all boundary conditions $\Omega^1_{\Lambda,p_1,a}[\cdot \mid \omega_e=\rho_e \, \forall \, e\in \E^d\setminus E_{\Lambda},\psi_u=1 \, \forall \, u\in R]=\Omega^{\xi}_{\Lambda',p_1,a}$ for the restriction of the measures on events depending on $E_{\Lambda'}$\footnote{See footnote \ref{Note}.}. Arguing as in the proof of Proposition~\ref{prop:existence}, we see that $\varphi^1_{\Lambda,p',a}[\psi_u=1 \, \forall u\in R \mid \rho_e, e\in \E^d\setminus E_{\Lambda}]$ convergences to $1$ uniformly in $\Lambda$ as $a$ tends $1$. Hence, for every $a$ close enough to $1$
\begin{equs}
\Omega^{1}_{\Lambda,p',a}[A \mid \omega_e=\rho_e, e\in \E^d\setminus E_{\Lambda}]\geq \varphi^1_{\Lambda,p',a}[\psi_u=1 \, \forall u\in R \mid \rho_e, e\in \E^d\setminus E_{\Lambda}]\Omega^{\xi}_{\Lambda',p_1,a}[A]> \phi^{\textup{RC},\rho}_{\Lambda',p}[A]
\end{equs}
for any increasing and non-empty event $A$ depending on $E_{\Lambda'}$.
The stochastic domination follows.
\end{proof}

\begin{prop}\label{prop: line cont}
The function $a\mapsto p_c(a)$ is decreasing and continuous on $[0, 1]$. 
\end{prop}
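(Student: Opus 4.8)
The plan is to combine the stochastic domination results of Propositions \ref{prop: domination} and \ref{prop:stochastic-domination-2} with the finite-energy/non-triviality facts established in Proposition \ref{prop:existence}. First I would establish monotonicity: if $a \leq a'$, then by Proposition \ref{prop: domination} we have $\varphi^1_{p,a} \preceq \varphi^1_{p,a'}$ for every $p$, so $\varphi^1_{p,a}[0 \longleftrightarrow \infty] \leq \varphi^1_{p,a'}[0 \longleftrightarrow \infty]$ (the event $\{0 \longleftrightarrow \infty\}$ is increasing, being an increasing limit of the increasing cylinder events $\{0 \longleftrightarrow \partial B_n\}$). Hence if $p > p_c(a')$, i.e.\ percolation occurs at $(p,a')$, then percolation occurs at $(p,a)$ as well, giving $p_c(a) \leq p_c(a')$; thus $a \mapsto p_c(a)$ is non-increasing. (For strict monotonicity, if needed, one argues as in Proposition \ref{prop:stochastic-domination-2}: on a two-vertex region the relevant connection probabilities are strictly monotone analytic functions of $a$, which via the same Strassen/Markov-chain argument upgrades $\preceq$ to strict domination; this forces $p_c(a) < p_c(a')$ whenever $a < a'$, but the statement as written only asks for "decreasing", which I would read as non-increasing, or establish strictly if the later sections require it.)

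Next, right-continuity. Fix $a \in [0,1)$ and suppose $p > p_c(a)$. I want to show $p_c(a') \leq p$ for all $a'$ in a right-neighbourhood of $a$, i.e.\ that $(p,a')$ is supercritical. Since $p > p_c(a)$, pick $p_2$ with $p_c(a) < p_2 < p$. Then $\varphi^1_{p_2,a}[0 \longleftrightarrow \infty] > 0$. Apply Proposition \ref{prop:stochastic-domination-2} with this $p_2$ and with $p_1 := p$: there is $\eps > 0$ so that for all $a' < a + \eps$ (and $a' > a - \eps$), $\varphi^1_{p_2,a} \preceq \varphi^1_{p,a'}$, whence $\varphi^1_{p,a'}[0 \longleftrightarrow \infty] \geq \varphi^1_{p_2,a}[0 \longleftrightarrow \infty] > 0$, so $p_c(a') \leq p$. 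Letting $p \downarrow p_c(a)$ shows $\limsup_{a' \downarrow a} p_c(a') \leq p_c(a)$; combined with monotonicity ($p_c(a') \leq p_c(a)$ for $a' \geq a$ is automatic, so actually this direction needs care) — more precisely, monotonicity gives $p_c(a') \leq p_c(a)$ for $a' > a$, hence $\limsup_{a' \downarrow a} p_c(a') \leq p_c(a)$ trivially, and the domination argument gives the matching lower bound $\liminf_{a' \downarrow a} p_c(a') \geq$ ... — let me instead organize it cleanly: monotonicity gives one-sided bounds, and the domination argument of the previous sentence shows that $p_c$ cannot jump down as $a$ increases through $a$, which is exactly right-continuity of a non-increasing function.

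Finally, left-continuity. Fix $a \in (0,1]$ and suppose $p < p_c(a)$, so $\varphi^1_{p,a}[0 \longleftrightarrow \infty] = 0$. I want $p_c(a') \geq p$ for $a'$ in a left-neighbourhood. Choose $p_1$ with $p < p_1 < p_c(a)$, so $\varphi^1_{p_1,a}[0\longleftrightarrow\infty]=0$. Apply Proposition \ref{prop:stochastic-domination-2} with $p_2 := p$ and this $p_1$: there is $\eps>0$ so that for $a' > a - \eps$, $\varphi^1_{p,a'} \preceq \varphi^1_{p_1,a}$, hence $\varphi^1_{p,a'}[0\longleftrightarrow\infty] \leq \varphi^1_{p_1,a}[0\longleftrightarrow\infty] = 0$, i.e.\ $p_c(a') \geq p$. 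Letting $p \uparrow p_c(a)$ gives $\liminf_{a' \uparrow a} p_c(a') \geq p_c(a)$, which with monotonicity yields left-continuity. One should also check the boundary values $a=0$ ($p_c(0)=1$ by definition, and $p_c(a)<1$ for $a\in(0,1)$ by Proposition \ref{prop:existence}, so continuity at $0$ fails unless... actually $p_c$ is non-increasing and $p_c(0)=1$, $p_c(a)<1$ for $a>0$, so $p_c$ is \emph{not} continuous at $0$ — I would therefore state and prove continuity on $(0,1]$, or note the convention; I will follow whatever the sections that invoke this need and most likely the intended statement is continuity on $[0,1]$ with $p_c(0^+)$ possibly $<1$, so this endpoint deserves a careful remark) and $a=1$ ($p_c(1) = p_c^{\textup{Ising}}$, handled by the same domination once one checks Proposition \ref{prop:stochastic-domination-2} extends to $a$ near $1$). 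The main obstacle is not any single hard estimate but rather the bookkeeping: making sure Proposition \ref{prop:stochastic-domination-2} is applied with the correct ordering of $p_1, p_2$ in each of the two continuity directions, and handling the degenerate endpoints $a \in \{0,1\}$ and $p \in \{0,1\}$ where the domination inputs need their stated conventions.
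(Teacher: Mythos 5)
Your argument on the open interval $(0,1)$ is essentially the paper's: monotonicity from Proposition \ref{prop: domination}, and both one-sided continuity bounds from Proposition \ref{prop:stochastic-domination-2} (the paper compresses this to a citation of Grimmett's book). Two small slips there: your two continuity paragraphs have their labels swapped (showing that supercriticality survives a small \emph{decrease} of $a$ controls $\limsup_{a'\uparrow a}p_c(a')$, i.e.\ left-continuity, and vice versa), and in the second paragraph the relevant range from Proposition \ref{prop:stochastic-domination-2} is $a'<a+\eps$ (the condition on $a_2$), not $a'>a-\eps$. These are cosmetic; the substance is right.

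The genuine gap is at the endpoints, where the proposition really does assert continuity on all of $[0,1]$. At $a=0$ you conclude that continuity \emph{fails}, reasoning from ``$p_c(0)=1$ and $p_c(a)<1$ for all $a>0$.'' That inference is false: pointwise strict inequality is perfectly compatible with $\lim_{a\downarrow 0}p_c(a)=1$, and this limit is exactly what the paper proves. The argument is the one from Proposition \ref{prop:existence}: for any fixed $p\in(0,1)$, when $a$ is small enough the vertex marginal $\Psi^1_{p,a}$ is stochastically dominated by subcritical Bernoulli site percolation on $\ZZ^d$, so $\varphi^1_{p,a}$ cannot percolate and $p_c(a)\geq p$; letting $p\uparrow 1$ gives $p_c(a)\to 1=p_c(0)$. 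At $a=1$ your suggestion to ``extend Proposition \ref{prop:stochastic-domination-2} to $a$ near $1$'' does not suffice as stated, because that proposition compares two dilute measures with $a$-parameters near a fixed interior point, whereas here one must compare with the $a=1$ measure itself, i.e.\ the ordinary FK--Ising random cluster model. The paper supplies a separate argument: for $p'>p$ it exhibits $a<1$ such that the edge marginal $\Omega^1_{p',a}$ stochastically dominates $\phi^{\textup{RC},1}_{p}$ (using single-edge conditional estimates plus the fact that the conditional probability of all of $\partial\Lambda'$ being open tends to $1$ uniformly as $a\to 1$), which forces $p_c(a)\to p_c(1)$. Without these two endpoint arguments the proof is incomplete, and the $a=0$ claim as written is incorrect.
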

\begin{proof}
The fact that $a\mapsto p_c(a)$ is decreasing is a consequence of Proposition \ref{prop: domination}. We now turn to the continuity in the open interval $(0,1)$. Let $a\in (0,1)$ and $p_1>p_2>p_c(a)$. By Proposition \ref{prop:stochastic-domination-2}, we can find $\varepsilon>0$ such that 
\begin{equs}
\varphi^1_{p_2,a}
\preceq
\varphi^1_{p_1,a'}
\end{equs}
for any $a'> a-\varepsilon$.
Since $\varphi^1_{p_2,a}$ percolates, we have that $\varphi^1_{p_1,a'}$ percolates, hence $p_c(a')\leq p_1$. Since $p_c(a')\geq p_c(a)$ by Proposition~\ref{prop: domination}, sending $p_1$ to $p_c(a)$ we obtain that the critical probability is continuous from the left at $a$. To show continuity from the right, consider $p_2<p_1<p_c(a)$ and note that again we can find $\varepsilon>0$ such that 
\begin{equs}
\varphi^1_{p_2,a'}
\preceq
\varphi^1_{p_1,a}
\end{equs}
for any $a'< a+\varepsilon$. Then $p_c(a')\geq p_2$, and continuity from the right follows.

The case $a=1$, follows similarly using the second part of Proposition \ref{prop:stochastic-domination-2}. To prove continuity at $a=0$, we can argue as in the proof of Proposition~\ref{prop:existence} to see that for every $p\in (0,1)$ there is $\varepsilon=\varepsilon(p)>0$ such that for every $a\in (0,\varepsilon)$, $\Psi^1_{p,a}$ is dominated by subcritical Bernoulli site percolation on $\ZZ^d$. This implies that $\varphi^1_{p,a}$ is not supercritical, hence $p_c(a)\geq p$. It follows that $p_c(a)$ converges to $p_c(0)=1$ as $a$ tends to $0$.
\end{proof}

In light of the coupling between the dilute random cluster measure and the Blume-Capel model, we now obtain Theorem \ref{thm: BC phase diagram} as an immediate corollary.

\begin{proof}[Proof of Theorem \ref{thm: BC phase diagram}]

This follows from Propositions \ref{prop: critical pts} and \ref{prop: line cont}. 
\end{proof}

\begin{rem}
We believe that one can extend the arguments above to show that the function $a\mapsto p_c(a)$ is strictly decreasing for $d\geq 2$ (c.f. \cite[Theorem 5.10]{GGBook}).
\end{rem}


\subsection{Discontinuity via Pirogov-Sinai theory}

We now show that the first half of Theorem~\ref{thm: existence}, namely the existence of $\Delta^-(d)$, follows from the classical result \cite{BS} in Pirogov-Sinai theory (see also \cite[Chapter 7]{FV} for a textbook approach to Pirogov-Sinai theory applied to the Blume-Capel model). To state the latter result precisely, we introduce a slightly different parametrisation of the Blume-Capel model.

For this parametrisation, there are two parameters $\beta'>0$ and $\lambda\in \RR$.
The Hamiltonian is defined by
\begin{equs}
H^\eta_{G,\lambda}(\sigma)
=
\sum_{xy \in E} (\sigma_x-\sigma_y)^2 -\lambda\sum_{x \in V} \sigma_x^2 + \sum_{\substack{xy \in E^d \\x \in V, y \in \ZZ^d\setminus V }} (\sigma_x-\eta_y)^2,
\end{equs}
and the probability of a configuration is proportional to
\begin{equs}
e^{-\beta' H^{\eta}_{G,\lambda}(\sigma)}.
\end{equs}
Expressed in our parametrisation, this corresponds to the change of variables $\beta= 2\beta'$ and $\Delta=\beta'(\lambda-2d)$. In \cite{BS}, it is proved that there exists $\beta'_0>0$ such that for every $\beta'>\beta'_0$ the following holds: there exists $\lambda_0(\beta')>0$ such that 
\begin{equs}
\llangle \sigma_0 \rrangle^+_{\beta',\lambda}
\begin{cases}
=0, \quad \lambda < \lambda_0(\beta'),	\\
> 0, \quad \lambda \geq \lambda_0(\beta'),
\end{cases}	
\end{equs} 
where $\llangle \cdot \rrangle^+_{\beta',\lambda}$ denotes the Gibbs measure at parameters $\beta'$, $\lambda$ under the plus boundary conditions.

Let now $\beta=2\beta'$ and $\Delta_0=\beta'(\lambda_0(\beta')-2d)$. Note that either $\beta=\beta_c(\Delta_0)$ or $\beta>\beta_c(\Delta_0)$. If the latter holds, then $\beta>\beta(\tilde\Delta)$ for any $\tilde\Delta$ in a neighbourhood of $\Delta_0$ by Proposition~\ref{prop: line cont}, which implies that $\llangle \sigma_0 \rrangle^+_{\beta',\lambda}>0$ for some $\lambda<\lambda_0(\beta')$. The latter is a contradiction, hence we can deduce that $\beta=\beta_c(\Delta_0)$. This shows that the phase transition at $\Delta_0$ is discontinuous, as desired.

\section{Combinatorial mapping between the Blume-Capel model and the Ising model} \label{sec: mapping bc-ising}

Let $d\geq 2$, and let $G=(V,E)$ be a finite subgraph of $\LL^d$. We establish a correspondence\footnote{Other mappings of the Blume-Capel model to certain spin models on the same graph $\ZZ^d$ appeared in the literature, see, for instance, \cite{BLH}.} between the Blume-Capel model on $G$ and an Ising model (not necessarily ferromagnetic) on a larger graph associated to $G$, described below. An immediate byproduct of the proof is that the coupling constants of the associated Ising model are ferromagnetic provided $\Delta \geq - \log 2$. We use this extensively later in the article.

First, we lift $G$ to the graph\footnote{This graph coincides with the strong product of $G$ with the complete graph on two vertices.} $\ell(G)=(\ell(V),\ell(E))$ with vertex set $\ell(V)=V\times \{0,1\}$ and edge set $\ell(E)=E_1\cup E_2$, where 
\begin{equs}
E_1
=
\bigcup_{xy \in E} \bigcup_{i,j=0}^1 \{(x,i), (y,j)\},
\qquad
E_2
=
\bigcup_{x \in V} \{ (x,0), (x,1) \}. 	
\end{equs}
Note that $\ell(G)$ can be seen as a subgraph of the graph $\ell(\ZZ^d)$ with vertex set $\ZZ^d\times \{0,1\}$ and edge set $\EE_1^d\cup \EE_2^d$, where 
\begin{equs}
\EE_1^d
=
\bigcup_{xy \in \EE^d} \bigcup_{i,j=0}^1 \{(x,i), (y,j)\},
\qquad
\EE_2^d
=
\bigcup_{x \in \ZZ^d} \{ (x,0), (x,1) \}. 	
\end{equs}
In what follows, we abuse the notation and write $\tau^i_x$ instead of $\tau_{(x,i)}$.

Given some boundary conditions $\xi\in \{-1,0,1\}^{\ZZ^d}$, we define the boundary conditions $\ell(\xi)\in \{-1,0,1\}^{\ZZ^d\times \{0,1\}}$ by letting $\ell(\xi)_{(x,i)}=\xi_x$ for every $x\in \ZZ^d$ and $i\in \{0,1\}$.  Let also 
$T:\{\pm 1\}^V\times \{\pm 1\}^V \mapsto \{-1,0,1\}^V$ be the map $(\tau^0,\tau^1) \mapsto \sigma$, where $\sigma_x=\frac{1}{2}(\tau^0_{x}+\tau^1_{x})$. Identifying $\{\pm 1\}^V\times \{\pm 1\}^V$ with $\{\pm 1\}^{\ell(V)}$, we also view $T$ as a map defined on $\{\pm 1\}^{\ell(V)}$, i.e.\ a map defined over all possible Ising configurations on $\ell(G)$. We write $\tau$ for the spin variable of the Ising model to distinguish it from the spin variable $\sigma$ of the Blume-Capel model.

\begin{lem}\label{lem: comb mapping}
Let $\xi\in \{-1,0,1\}^{\ZZ^d}$. Let also $\mu^{\ising, \ell(\xi)}_{\ell(G),J}$ denote the Ising model on $\ell(G)$ with boundary conditions $\ell(\xi)$ and coupling constants
\begin{equs} 
J_{x,y}=J(\beta,\Delta)_{x,y}
=
\frac{\beta}4 \1_{xy \in E_1} + \frac{(\Delta + \log 2)}2 \1_{xy \in E_2}.
\end{equs}
	
Then, for every $\eta\in \{-1,0,1\}^V$, we have
\begin{equs}
\mu^{\xi}_{G,\beta,\Delta}(\sigma=\eta)=\mu^{\ising, \ell(\xi)}_{\ell(G),J}(\tau\in T^{-1}(\eta)).
\end{equs}
\end{lem}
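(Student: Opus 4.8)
The plan is to verify the identity by a direct computation of the Boltzmann weights. First I would fix $\eta \in \{-1,0,1\}^V$ and describe the fibre $T^{-1}(\eta) \subset \{\pm 1\}^{\ell(V)}$ explicitly: writing a configuration on $\ell(V)$ as a pair $(\tau^0, \tau^1)$ with $\tau^i \in \{\pm1\}^V$, the condition $T(\tau^0,\tau^1) = \eta$ means that at each $x \in V$ we have $\tfrac12(\tau^0_x + \tau^1_x) = \eta_x$. Hence if $\eta_x = 0$ then $\{\tau^0_x, \tau^1_x\} = \{+1,-1\}$ (two choices), while if $\eta_x = \pm 1$ then $\tau^0_x = \tau^1_x = \eta_x$ (one choice). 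So $|T^{-1}(\eta)| = 2^{|\{x : \eta_x = 0\}|}$, and I will parametrise the fibre by the choice, for each $x$ with $\eta_x = 0$, of which copy carries $+1$.

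Next I would compute the $\ell(\xi)$-boundary Ising weight of an element $(\tau^0,\tau^1)$ of the fibre, splitting the Hamiltonian over $E_1$, $E_2$, and the boundary edges. For the $E_2$-edge at $x$, the term is $\tfrac{(\Delta + \log 2)}{2}\,\tau^0_x \tau^1_x$, which equals $\tfrac{(\Delta+\log 2)}{2}$ when $\eta_x = \pm 1$ and $-\tfrac{(\Delta+\log 2)}{2}$ when $\eta_x = 0$; summing over $x$ and exponentiating, the $E_2$ contribution to the weight is, up to the global constant $\prod_{x \in V} e^{(\Delta+\log 2)/2}$, a factor $e^{-(\Delta+\log 2)} = \tfrac12 e^{-\Delta}$ for each $x$ with $\eta_x = 0$ — crucially independent of the internal choice within the fibre, so summing over the $2^{\#\{\eta_x=0\}}$ fibre elements exactly cancels that $2$-power and produces $e^{\Delta \eta_x^2}$ up to constants. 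For the $E_1$-edges over $xy \in E$, the sum $\sum_{i,j} \tau^i_x \tau^j_y = (\tau^0_x + \tau^1_x)(\tau^0_y + \tau^1_y) = 4\eta_x\eta_y$, so with coupling $\beta/4$ the contribution is $e^{\beta \eta_x \eta_y}$, matching the Blume-Capel interaction term, and again it does not depend on the fibre choice. The boundary edges in $E^d$ between $x \in V$ and $y \notin V$ in $\ell(E)$ similarly contribute $\sum_{i,j} \tau^i_x \,\ell(\xi)_{(y,j)} = 2(\tau^0_x+\tau^1_x)\,\xi_y = 4 \eta_x \xi_y$ with coupling $\beta/4$, giving $e^{\beta \eta_x \xi_y}$, matching the Blume-Capel boundary term.

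Putting these together: summing the Ising weights over the whole fibre $T^{-1}(\eta)$ gives $e^{-H^\xi_{G,\beta,\Delta}(\eta)}$ times a multiplicative constant $C(\beta,\Delta,G)$ that is the same for every $\eta$ (namely the product over $x \in V$ of $e^{(\Delta+\log 2)/2}$, the product over $xy \in E$ of a normalising factor, etc.). Dividing by the Ising partition function $Z^{\ising,\ell(\xi)}_{\ell(G),J} = \sum_{\tau} e^{-H} = \sum_{\eta} C(\beta,\Delta,G)\, e^{-H^\xi_{G,\beta,\Delta}(\eta)} = C(\beta,\Delta,G)\, Z^\xi_{G,\beta,\Delta}$, the constant $C$ cancels and we obtain $\mu^{\ising,\ell(\xi)}_{\ell(G),J}(\tau \in T^{-1}(\eta)) = \tfrac{1}{Z^\xi_{G,\beta,\Delta}} e^{-H^\xi_{G,\beta,\Delta}(\eta)} = \mu^\xi_{G,\beta,\Delta}(\sigma = \eta)$, as claimed.

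The only real subtlety — and the step I would be most careful about — is the bookkeeping for the $E_2$-edges: one must check that the factor $2^{-\#\{x : \eta_x = 0\}}$ coming from the $e^{-(\Delta + \log 2)}$ weights exactly cancels the multiplicity $2^{\#\{x : \eta_x = 0\}}$ of the fibre, leaving precisely the $e^{\Delta \eta_x^2}$ crystal-field term (up to an $\eta$-independent constant). This is also exactly the computation that dictates the choice of coupling $\tfrac{\Delta + \log 2}{2}$ on $E_2$ and explains why the associated Ising model is ferromagnetic precisely when $\Delta \geq -\log 2$. Everything else is a routine matching of exponents, so I would organise the proof around carefully stating the fibre parametrisation and then tabulating, edge-type by edge-type, the contribution to the weight and its (in)dependence on the fibre choice.
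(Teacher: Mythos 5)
Your proposal is correct and follows essentially the same direct Boltzmann-weight computation as the paper's proof: both expand the Ising weight edge-type by edge-type, use $(\tau^0_x+\tau^1_x)(\tau^0_y+\tau^1_y)=4\eta_x\eta_y$ for the $E_1$ and boundary terms, and observe that the $E_2$ weight $e^{\frac{\Delta+\log 2}{2}\tau^0_x\tau^1_x}$ together with the fibre multiplicity $2^{\#\{\eta_x=0\}}$ reproduces $\prod_x e^{\Delta\eta_x^2}$ up to an $\eta$-independent constant (the paper writes this via $\tau^0_x\tau^1_x=2\eta_x^2-1$, which is the same bookkeeping). The cancellation of the common constant $e^{-\frac{(\Delta-\log 2)}{2}|V|}$ against the ratio of partition functions matches the paper's final step as well.
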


\begin{proof}
We start by showing that 
\begin{equs} 
\begin{split}\label{bc}
\sum_{\tau \in T^{-1}(\eta)}
\prod_{uv \in E_1} 	e^{\frac{\beta}4 \tau_u \tau_v} \prod_{\substack{u\in \ell(V) \\ v \in \partial \ell(V)}} e^{\frac{\beta}4 \tau_u \ell(\xi)_v}
\prod_{uv \in E_2} e^{\frac{(\Delta + \log 2)}2 \tau_v \tau_v}
& = \\ 
e^{-\frac{(\Delta-\log 2)}2|V|}
 \prod_{xy \in E}e^{\beta \eta_x \eta_y} 
 \prod_{\substack{x\in V \\ y \in \partial V}} e^{\beta \eta_x \xi_y}
 \prod_{x \in V} e^{\Delta \eta_x}.
 \end{split}
\end{equs}
 
Note that

\begin{equs}
\begin{split} \label{bc1}
\sum_{\tau \in T^{-1}(\eta)}
\prod_{uv \in E_1} 	e^{\frac{\beta}4 \tau_u \tau_v} \prod_{\substack{u\in \ell(V) \\ v \in \partial \ell(V)}} e^{\frac{\beta}4 \tau_u \ell(\xi)_v}
\prod_{uv \in E_2} e^{\frac{(\Delta + \log 2)}2 \tau_v \tau_v}
\\ = 
\sum_{\substack{\tau^0, \tau^1 \in \{ \pm 1\}^{V} \\ T(\tau^0, \tau^1)=\eta} }
\prod_{xy \in E} \prod_{i,j=0}^1 e^{\frac{\beta}4 \tau^i_x\tau^j_y }
\prod_{\substack{x\in V \\ y \in \partial V}} \prod_{i,j=0}^1 e^{\frac{\beta}4 \tau^i_x \ell(\xi)_{(y,j)}}
\prod_{x \in V}e^{\frac{(\Delta+\log 2)}{2}\tau^0_x \tau^1_x}
\\ =
\sum_{\substack{\tau^0,\tau^1 \in \{\pm 1\}^V \\ T(\tau^0, \tau^1)=\eta}}
\prod_{xy \in E}e^{\beta \eta_x \eta_y} \prod_{\substack{x\in V \\ y \in \partial V}}e^{\beta \eta_x \xi_y} \prod_{x \in V} \left(e^{(\Delta + \log 2)\eta_x^2} e^{-\frac{(\Delta+\log 2)}2}\right)
\\ =
e^{-\frac{(\Delta+\log 2)}2|V|} \Bigg(\sum_{\substack{\tau^1,\tau^2 \in \{\pm 1\}^V \\ T(\tau^1, \tau^2)=\eta}} 1\Bigg)\prod_{xy \in E}e^{\beta \eta_x \eta_y} \prod_{\substack{x\in V \\ y \in \partial V}}e^{\beta \eta_x \xi_y} \prod_{x \in V} e^{(\Delta + \log 2)\eta_x^2},
\end{split}
\end{equs}
where in the second equality, we used that 
\begin{equs}
\prod_{i,j=0}^1 e^{\frac{\beta}4 \tau^i_x\tau^j_y }=e^{\beta \eta_x \eta_y}, \qquad \prod_{i,j=0}^1 e^{\frac{\beta}4 \tau^i_x \ell(\xi)_{(y,j)}}=e^{\beta \eta_x \xi_y}, \qquad \text{ and } \qquad \tau^0_x \tau^1_x=2\eta_x^2-1.
\end{equs}

It is not hard to see that $T$ is injective on the vertices $x\in V$ such that $\tau^0_x = \tau^1_x$ and is $2$-to-$1$ on the vertices $x\in V$ such that $\tau^0_x \neq \tau^1_x$. Thus, 
\begin{equs}
   \sum_{\substack{\tau^1,\tau^2 \in \{\pm 1\}^V \\ T(\tau^1, \tau^2)=\eta}} 1  =2^{\sum_{x\in V}(1- \eta_x^2)}.
\end{equs}
Hence,
\begin{equs}
\eqref{bc1}
&=
e^{-\frac{(\Delta-\log 2)}2|V|}
 \prod_{xy \in E}e^{\beta \eta_x \eta_y} 
 \prod_{\substack{x\in V \\ y \in \partial V}}e^{\beta \eta_x \xi_y}
 \prod_{x \in V} e^{\Delta \eta_x^2}
\end{equs}
which establishes \eqref{bc}.

Similarly, we obtain 
\begin{equs}
Z^\ising_{\ell(G),J}=e^{-\frac{(\Delta-\log 2)}2|V|}Z_{G,\beta,\Delta},
\end{equs}
and the desired assertion follows readily.
\end{proof}

\begin{rem}
$J$ is ferromagnetic provided $\Delta \geq -\log 2$. We note that the edges between the vertices $(x,0)$ and $(x,1)$ are antiferromagnetic for $\Delta < -\log 2$ and their interaction becomes stronger as $\Delta \rightarrow -\infty$, i.e.\ as $0$ becomes more likely.
\end{rem}

\begin{cor}\label{corollary: correlations}
Let $G=(V,E)$ be a finite subgraph of $\ZZ^d$, and let $J$ be as in Lemma \ref{lem: comb mapping}. For any non-empty set $A\subset V$, any indices $i_x\in \{0,1\}$, $x\in A$, and any boundary conditions $\xi\in \{-1,0,1\}^V$,
\begin{equs}
\langle \prod_{x\in A}\sigma_x  \rangle^{\xi}_{G,\beta,\Delta}
=
\langle \prod_{x\in A} \tau^{i_x}_x  \rangle^{\ising,\ell(\xi)}_{\ell(G),J}.
\end{equs}
\end{cor}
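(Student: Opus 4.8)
The plan is to deduce Corollary~\ref{corollary: correlations} directly from the combinatorial mapping in Lemma~\ref{lem: comb mapping}. The key observation is that for any vertex $x \in V$, the value of the Ising spin $\tau^{i_x}_x$ (for either choice $i_x \in \{0,1\}$) can be related to the Blume-Capel spin $\sigma_x = \frac12(\tau^0_x + \tau^1_x)$ in a way that is compatible with the fibre structure of the map $T$. Indeed, on the fibre $T^{-1}(\eta)$ over a configuration $\eta \in \{-1,0,1\}^V$: if $\eta_x = +1$ then $\tau^0_x = \tau^1_x = +1$, so $\tau^{i_x}_x = \eta_x$; if $\eta_x = -1$ then $\tau^0_x = \tau^1_x = -1$, so again $\tau^{i_x}_x = \eta_x$; and if $\eta_x = 0$ then $\tau^0_x = -\tau^1_x$, so $\tau^{i_x}_x \in \{-1,+1\}$ takes each value on exactly one of the two preimages, which by the $2$-to-$1$ structure means $\tau^{i_x}_x$ is symmetrically distributed over that fibre.

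Concretely, I would first fix $\eta \in \{-1,0,1\}^V$ and compute $\sum_{\tau \in T^{-1}(\eta)} \prod_{x \in A} \tau^{i_x}_x$. Split $A$ into $A_0 = \{x \in A : \eta_x = 0\}$ and $A_{\pm} = A \setminus A_0$. On each fibre element the product over $A_{\pm}$ equals the constant $\prod_{x \in A_\pm} \eta_x$. For the product over $A_0$: since the fibre is a product over the $|A_0'|$ coordinates where $\tau^0 \neq \tau^1$ (with $A_0' \supseteq A_0$ the full zero-set of $\eta$) of two-element sets, and $\tau^{i_x}_x$ ranges over $\{\pm 1\}$ freely and independently as we range over the fibre, the sum of $\prod_{x \in A_0} \tau^{i_x}_x$ over the fibre is $0$ whenever $A_0 \neq \emptyset$, and equals $|T^{-1}(\eta)| = 2^{|A_0'|}$ otherwise. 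Comparing with the analogous statement for $\sigma$: $\prod_{x\in A}\sigma_x(\eta) = \prod_{x \in A} \eta_x$, which is $0$ as soon as some $\eta_x = 0$ and equals $\prod_{x \in A_\pm} \eta_x$ otherwise. So in all cases
\begin{equs}
\sum_{\tau \in T^{-1}(\eta)} \prod_{x \in A} \tau^{i_x}_x
=
|T^{-1}(\eta)| \cdot \prod_{x \in A} \sigma_x(\eta)
=
|T^{-1}(\eta)| \cdot \eta|_A\text{-product},
\end{equs}
where I mean $\prod_{x\in A}\eta_x$ on the right (zero if any $\eta_x=0$).

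Then I would combine this with Lemma~\ref{lem: comb mapping}. Writing $w(\tau)$ for the (unnormalised) Ising weight on $\ell(G)$ with boundary condition $\ell(\xi)$, Lemma~\ref{lem: comb mapping} and its proof give that $w$ is constant on each fibre $T^{-1}(\eta)$ up to the explicit factor appearing in \eqref{bc}, so that $\sum_{\tau \in T^{-1}(\eta)} w(\tau) = Z^{\ising}_{\ell(G),J}\, \mu^{\xi}_{G,\beta,\Delta}(\sigma = \eta)$. Hence
\begin{equs}
\langle \prod_{x\in A} \tau^{i_x}_x \rangle^{\ising,\ell(\xi)}_{\ell(G),J}
=
\frac{1}{Z^{\ising}_{\ell(G),J}} \sum_{\eta} \sum_{\tau \in T^{-1}(\eta)} w(\tau) \prod_{x \in A} \tau^{i_x}_x
=
\sum_{\eta} \mu^{\xi}_{G,\beta,\Delta}(\sigma = \eta) \prod_{x\in A}\sigma_x(\eta)
=
\langle \prod_{x \in A} \sigma_x \rangle^{\xi}_{G,\beta,\Delta},
\end{equs}
using that $w$ restricted to $T^{-1}(\eta)$ is constant (so it pulls out of the inner fibre sum) together with the fibre computation above. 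The main subtlety — really the only place requiring care — is correctly bookkeeping the two cases $\tau^0_x = \tau^1_x$ versus $\tau^0_x \neq \tau^1_x$ and checking that $\tau^{i_x}_x$ is genuinely free/symmetric over the fibre in the second case, independently across the coordinates in $A_0$; this is exactly the injectivity/$2$-to-$1$ dichotomy already established in the proof of Lemma~\ref{lem: comb mapping}, so the argument is essentially immediate once that structure is invoked. I would also note that the identity holds for any fixed choice of indices $i_x$, reflecting the $\mathbb{Z}/2$ symmetry swapping the two copies, which is a sanity check rather than an obstacle.
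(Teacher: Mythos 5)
Your proof is correct, but it takes a genuinely different route from the paper. The paper's proof is a two-line symmetry argument: the map swapping $(x,0)\leftrightarrow(x,1)$ is an automorphism of $\ell(G)$ preserving the coupling constants $J$ and the boundary condition $\ell(\xi)$, so $\langle\prod_{x\in A}\tau^{i_x}_x\rangle^{\ising,\ell(\xi)}_{\ell(G),J}$ is independent of the choice of indices $i_x$; averaging over all $2^{|A|}$ choices replaces each $\tau^{i_x}_x$ by $\tfrac12(\tau^0_x+\tau^1_x)=\sigma_x\circ T$, and then the statement of Lemma~\ref{lem: comb mapping} (push-forward of the Ising measure under $T$ equals the Blume--Capel measure) finishes. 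Your argument instead computes the fibre sum $\sum_{\tau\in T^{-1}(\eta)}\prod_{x\in A}\tau^{i_x}_x=|T^{-1}(\eta)|\prod_{x\in A}\eta_x$ directly, splitting according to whether $\eta_x=0$, and then pulls the Ising weight out of each fibre using that it is constant there --- a fact that is visible in the proof of Lemma~\ref{lem: comb mapping} (each factor $e^{\beta\eta_x\eta_y}$, $e^{\beta\eta_x\xi_y}$, $e^{(\Delta+\log 2)\tau^0_x\tau^1_x}$ depends on $\tau$ only through $\eta=T(\tau)$) but is slightly stronger than the lemma's statement alone. So the tradeoff is: the paper's version only needs the statement of the lemma plus an a priori symmetry observation, while yours is more explicit and self-contained at the fibre level but requires reopening the proof of the lemma to extract the constancy of the weight on fibres. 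Both arguments are sound and hinge on the same underlying $2^{|\{x:\eta_x=0\}|}$-to-$1$ structure of $T$.
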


\begin{proof}
Observe that, by symmetry,
\begin{equs}
\langle \prod_{x\in A}\tau^{i_x}_x  \rangle^{\ising,\ell(\xi)}_{\ell(G),J}
=
\frac{1}{2^{|A|}}\sum_{j:A\mapsto \{0,1\}}\langle \prod_{x\in A}\tau^{j_x}_x \rangle^{\ising,\ell(\xi)}_{\ell(G),J}
=
\Big\langle \prod_{x\in A}\frac{\tau^0_x+\tau^1_x}2 \Big\rangle^{\ising,\ell(\xi)}_{\ell(G),J}.  \end{equs}
The latter is equal to $\langle \prod_{x\in A}\sigma_x  \rangle^{\xi}_{G,\beta,\Delta}$ by Lemma~\ref{lem: comb mapping}.
\end{proof}

\section{Tricritical point in $d\geq 3$ via the infrared bound}\label{sec: pf in 3d}

In this section, we prove Theorem \ref{thm: existence} in $d\geq 3$ for $\Delta^+ = -\log 2$. The key idea is to use the mapping described in Section \ref{sec: mapping bc-ising} to map the Blume-Capel model to a ferromagnetic Ising model on $l(\ZZ^d)$. Since this graph is transitive and amenable, we can then apply the main result of \cite{ADCS} concerning continuity of the phase transition for the Ising model, whose hypothesis is guaranteed in our setting by the infrared bound \cite{FSS}. Strictly speaking, because the mapping does not give us an Ising model on $\mathbb{Z}^d$, we use the generalisation obtained by \cite{AR} rather than \cite{ADCS}.

First, we claim that in $d\geq 3$ the two-point correlations $\langle \tau^1_x \tau^1_y \rangle^{\ising,0}_{J}$ decay, in an averaged sense, to $0$ for every $\beta\leq \beta_c(\Delta)$. 

\begin{lem} \label{lem: decay in 3d}
Let $d \geq 3$. Then, for any $\Delta \in \RR$ and $\beta \leq \beta_c(\Delta)$,
\begin{equs}
\inf_{B \subset \ZZ^d, |B|<\infty} \frac{1}{|B|^2} \sum_{ x,y \in B} 	\langle \tau^1_x \tau^1_y \rangle^{\ising,0}_{J(\beta,\Delta)}
=
0
\end{equs}
where $J(\beta,\Delta)$ is as in Lemma \ref{lem: comb mapping}.
\end{lem}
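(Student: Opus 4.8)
The strategy is the classical infrared bound argument applied to the Ising model on $\ell(\ZZ^d)$, which by Lemma~\ref{lem: amenable} is transitive and amenable, and for which the coupling constants $J(\beta,\Delta)$ are reflection-positive. First I would observe that because we only need the claim for $\beta \leq \beta_c(\Delta)$, and the averaged two-point function is monotone in $\beta$ (via Proposition~\ref{prop: domination} through the Edwards-Sokal coupling, or directly by Griffiths' inequalities for the ferromagnetic Ising model), it suffices to treat $\beta = \beta_c(\Delta)$; but actually it is cleaner to work at arbitrary $\beta \le \beta_c(\Delta)$ directly, since the bound we obtain is uniform. The key input is the Fröhlich–Simon–Spencer infrared bound \cite{FSS}: the Ising model on $\ell(\ZZ^d)$ is reflection positive with respect to reflections through hyperplanes of the $\ZZ^d$-directions (the extra $\{0,1\}$ coordinate is handled since $J$ is constant in that direction and the pair interaction $E_2$ is a single edge per fibre), hence the finite-volume free-boundary two-point function, viewed in Fourier space on the torus, is dominated by the Gaussian (massless free field) two-point function.

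**Main steps.** (1) Set up the model on a large torus $\TT_N^d \times \{0,1\}$ with the periodised couplings $J(\beta,\Delta)$, and record reflection positivity. (2) Apply the infrared bound in the form $\widehat{G}_N(k) \le \tfrac{C}{E(k)}$ where $\widehat{G}_N$ is the Fourier transform of the two-point function in the $\ZZ^d$-variable and $E(k) \sim |k|^2$ is the symbol of the (periodised) lattice Laplacian associated to the $E_1$-interaction; the $E_2$-interaction only shifts things by a bounded amount and does not affect the small-$k$ behaviour. (3) Since $d \ge 3$, $\int_{[-\pi,\pi]^d} \tfrac{dk}{E(k)} < \infty$, so summing over $k \ne 0$ gives that $\tfrac{1}{|\TT_N^d|}\sum_{x \in \TT_N^d} \langle \tau^1_0 \tau^1_x \rangle_{\TT_N}$ is bounded above by the $k=0$ Fourier mode (the "condensate") plus a term vanishing as $N \to \infty$. (4) The condensate at $k=0$ equals the squared magnetisation under free/periodic boundary conditions, which vanishes for $\beta \le \beta_c(\Delta)$: here I would use Corollary~\ref{corollary: correlations} to translate $\langle \tau^1_0 \rangle$-type quantities back to $\langle \sigma_0 \rangle$ for the Blume-Capel model, together with the fact that the free-boundary magnetisation vanishes below (and at, by definition of $\beta_c$ via the plus state and FKG) the critical point — a standard comparison of boundary conditions. (5) Pass to the infinite-volume free measure on $\ell(\ZZ^d)$ (existence and translation-invariance being standard, or deduced from Proposition~\ref{prop: inf vol limits} via Edwards-Sokal), and use amenability to upgrade the periodic-volume average bound to the stated infimum over finite $B \subset \ZZ^d$: one takes $B$ to be large boxes $B_n \times\{\text{fiber }1\}$ and controls the average of $\langle \tau^1_x \tau^1_y\rangle^{\ising,0}_J$ over $x,y \in B_n$ by the torus computation up to boundary corrections that are negligible by amenability (a Følner-type argument).

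**The main obstacle.** The technically delicate point is step (4) combined with the boundary-condition bookkeeping: the infrared bound is naturally stated for the torus/periodic measure, whereas the statement concerns the free-boundary infinite-volume measure $\langle \cdot\rangle^{\ising,0}_J$ on $\ell(\ZZ^d)$. One must argue that the torus two-point function dominates (or is comparable to) the free one — this is standard but requires care since $\ell(\ZZ^d)$ is not simply $\ZZ^{d+1}$; I would invoke reflection positivity / Griffiths to compare, or alternatively run the whole argument with free boundary conditions on a box and use that the box two-point function is monotone in the domain. The second delicate point is confirming that the small-$k$ asymptotics of the relevant symbol $E(k)$ genuinely behave like $|k|^2$ despite the presence of the $E_1$ couplings connecting all four pairs $(x,i),(y,j)$ across an edge $xy$ — this just multiplies the Laplacian symbol by a positive constant ($=4$, before the $\tfrac14$ normalisation in $J$), so the integrability criterion $d \ge 3$ is exactly the usual one; I would spell this out but expect no surprises. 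Everything else — reflection positivity of $J$ for $\Delta \ge -\log 2$ (where $J$ is ferromagnetic), amenability, the Følner argument — is routine given the results already in the excerpt.
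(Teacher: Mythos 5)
Your overall plan (infrared bound via reflection positivity, plus the ADCS argument to convert the Fourier estimate into the stated infimum) is the right one, and matches the paper's one-sentence proof in spirit. But you are running it on the harder of the two available models, and this creates unnecessary work that the paper's approach entirely sidesteps.

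The paper's hint -- ``reflection positive models (such as the nearest-neighbour Blume-Capel model) on $\ZZ^d$'' -- tells you that the intended argument applies FSS/ADCS directly to the Blume-Capel model on $\ZZ^d$, not to the lifted Ising model on $\ell(\ZZ^d)$. The point you are under-using is Corollary~\ref{corollary: correlations}: taking $A=\{x,y\}$ and $i_x=i_y=1$, it gives the \emph{identity}
\begin{equation*}
\langle \tau^1_x \tau^1_y\rangle^{\ising,0}_{J(\beta,\Delta)} \;=\; \langle \sigma_x \sigma_y \rangle^{0}_{\beta,\Delta}
\end{equation*}
in every finite volume, hence also in the free infinite-volume limit. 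So the quantity you need to control is literally the averaged free two-point function of the Blume-Capel model on $\ZZ^d$, and you never need to touch $\ell(\ZZ^d)$ at all. The Blume-Capel model is a nearest-neighbour pair-interaction model with a single-site a priori measure on $\{-1,0,1\}$, so it is reflection positive through bond midplanes for \emph{every} $\Delta\in\RR$; FSS gives $\widehat G(k)\le C/E(k)$ with the standard $\ZZ^d$ Laplacian symbol, and ADCS Corollary~1.4 converts this into vanishing of the infimum for $d\ge 3$ and $\beta\le \beta_c(\Delta)$ verbatim. No lifted graph, no $2\times 2$ matrix-valued Fourier transform, no separate analysis of the $E_1$/$E_2$ symbol.

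Your route is not wrong, but it is responsible for the two ``delicate points'' you flagged, and it also contains one small misstatement. You write that RP of $J$ holds ``for $\Delta\ge -\log 2$ (where $J$ is ferromagnetic)'' -- this would leave the lemma unproved for $\Delta< -\log 2$, whereas it is stated for all $\Delta\in\RR$. In fact RP of the lifted Ising model with respect to reflections through $\ZZ^d$-hyperplanes holds for all $\Delta$, because the $E_2$ (possibly antiferromagnetic) edges live inside fibres and never cross such a hyperplane; only the cross-terms need to be ferromagnetic, and those are the $E_1$ edges with coupling $\beta/4>0$. So your restriction to $\Delta\ge -\log 2$ is not needed -- but it is symptomatic of the extra care the lifted model demands. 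If you instead transfer the whole two-point function via Corollary~\ref{corollary: correlations} at the outset, all of this evaporates and the proof becomes exactly ``Blume-Capel on $\ZZ^d$ is RP, apply FSS/ADCS, done.''

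Also, one phrasing in your step (4) should be tightened: the ADCS argument does not require that the ``condensate equals the squared magnetisation'' in a literal sense at $\beta_c$. Rather, the infrared bound controls the $k\ne 0$ contribution to the finite-box average, and the $k=0$ contribution is killed by a diagonal choice of torus size versus box size (using that $1/E(k)$ is summable for $d\ge 3$). Invoking ``the magnetisation vanishes under free boundary conditions'' is both true and irrelevant -- it holds trivially by $\pm$-symmetry and does not by itself control the averaged two-point function. The actual work is the Fourier estimate; that is what ADCS Corollary~1.4 supplies, and it goes through unchanged for the Blume-Capel single-site measure.
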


\begin{proof}
The proof is straightforward adaptation of the proof of \cite[Corollary 1.4]{ADCS} and is based on the celebrated infrared/Gaussian domination bound of \cite{FSS}, which applies to reflection positive models (such as the nearest-neighbour Blume-Capel model, see \cite[Section 3.1]{Biskup}) on $\ZZ^d$ for $d \geq 3$. We omit the details.
\end{proof}

Our aim now is to use the above result to prove Theorem~\ref{thm: existence} in $d\geq 3$. For that we will use a result of Raoufi \cite[Theorem 1]{AR}, which states that for the Ising model on any vertex-transitive amenable graph $G$, and for any $\beta>0$, we have 
\[
\langle \cdot \rangle^{\ising,0}_{\beta,G}=\lambda\langle \cdot \rangle^{\ising,+}_{\beta,G}+(1-\lambda)\langle \cdot \rangle^{\ising,-}_{\beta,G}
\]
for some $\lambda\in [0,1]$. Since for the $2$-point correlations we have $\langle \tau_x \tau_y \rangle^{\ising,+}_{\beta,G}=\langle \tau_x \tau_y \rangle^{\ising,-}_{\beta,G}$ by symmetry, it follows that
\begin{equs}\label{eq: raoufi}
\langle \tau_x \tau_y \rangle^{\ising,0}_{\beta,G}=\langle \tau_x \tau_y \rangle^{\ising,+}_{\beta,G}    
\end{equs}
Below we will argue that this is sufficient to prove continuity of the phase transition by using correlation inequalities and translation invariance.

We now recall the definition of amenability. Let $G$ be a countable locally finite graph. We say that $G$ is amenable if
\begin{equs}
\inf_{A \subset V, |A| < \infty} \frac{|\partial A|}{|A|}
=
0.
\end{equs}

\begin{lem} \label{lem: amenable}
The graph $l(\ZZ^d)$ is amenable and vertex-transitive. 
\end{lem}

\begin{proof}
It is easy to see that $\ell(\ZZ^d)$ is the Cayley graph of the group $\ZZ^d\times\{0,1\}$ with respect to the generating set $\{(e_1,0),\ldots,(e_d,0),(e_1,1),\ldots,(e_d,1),(0,1)\}$, where $\{e_1,\ldots,e_d\}$ is the standard generating set of $\Z^d$. This readily implies that $\ell(\ZZ^d)$ is vertex-transitive. Moreover, $\ell(\ZZ^d)$ is amenable because e.g.\ the boxes $F_n:=\Lambda_n\times \{0,1\}$ form a F{\o}lner sequence, i.e.\ 
\begin{equs}
\lim_{n\to \infty} \frac{|\partial F_n|}{|F_n|}=0.    
\end{equs}
\end{proof}

We are now ready to prove  Theorem \ref{thm: existence} in $d \geq 3$.

\begin{proof}[Proof of Theorem \ref{thm: existence} in $d \geq 3$]
With Lemma \ref{lem: decay in 3d} in hand, the proof follows essentially from applying \cite[Theorem 1.2]{ADCS}. The only caveat is that the result is stated for Ising models on $\ZZ^d$. Thus, we proceed as follows: by Lemma \ref{lem: amenable}, $l(\ZZ^d)$ is amenable and transitive. Thus, it follows from \cite[Proposition 1]{AR} that $\langle \tau^1_0 \tau^1_x\rangle^{\ising,+}_{J(\beta,\Delta)}=\langle \tau^1_0 \tau^1_x\rangle^{\ising,0}_{J(\beta,\Delta)}$ for every $\beta>0$ and $\Delta\geq-\log{2}$. (Recall that $\Delta \geq -\log 2$ is required for the Ising model on $l(\mathbb{Z}^d)$ to be ferromagnetic.)

Recall that Griffiths' inequality \cite{Griffiths} for the Ising model implies that
\begin{align*}
\langle \tau^1_x \tau^1_y \rangle^{{\rm Ising}, +}_{J(\beta_c(\Delta),\Delta)}
\geq
\langle \tau^1_x \rangle^{{\rm Ising}, +}_{J(\beta_c(\Delta),\Delta)}
\langle \tau^1_y \rangle^{{\rm Ising}, +}_{J(\beta_c(\Delta),\Delta)}
\end{align*}
Thus, by the result of the previous paragraph, the above display, and translation invariance, we have that
\begin{equs}
\langle \tau^1_x \tau^1_y \rangle^{\ising,0}_{J(\beta_c(\Delta),\Delta)}
=
\langle \tau^1_x \tau^1_y \rangle^{{\rm Ising}, +}_{J(\beta_c(\Delta),\Delta)}
\geq     
\left(\langle \tau^1_0\rangle^{\ising,+}_{J(\beta_c(\Delta),\Delta)} \right)^2.
\end{equs}
Hence by taking C\'esaro averages and using Lemma \ref{lem: decay in 3d} we obtain
\begin{equs}
\left(\langle \tau^1_0 \rangle^{\ising,+}_{J(\beta_c(\Delta),\Delta)} \right)^2\leq \inf_{B \subset \ZZ^d\times\{0,1\}, |B|<\infty} \frac{1}{|B|^2} \sum_{ x,y \in B} 	\langle \tau^1_x \tau^1_y \rangle^{\ising,0}_{J(\beta_c(\Delta),\Delta)}
=
0.
\end{equs}
Thus, $\langle \sigma_0 \rangle^+_{\beta_c(\Delta),\Delta}=0$.
\end{proof}

\section{Quantitative analysis of crossing probabilities in $d=2$}\label{sec: quantitative}

Given a rectangle $R=[a,b]\times [c,d] \subset \RR^2$, we define the bottom, top, left, and right sides of $R$ respectively by
\begin{equs}
\Bot[R]
&=	
[a,b] \times \{c\},
\quad 
\Top[R]
=
[a,b] \times \{ d \},
\\
\Left[R]
&=
\{ a \} \times [c,d],
\quad
\Right[R]
=
\{ b \} \times [c,d].
\end{equs} 
\noindent We identify $\LL^2$ with its natural embedding in $\RR^2$ and identify the sides of $R$ defined above with their corresponding subgraphs of $\LL^2$. We denote by $H_R$ the event that $\omega\cap R$ contains a path of open edges from $\Left[R]$ to $\Right[R]$, and we call such a path a {\em horizontal crossing}.
Similarly, we denote by $V_R$ the event that $\omega\cap R$ contains a path of open edges from $\Bot[R]$ to $\Top[R]$, and we call such a path a {\em vertical crossing}. When $R= [-n,n]^2$ for some $n \geq 1$, we simply write $H_n$ and $V_n$, respectively.

The following quadrichotomy on crossing probabilities is at the heart of our approach in $d=2$.
\begin{prop}\label{prop:quad}
Fix $p,a \in (0,1)$. Then exactly one of the following properties is satisfied for some $c>0$:
\begin{description}
\item[(SubCrit)] \qquad $\varphi_{\Lambda_{2n}}^1[H_n]\leq e^{-cn}$, $\qquad \forall n \in \NN$;
\item[(SupCrit)] \qquad  $\varphi_{\Lambda_{2n}}^0[H_n]\geq 1-e^{-cn}$, $\qquad \forall n \in \NN$;
\item[(ContCrit)] \qquad \hspace{-0.6em}
$c\leq \inf_{\xi} \varphi_{\Lambda_{2n}}^\xi[H_n]\leq \sup_{\xi} \varphi_{\Lambda_{2n}}^\xi[H_n]\leq 1-c$, $\qquad \forall n \in \NN$;
\item[(DiscontCrit)]  \quad \hspace{-0.7em}
$\varphi_{\Lambda_{2n}}^0[H_n]\leq e^{-cn} 
\; \text{ and } \;
\varphi_{\Lambda_{2n}}^1[H_n]\geq 1-e^{-cn}$, $\qquad \forall n \in \NN$.
\end{description}
\end{prop}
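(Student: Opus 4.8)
The plan is to follow the renormalisation strategy of Duminil-Copin and Tassion \cite{DCT}, adapted to the dilute random cluster model. The first step is to introduce, for each scale $n$, crossing probabilities in rectangles of fixed aspect ratio, and to package them via a quantity that detects whether crossings are ``easy'' or ``hard'' at scale $n$. Concretely I would set, for $\rho>1$ fixed, $p_n^\xi = \varphi^\xi_{\Lambda_{\rho n}}[\text{horizontal crossing of }[0,\rho n]\times[0,n]]$ and work with $p_n^0$ and $p_n^1$ simultaneously; by the comparison-between-boundary-conditions estimates (Proposition~\ref{prop: domination}, Proposition~\ref{free-closed}) it suffices to control $p_n^0$ from below and $p_n^1$ from above. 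The core analytic input is a Russo--Seymour--Welsh (RSW) statement for the dilute random cluster model: if crossing probabilities of squares are bounded away from $0$ (resp.\ $1$) at a given scale under a given boundary condition, then crossing probabilities of rectangles of bounded aspect ratio are bounded away from $0$ (resp.\ $1$) at the same scale, up to a universal loss. This is where the \eqref{eq:FKGRC} inequality, the spatial Markov property \eqref{eq:DMPRC}, monotonicity in the domain \eqref{eq:MON}, and crucially the refined domination of Proposition~\ref{free-closed} (``free is the most favourable boundary condition inducing no connection'') all get used; it is a direct transcription of the modern RSW arguments for the random cluster model, with the extra site-percolation layer handled by the conditioning identity \eqref{eq: conditioning}.

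With RSW in hand, the second step is to set up the renormalisation inequalities. Following \cite{DCT}, one shows that the sequence $p_n^1$ (wired crossings of rectangles) satisfies a self-improving inequality of the form: if $p_n^1$ is large at some scale, it becomes doubly-exponentially close to $1$ at larger scales, whereas if it stays bounded away from $1$ along a subsequence then it is bounded away from $1$ at all scales; and dually for $p_n^0$ under the free measure, using that $1-p_n^0$ plays the role of a crossing probability of the dual model (here the duality of the random-cluster layer on $\ZZ^2$, combined with the site percolation, gives a planar-duality statement, and one must be slightly careful that the relevant dual object is $\{0,-\}$-type percolation). The outcome is a dichotomy at the level of the wired sequence (either $p_n^1 \to 1$ stretched-exponentially fast, or $\inf_n p_n^1 > 0$ — and in fact $\sup_n p_n^1 < 1$ once one also invokes the free estimate) and symmetrically for the free sequence.

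The third step is to assemble the four cases. Combining the wired dichotomy and the free dichotomy gives four a priori combinations:
\begin{equs}
(p_n^1 \to 1,\ p_n^0 \to 0),\quad (p_n^1 \to 1,\ \inf p_n^0 > 0),\quad (\sup p_n^1 < 1,\ p_n^0 \to 0),\quad (\sup p_n^1 < 1,\ \inf p_n^0 > 0).
\end{equs}
The first combination yields \textbf{(DiscontCrit)}; the second, after using monotonicity in $p$ (Proposition~\ref{prop: domination}), forces $p > p_c$ and gives \textbf{(SupCrit)} (one bootstraps ``$p_n^0$ bounded away from $0$ at criticality'' to ``$p_n^0 \to 1$ off-criticality'' via a standard Russo-type argument exploiting strict monotonicity in $p$, analytically available by finite energy); the third symmetrically forces $p < p_c$ and gives \textbf{(SubCrit)}; the fourth is exactly \textbf{(ContCrit)}. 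Mutual exclusivity is then immediate from the quantitative form of each alternative. Passing from ``rectangles of fixed aspect ratio'' back to the squares $H_n$ in $\Lambda_{2n}$ as stated is a further application of RSW and \eqref{eq:MON}.

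\textbf{Main obstacle.} The hard part will be establishing the RSW theory for the dilute random cluster model — in particular proving that the free boundary condition is, in the appropriate sense, the extremal one that creates no boundary connections, and pushing this through the two-layer structure (site environment $\psi$ plus edges $\omega$). Proposition~\ref{free-closed} is the key enabling lemma, but turning it into a full crossing-probability RSW statement (including the ``pushing'' and gluing constructions of \cite{DCT}) requires care, because the site percolation layer is only \emph{weakly} monotonic and the conditioning in \eqref{eq: conditioning} must be used to reduce each step to a statement about the genuine random cluster model on a random graph. The renormalisation bookkeeping of \cite{DCT} then transfers with essentially no change, so I expect the remaining steps to be routine given a robust RSW input.
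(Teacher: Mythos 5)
Your overall strategy — adapt the RSW plus renormalisation programme of Duminil-Copin and Tassion to the dilute random cluster model, leaning on Proposition~\ref{free-closed} and the conditioning identity \eqref{eq: conditioning} to deal with the two-layer structure — is the same as the paper's, and your identification of the RSW step (and the weak-monotonicity subtleties it entails) as the main technical obstacle is accurate. However, the renormalisation architecture you propose has a genuine gap: you want to run two \emph{independent} dichotomies, one for the wired sequence $p_n^1$ and one for the free sequence $p_n^0$, and then read off the four alternatives from the four combinations. That is not the right structure, and two of your four conclusions are not supported by it. The paper instead first notes that {\bf (SubCrit)} and {\bf (SupCrit)} are mutually exclusive (so if either holds we are done), and then, assuming \emph{neither} holds, proves only the {\bf (ContCrit)}-versus-{\bf (DiscontCrit)} dichotomy. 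The crucial device for this is a pair of strip densities $p_n$ (primal horizontal crossings under the \emph{free} measure in very long rectangles, taken as a $\limsup$ in the aspect ratio) and $q_n$ (dual vertical crossings under the \emph{wired} measure), together with \emph{two separate} families of inequalities: renormalisation upper bounds $p_{3n}\lesssim p_n^{3-\eps}$, $q_{3n}\lesssim q_n^{3-\eps}$ (Lemma~\ref{lem: renormalisation inequality for strip densities}), and a \emph{duality} lower bound $p_{3n}\gtrsim q_n^{3+\eps}$, $q_{3n}\gtrsim p_n^{3+\eps}$ (Lemma~\ref{lem: dual relation of strip densities}). The duality coupling is what lets one propagate from ``$p_n$ decays'' to ``$q_n$ decays'' and back; a single-sequence renormalisation dichotomy has no way to rule out, say, $p_n^1$ bounded away from $0$ and $1$ while $p_n^0\to 0$ without simultaneously controlling the dual sequence. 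You mention duality in one sentence (``$1-p_n^0$ plays the role of a crossing probability of the dual model'') but it never enters your renormalisation bookkeeping, which is precisely where it is needed.

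Concretely, your combinations (2) and (3) do not deliver {\bf (SupCrit)} and {\bf (SubCrit)}. {\bf (SupCrit)} is an exponential statement ($\varphi^0_{\Lambda_{2n}}[H_n]\geq 1-e^{-cn}$), and ``$\inf_n p_n^0>0$ together with $p_n^1\to 1$'' does not imply it; the Russo-type bootstrap you gesture at would require a sharp-threshold result that is not available at this stage and is logically downstream of the quadrichotomy itself. In the paper there is no need for such a bootstrap because {\bf (SubCrit)} and {\bf (SupCrit)} are handled up front as the obviously-disjoint off-critical cases, and the renormalisation machinery is deployed only to separate {\bf (ContCrit)} from {\bf (DiscontCrit)} (where the ``bootstrap'' step actually appears: passing from stretched-exponential decay of $p_n, q_n$ along a geometric subsequence to genuine exponential decay, using the finite-energy lower bound $p_n\geq e^{-c'n}$ to take $\lambda=n$ in the renormalisation inequality). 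I suggest you restructure: (a) dispose of {\bf (SubCrit)}/{\bf (SupCrit)} by observing they are mutually exclusive and that either one trivially excludes the other three alternatives; (b) assuming neither, introduce the coupled strip densities $p_n$ and $q_n$, and prove the duality lemma alongside the renormalisation lemma; (c) combine them to get the $\{\inf p_n,\inf q_n>0\}$ vs.\ $\{p_n,q_n\searrow 0$ exponentially$\}$ dichotomy, and translate back to $H_n$ in $\Lambda_{2n}$ via RSW and \eqref{eq:MON}.
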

\noindent The proof of Proposition \ref{prop:quad} is based on the approach of \cite{DCT}. The behaviour in the off-critical regimes \textbf{(SubCrit)} and \textbf{(SupCrit)} follow from standard coarse-graining arguments. The critical behaviour is more subtle to analyse. Our main tool consists of two sets of inequalities that quantify how macroscopic connectivity (or lack of connectivity) changes from scale to scale. The first inequality encodes an approximate duality between crossing and non-crossing events at similar scales and is formalised in Lemmas \ref{lem: dual relation of strip densities}. The second inequality is a renormalisation inequality that compares macroscopic connectivity at scales that are triadically separated, and it is formalised in Lemma \ref{lem: renormalisation inequality for strip densities}. A key input to the renormlisation is a Russo-Seymour-Welsh theory developed for the dilute random cluster model (see the next subsection). Our goal in this subsection is to state these inequalities precisely and then show how they can be used to prove Proposition \ref{prop:quad}. Then in the remainder of this section we prove Proposition \ref{prop:quad} and detail some of its applications.

We begin by introducing the relevant observables. Roughly speaking, they consist of two sequences $(p_n)_{n \in \N}$ and $(q_n)_{n \in \N}$ that, as mentioned above, for each $n\in\N$ track, respectively, the macroscopic connectivity and lack of connectivity at that scale. For example, one could define $p_n$ as the existence of a circuit of open edges surrounding an annulus of order $n$ and $q_n$ as the non-existence of a path of open edges connecting the inner boundary of the annulus to the outer boundary. Motivated by \cite{DCT}, we instead choose to track densities of horizontal crossings and lack of vertical crossings of very long, thin strips. 

\begin{defn}
For $n \in \NN$ we define the strip crossing density and dual strip crossing density at scale $n$ respectively by,
\begin{equs}
p_n
&:=
\limsup_{\alpha \rightarrow \infty} \Big( \varphi^0_{[0,\alpha n]\times[-n,2n]} [ H_{[0,\alpha n]\times[0,n]} ] \Big)^{1/\alpha}
\\
q_n
&:=
\limsup_{\alpha \rightarrow \infty} \Big( \varphi^1_{[0,\alpha n]\times[-n,2n]} [ V_{[0,\alpha n]\times[0,n]}^c] \Big)^{1/\alpha}.
\end{equs}
\end{defn}

The quantity $p_n$ is called a strip crossing density for the following reason. Naively, one could imagine deterministically gluing crossings of rectangles of aspect ratio $\alpha_0$ where $\alpha_0 < \alpha$ approximately $\alpha/\alpha_0$ times to create a crossing of a rectangle of aspect ratio $\alpha$. Then since these events are increasing the FKG inequality would give a lower bound on the probability of crossing the rectangle of aspect ratio $\alpha$ in terms of the probability of crossing the rectange of aspect ratio $\alpha_0$ which scales as a power which is proportional to $\alpha$. The constant of proportionality would be uniform over $\alpha > \alpha_0$ and hence, after taking the $1/\alpha$-root and the limit as $\alpha \rightarrow \infty$, the strip crossing density would measure (if the inequalities were equalities) a certain power of the macroscopic crossing at scale $\alpha_0$. We briefly postpone the discussion of why $p_n$ is a convenient quantity to keep track of as opposed to a crossing of a rectangle of fixed aspect ratio (or of circuits in annuli) to give an interpretation of the quantity $q_n$.

The quantity $q_n$ keeps track of the density of lack of vertical crossings, but it is perhaps more useful to interpret this by using planar duality. Let $(\LL^2)^* = \LL^2 + (1/2, 1/2)$ be the dual lattice, which we identify with its natural embedding in $\RR^2$. Given $\omega$ on $\LL^2$, there is an associated dual configuration $\omega^*$ constructed by declaring that an edge in the dual graph is open if and only if it crosses a closed edge in the primal graph. Thus, if $V_R^c$ occurs for some rectangle $R$, then this implies a horizontal crossing in a suitable rectangle in the dual lattice. As such $q_n$ keeps track of the density of dual horizontal crossings.

We now explain why the strip densities $(p_n)_{n \in \N}$ and $(q_n)_{n \in \N}$ are the relevant observables in our setting. In order to motivate this, first consider the usual random cluster model on $\Z^2$. Renormalisation inequalities in this setting were first derived in \cite{DCST17}, where the analogous quantities to $p_n$ and $q_n$ were roughly given by the existence of circuits of open edges in annuli of scale $n$ in the primal and dual graph, respectively. In this setting the model is \emph{self-dual} at the critical point and these two quantities are essentially the same (again, we emphasise, at the critical/self-dual point). This is used extensively to derive analogous versions of the renormalisation inequalities. On more general infinite connected biperiodic planar graphs with sufficient dihedral symmetry (i.e.\ $\pi/2$ rotations and reflections) but where there is no self-duality, it is not clear how these quantities are related at a fixed scale $n$. One of the insights of \cite{DCT} is that the strip crossing and dual strip crossing densities can be related at different scales (which motivates why we take the limit as the aspect ratio goes to infinity, so that comparisons can be made between scales). Since in our setting there is no obvious self-duality for the dilute random cluster model (even on the self-dual graph $\Z^2$), we adopt this multiscale approach.

Our first set of inequalities relate the strip crossing densities and dual strip crossing densities on different scales. We omit its proof since the geometric arguments developed in \cite[Section 5.1]{DCT} adapt with minor modifications to our setting (we opt instead to explain these adaptations in detail when dealing with the renormalisation inequalities).

\begin{lem}[Duality of strip densities] \label{lem: dual relation of strip densities}
Let $(a,p)$ be such that neither {\bf (SubCrit)} nor { \bf (SupCrit)} occur. Then there exists $C>0$ such that, for all integers $\lambda \geq 2$ and $n \in 9\NN$,
\begin{equs}
p_{3n} \geq \frac{1}{\lambda^C} q_n^{3+3/\lambda},
\qquad
q_{3n} \geq \frac{1}{\lambda^C} p_n^{3+3/\lambda}.
\end{equs}
\end{lem}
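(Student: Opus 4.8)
The plan is to prove the first inequality $p_{3n} \geq \lambda^{-C} q_n^{3+3/\lambda}$; the second follows by the same argument after passing to the dual model (using that planar duality exchanges the roles of horizontal/vertical crossings, of $p$ and its dual parameter, and of $\varphi^0$ and $\varphi^1$, as recorded in the duality discussion preceding the lemma). The strategy, following the renormalisation scheme of \cite{DCT}, is to build a horizontal crossing of a long rectangle at scale $3n$ out of many (dual) crossing/non-crossing events at scale $n$, arranged so that the crossings at scale $n$ chain together to force a crossing at the larger scale. Concretely, I would work in a strip of width $9n$ (so that $S_{3n}$ has the form $\RR \times [-3n, 6n]$), subdivide the length $\alpha \cdot 3n$ rectangle into blocks of length $n$, and in each block use a combination of a long horizontal crossing in a sub-strip of width $3n$ together with vertical ``connector'' crossings that join consecutive blocks. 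The number $3 + 3/\lambda$ arises because each unit of length at scale $3n$ requires on the order of $3$ units of length at scale $n$ to cross horizontally, plus an extra $3/\lambda$ worth of vertical connectors whose total cost, after optimising over how finely one subdivides the vertical direction into $\lambda$ sub-blocks, contributes the $1/\lambda$ correction; the polynomial prefactor $\lambda^{-C}$ collects the finitely many RSW constants needed to turn each vertical connector of aspect ratio $O(\lambda)$ into a crossing with probability bounded below.

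The key steps, in order: (1) Unwind the definitions: $q_n = \limsup_\alpha (\varphi^1_{[0,\alpha n]\times[-n,2n]}[V^c])^{1/\alpha}$, so by planar duality $q_n$ governs the (exponential rate of the) probability of a horizontal dual crossing of a long thin rectangle under the dual of the wired strip measure; similarly $p_{3n}$ governs horizontal crossings of long rectangles of width $3n$ under $\varphi^0$. (2) Use the FKG inequality \eqref{eq:FKGRC} together with the spatial Markov property \eqref{eq:DMPRC} and monotonicity in the domain \eqref{eq:MON} to decompose the target crossing event at scale $3n$ as an intersection of $\sim 3\alpha \lambda$ translated copies of crossing events at scale $n$, each of which (by \eqref{eq:MON}, comparing the strip measure of width $3n$ restricted to a sub-strip with the appropriate strip measure of width $n$, or $3n$) has probability at least the corresponding quantity appearing in $q_n$. (3) Apply Proposition \ref{prop: RSW} to the $O(\lambda)$ vertical connector events to replace them by crossings of bounded aspect ratio at a uniform multiplicative cost $c_\rho$ with $\rho = O(\lambda)$, so the total loss from connectors is $c_\rho^{O(\alpha)}$ with $c_\rho \geq \lambda^{-C'}$ from tracking the explicit RSW constant. (4) Take $\alpha \to \infty$, extract the $1/\alpha$-th power, and read off $p_{3n} \geq \lambda^{-C} q_n^{3+3/\lambda}$.

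The main obstacle I expect is Step (2)–(3): getting the exponent exactly $3 + 3/\lambda$ rather than some worse constant requires care in how the blocks are glued — one must ensure that the vertical connectors are short (length $\asymp n$) but can be placed at $\lambda$ evenly spaced heights within the width-$3n$ strip so that a pigeonhole/gluing argument forces the horizontal crossing to propagate, and that the measure on which each sub-event is evaluated can be dominated (via \eqref{eq:MON} and Proposition \ref{free-closed}) by the strip measure that appears in the definition of $q_n$. A secondary subtlety is that $q_n$ is defined via the \emph{wired} strip measure while $p_{3n}$ uses the \emph{free} one; bridging these requires invoking that we are \emph{not} in {\bf (SubCrit)} or {\bf (SupCrit)}, which (via the RSW estimate of Proposition \ref{prop: RSW} and the dichotomy machinery of \cite{DCT}) keeps the relevant crossing probabilities from being too close to $0$ or $1$, so that the polynomial-in-$\lambda$ losses do not degenerate. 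Once the combinatorial gluing picture is set up correctly, the remaining estimates are routine applications of FKG, the domain Markov property, and RSW, exactly as in \cite[Section 3]{DCT}.
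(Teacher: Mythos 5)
Your outline conflates the duality lemma with the separate renormalisation inequality (Lemma~\ref{lem: renormalisation inequality for strip densities}), and the central step that would actually connect $p_{3n}$ to $q_n$ is missing. The gluing scheme you describe --- chaining blocks of length $n$ crossed horizontally, joined by vertical connectors, inside a strip of width $9n$ --- is intrinsically a primal-to-primal construction: stringing together horizontal primal crossings of sub-blocks with vertical primal connectors produces a horizontal primal crossing of the large box, and hence a bound of the shape $p_{3n}\gtrsim p_n^{3(1+o(1))}$. But the lemma compares $p_{3n}$ (horizontal primal crossings under the \emph{free} strip measure) to $q_n$ (absence of vertical primal crossings, i.e.\ horizontal \emph{dual} crossings, under the \emph{wired} strip measure). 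You never pin down how $q_n$ enters: a chain of dual crossings (paths of closed edges) cannot be assembled into an open primal path, the event $H_R$ is increasing while $V^c_{R'}$ is decreasing, and FKG therefore cannot be applied to an ``intersection of translated scale-$n$ events each with probability at least the quantity appearing in $q_n$'' in the way your step (2) asserts. The actual argument, following \cite[Section 5.1]{DCT}, exploits the complementary planar-duality identity $H_R=(V^*_R)^c$ as a relation between \emph{probabilities} under carefully chosen free/wired strip measures, together with boundary-condition pushing; it is not a constructive gluing of a scale-$3n$ crossing out of scale-$n$ pieces.

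A second, separate issue is your derivation of the second inequality $q_{3n}\geq\lambda^{-C}p_n^{3+3/\lambda}$ by ``passing to the dual model.'' This shortcut is unavailable here. As the paper emphasises in the discussion preceding Proposition~\ref{prop: bridge events}, the planar dual of the dilute random cluster model is \emph{not} a dilute random cluster model, and the estimates on dual crossings must be established directly in the primal picture rather than inherited from the primal statement applied to the dual measure. This is precisely why both inequalities appear in the statement of the lemma: each needs its own argument (symmetric in structure, with the roles of free/wired boundary conditions and of primal/dual crossings swapped, and with the stochastic domination of Lemma~\ref{free-empty-vertex} and Proposition~\ref{free-closed} doing the work that self-duality would do for the classical random cluster model).
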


We now turn to the second set of key inequalities that constitute the renormalisation inequalities. We defer its proof to Subsection \ref{subsec: proof of renorm}.

\begin{lem}[Renormalisation of strip densities] \label{lem: renormalisation inequality for strip densities}
Let $(p,a)$ be such that neither {\bf (SubCrit)} nor {\bf (SupCrit)} occur. Then there exists $C>0$ such that for all integers $\lambda \geq 2$ and $n \in 9\NN$,
\begin{equs}
p_{3n} \leq \lambda^C p_n^{3-9/\lambda},
\qquad
q_{3n} \leq \lambda^C q_n^{3-9/\lambda}.
\end{equs}
\end{lem}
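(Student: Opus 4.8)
\textbf{Proof proposal for Lemma~\ref{lem: renormalisation inequality for strip densities}.}

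The plan is to follow the renormalisation strategy of \cite[Section 5.1]{DCT}, adapting it to the dilute random cluster setting using the tools developed above (the spatial Markov property \eqref{eq:DMPRC}, monotonicity \eqref{eq:MON}, the refined comparison Proposition~\ref{free-closed}, and the RSW estimate of Proposition~\ref{prop: RSW}). I will prove the inequality for $p_{3n}$; the bound for $q_{3n}$ follows by the same argument applied to the dual configuration in the strip with wired-on-bottom boundary conditions. The key point is that a horizontal crossing of a long rectangle of height $3n$ can be decomposed into three ``stacked'' crossings of rectangles of height $n$, together with a few short vertical crossings/gluing pieces that connect consecutive horizontal crossings; since the renormalised strip has height $3n$, each of the three sub-crossings lives in a strip of the original height $n$, which is where the exponent $3$ comes from, and the $-9/\lambda$ correction accounts for the polynomial loss in RSW and the boundary-condition corrections from splitting the strip into blocks.

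Concretely, first I would fix the scale and, given a long rectangle $[0,\alpha\cdot 3n]\times[-3n,6n]$ with free boundary conditions, partition it along the horizontal direction into $\asymp \alpha\lambda$ blocks of horizontal length $3n/\lambda$ (choosing lengths in $9\NN$ so that divisibility is respected, using $n\in 9\NN$). Using \eqref{eq:DMPRC} and Proposition~\ref{free-closed} to impose free/empty boundary conditions on the vertical interfaces between consecutive blocks, the probability of a horizontal crossing of the big rectangle is bounded above by a product over blocks of crossing probabilities in the free strip $\varphi^0_{S_{n}}$-type measures, up to corrections on the boundary blocks. In each block, the event that a horizontal crossing passes through is, by a union bound over the three sub-strips of height $n$ and by the FKG/monotonicity inequalities, bounded by a sum/product of horizontal crossing events at scale $n$, and RSW (Proposition~\ref{prop: RSW}) converts vertical gluing crossings into horizontal ones at the cost of the factor $c_\rho^{-1}$ and exponent $1/c_\rho$; taking $\lambda$ large absorbs this into $\lambda^C$ and $-9/\lambda$. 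Taking the $1/\alpha$-power limit $\alpha\to\infty$ then yields $p_{3n}\leq \lambda^{C} p_n^{3-9/\lambda}$, the boundary blocks contributing nothing in the limit.

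The main obstacle, and the place where genuine care is needed beyond \cite{DCT}, is the bookkeeping of boundary conditions: in \cite{DCT} one works with a single random cluster measure, whereas here the relevant comparisons involve both the edge configuration $\omega$ and the vertex environment $\psi$, so that ``closing'' an interface is not simply conditioning $\omega$ to be $0$ there but must be handled through the refined stochastic domination of Proposition~\ref{free-closed} (free/empty vertices), ensuring that after splitting the strip we genuinely dominate by a product of free-strip measures rather than something with uncontrolled boundary. The other subtlety is that the strip measures $\varphi^\#_{S_m}$ must be shown to arise as limits of the finite-volume measures in the $\alpha\to\infty$ direction compatibly with the block decomposition; this is routine given Proposition~\ref{prop: inf vol limits} and \eqref{eq:MON} but should be stated carefully. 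Since all the adaptations needed here are already present in the proof of Proposition~\ref{prop: RSW} (which handles exactly these $(\psi,\omega)$ boundary-condition issues in the strip), the remaining work is a faithful transcription of \cite[Section 5.1]{DCT}, and I would present it at that level of detail.
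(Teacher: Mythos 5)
Your proposed decomposition goes the wrong way. The paper's proof (following DCT Lemma~15) works in a \emph{tall} rectangle $R=[0,\alpha n]\times[0,6\lambda n+3n]$, cut \emph{vertically} into alternating horizontal strips $R_i$ (height $3n$) and $R_i'$ (height $3n$) spanning the full width $\alpha n$, together with three very thin strips $\tilde R_i^-,\tilde R_i,\tilde R_i^+$ (height $n/9$) inside each $R_i$. It then plays the primal crossing event $\tilde\cE$ against the dual non-crossing events $\cF,\tilde\cF$, proves the three estimates \eqref{eq: pf of ren 1}--\eqref{eq: pf of ren 3} relative to the balanced boundary condition $1/0$, and closes the argument only after invoking the duality relation of Lemma~\ref{lem: dual relation of strip densities}. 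By contrast, you propose to cut the long, short rectangle $[0,3\alpha n]\times[-3n,6n]$ \emph{horizontally} into $\asymp\alpha\lambda$ blocks of width $3n/\lambda$ and then ``impose free/empty boundary conditions on the vertical interfaces.''

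This cannot work as stated for two concrete reasons. First, conditioning the vertical interfaces to be free/closed makes a horizontal crossing of the big rectangle \emph{impossible} (it would have to traverse a closed interface), and even the softer comparison via Proposition~\ref{free-closed} would run in the wrong direction: free boundary conditions are unfavourable for crossings, so they yield a lower bound on the restricted block probabilities, not the upper bound $p_{3n}\le \lambda^C p_n^{3-9/\lambda}$ that you need. Second, there is no mechanism in your decomposition that produces the exponent $3-9/\lambda$. The exponent $3$ in this lemma does not come from stacking three primal horizontal crossings of height $n$ — a horizontal crossing of a height-$3n$ rectangle is a single path and cannot be decomposed in that way — it comes from the interplay with the \emph{dual} crossing events and the duality relation between $p$ and $q$, while the $-9/\lambda$ correction comes from the slack strips $R_i'$ and the thin $\tilde R_i^\pm$ layers and the \eqref{eq: pushdual} estimate of Lemma~\ref{lem: push bc}. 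None of these appear in your sketch, so the exponent is essentially asserted rather than derived. The key missing idea is precisely the use of the tall-rectangle geometry and the $\cE/\cF$ and $\tilde\cE/\tilde\cF$ primal-dual pair of events, and of the duality lemma for strip densities as an input to the renormalisation.
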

\noindent

Before proving Lemmas~\ref{lem: dual relation of strip densities} and \ref{lem: renormalisation inequality for strip densities}, we will show that they imply the following: either (i) $\inf_{n \in 9\NN} p_n > 0$ and $\inf_{n \in 9\NN} q_n > 0$; or, (ii) there exists $c> 0$ such that $p_n \leq e^{-cn}$ and $q_n \leq e^{-cn}$ for all $n \in 9\NN$.

It suffices to show that (ii) holds whenever $\inf_{n \in 9\NN} p_n = 0$ or $\inf_{n \in 9\NN} q_n = 0$. First, note that by Lemma~\ref{lem: dual relation of strip densities} we have that in this case both $\inf_{n \in 9\NN} p_n$ and $\inf_{n \in 9\NN} q_n$ are equal to $0$. Then applying Lemma~\ref{lem: renormalisation inequality for strip densities} for $\lambda = 9$ we obtain stretch exponential decay along a geometric subsequence. Note that, by the finite energy property  -- see Proposition \ref{prop: finite energy} -- we may open primal or dual paths of edges with exponential cost in the length of the path.  Hence there exists $c'>0$ such that $p_n \geq e^{-c'n}$ and $q_n \geq e^{-c'n}$ for every $n\geq 1$, and so $\sup_n p_n^{-9/n} <\infty$ and $\sup_n q_n^{-9/n} <\infty$. This implies that for $\lambda=n$, we have that $p_{3n} \leq C_1 n^{C_2} p_n^3$ and $q_{3n} \leq C_1 n^{C_2} q_n^3$, for some constants $C_1>0$ and $C_2>0$. Thus, for some $C_3$ sufficiently large, the sequences $(n^{C_3} p_n)_{n\geq 1}$ and $(n^{C_3} q_n)_{n\geq 1}$ decay exponentially, and hence both $(p_n)_{n\geq 1}$ and $(q_n)_{n\geq 1}$ decay exponentially.

We will show that one can take $n \in \NN$ instead of $n \in 9\NN$. To see this, note that by inclusion of events and \eqref{eq:MON}, for every $m\geq n$ we have
\begin{equs}
    \varphi^0_{[0,\alpha m]\times[-m,2m]} [ H_{[0,\alpha m]\times[0,m]} ]\geq \varphi^0_{[0,\alpha m]\times[-m,2m]} [ H_{[0,\alpha m]\times[0,n]} ]\geq \varphi^0_{[0,\alpha' n]\times[-n,2n]} [ H_{[0,\alpha' n]\times[0,n]} ]
\end{equs}
and
\begin{equs}
    \varphi^1_{[0,\alpha m]\times[-m,2m]} [ V^c_{[0,\alpha m]\times[0,m]} ]\geq \varphi^1_{[0,\alpha m]\times[-m,2m]} [ V^c_{[0,\alpha m]\times[0,n]} ]\geq \varphi^1_{[0,\alpha' n]\times[-n,2n]} [ V^c_{[0,\alpha' n]\times[0,n]} ],
\end{equs}
where $\alpha'=\alpha m/n$.
Hence $p_m\geq p_n^{m/n}$ and $q_m\geq q_n^{m/n}$ for every $m\geq n$.

We now assume Lemmas \ref{lem: dual relation of strip densities} and \ref{lem: renormalisation inequality for strip densities} in order to prove Proposition \ref{prop:quad}. The proof assuming these lemmas is based on \cite[Section 5.4]{DCT}.

\begin{proof}[Proof of Proposition \ref{prop:quad} assuming Lemmas~\ref{lem: dual relation of strip densities} and \ref{lem: renormalisation inequality for strip densities}]

First note that {\bf(SubCrit)} and {\bf (SupCrit)} are disjoint. Thus, it suffices to show that if {\bf non(SubCrit)} and {\bf non(SupCrit)} occur, then either {\bf (ContCrit)} or {\bf (DiscontCrit)} occurs. Assume that {\bf non(SubCrit)} and {\bf non(SupCrit)} occur. 

Arguing as in \cite[Section 5.4]{DCT}, we find that: if (i) holds, then {\bf (ContCrit)} occurs; whereas, if (ii) holds, then  {\bf (DiscontCrit)}  occurs. The adaptation of these arguments to our setting is mild, but requires some care concerning the dual crossings. Let us first show that if (ii) holds, then {\bf (DiscontCrit)} occurs. It is sufficient to show that either the crossing of squares or the dual crossing of squares decays exponentially. We treat the case of dual crossings since the other case follows by similar arguments. Note that by a union bound there exist $x \in \{-n-1/2\}\times[-n+1/2,n-1/2]$ and $y \in \{n+1/2\}\times[-n+1/2,n-1/2]$ such that
\begin{equs}
\varphi^1_{\Lambda_{2n}}[x \overset{*}\longleftrightarrow y]
\geq
\frac{1}{4n^2} \varphi^1_{\Lambda_{2n}}[V_{n}^c]	
\end{equs}
where $\overset{*}\longleftrightarrow $ denotes connection in the dual configuration $\omega^*$.
For $k \in \N$ let $x_k = x + (4kn+1,0)$. Let $S'_n=\RR \times [-n,n]$ denote the infinite strip of width $2n$. By reflection across $y$ and its translates (here we use the translation symmetry and reflection symmetries of the measure), the FKG inequality, and monotonicity in boundary conditions we have that
\begin{equs}
\varphi_{S'_{3n}}^1[V^c_{[-n,(4k+1)n]\times[-n,n]}]
\geq
\left(\frac{1}{4n^2} \varphi^1_{\Lambda_{2n}}[V_n^c]\right)^{k}.
\end{equs}
The choice of pushing the wired boundary conditions to $S'_{3n}$ is motivated since we want to see the event $q_{2n}$ appearing. On the other hand, by finite energy there exists $c'>0$ such that
\begin{equs}
\varphi_{S'_{3n}}^1[V^c_{[-n,(4k+1)n]\times[-n,n]}]
\leq
e^{c'n}
\varphi^1_{[-n,(4k+1)n]\times[-3n,3n]}[V^c_{[-n,(4k+1)n]\times[-n,n]}]	
\end{equs}
Shifting by $+n$ in the horizontal and vertical directions in order to get the correct domains, we may insert this bound into the penultimate display and take $k \rightarrow \infty$ to yield
\begin{equs}
4n^2q_{2n}^4
\geq
\varphi^1_{\Lambda_{2n}}[V_n^c].
\end{equs}
Since $q_n$ decays exponentially, we obtain that the righthand side decays exponentially, as desired.

Let us now assume (i). We wish to show {\bf (ContCrit)} holds. It is sufficient to show that for any $\rho > 0$ there exists $c_\rho \in (0,1/2)$ such that, for any boundary condition $\xi$,
\begin{equs}\label{eq: box crossing}
\varphi^\xi_{[-n,(\rho+1)n]\times[-n,2n]}\left[ H_{[0,\rho n]\times[0,n]}\right]
\in
(c_\rho,1-c_\rho). 
\end{equs}
Let ${\rm mix}$ denote the boundary conditions on $[-n,(\rho+1)n]\times[-n,2n]$ that are wired on the left, wired on the right, and $0$ elsewhere. Let $K = [-2n/3,-n/3]\times[-n,2n]$ and $K' = [\rho n +n/3,\rho n + 2n/3]\times[-n,2n]$. By the comparison of boundary conditions, we have
\begin{equs}
\varphi^{\rm mix}_{[-n,(\rho+1)n]\times[-n,2n]}\left[H_{[0,\rho n]\times[0,n]} \cap H^c_{K} \cap H^c_{K'}\right]
&\leq	
\varphi^{\rm mix}_{[-n,(\rho+1)n]\times[-n,2n]}\left[H_{[0,\rho n] \times[0,n]} \mid H^c_{K} \cap H^c_{K'}\right]
\\
&\leq
\varphi^0_{[-n,(\rho+1)n],[-n,2n]}[H_{[0,\rho n]\times [0,n]}].
\end{equs}
We claim that 
\begin{equs}
\begin{split} \label{eq: box cross 1}
\varphi^{\rm mix}_{[-n,(\rho+1)n]\times[-n,2n]}[H_{[0,\rho n]\times[0,n]}]
&\geq
p_n^{\rho+2},
\end{split}
\\
\begin{split} \label{eq: box cross 2}
\varphi^{\rm mix}_{[-n,(\rho+1)n]\times[-n,2n]}[H^c_K \cap H^c_{K'} \mid H_{[0,\rho n]\times[0,n]}]
&\geq
q_{n/3}^{18}.
\end{split}
\end{equs}
Assuming this claim, the lower bound in \eqref{eq: box crossing} follows. In order to obtain the upper bound, we apply the same reasoning to the dual crossings (we omit this since the arguments and the required claims are similar).

In order to prove \eqref{eq: box cross 1}, note that for any $\alpha \in \N$ we have 
\begin{equs}
\varphi^0_{S_n}\left[ H_{[0,(\rho+2)\alpha n]\times[0,n]} \right]
\leq	
\varphi^{\rm mix}_{[-n,(\rho+1)n]\times[-n,2n]}[H_{[0,\rho n]\times[0,n]}]^{\alpha-1}.
\end{equs}
To see this, note that the crossing event $ H_{[0,(\rho+2)\alpha n]\times[0,n]}$ is included in the event that $\alpha - 1$ disjoint translates of crossings in $[0, \rho n]\times[0,n]$ occur. The inequality then follows by using the spatial Markov property and monotonicity in boundary conditions. Using finite energy as above to get the correct boundary conditions, and then dividing by $\alpha-1$ and taking limits as $\alpha \rightarrow \infty$ yields the desired result. Let us now consider \eqref{eq: box cross 2}. By comparison of boundary conditions we have
\begin{align*}
\varphi^{\rm mix}_{[-n, (\rho+1)n]\times[-n,2n]}[H_K^c \cap H_{K'}^c \mid H_{[0,\rho n]\times[0,n]}]
\geq
\varphi^{\rm mix}_{[-n, 0]\times[-n,2n]}[H_K^c] \, \varphi^{\rm mix}_{[\rho n,(\rho+1)n]\times [-n,2n]}[H_{K'}^c]. 
\end{align*}
Above, ${\rm  mix}$ again means wired on the left and right sides of the rectangles, and free elsewhere. Note that by translation invariance and $\pi/2$ rotation invariance, each of the probabilities on the righthand side coincides with
\begin{align*}
\varphi^{\rm mix^\perp }_{[0,3n]\times [-n/3,2n/3]}[V_{[0,3n]\times[0,n/3]}^c],
\end{align*}
where ${\rm mix^\perp}$ are the boundary conditions that are wired on the top and bottom, and free on the sides. Arguing as in the proof of \eqref{eq: box cross 1}, we get that
\begin{equs}
\varphi^{\rm mix^\perp}_{[0,\rho n/3]\times[-n/3,2n/3]}[V^c_{[0,\rho n]\times[0,n/3]}]
&\geq
q_{n/3}^{\rho},
\end{equs}
from which \eqref{eq: box cross 2} follows.

\end{proof}

\subsection{Russo-Seymour-Welsh theory} \label{subsec: proof of RSW}

A central tool in the analysis of crossing probabilities for 2D percolation models is the Russo-Seymour-Welsh (RSW) theory. Loosely speaking, this allows to bound the probability of long horizontal crossings from below by short vertical crossings. The RSW estimates apply to the wired and free measures in infinite volume, and also the following strip measures: for $m_1,m_2 \in \N$ let $S_{m_1,m_2} = \Z \times [-m_1,2m_2]$. In the case $m_1=m_2$ we simply write $S_m = \RR \times [-m,2m]$ to denote the infinite strip of width $3m$. We write $\varphi^{1}_{S_{m_1,m_2}}, \varphi^{0}_{S_{m_1,m_2}}$, and $\varphi^{0/1}_{S_{m_1,m_2}}$ to denote the wired, free, and mixed boundary conditions (wired on bottom and free on top) random cluster measures on $S_{m_1,m_2}$, respectively.
\begin{prop}[RSW estimate] \label{prop: RSW}
For any $\rho >0$  there exists $c_\rho > 0$ such that for any $(p,a) \in (0,1)$, and for all $n \geq \frac 1{c_\rho}$ and $m_1,m_2 \geq n$,
\begin{equs} \label{eq: RSW} \tag{\textbf{RSW}}
\varphi^{0/1}_{S_{m_1,m_2}}[H_{[0,\rho n] \times [0,n]}]
\geq 
c_\rho \Big( \varphi^{0/1}_{S_{m_1,m_2}}[V_{[0,\rho n] \times [0,n]}] \Big)^{1/c_\rho}, 
\end{equs}
and
\begin{equs} \label{eq: RSWstar} \tag{\textbf{RSW*}}
\varphi^{0/1}_{S_{m_1,m_2}}[V_{[0,\rho n] \times [0,n]}^c]
\geq 
c_\rho \Big( \varphi^{0/1}_{S_{m_1,m_2}}[H_{[0,\rho n] \times [0,n]}^c] \Big)^{1/c_\rho}. 
\end{equs}
Above we could also replace $\varphi^{0/1}_{S_{m_1,m_2}}$ by either $\varphi^1_{p,a}$ or $\varphi^0_{p,a}$.
\end{prop}

\noindent The proof of Proposition \ref{prop: RSW} is  based on a symmetric domain argument developed in \cite{DCT}. We note that the full plane measures $\varphi^1_{p,a}$ and $\varphi^0_{p,a}$, since crossing events only depend on the edge-marginal of these dilute random cluster measure, satisfy the assumptions of the general RSW theory developed in \cite{KT23}. The case of the infinite strip, however, is not covered by that theory. In the proof below we assume without loss of generality that $m_1=m_2=n$.

We begin with some geometric setup. Let $R=[a,b]\times [c,d]$ be a rectangle in $\RR^2$. For $E,F \in \{ \Bot[R], \Top[R], \Left[R], \Right[R] \}$ and a rectangle $S \subset R$, we define the following connection events:
\begin{itemize}
\item we write $E \overset{S}{\conn} F$ if there exists a path in $\omega$ which lies in $S$, and intersects both $E$ and $F$;
\item we write $E \overset{*,S}{\conn} F$ if there exists a path in $\omega^*$ which, except from its first and last edge, lies in $S$, and intersects both $E$ and $F$.
\end{itemize}
If $S=R$, then we drop $S$ from the notation. We write $H_R$ and $V_R$ to denote the events that $\Left[R] \conn \Right[R]$ and $\Bot[R] \conn \Top[R]$, respectively. We write $H^*_R$ and $V^*_R$ to denote the events that $\Left[R] \connstar \Right[R]$ and $\Bot[R] \connstar \Top[R]$, respectively. Let $k = \lceil n/50 \rceil$. Define $R_0=\{-17k,\dots,18k\}\times\{0,\dots, n\}$ and $S_0 = \{0,\dots,k\}\times\{0\}$. Let $R_j, S_j$ denote the translates $R_0+(jk,0)$ and $S_0 +(jk,0)$, respectively. 

We consider the following bridge events:
\begin{equs}
A_j
&=
\{ S_j \overset{R_j \cup R_{j+4}}{\conn} S_{j+2} \cup S_{j+4} \}
\\
A_j^*
&=	
\{ S_j \overset{*, R_j \cup R_{j+4}}{\conn} S_{j+2} \cup S_{j+4}\}.
\end{equs}
It is easy to see that these bridge events may be glued together, using the FKG inequality, to create long horizontal crossings and dual crossings. The RSW estimates are then a consequence of the following bounds on the probability of bridge events.

\begin{prop} \label{prop: bridge events}
There exists $c_1>0$ such that, for every $\lambda \geq 1$ and $n \in \NN$,
\begin{equs}
\varphi[A_0]
\geq
\frac{c_1}{\lambda^3} \varphi[ V_{[0,\lambda n]\times[0,n]} ]^3
\text{ and }
\varphi[A_0^*]
\geq
\frac{c_1}{\lambda^3} \varphi [ V^*_{[0,\lambda n]\times[0,n]} ]^3.
\end{equs}	
\end{prop}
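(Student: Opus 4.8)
The plan is to follow the RSW renormalisation strategy of \cite{DCT}, adapted to the dilute random cluster model. All the ingredients it needs are available for $\varphi$: the \eqref{eq:FKGRC} inequality, translation invariance and ergodicity (Proposition \ref{prop: inf vol limits}), monotonicity in the domain \eqref{eq:MON}, and the \eqref{eq: RSW} estimate (Proposition \ref{prop: RSW}); moreover they all have dual analogues obtained by working on $(\LL^2)^*$ and using planar duality. It therefore suffices to prove the bound for $A_0$, the bound for $A_0^*$ being identical with each crossing event replaced by its dual analogue (the only additional care being the first and last edges of dual paths, handled exactly as in the definition of $\overset{*,S}{\conn}$).

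First I would realise $A_0$ as containing an intersection of three crossing events inside $R_0 \cup R_4$ whose simultaneous occurrence topologically forces a connection from $S_0$ to $S_2 \cup S_4$. Up to the precise placement of the sub-rectangles, one takes two ``ascending'' crossings rooted near $S_0$ and near $S_2$ (or $S_4$) that both reach a common horizontal level inside $R_0 \cup R_4$, together with one ``horizontal'' crossing at that level joining them, and uses that a bottom-to-top crossing of a rectangle must intersect any left-to-right crossing of the same rectangle; the three paths then concatenate into a single open path from $S_0$ to $S_2 \cup S_4$ lying in $R_0 \cup R_4$. The unions $S_2 \cup S_4$ and $R_0 \cup R_4$ appearing in the definition of $A_0$ are precisely what provide the room needed for this. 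By \eqref{eq:FKGRC}, $\varphi[A_0]$ is then at least the product of the three probabilities.

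Next I would bound each of these three probabilities below by $\tfrac{1}{C\lambda}\,\varphi\big[V_{[0,\lambda n]\times[0,n]}\big]$, uniformly in $n$ and $\lambda$; this is where the factor $\lambda^{-3}$ is produced. Decomposing the bottom side of $[0,\lambda n]\times[0,n]$ into $O(\lambda)$ segments of length $\asymp k$, the event $V_{[0,\lambda n]\times[0,n]}$ is contained in the union over these segments of the increasing events ``this segment is connected to the top side inside the rectangle''; a union bound together with translation invariance of $\varphi$ then yields one such segment — hence, after translating, a segment placed wherever we like — connected across a rectangle of comparable shape with probability at least $\tfrac{1}{C\lambda}\varphi[V_{[0,\lambda n]\times[0,n]}]$, and one transfers this to the rectangles used in the previous step by \eqref{eq:MON} (to change the domain) and \eqref{eq: RSW} (to change the aspect ratio by a bounded factor). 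Multiplying the three bounds and combining with the previous step gives $\varphi[A_0] \geq \tfrac{c_1}{\lambda^3}\varphi[V_{[0,\lambda n]\times[0,n]}]^3$ with $c_1 = c_1(p,a)$, and the same scheme run on $(\LL^2)^*$ gives the bound for $A_0^*$.

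The hard part will be executing the first two steps simultaneously and uniformly in $\lambda$: one has to choose the three sub-rectangles — their positions and, crucially, their aspect ratios — so that they fit inside $R_0 \cup R_4$, so that their crossings are forced to meet in the required pattern, and so that each of them still admits the lower bound of the previous paragraph with only a polynomial-in-$\lambda$ loss. This is exactly what dictates the particular scales ($k$, $17k$, $18k$) and the four-fold index shift built into the definitions of the $R_j$ and $S_j$, and is where essentially all of the bookkeeping is concentrated; the dual version adds only the routine complication of tracking the endpoints of dual paths that leave $R_0 \cup R_4$.
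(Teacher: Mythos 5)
Your proposal is circular, and this is fatal: you list Proposition~\ref{prop: RSW} (the RSW estimate) as one of the available ingredients and use it explicitly in the second step ``to change the aspect ratio by a bounded factor.'' But in the paper, Proposition~\ref{prop: RSW} is \emph{deduced from} Proposition~\ref{prop: bridge events} by a routine gluing argument; the bridge-event estimate is precisely what produces RSW, not the other way around. Once RSW is removed from your toolbox, your plan collapses at exactly the point where circularity entered. You want to realise $A_0 \supset B_1 \cap B_2 \cap B_3$ with two ``vertical'' crossings and one ``horizontal'' crossing joining them, then apply \eqref{eq:FKGRC}. The two vertical pieces can indeed be bounded below by $\tfrac{1}{C\lambda}\varphi[V_{[0,\lambda n]\times[0,n]}]$ via a union bound over $O(\lambda)$ segments plus translation invariance — that part is fine. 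But the horizontal piece cannot: a priori there is no relation between horizontal and vertical crossing probabilities, and establishing such a relation is exactly the content of the RSW estimate you are not allowed to use here. The essential difficulty the proposition has to overcome is producing a connection \emph{across} a rectangle from information only about connections \emph{up} a rectangle, and a naive FKG-gluing of three crossings cannot do this.

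The paper's actual argument avoids the horizontal crossing entirely and is genuinely different in structure. One decomposes the vertical crossing event $V_{[0,\lambda n]\times[0,n]}$ as a union over $j$ of the vertical-type events $\sT_j, \sL^{g}_j, \sL^{b}_j, \sR^{g}_j, \sR^{b}_j$ (and their duals), so that by a union bound and translation/reflection invariance some tortuous crossing $C_0 \in \{\sT_0, \sL^b_0, \sT_0^*, \sL^{*,b}_0\}$ satisfies $\varphi[C_0] \geq \frac{1}{5(C+1)}\varphi[V_{[0,\lambda n]\times[0,n]}]$ with $C = O(\lambda)$. The crucial step is then the conditional estimate of Lemma~\ref{lem: cond prob}:
\begin{equs}
\varphi[A_0 \mid C_0 \cap C_4] \;\geq\; \tfrac12\, \varphi[C_2 \setminus (A_0 \cup A_2) \mid C_0 \cap C_4],
\end{equs}
which roughly says that, given tortuous paths from $S_0$ and $S_4$, either one of them already bridges to $S_2\cup S_4$, or there is a separating dual arc, and by a symmetry/duality argument the bridging alternative is at least as likely as the blocking one, up to a factor two. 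Conditioning on the left-most and right-most tortuous paths, pushing boundary conditions via the spatial Markov property and Lemma~\ref{free-empty-vertex}, and exploiting $\pi/2$-rotation symmetry of a well-chosen square $\Sym$ makes this comparison rigorous. Combining with FKG for $\varphi[C_0 \cap C_2 \cap C_4] \geq \varphi[C_0]^3$ then gives $(q+2)\varphi[A_0] \geq \varphi[C_0]^3$ and the stated bound. This is the substitute for your ``one horizontal crossing,'' and it is where the genuine work of the proposition lies — it is not bookkeeping about scales, as your last paragraph suggests, but a replacement of a gluing argument by a conditional self-improvement that is agnostic to the horizontal/vertical asymmetry.
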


\begin{proof}[Proof of Proposition \ref{prop: RSW} assuming Proposition \ref{prop: bridge events}]
We prove the assertion for primal crossings. The case of dual crossings follows similarly. Note that if the event $A_j$ occurs for every $-1\leq j \leq 50\rho$, then $H_{[0,\rho n]\times[0,n]}$ occurs. Using Proposition~\ref{prop: bridge events} and \eqref{eq:FKGRC} we obtain 
\begin{equs}
\varphi[H_{[0,\rho n]\times[0,n]}]\geq \left(\frac{c_1}{\rho^3} \varphi[ V_{[0,\rho n]\times[0,n]} ]^3\right)^{50\rho+2}.
\end{equs}
\end{proof}

To prove Proposition \ref{prop: bridge events}, we follow the strategy of \cite[Section 3]{DCT}. The main differences are the following: 
\begin{enumerate}
    \item[(1)] conditioning on a set of edges being closed does not induce the free boundary conditions, but the conditional measure is dominated by the free measure (see Lemma \ref{free-empty-vertex});
    \item[(2)] the dual model is not a dilute random cluster measure, hence the estimate on dual crossings needs to be proved directly.
\end{enumerate}

\begin{proof}[Proof of Proposition \ref{prop: bridge events}] 
Note that $\bigcup_{j=0}^C R_j \supset [0,\lambda n]\times[0,n]$, 
where $C= \lfloor \lambda n / k \rfloor \in (0,50\lambda]$. We bound the events $V_{[0,\lambda n]\times[0,n]}$ and $V^*_{[0,\lambda n]\times[0,n]}$ according to the crossings in the $R_j$ and $R_j^*$, respectively. Define the events
\begin{align*}
\sT_j
&=
\{ S_j \overset{R_j}{\conn} \Top[R_j] \}
&&
\sT_j^*
=
\{ S_j \overset{*,R_j}{\conn} \Top[R_j] \}
\\
\sL^g_j
&=
\{ S_j \overset{R_{j-13}}{\conn} \Left[R_{j+4}] \}
&&
\sL^{*,g}_j
=
\{ S_j \overset{*, R_{j-13}}{\conn} \Left[R_{j+4}] \}
\\
\sL^b_j
&=
\{ S_j \overset{R_{j}}{\conn} \Left[R_j] \} \setminus \sL^g_j
&&
\sL^{*,b}_j
=
\{ S_j \overset{*, R_{j}}{\conn} \Left[R_j] \} \setminus \sL^{*,g}_j
\\
\sR^{g}_j
&=
\{ S_j \overset{R_{j+13}}{\conn} \Right[R_{j-4}] \}
&&
\sR^{*,g}_j
=
\{ S_j \overset{*, R_{j+13}}{\conn} \Right[R_{j-4}] \}
\\
\sR^{b}_j
&=
\{ S_j \overset{R_j}{\conn} \Right[R_j] \} \setminus \sR^{g}_j
&&
\sR^{*,b}_j
=
\{ S_j \overset{*, R_j}{\conn} \Right[R_j] \} \setminus \sR^{*,g}_j.
\end{align*}

\begin{rem}
The events $\sT_j, \sT_j^*$ consist of up-down crossings and dual crossings in $R_j$. The event $\sL^g_j$ (resp. $\sL^{*,g}_j$) consists of a good left crossing in the sense that there exists a path (resp. dual path) from $S_j$ to $\Left[R_{j+4}]$ that does not explore to the right of the $y$-axis translated by the vector $(jk+5k,0)$. The event $\sL^b_j$ (resp. $\sL^{*,b}_j$) consists of a bad left crossing in the sense that any path (resp. dual path) from $S_j$ to $\Left[R_j]$ must cross the $y$ axis translated by the vector $(jk + 5k,0)$. There are similar observations for the events involving right crossings. Note that we may take these crossings to be vertex self-avoiding.
\end{rem}

Observe that, by translation invariance, we have $\varphi[C_j] = \varphi[C_0]$, where $C_j$ refers to any of the crossings or dual crossings defined above. Moreover, by reflection invariance along the $y$-axis translated by the vector $(jk +k,0)$, we have that $\varphi[C^L_j] = \varphi[C^R_j]$, where $C^L_j$ and $C^R_j$ refer to any of the same type of crossing/dual crossing event defined above. Therefore, by union bounds,
\begin{equs}
\varphi[V_{[0,\lambda n]\times[0,n]}]
&\leq
\sum_{j = 0}^C \Big( \varphi[\sT_j] + \varphi[\sL^g_j] + \varphi[\sL^b_j] + \varphi[\sR^g_j]+\varphi[\sR^b_j] \Big)	
\\
\varphi[V^*_{[0,\lambda n]\times[0,n]}]
&\leq
\sum_{j = 0}^C \Big( \varphi[\sT^*_j] + \varphi[\sL^{*,g}_j] + \varphi[\sL^{*,b}_j] + \varphi[\sR^{*,g}_j]+\varphi[\sR^{*,b}_j] \Big)
\end{equs}
from which we deduce
\begin{equs}
\begin{split} \label{eq : rsw max}
\max\{ \varphi[\sT_0], \varphi[\sL^{g}_0],\varphi[\sL^{b}_0] \}
&\geq
\frac{1}{5(C+1)} \varphi[V_{[0,\lambda n]\times[0,n]}]
\end{split}
\\
\begin{split} \label{eq : rsw max dual}
\max\{ \varphi[\sT^*_0], \varphi[\sL^{*,g}_0],\varphi[\sL^{*,b}_0] \}
&\geq
\frac{1}{5(C+1)} \varphi[V^*_{[0,\lambda n]\times[0,n]}].
\end{split} 
\end{equs}

Note that if the first maximiser is $\varphi[\sL^{g}_0]$, then on the event $\sR^{g}_0\cap\sL^{g}_2$, we have that $A_0$ happens and the desired inequality follows from translation invariance, reflection invariance, and the FKG inequality. A similar observation holds for $\varphi[\sL^{*,g}_0]$. Thus me may assume that the maxima are taken over the tortuous paths $\sT_0$, $\sT_0^*$, $\sL^b_0$, and $\sL^{*,b}_0$. Let $C_0$ be the maximiser amongst $\{ \sT_0, \sL^b_0\}$ and let $C_0^*$ be the maximiser amongst $\{ \sT_0^*, \sL^{*,b}_0 \}$. It is sufficient to establish the following conditional probability estimate. 
\begin{lem}\label{lem: cond prob}
Let $C_0$ and $C_0^*$ be as above. Then,
\begin{equs}
\varphi[A_0 \mid C_0 \cap C_4]
&\geq
\frac 12 \varphi[ C_2 \setminus (A_0 \cup A_2) \mid C_0 \cap C_4 ].
\end{equs}
and
\begin{equs}
\varphi[A_0^* \mid C_0^* \cap C_4^*]
&\geq
\frac 12 \varphi[ C_2^* \setminus (A_0^* \cup A_2^*) \mid C_0^* \cap C_4^*].
\end{equs}
\end{lem}
\noindent We assume Lemma \ref{lem: cond prob} for now and show how it is used to prove Proposition \ref{prop: bridge events}. After this proof, we then prove Lemma \ref{lem: cond prob}.

Let us do the case of $A_0$ since the same reasoning applies for $A_0^*$. By Lemma \ref{lem: cond prob} and translation invariance, we have that
\begin{equs}
2 \varphi [ A_0 ]
\geq 
\varphi \left[ (C_2 \setminus (A_0 \cup A_2)) \cap C_0 \cap C_4 \right] 	
\end{equs}
and 
\begin{equs}
2\varphi[A_0]
\geq
\varphi[A_0 \cap (C_2 \cap C_0 \cap C_4)] + \varphi[ A_2 \cap (C_0 \cap C_2 \cap C_4)].	
\end{equs}
Hence, by summing over disjoint events and using the FKG inequality, we obtain 
\begin{equs}
4\varphi[A_0]
&\geq
\varphi[(C_0 \cap C_2 \cap C_4) \setminus (A_0 \cup A_2)] + \varphi [(C_0 \cap C_2 \cap C_4) \cap (A_0 \cup A_2)]
\\
&=
\varphi[C_0 \cap C_2 \cap C_4]
\geq
\varphi[C_0]^3
\end{equs}
These estimates combined with \eqref{eq : rsw max} and \eqref{eq : rsw max dual} complete the proof.
\end{proof} 

It remains to prove Lemma \ref{lem: cond prob}.

\begin{proof}[Proof of Lemma \ref{lem: cond prob}]
Since the case concerning dual bridges requires some care, we only treat this one. Moreover, we only analyse the case of $C_0=\sL^{*,b}_0$, which requires a much more subtle symmetric domain argument. See Figure \ref{fig:symmetric domain}. The analogous arguments for the other case and also the events involving primal crossings are easier and therefore we omit them. We emphasise that the only differences in the proofs of the latter are purely geometric. The interested reader can consult \cite[Lemma 9]{DCT} for more details.

Assume that $C_0 = \cL^{*,b}_0$, i.e.\ the case of tortuous left dual crossings. We partition the event $\cL^{*,b}_0 \cap \cL^{*,b}_4$ in terms of the left-most and right-most tortuous left dual crossings in $R_0$ and $R_4$ and denote these random variables by $\Gamma_L^*$ and $\Gamma_R^*$, respectively. We fix $\gamma_L^*$ and $\gamma_R^*$ dual paths such that all edges except the first and last lie in their respective domains. We additionally impose that they satisfy the constraints of the crossings in $\cL^{*,b}_0$ and $\cL^{*,b}_4$ in $R_0$ and $R_4$, respectively. We condition on $\Gamma_L^* = \gamma_L^*$ and $\Gamma_R^* = \gamma_R^*$. We first assume that $\gamma_L^*$ and $\gamma_R^*$ do not intersect, and we will handle the case where they intersect separately.

Let $\mathbf{L}$ denote the vertical axis $\{(5k,t) : t \in \mathbb{R} \}$ endowed with its natural ordering. We call $\mathbf{H}$ the half-plane to the right of $\mathbf{L}$. Let $x \in \mathbf{L}$ denote the first intersection point of $\gamma_L^*$ and $\mathbf{L}$, where we start the exploration of the path from $S_0$. Consider now the path $\gamma_R^*$. Since we assume that $\gamma_R^* \cap \gamma_L^* = \emptyset$ and by the definition of $S_4$, necessarily there exist $x_-,x_+ \in \mathbf{L}$ consecutive intersection points of $\gamma_R^*$ (traced from the starting point in $S_4$) with $\mathbf{L}$ such that $x_- < x < x_+$. Let $\upsilon_L$ denote the arc of $\gamma_L^*$ from the start point to $x$ and let $\tilde\upsilon_L$ denote its reflection across $\mathbf{L}$. Necessarily we have that $\tilde\upsilon_L \cap \gamma_R^*|_{[x_-,x_+]} \neq \emptyset$, where $\gamma_R^*|_{[x_-,x_+]}$ denotes the arc of $\gamma_R^*$ traced between $x_-$ and $x_+$.  Let $\upsilon_R$ denote the arc of $\gamma_R^*$ traced between $x_-$ and its first intersection point with $\tilde\upsilon_L$, and let $\tilde \upsilon_R$ denote its reflection across $\mathbf{L}$. We identify these dual paths with the primal edges that they cross and let ${\rm Sym}(\Omega)$ denote the domain in the primal enclosed by these arcs in the sense that these arcs form its external edge boundary. Furthermore, let $\Omega$ denote the subdomain of ${\rm Sym}(\Omega)$ enclosed between the full paths of $\gamma_L^*$ and $\gamma_R^*$. See Figure~\ref{fig:symmetric domain} Note that ${\rm Sym}(\Omega)$ and $\Omega$ are homotopic and the only paths that possibly need to be deformed are $\tilde\upsilon_L$ and $\tilde\upsilon_R$. We call these pieces $\Top[{\rm Sym}(\Omega)]$ and $\Bot[{\rm Sym}(\Omega)]$, respectively, and they correspond after the homotopy to two pieces that we call $\Top[\Omega]$ and $\Bot[\Omega]$, respectively. In addition, we call $\Left(\Omega)=\Left({\rm Sym}(\Omega)) = \upsilon_L$ and $\Right[\Omega]=\Right[{\rm Sym}(\Omega)]=\upsilon_R$.

\begin{figure}
    \centering
    \includegraphics[width=.6\linewidth]{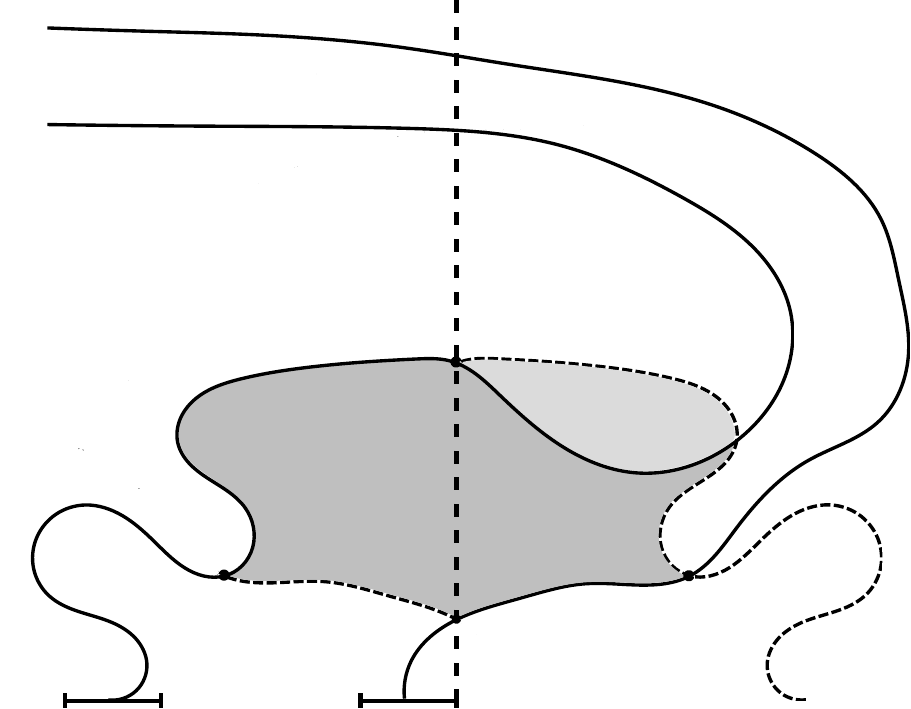}
    \put(-90,110){${\rm Sym}(\Omega)$}
    \put(-158,65){$\Omega$}
    \put(-245,-10){$S_0$}
    \put(-155,-10){$S_4$}
    \put(-150,140){$\mathbf{L}$}
    \put(-130,20){$x_-$}
    \put(-130,110){$x$}
    \caption{The domain $\Omega$ shaded in dark grey. The symmetric domain ${\rm Sym}(\Omega)$ is the union of the two grey shaded areas. The arc emanating from $S_0$ is $\upsilon_L$. Its reflection, which is dashed, is $\tilde\upsilon_L$. The solid arc emanating from $x_-$ to its first intersection with $\tilde\upsilon_L$ is $\upsilon_R$. Its reflection, which is also dashed, is $\tilde\upsilon_R$.}
    \label{fig:symmetric domain}
\end{figure}

Let ${\rm mix}$ denote the following boundary conditions on $\Omega$:
\begin{itemize}
\item $0$ vertices on the outside of $\Left[\Omega]\cup\Right[\Omega]$, 
\item wired on $\Top[\Omega]$,
\item wired on $\Bot[\Omega]$.	
\end{itemize} 
We may define the analogous boundary conditions on ${\rm Sym}(\Omega)$ via the homotopy. Now let $\xi$ denote the random boundary condition on $\Omega$ induced by $\gamma_L^*$ and $\gamma_R^*$ such that the left-most and right-most interior bonds are closed. Then by Lemma \ref{free-empty-vertex} and monotonicity in the domain we have that
\begin{equs}
\varphi[A_0^* \mid \Gamma_L^* = \gamma_L^*, \Gamma_R^* = \gamma_R^*]
&\geq
\varphi\left[ \varphi_\Omega^\xi	[ \gamma_L^* \connstar \gamma_R^*  ] \mid \Gamma_L^* = \gamma_L^*, \Gamma_R^* = \gamma_R^* \right]
\\
&\geq
\varphi_\Omega^{\rm mix} \left[ \Left[\Omega] \connstar \Right[\Omega]\right]
\\
&\geq
\varphi_\Sym^{\rm mix} \left[ \Left[\Sym(\Omega)] \connstar \Right[\Sym(\Omega)] \right].
\end{equs}

Now, we turn to the righthand side of the lemma. We again partition $ \cL_0^{*,b} \cap \cL_4^{*,b}$ according to $\Gamma_L^*$ and $\Gamma_R^*$ and assume $\Gamma_L^* = \gamma_L^*$ and $\Gamma_R^* = \gamma_R^*$ as before. Conditionally on this, note that the event $\cL_2^{*,b} \setminus (A_0^* \cup A_2^*)$ implies the existence of random primal paths $\Pi_L$ (left-most) and $\Pi_R$ (right-most) paths separating $\gamma_L^*$ from $\gamma_R^*$. Conditionally on $\Pi_R = \pi_R$ and $\Pi_L = \pi_L$, let $\Omega^*$ denote the set of vertices enclosed by $\pi_R \cup \pi_L$, and $\Top[\Omega]\cup \Bot[\Omega]$, and denote $\Top[\Omega^*]$ and $\Bot[\Omega^*]$ the intersection of $\Top[\Sym(\Omega)]$ and $\Bot[\Sym(\Omega)]$ with the boundary of $\Omega^*$, respectively. Let $\xi$ denote a random boundary condition on $\Omega^*$ that is wired on $\pi_R \cup \pi_L$. Let also ${\rm mix}^*$ denote the following boundary conditions on $\Omega^*$:
\begin{itemize}
    \item $0$ vertices on $\Top[\Omega^*]$ and $\Bot[\Omega^*]$
    \item wired on $\pi_L \cup \pi_R$.
\end{itemize}
We transfer these boundary conditions to ${\rm Sym}(\Omega)$ via the homotopy. 
By the spatial Markov property, monotonicity in boundary conditions, and inclusion of events,
\begin{equs}
\varphi[\cT_2^* \setminus (A_0^* \cup A_4^*) \mid \gamma_L^*, \gamma_R^*, \pi_L, \pi_R ]
&\leq
\varphi[\varphi_{\Omega^*}^\xi[\Bot[\Omega^*] \connstar \Top[\Omega^*]] \mid \gamma_L^*, \gamma_R^*, \pi_L, \pi_R  ]	
\\
&\leq
\varphi^{{\rm mix}^*}_{\Omega^*}[\Bot[\Omega^*] \connstar \Top[\Omega^*]]
\\
&\leq
\varphi_\Sym^{{\rm mix}^*}[\Bot[\Sym(\Omega)] \connstar \Top[\Sym(\Omega)]]
\\
&\leq
2\varphi_\Sym^{{\rm mix}}[\Bot[\Sym(\Omega)) \connstar \Top[\Sym(\Omega)]]
\end{equs}
where the factor $2$ comes from difference in the number of connected clusters when comparing ${\rm mix}^*$ and $\rm mix$ boundary conditions.
By the reflection symmetries of the domain and the measure
\begin{equs}
\varphi_\Sym^{\rm mix} [ \Left[\Sym(\Omega] \connstar \Right[\Sym(\Omega)] ]=\varphi_\Sym^{{\rm mix}}[\Bot[\Sym(\Omega)) \connstar \Top[\Sym(\Omega)]],
\end{equs}
hence 
\begin{equs}
\varphi[A_0^* \mid \Gamma_L^* = \gamma_L^*, \Gamma_R^* = \gamma_R^*]
&\geq
\frac{1}{2}\varphi[\cT_2^* \setminus (A_0^* \cup A_4^*) \mid \gamma_L^*, \gamma_R^*, \pi_L, \pi_R ].
\end{equs}

Note that if $\gamma_L^* \cap \gamma_R^* \neq \emptyset$, the event $A_0^*$ occurs, and the last inequality is trivially true. The desired assertion follows.

\end{proof}

\subsection{Renormalisation: Proof of Lemma \ref{lem: renormalisation inequality for strip densities}} \label{subsec: proof of renorm}

We require an intermediary lemma that gives estimates on crossing probabilities under balanced boundary conditions at macroscopic distance away from the rectangle. This allows us to push boundary conditions in the proof of the renormalisation inequality.

\begin{lem} \label{lem: push bc}
Assume {\bf non(SubCrit)} and {\bf non(SupCrit)}. There exists $c>0$ such that, for all $n \in 9\NN$, at least one of the following must occur:
\begin{align}\label{eq: pushprimal} \tag{\textbf{PushPrimal}}
\forall \alpha > 0, \qquad &\varphi_{\overline{R}}^{{\rm LTR}}[H_R] 
\geq
c^\alpha
\\ \label{eq: pushdual} \tag{\textbf{PushDual}}
\forall \alpha > 0,
\qquad 
&\varphi_{\overline{R}}^{\rm B}[V_R^c]
\geq
c^\alpha
\end{align} 
where $R=[0,\alpha n]\times [0,n/9]$; $\overline{R}=[0,\alpha n] \times [0, 28 n/9]$; {\rm LTR} is the boundary condition that is wired on $\Left[\overline R]\cup\Top[\overline  R]\cup\Right[\overline  R]$ and $0$ on $\Bot[\overline  R]$; and, {\rm B} is the boundary condition that is $0$ on $\Left[\overline  R]\cup\Top[\overline  R]\cup\Right[\overline  R]$, and wired on $\Bot[\overline R]$.
\end{lem}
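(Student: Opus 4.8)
The plan is to adapt the renormalisation argument of \cite{DCT} to the dilute setting. Throughout I will use four ingredients: the RSW estimate of Proposition \ref{prop: RSW} (valid for the infinite-volume measures $\varphi^0_{p,a},\varphi^1_{p,a}$ and for the strip measures); the FKG inequality (Proposition \ref{prop: FKG}); monotonicity in the domain and in the boundary conditions (\eqref{eq:MON}, Proposition \ref{prop: domination}) together with the comparison of Lemma \ref{free-empty-vertex}/Proposition \ref{free-closed}; and the hypothesis that neither \textbf{(SubCrit)} nor \textbf{(SupCrit)} occurs, which is what will keep all the auxiliary crossing probabilities below bounded away from $0$ uniformly in $n$. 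Two recurring subtleties will need care: Proposition \ref{prop: RSW} does not apply verbatim to the finite-box mixed-boundary measures $\varphi^{\rm LTR}_{R_0}$, $\varphi^{\rm B}_{R_0}$, so before each use of RSW I will compare, via monotonicity, with an infinite-volume or strip measure to which it does apply; and the dual of the dilute random cluster model is not a dilute random cluster model, so all dual-crossing statements will be proved directly in the primal, trading conditioning-on-closed-edges for the free boundary condition via Lemma \ref{free-empty-vertex}, exactly as in Section \ref{subsec: proof of RSW}.

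The dichotomy will come from a single-scale test. Fix $n\in 9\NN$, a large universal constant $\alpha_0$, and a universal threshold $\eta\in(0,\tfrac12)$, and ask whether $\varphi^{\rm LTR}_{R_0}[H_R]\geq\eta$ when $R=[0,\alpha_0 n]\times[0,n/9]$ and $R_0=[0,\alpha_0 n]\times[0,28n/9]$. If the answer is yes, I will deduce \eqref{eq: pushprimal}: for arbitrary $\alpha>0$, tile the rectangle of width $\alpha n$ by $O(\alpha)$ overlapping translates of a width-$\alpha_0 n$ rectangle and bridge consecutive translates by $O(\alpha)$ tall, thin vertical rectangles inside $R_0=[0,\alpha n]\times[0,28n/9]$. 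In each bridging rectangle the required crossing is an easy-way crossing of a tall thin rectangle, so RSW (after a monotonicity comparison with the wired-top/free-bottom strip measure) bounds it below by a universal constant; in each translate, one bounds the horizontal crossing probability below by a universal constant using the test hypothesis $\varphi^{\rm LTR}_{R_0}[H_R]\geq\eta$ together with RSW and monotonicity. FKG then gives $\varphi^{\rm LTR}_{R_0}[H_R]\geq c^\alpha$ for a universal $c>0$.

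If instead $\varphi^{\rm LTR}_{R_0}[H_R]<\eta$, then under $\varphi^{\rm LTR}_{R_0}$ the rectangle $R$ is crossed from top to bottom by a dual path of closed edges with probability $>\tfrac12$. I will turn this into $\varphi^{\rm B}_{R_0}[V_R^c]\geq c'$ for a universal $c'>0$, exploiting that B is, up to the shift to the dual lattice, the boundary condition obtained from LTR by interchanging ``wired'' and ``free'': after a rotation by $\pi/2$, a dual top-to-bottom crossing of $R$ under $\varphi^{\rm LTR}_{R_0}$ plays the role of a dual left-to-right crossing of $R$ under $\varphi^{\rm B}_{R_0}$. Since FKG and the spatial Markov property are not available for the dual model, I will realise this in the primal exactly as in the proof of Lemma \ref{lem: cond prob} --- conditioning on the extremal dual crossings, dominating the conditional measures by the free measure via Lemma \ref{free-empty-vertex}, and comparing with a symmetrised box. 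A bootstrap identical to the previous paragraph, now concatenating dual left-to-right crossings with dual vertical connectors, then upgrades $\varphi^{\rm B}_{R_0}[V_R^c]\geq c'$ to $\varphi^{\rm B}_{R_0}[V_R^c]\geq c^\alpha$ for all $\alpha$, which is \eqref{eq: pushdual}.

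I expect the main obstacle to be the second case and the accompanying dual bootstrap. The geometric content --- existence and rigidity of extremal dual crossings and the gluing of rotated crossings --- is deterministic and transfers from \cite{DCT}; what must be redone is the probabilistic bookkeeping, since every appearance of a conditioned-on-closed-edges measure has to be replaced by the free measure before the primal FKG inequality and RSW can be applied. A secondary, pervasive but routine, point is to justify each application of Proposition \ref{prop: RSW} --- stated only for infinite-volume and strip measures --- via the domain- and boundary-monotonicity comparisons noted at the outset; this is also the precise place where the hypothesis that neither \textbf{(SubCrit)} nor \textbf{(SupCrit)} occurs is needed, to prevent the universal constants from degenerating as $n\to\infty$.
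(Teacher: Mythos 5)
The paper does not actually give a proof of this lemma; it is explicitly omitted with a pointer to \cite[Section 5.2]{DCT} and a remark that the adaptations are the same as those in Proposition~\ref{prop: RSW} and Lemma~\ref{lem: renormalisation inequality for strip densities}. Your plan --- a single--scale dichotomy followed by a gluing bootstrap, with Lemma~\ref{free-empty-vertex} replacing the usual ``conditioning on closed edges gives free boundary'' step, and with all dual statements proved directly in the primal --- does capture the DCT strategy and correctly identifies the two dilute--specific modifications that the paper flags. So the high-level structure is right.

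There is, however, a genuine direction error in your Case~1 bootstrap that is not a mere bookkeeping detail. You propose to test whether $\varphi^{\rm LTR}_{R_0}[H_R]\geq\eta$ at scale $\alpha_0$, and then, for larger $\alpha$, to tile $R$ by width-$\alpha_0 n$ translates $T_i$ and bound each $\varphi^{\rm LTR}_{R_0(\alpha)}[H_{T_i}]$ below by the test value. But the LTR boundary condition has \emph{wired} sides, and passing from the narrow test box $R_0(\alpha_0)$ to the wider box $R_0(\alpha)$ moves those wired sides away from $T_i$: by \eqref{eq:MON} (in the form $\preceq_1$), one has $\varphi^{\rm LTR}_{R_0(\alpha)}\preceq \varphi^{\rm LTR}_{R_0(\alpha_0)+\text{translate}}$ on events in the translate, i.e.\ the wide-box crossing probability is \emph{smaller}, not larger, than the test. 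So the test hypothesis does not transfer by monotonicity; the comparison runs the wrong way precisely because LTR wires the near boundary. (This is in contrast to the $1/0$ boundary conditions in \cite{DCT}, where the \emph{free} boundary is the one pushed away, and widening the box is favourable.) To make the bootstrap work you need either to set up the single-scale test so that it concerns a crossing which does not touch the wired sides --- e.g.\ a crossing of a sub-rectangle at macroscopic distance from $\Left[R_0]\cup\Right[R_0]$, compared via \eqref{eq:MON} with the wired-top/free-bottom strip measure $\varphi^{0/1}_{S_m}$ to which Proposition~\ref{prop: RSW} applies --- or to close off a buffer of edges (paying a finite-energy cost) and invoke Lemma~\ref{free-empty-vertex} so that the comparison becomes domain-monotone in the correct direction. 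A symmetric issue affects the dual bootstrap; moreover, your ``rotation by $\pi/2$'' shortcut cannot be applied to the elongated rectangle $R$ directly: you must first localise the dual top-to-bottom crossing of $R$ (an easy crossing) to a sub-square of side $\asymp n/9$ by a union bound, then rotate and glue. These are exactly the two points where your sketch is too optimistic; the rest of the plan, including the use of finite-energy and the primal-only treatment of dual events, is sound.
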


The proof of Lemma \ref{lem: push bc} is almost exactly the same as \cite[Sections 5.2]{DCT} and we sketch  it for completeness.
\begin{proof}
It is sufficient to show that there exists $c>0$ such that for all\footnote{The case for generic $n$ then follows by monotonicity arguments.} $n \in 27\N$ 
\begin{align}
\begin{split} \label{eq: push lemma 1}
\text{ either } \qquad &\forall \alpha \geq 1, \qquad \varphi^{0/1}_{S_n}[H_{[0,\alpha n] \times [0,n/9]}] \geq c^\alpha,
\end{split}
\\
\begin{split}\label{eq: push lemma 2}
\text{ or } \qquad &\forall \alpha \geq 1, \qquad \varphi^{0/1}_{S_n}[V_{[0,\alpha n]\times [0,n/9]}^c] \geq c^\alpha.
\end{split}
\end{align}
Indeed assume that \eqref{eq: push lemma 2} holds. Define the thin rectangles
\begin{equs}
    R_i
    &=
    [0,\alpha n]\times [in/27, (i+1)n/27], \qquad i \in \{1, \dots, 83\}.
\end{equs} 
In order to show that \eqref{eq: pushdual} occurs, it is sufficient by monotonicity in the boundary conditions and inclusion of events to show that there exists $c_2>0$ such that
\begin{equs}
\varphi^{0/1}_{\overline{R}}[V_{R_1}^c]
\geq
c_2^\alpha. 
\end{equs}
Observe that
\begin{equs} \label{eq: pushdual iteration}
\varphi^{0/1}_{\overline{R}}[V_{R_1}^c]
\geq
\varphi^{0/1}_{\overline{R}}[V_{R_1}^c \mid V_{R_2}^c] \varphi^{0/1}_{\overline{R}}[V_{R_2}^c].
\end{equs}
Furthermore, observe that, conditionally on $V_{R_2}^c$, we may use monotonicity in boundary conditions and \eqref{eq: push lemma 2} to deduce that $V_{R_1}^c$ occurs with probability $c^{3\alpha}$, since we apply it for $n/3$ and $3\alpha$. More generally, we have that 
\begin{equs}
 \varphi^{0/1}_{\overline{R}}[V_{R_j}^c \mid V_{R_{j+1}}^c]
 \geq
 c^{3\alpha}.
\end{equs}
We iterate the conditioning in \eqref{eq: pushdual iteration}. In order to close the iteration, we note that by monotonicity in boundary conditions and \eqref{eq: push lemma 2} we have that 
\begin{equs}
\varphi^{0/1}_{\overline{R}}[V^c_{R_{83}}]
\geq
c^{3\alpha}.
\end{equs}
Thus, iterating $83$ times, we obtain
\begin{equs}
\varphi^{0/1}_{\overline{R}}[V^c_{R_{1}}]
\geq
c^{249\alpha}.
\end{equs}
which establishes the desired result.

It remains to establish the dichotomy \eqref{eq: push lemma 1} and \eqref{eq: push lemma 2}. Define the rectangles
\begin{equs}
R'_i
&:=
[0,2n]\times[in/3,(i+1)n/3], \qquad i = 0,1,2
\end{equs}
and let $R'=[0,2n]\times[0,n]$.
Note that if there exists $i$ such that $\varphi^{0/1}_{S_n}[V_{R'_i}] \geq 1/6$, then by inclusion of events and \eqref{eq: RSW} we obtain that $\varphi^{0/1}_{S_n}[H_{R''_i}] \geq t$ for some constant $t>0$, where $R''_i=[0,4n]\times [in/3,i(n+1)/3]$. Note that when the rectangles $(kn,0)+R''_i$ are crossed horizontally, and the rectangles $(kn,0)+R'_i$ are crossed vertically for $k=0,1,\ldots,\alpha$, then $H_R$ occurs. Thus \eqref{eq: push lemma 1} is satisfied in this case by the FKG and translation invariance of the measure. Similarly, by \eqref{eq: RSWstar}, if there exists $i$ such that $\varphi^{0/1}_{S_n}[H_{R_i'}^c] \geq 1/6$, then \eqref{eq: push lemma 2} holds. Therefore we assume that for all $i$,
\begin{equs} \label{eq: push assumption}
\varphi^{0/1}_{S_n}[V_{R_i'}^c], \varphi^{0/1}[H_{R_i'}]
\geq
5/6. 
\end{equs}
In order to prove the desired result, we do a gluing of bridges construction analogous to the proof of Proposition \ref{prop: RSW}. We define precisely one of these bridge events and then show that they occur with positive probability, the rest of the argument then follows from FKG. Let us only focus only on \eqref{eq: push lemma 2}. For $i=0,\dots,5$ define the segment 
\begin{equs}
I_i
&=
[in/3,(i+1)n/3]\times\{n\}.
\end{equs}
As hinted above, our goal is to show that that there exists $c>0$ such that
\begin{equs}
\varphi^{0/1}_{S_n}[I_1 \overset{*, R'}{\longleftrightarrow} I_4]
\geq 
c.
\end{equs}

By the assumption
\eqref{eq: push assumption} we have that
\begin{equs}
\varphi^{0/1}_{S_n}[I_1 \overset{*,R'}{\longleftrightarrow} I_4]
&\geq
\varphi^{0/1}_{S_n}[I_1 \overset{*,R'}{\longleftrightarrow} I_4 \mid V_{R'_1}^c] \, \varphi^{0/1}_{S_n}[V_{R_1'}^c]
\geq
\frac 56 \varphi^{0/1}_{S_n}[I_1 \overset{*,R'}{\longleftrightarrow} I_4 \mid V_{R'_1}^c].
\end{equs}
In order to bound the probability on the righthand side, we introduce the rectangles 
\begin{equs}
K_i
&=
[in/3, (i+1)n/3]\times[0,2n], \qquad i = 1,\dots,5.
\end{equs}
Note that each of these rectangles are $\pi/2$ rotations of $R_i$. 
Let $K=\bigcup_{i=0}^5 K_i$.
Note that for each $i$ the segment $I_i$ bisects the rectangle $K_i$. Note that a helpful figure for what follows can be found in \cite[Figure 9]{DCT}. We partition the event $V_{R_1'}^c$ according to the lowest dual crossing $\Gamma^*$ in $R_1'$. Letting $\gamma^*$ be such a dual crossing of $R_1'$, we denote $K(\gamma^*)$ to be the uppermost connected component of $K\setminus \gamma^*$ and let also $R'(\gamma*)$ denote the uppermost connected component of $R' \setminus \gamma^*$. Then by the spatial Markov property, monotonicity in boundary conditions, and the FKG inequality, we have
\begin{equs}
\varphi^{0/1}_{S_n}[I_1 \overset{*,R'}{\longleftrightarrow} I_4 \mid \Gamma^* = \gamma^*]
&\geq
\varphi^{{\rm mix}}_{K(\gamma^*)}[I_1 \overset{*,R'(\gamma^*)}{\longleftrightarrow}I_4]
\\
&\geq
\varphi^{{\rm mix}}_{K(\gamma^*)}[I_1 \overset{*,K_1}{\longleftrightarrow} \gamma^*] \varphi^{{\rm mix}}_{K(\gamma^*)}[I_4 \overset{*,K_4}{\longleftrightarrow} \gamma^*].
\end{equs}
Above, ${\rm mix}$ denotes the boundary conditions that are $0$ on $\Top[K(\gamma^*)]$, $0$ on $\Bot[K(\gamma^*)]$ and wired on the other sides. Let $K_L = K_0\cup K_1\cup K_2$ and $K_R = K_3 \cup K_4 \cup K_4$, and let ${\rm mix}$ denote the analogous boundary conditions on these rectangles. Note that these are rotations of $R'$. Then by the monotonicity in boundary conditions and $\pi/2$-rotational symmetry, we may write 
\begin{equs}
\varphi^{0/1}_{S_n}[I_1 \overset{*,R'}{\longleftrightarrow} I_4 \mid \Gamma^* = \gamma^*]
\geq
\varphi^{\rm mix}_{R'}[V_{R_1}^c]^2
\end{equs}
where $\rm mix$ on $R'$ denotes wired on the top, wired on the bottom, and free on the left and righthand sides. 
In order to bound the righthand side from below, note that by a union bound (on the complementary events), our assumption \eqref{eq: push assumption}, and spatial Markov property we have
\begin{equs}
1/2
\leq
\varphi^{0/1}_{S_n}[H_{R_0}\cap V_{R_1}^c \cap H_{R_2}]
\leq
\varphi^{\rm mix}_{R'}[V_{R_1}^c].
\end{equs} 
Thus, putting the above bounds together, we find that 
\begin{equs}
\varphi^{0/1}_{S_n}[I_1 \overset{*,R'}{\longleftrightarrow} I_4]
\geq
\frac{5}{24}. 
\end{equs}

\end{proof}

We now prove Lemma \ref{lem: renormalisation inequality for strip densities}. We stress again that the proof is an adaptation of the proof of \cite[Lemma 15]{DCT}, where the main differences are those discussed in the proof of Proposition \ref{prop: RSW}.
\begin{proof}[Proof of Lemma \ref{lem: renormalisation inequality for strip densities}] 
 Without loss of generality, we prove the second inequality (renormalisation of the $q_n$'s). Moreover, without loss of generality we may assume that \eqref{eq: pushdual} occurs.

Assume that $n \in 9\NN$. Define the rectangles
\begin{align*}
R&=[0,\alpha n] \times [0, 6\lambda n + 3n] 
\\
R_i&= [0,\alpha n] \times [6in+3n, 6in + 6n], &\qquad 0 \leq i \leq \lambda - 1
\\
R_i'&=[0,\alpha n] \times [6in, 6in + 3n], &\qquad 0 \leq i \leq \lambda.
\end{align*}
Note that these rectangles are not the same as those of Lemma \ref{lem: push bc}. Furthermore, note that the $R_i$ and $R_i'$ partition $R$. We further subdivide each $R_i$ horizontally into three thin rectangles. For $0 \leq i \leq \lambda -1$, define
\begin{equs}
\tilde R_i^-
&:=
[0,\alpha n]\times [6 i n + 3n + \frac{12}{9}n, 6 i n + 3n + \frac{13}{9}n ]
\\
\tilde R_i
&:=
[0,\alpha n]\times [6 i n + 3n + \frac{13}{9}n, 6 i n + 3n + \frac{14}{9}n ]
\\
\tilde R_i^+
&:=
[0,\alpha n]\times [6 i n + 3n + \frac{14}{9}n, 6 i n + 3n + \frac{15}{9}n ].
\end{equs} Let $\cE$ denote the event that each rectangle $R_i$ is crossed horizontally. Let $\cF$ denote the event that each rectangle $R_i'$ is not crossed vertically, i.e. that there is a dual horizontal crossing. Let $\tilde \cE$ be the event that all of the $\tilde R_i$ are crossed horizontally. Let $\tilde \cF$ be the event that none of the $\tilde R_i^{\pm}$ are crossed vertically.

Let $1/0$ denote the boundary condition that is wired on $\Top[R] \cup \Bot[R]$ and free on $\Left[R] \cup \Right[R]$. Assume the following estimates hold:
\begin{align} \label{eq: pf of ren 1}
\varphi^{1/0}_R[\tilde \cE \cap \cF]
&\geq
\frac{(2r)^{-12\lambda n + 6n}}{\lambda^{C\lambda \alpha }}
\varphi^1_{[0,\alpha n] \times [-n, 2n]}[V_{[0,\alpha n]\times [0,n]}^c]^{\lambda + 1}
\\ \label{eq: pf of ren 2}
\varphi^{1/0}_R[\tilde\cF \mid \tilde \cE \cap \cF]
&\geq
c^{2\lambda \alpha}
\\ \label{eq: pf of ren 3}
\varphi^{1/0}_R[\tilde \cE \cap \tilde \cF]
&\leq
\varphi^0_{[0,\alpha n] \times [-n/9, 2n/9]}[H_{[0,\alpha n]\times [0, n/9]}]^\lambda.
\end{align}
Then, by \eqref{eq: pf of ren 1} and \eqref{eq: pf of ren 2},
\begin{equs}
\varphi^{1/0}_R[\tilde \cE \cap \tilde \cF]
&\geq
\varphi^{1/0}_R[\tilde \cF \mid \tilde \cE \cap \cF] \varphi^{1/0}_R[\tilde \cE \cap \cF]
\\
&\geq
\frac{c^{2\lambda \alpha}(2r)^{-12\lambda n + 6n}}{\lambda^{C\lambda \alpha }}
\varphi^1_{[0,\alpha n] \times [-n, 2n]}[V_{[0,\alpha n]\times [0,n]}^c]^{\lambda + 1}.
\end{equs}
Hence, by \eqref{eq: pf of ren 3}, we get that
\begin{equs}
\varphi^1_{[0,\alpha n] \times [-n, 2n]}[V_{[0,\alpha n]\times [0,n]}^c]^{(\lambda + 1)/\alpha} 
\leq
\frac{\lambda^{C\lambda }}{c^{2\lambda }(2r)^{(-12\lambda n + 6n)/\alpha}}
\varphi^0_{[0,\alpha n] \times [-n/9, 2n/9]}[H_{[0,\alpha n]\times [0, n/9]}]^{\lambda/\alpha}
\end{equs}
The renormalisation inequality then follows: i) raising to the power of $1/\lambda$; ii) taking $\alpha \rightarrow \infty$ to express these events in terms of $p_m$ and $q_m$, where $m=m(n)$; and, iii) applying the duality relation between $p_m$ and $q_m$ of Lemma \ref{lem: dual relation of strip densities}. It remains to prove \eqref{eq: pf of ren 1}-\eqref{eq: pf of ren 3}. 

To prove \eqref{eq: pf of ren 1}, first note that
\begin{equs} \label{eq: ren1 pf1}
\phi^{1/0}_{R}[\tilde \cE \cap \cF] 
\geq
(2r)^{-12\lambda n + 6n} \varphi^1_R[\tilde \cE \cap \cF]
\geq 
(2r)^{-12\lambda + 6n} \varphi^1_R[\tilde \cE] \varphi^1_R[\cF \mid \tilde \cE].
\end{equs}
Note that wired boundary conditions are favourable for the event $\tilde\cE$. Thus, by monotonicity in boundary conditions, the FKG inequality, and arguing as in \cite[Corollary 11]{DCT} to find the dependencies on $\lambda$ and $\alpha$, we find
\begin{equs} \label{eq: ren1 pf2}
\varphi^1_R[\tilde\cE]
\geq
\varphi^1_{S_{6\lambda n + 3n}}[\tilde\cE]
\geq
\prod_{i=1}^{\lambda - 1} 
\varphi^1_{S_{6\lambda n + 3n}}[\tilde R_i]
\geq
\lambda^{C \lambda \alpha}
\end{equs}
where above we abuse notation and write $S_{6\lambda n + 3n} = \RR \times[0, 6\lambda n + 3n]$.
On the other hand, note that we may partition the event $\tilde \cE$ as follows. For each $1 \leq i \leq \lambda - 1$, let $\Gamma_i$ be the top-most path in $\tilde R_i$, and set $\Gamma_{0} = \Bot[R]$ and $\Gamma_{\lambda}=\Top[R]$. Let $R_i'(\Gamma)$ be the random domain with boundary given by $\Gamma_{i+1}$, $\Gamma_{i}$, and the relevant left and right segments of the boundary of $R$. Then, by conditioning on $\Gamma$, the spatial Markov property, independence, monotonicity in boundary conditions, and translation invariance, we find
\begin{equs} \label{eq: ren1 pf3}
\varphi^1_R[\cF \mid \tilde \cE]
\geq
\prod_{i=0}^{\lambda}\varphi^1_{R_i'(\Gamma)}[V_{R_i'}^c]
\geq
\varphi^1_{[0,\alpha n] \times [-n, 2n]}[V_{[0,\alpha n]\times [0,n]}^c]^{\lambda + 1}.
\end{equs}
Inserting \eqref{eq: ren1 pf2} and \eqref{eq: ren1 pf3} into \eqref{eq: ren1 pf1} establishes \eqref{eq: pf of ren 1}.

A similar conditioning argument, this time on the dual paths occuring in the $\tilde R_i^{\pm}$, together with the stochastic domination in Lemma \ref{free-empty-vertex}, yields \eqref{eq: pf of ren 3}. Finally, in order to obtain \eqref{eq: pf of ren 2}, we note that by a similar conditioning (this time on the crossings in $\tilde \cE$ and dual crossings of $\cF$) and pushing boundary conditions argument, together with translation invariance, we have the following estimate:
\begin{equs}
\varphi^{1/0}_R[\tilde \cF \mid \tilde \cE \cap \cF]
\geq
\varphi^{\rm LTR}_{[0,\alpha n] \times [0, 28n/9]}[V_{[0,\alpha n]\times [0,n/9]}^c]^{2\lambda}.
\end{equs}
Applying the \eqref{eq: pushdual} then completes the proof.

\end{proof}

\subsection{Applications of the quadrichotomy}

We now state some important consequences of Proposition \ref{prop:quad} that allow us to get a better quantitative understanding of the phase diagram in $d=2$. We suppress notational dependence on $p$ and $a$ when clear from context. 

The following propositions characterise the off-critical behaviour. The proofs of the first two are standard and can be found in \cite[Section 6]{DCT}.
\begin{prop}\label{cor:1}
Assume that {\bf (SubCrit)} occurs. There exists $c>0$ such that for every $n\geq 1$,
\begin{equs}\label{eq:kj}\varphi_{\Lambda_n}^1[0\longleftrightarrow \partial\Lambda_n]\leq e^{-cn}.\end{equs}
In particular, $\varphi^1[0\longleftrightarrow\infty]=0$ and $\varphi^0=\varphi^1$.
\end{prop}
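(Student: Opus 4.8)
The plan is to follow the route of \cite[Section~5]{DCT}: first deduce from {\bf (SubCrit)} the exponential decay \eqref{eq:kj} of the one-arm probability, and then read off the two ``in particular'' statements as soft consequences.

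For \eqref{eq:kj}, I would argue as follows. By Proposition~\ref{prop: domination}, {\bf (SubCrit)} is equivalently a bound $\sup_\xi \varphi^\xi_{\Lambda_{2n}}[H_n]\le e^{-cn}$, and by the lattice symmetries of the square box the same holds with $V_n$ in place of $H_n$; dually, this says that under the wired measure on $\Lambda_{2n}$ the box $\Lambda_n$ has a dual left--right (and top--bottom) crossing with probability $\ge 1-e^{-cn}$. Feeding these square-box estimates into the RSW estimate (Proposition~\ref{prop: RSW}) and its dual counterpart --- which, as noted after Proposition~\ref{prop: bridge events}, has to be established directly because the dual of the dilute random cluster model is not itself of that type --- one upgrades them to exponential control of crossings of all rectangles of bounded aspect ratio, at every scale and uniformly in the boundary conditions. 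The only subtlety beyond \cite{DCT}, already dealt with in the proofs of Proposition~\ref{prop: RSW} and Lemma~\ref{lem: renormalisation inequality for strip densities}, is that conditioning a dilute random cluster measure on a set of edges being closed yields not a free measure but one dominated by it (Lemma~\ref{free-empty-vertex}). Now, if $0\longleftrightarrow\partial\Lambda_n$ inside $\Lambda_n$, then (looking at the last exit of an open path from $\Lambda_{n/2}$) the annulus $\Lambda_n\setminus\Lambda_{n/2}$ is crossed from its inner to its outer side, so in particular there is no dual circuit surrounding $\Lambda_{n/2}$ inside $\Lambda_n$. Covering that annulus by $O(1)$ rectangles of bounded aspect ratio and scale of order $n$, and using the rectangle estimates above together with the FKG inequality (applied to decreasing events) and a standard gluing of four dual crossings into a circuit, one gets that such a dual circuit exists with $\varphi^1_{\Lambda_n}$-probability $\ge 1-e^{-c'n}$, whence $\varphi^1_{\Lambda_n}[0\longleftrightarrow\partial\Lambda_n]\le e^{-c'n}$; absorbing the (trivial) small-$n$ range into the constant gives \eqref{eq:kj}.

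For the consequences: since $\varphi^1\preceq\varphi^1_{\Lambda_n}$ by monotonicity in the domain \eqref{eq:MON} and $\{0\longleftrightarrow\partial\Lambda_n\}$ is an increasing, $\Lambda_n$-measurable event, \eqref{eq:kj} gives $\varphi^1[0\longleftrightarrow\infty]=\lim_n\varphi^1[0\longleftrightarrow\partial\Lambda_n]\le\lim_n\varphi^1_{\Lambda_n}[0\longleftrightarrow\partial\Lambda_n]=0$. For $\varphi^0=\varphi^1$ I would use the standard monotone coupling of $\varphi^0_{\Lambda_N}$ and $\varphi^1_{\Lambda_N}$: for a local increasing event $A$, the difference $\varphi^1_{\Lambda_N}[A]-\varphi^0_{\Lambda_N}[A]$ is at most the $\varphi^1_{\Lambda_N}$-probability that the cluster of $\partial\Lambda_N$ meets the support of $A$, which by \eqref{eq:kj} (together with the spatial Markov property and monotonicity in boundary conditions, used to localise back to a finite wired box) tends to $0$ as $N\to\infty$; hence the free and wired infinite-volume limits agree on local events, i.e.\ $\varphi^0=\varphi^1$.

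The main obstacle is the first half of the middle paragraph. Because $\varphi^1_{\Lambda_n}$ is the \emph{largest} dilute random cluster measure, one cannot bound the probability of an increasing event (such as crossing an annulus) under it by passing to smaller measures; the argument must therefore be run at the level of events localised via the spatial Markov property, and must crucially exploit that {\bf (SubCrit)} is itself a bound on a \emph{finite-volume wired} measure. Combined with the need to develop RSW and renormalisation for the (non-self-dual) dual model, this is where all the work sits, and it is precisely the content of the already-established Proposition~\ref{prop: RSW} and Lemma~\ref{lem: renormalisation inequality for strip densities} and their dual analogues.
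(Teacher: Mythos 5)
Your proposal is correct and takes essentially the same route as the paper, which for this proposition simply defers to the standard arguments of \cite[Section 5]{DCT}: from \textbf{(SubCrit)} one gets exponential bounds on hard-direction rectangle crossings by decomposing into sub-squares, upgrades these via the RSW estimate to exponential bounds on easy-direction crossings, and hence obtains dual circuits in annuli with probability $1-e^{-cn}$ blocking the one-arm event, with the two consequences following from \eqref{eq:MON} and the standard coupling/exploration argument. You also correctly isolate the only model-specific subtleties (Lemma~\ref{free-empty-vertex} for conditioning on closed edges, and the fact that \textbf{(SubCrit)} is itself a finite-volume wired bound, which is what makes the boundary-condition bookkeeping go through).
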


\begin{prop}\label{cor:2}
Assume that {\bf (SupCrit)} occurs. There exists $c>0$ such that for every $n\geq 1$,
\begin{equs}
\varphi^0[\Lambda_n\centernot\longleftrightarrow\infty]\leq e^{-cn}.
\end{equs}
In particular, $\varphi^0[0\longleftrightarrow\infty]>0$ and $\varphi^0=\varphi^1$.
\end{prop}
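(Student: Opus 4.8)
The plan is to adapt the renormalisation-free part of the argument in \cite[Section 5]{DCT}, which turns the square-crossing lower bound of \textbf{(SupCrit)} into a quantitative lower bound on the probability that $\Lambda_n$ belongs to an infinite open cluster of $\varphi^0$. First I would use the symmetries of $\varphi^0$ (Proposition \ref{prop: inf vol limits}) to deduce from \textbf{(SupCrit)} that $\varphi^0_{\Lambda_{2m}}[V_m]\geq 1-e^{-cm}$ as well, and then monotonicity in the domain \eqref{eq:MON} to transfer both square-crossing bounds to the infinite-volume free measure: $\varphi^0[H_m],\,\varphi^0[V_m]\geq 1-e^{-cm}$ for all $m$. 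Feeding these into the RSW estimate (Proposition \ref{prop: RSW}) and \eqref{eq:FKGRC}, and applying the standard combination-of-crossings arguments of \cite[Section 5]{DCT}, I would obtain a constant $c>0$ such that, for every $m\geq 1$,
\begin{equs}
\varphi^0[\cC_m]\geq 1-e^{-cm},
\end{equs}
where $\cC_m$ is the event that there is an open circuit around the origin inside $\Lambda_{2m}\setminus\Lambda_m$ together with an open path inside $\Lambda_{4m}\setminus\Lambda_m$ joining this circuit to $\partial\Lambda_{4m}$.

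Next I would glue across dyadic scales. Fix $n$ and set $m_k=2^kn$. On $\cC_{m_k}$ the circuit around $0$ in $\Lambda_{2m_k}\setminus\Lambda_{m_k}$ is joined by an open path to $\partial\Lambda_{4m_k}$; since the circuit realising $\cC_{m_{k+1}}$ surrounds $\Lambda_{m_{k+1}}=\Lambda_{2m_k}$ (hence also the circuit of $\cC_{m_k}$) and is contained in $\Lambda_{2m_{k+1}}=\Lambda_{4m_k}$, that path must cross it (a discrete Jordan-curve argument). Hence, on $\bigcap_{k\geq0}\cC_{m_k}$, all the circuits around $0$ are connected to one another and their union is an infinite open cluster surrounding $\Lambda_n$; in particular $\Lambda_n\longleftrightarrow\infty$. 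A union bound then gives
\begin{equs}
\varphi^0[\Lambda_n\centernot\longleftrightarrow\infty]\leq\sum_{k\geq0}\varphi^0[\cC_{m_k}^c]\leq\sum_{k\geq0}e^{-c\,2^kn}\leq Ce^{-cn},
\end{equs}
which is the claimed exponential decay with a suitable constant.

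For the two stated consequences: taking $n$ large above shows $\varphi^0[\Lambda_n\longleftrightarrow\infty]>0$, so $\varphi^0$ has an infinite open cluster with positive probability; as $\varphi^0$ is translation invariant and ergodic (Proposition \ref{prop: inf vol limits}) it does so almost surely, and since $\varphi^0$ has finite energy (the finite-energy property of Section \ref{sec:basic-properties}, transported to infinite volume via the Gibbs property of Proposition \ref{prop: inf vol limits}) the infinite cluster is almost surely unique by the Burton--Keane argument and has strictly positive density, whence $\varphi^0[0\longleftrightarrow\infty]>0$. The equality $\varphi^0=\varphi^1$ then follows from uniqueness of the infinite cluster by the standard argument for monotonic Gibbsian percolation measures with finite energy, exactly as in \cite[Section 5]{DCT} (see also \cite{GGBook}).

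The only real novelty relative to \cite{DCT} is checking that RSW, FKG and domain monotonicity hold for the dilute random cluster model, but these are Propositions \ref{prop: RSW}, \ref{prop: FKG} and \eqref{eq:MON}. Crucially, the whole construction uses only \emph{lower} bounds on probabilities of increasing (open-connection) events under the \emph{free} measure --- never upper bounds or dual events --- so the two complications flagged in Subsection \ref{subsec: proof of RSW} (the dual model is not a dilute random cluster model; conditioning edges closed does not create free boundary conditions) do not intervene. I therefore do not expect a serious obstacle; the main care needed is the routine planar bookkeeping in the gluing step.
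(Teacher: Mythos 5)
Your overall skeleton — transfer \textbf{(SupCrit)} to the infinite-volume free measure via \eqref{eq:MON}, boost square crossings to annulus circuit estimates, glue circuits across scales, conclude via ergodicity and finite energy — is indeed the standard argument the paper is pointing to when it refers the reader to \cite[Section 5]{DCT}, so the route is sound. But there are two concrete issues. First, the dyadic gluing as written does not deliver $\Lambda_n\longleftrightarrow\infty$: the first circuit produced by $\cC_{m_0}=\cC_n$ lives in $\Lambda_{2n}\setminus\Lambda_n$, i.e.\ strictly \emph{outside} $\Lambda_n$, and nothing in your construction attaches $\Lambda_n$ to it. On $\bigcap_k\cC_{m_k}$ you only get $\Lambda_{2n}\longleftrightarrow\infty$. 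This is harmless (start the scales at $m_0=n/2$, or add a radial crossing of $\Lambda_{2n}\setminus\Lambda_{n/2}$ to the event; either way the exponent just absorbs a constant), but the ``in particular'' as stated is a non sequitur. It is also worth noting that the paper's own version of this step (visible in the proof of Proposition~\ref{cor:3} and in Proposition~\ref{sup-circuit}) avoids the dyadic chain entirely: if $\Lambda_n\centernot\longleftrightarrow\infty$ then a dual circuit surrounds $\Lambda_n$, hence crosses the positive $x$-axis at some $k\geq n$ and forces a dual arm of length $\gtrsim k$; a union bound over $k$ against the primal circuit probability $\varphi^0[\mathcal C_m]$ then closes the argument, with no need for the ``path to $\partial\Lambda_{4m}$'' that your $\cC_m$ carries.

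Second, and more substantively, the claim that the construction ``uses only lower bounds on increasing events, never dual events'' — which you use to conclude that the two complications flagged in Subsection~\ref{subsec: proof of RSW} don't arise — is not correct. To build the circuit in $\cC_m$ you need crossings of long ($\rho:1$, $\rho>1$) rectangles with probability $1-e^{-cm}$, but \textbf{(SupCrit)} only gives you \emph{square} crossings close to $1$. A purely primal FKG/gluing bootstrap from squares yields long-rectangle crossings bounded below by a constant, not exponentially close to $1$; the amplification to $1-e^{-cm}$ genuinely passes through the dual (a missing hard primal crossing \emph{is} a dual easy crossing). This is precisely why the paper proves both primal and dual versions of the bridge estimate in Proposition~\ref{prop: bridge events} (the events $A_0$ and $A_0^*$), and why its proof of Proposition~\ref{sup-circuit} is phrased as a bound on $\varphi^0[H^*_{[-2n,-n]\times[-2n,2n]}]$. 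So the complications do intervene — your plan survives because the paper already dealt with them, not because they are absent. The rest (union bound over scales, $\varphi^0[0\longleftrightarrow\infty]>0$ via translation invariance, $\varphi^0=\varphi^1$ via circuit-shielding and \eqref{eq:DMPRC}) is fine.
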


Let $\mathcal{C}_n$ be the event that there is a circuit in $\omega\cap(\Lambda_{2n}\setminus \Lambda_n)$ surrounding the origin.

\begin{prop}\label{sup-circuit}  
Assume that {\bf (SupCrit)} occurs. Then there exists $t>0$ such that for every $n\geq 1$, we have $\varphi^0[\mathcal{C}_n] \geq 1- e^{-tn}$.
\end{prop}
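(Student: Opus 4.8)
The plan is to produce, with $\varphi^0$-probability at least $1-e^{-tn}$, an open circuit surrounding $\Lambda_n$ inside $\Lambda_{2n}$ by gluing the long-way crossings of four rectangles tiling the annulus $\Lambda_{2n}\setminus\Lambda_n$, each of which is crossed with probability $1-e^{-cn}$.

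\emph{Step 1: upgrading \textnormal{\textbf{(SupCrit)}} to long rectangles.} First I would note that, by \eqref{eq:MON} applied with the order $\preceq_0$, the infinite-volume measure $\varphi^0$ dominates $\varphi^0_{\Lambda_{2n}}$ on increasing events supported inside $\Lambda_{2n}$; hence {\bf (SupCrit)} gives $\varphi^0[H_{\Lambda_n}],\varphi^0[V_{\Lambda_n}]\geq 1-e^{-cn}$ for all $n\geq 1$. Combining this with translation invariance (Proposition~\ref{prop: inf vol limits}), the \eqref{eq: RSW} estimate of Proposition~\ref{prop: RSW}, and \eqref{eq:FKGRC}, in exactly the way crossing estimates are propagated across scales and aspect ratios in \cite[Section~5]{DCT}, I would deduce that for every $\rho\geq 1$ there is $c_\rho>0$ with
\begin{equs}
\varphi^0[H_{[0,\rho n]\times[0,n]}]\geq 1-e^{-c_\rho n}\qquad\text{for all }n\geq 1,
\end{equs}
together with the analogous bound for vertical crossings of $[0,n]\times[0,\rho n]$ by $\pi/2$-rotation invariance.

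\emph{Step 2: the gluing.} I would then take the four rectangles obtained from $R_{\mathrm T}=[-2n,2n]\times[n,2n]$ by the symmetries of the square, so that their union is $\Lambda_{2n}\setminus(-n,n)^2$ and consecutive ones overlap in a square of side $n$ (for instance $R_{\mathrm T}$ and the right rectangle $R_{\mathrm R}=[n,2n]\times[-2n,2n]$ overlap in $[n,2n]\times[n,2n]$). On the event $\mathcal{G}$ that the two horizontal rectangles are crossed horizontally and the two vertical ones are crossed vertically (all the long way), each such crossing necessarily restricts, inside each relevant overlap square, to a crossing in the direction transverse to its neighbour's crossing; two consecutive crossings therefore meet inside the overlap square, and the four crossings chain into an open circuit around $\Lambda_n$ contained in $\Lambda_{2n}$. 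Hence $\mathcal{G}\subseteq\mathcal{C}_n$. Each of the four long-way crossing events concerns a $4n\times n$ rectangle, so by Step~1 (with $\rho=4$) and translation invariance it has $\varphi^0$-probability at least $1-e^{-c_4 n}$; since the four events are increasing, \eqref{eq:FKGRC} yields
\begin{equs}
\varphi^0[\mathcal{C}_n]\geq\varphi^0[\mathcal{G}]\geq\bigl(1-e^{-c_4 n}\bigr)^4\geq 1-4e^{-c_4 n}.
\end{equs}
This is $\geq 1-e^{-tn}$ for any $t\in(0,c_4)$ once $n$ is large; decreasing $t$ and using that $\varphi^0[\mathcal{C}_n]>0$ by finite energy disposes of the finitely many remaining values of $n$.

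The only genuinely non-routine ingredient is Step~1, namely transferring the exponential crossing estimate from squares to rectangles of arbitrary aspect ratio while keeping the exponential rate; but this is precisely the RSW-and-renormalisation machinery of \cite[Section~5]{DCT}, and the only adaptations needed in the dilute setting are the two already isolated in the proof of Proposition~\ref{prop: RSW} (conditioning on closed edges yields a measure dominated by the free one, via Lemma~\ref{free-empty-vertex}; dual crossing estimates are argued directly). The planar-topology gluing of Step~2 is standard and identical to the classical case.
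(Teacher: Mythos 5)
Your argument is correct and takes essentially the same route as the paper: the paper's one-line proof likewise reduces $\mathcal{C}_n$ to an exponential estimate on the long-way crossings of the $n\times 4n$ rectangles tiling the annulus (phrased there as exponential decay of the complementary dual crossing $H^*_{[-2n,-n]\times[-2n,2n]}$), deduced from \textbf{(SupCrit)} together with \eqref{eq:MON} and \eqref{eq: RSW} exactly as in your Step 1. The only cosmetic difference is that you glue the four primal crossings with \eqref{eq:FKGRC}, whereas the paper takes a union bound over the four dual crossing events.
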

\begin{proof}
Note that if the event $\mathcal{C}_n$ does not happen, then either $H^*_{[-2n,-n]\times[-2n,2n]}$ or one of its rotations by $\pi/2$, $\pi$ or $3\pi/2$ happens. By the $\pi/2$ symmetry of $\varphi^0$, it suffices to show that $\varphi^0[H^*_{[-2n,-n]\times[-2n,2n]}]$ decays exponentially. Since we are assuming that {\bf (SupCrit)} occurs, using \eqref{eq:MON} we can conclude that $\varphi^0[H_n]\geq 1-e^{-cn}$. The desired exponential decay follows now from \eqref{eq: RSW} and translation invariance.
\end{proof}

The following proposition characterises the discontinuous critical behaviour.
\begin{prop}\label{cor:3}
Assume that {\bf (DiscontCrit)} occurs. There exists $c>0$ such that for every $n\geq 1$,
\begin{equs}
\varphi^0[0\longleftrightarrow\partial \Lambda_n]\leq e^{-cn} \quad \text{and} \quad \varphi^1[\Lambda_n\centernot\longleftrightarrow\infty]\leq e^{-cn}.
\end{equs}
In particular,  $\varphi^0[0\longleftrightarrow\infty]=0$ and  $\varphi^1[0\longleftrightarrow\infty]>0$.
\end{prop}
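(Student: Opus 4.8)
The plan is to obtain both bounds from a single pair of quantitative circuit estimates valid at every dyadic scale, and then to read the first bound off directly while chaining the second over scales. The inputs are the following. Under the standing assumption \textbf{(DiscontCrit)}, the renormalisation performed in the proof of Proposition~\ref{prop:quad} (case (ii)) yields $c>0$ with $p_n\le e^{-cn}$ and $q_n\le e^{-cn}$ for all $n$. Inserting these into the RSW estimate \eqref{eq: RSW} and gluing $O(1)$-many crossings of bounded-aspect rectangles inside an annulus --- which is exactly what is carried out, together with its dual analogue obtained by interchanging the roles of $p_n$ and $q_n$, in the proof of Lemma~\ref{dual-circuit} --- produces a constant $c'>0$ such that, for every $m\ge 1$,
\begin{equs}
\varphi^0\big[\,\Lambda_{3m}\setminus\Lambda_m\ \text{contains a closed dual circuit surrounding}\ \Lambda_m\,\big]\ \ge\ 1-e^{-c'm},
\end{equs}
\begin{equs}
\varphi^1\big[\,\Lambda_{3m}\setminus\Lambda_m\ \text{contains an open circuit surrounding}\ \Lambda_m\,\big]\ \ge\ 1-e^{-c'm}.
\end{equs}

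Granting these, the first bound is immediate, since a closed dual circuit surrounding $\Lambda_{\lfloor n/3\rfloor}$ inside $\Lambda_n\setminus\Lambda_{\lfloor n/3\rfloor}$ disconnects $0$ from $\partial\Lambda_n$: with $m=\lfloor n/3\rfloor$,
\begin{equs}
\varphi^0[\,0\longleftrightarrow\partial\Lambda_n\,]\ \le\ \varphi^0\big[\,\Lambda_n\setminus\Lambda_m\ \text{has no closed dual circuit surrounding}\ \Lambda_m\,\big]\ \le\ e^{-c'm}\ \le\ e^{-c''n}.
\end{equs}
Letting $n\to\infty$ and using $\{0\longleftrightarrow\infty\}=\bigcap_n\{0\longleftrightarrow\partial\Lambda_n\}$ then gives $\varphi^0[0\longleftrightarrow\infty]=0$.

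For the second bound I would chain the open-circuit estimate, since a single open circuit no longer suffices to reach infinity. Fix a large $N$ and let $i_0$ be minimal with $3^{i_0}\ge 3n$. Let $G$ be the event that for every $i$ with $i_0\le i\le N$: (i) the annulus $\Lambda_{3^i}\setminus\Lambda_{3^{i-1}}$ contains an open circuit $\gamma_i$ surrounding $\Lambda_{3^{i-1}}$; (ii) a bounded-aspect rectangle straddling $\partial\Lambda_{3^i}$ is crossed in the radial direction, so that $\gamma_i$ and $\gamma_{i+1}$ lie in the same open cluster; and (iii) $\Lambda_n$ is connected to $\gamma_{i_0}$. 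On $G$ the sets $\Lambda_n,\gamma_{i_0},\dots,\gamma_N$ all lie in one open cluster, so $\Lambda_n\longleftrightarrow\partial\Lambda_{3^{N-1}}$; and $G^c$ is contained in a union over $i$ of events each amounting to the existence of a closed dual crossing of an annulus at scale $3^i$, which by the displays above and \eqref{eq: RSW} have $\varphi^1$-probability at most $e^{-c'3^i}$. Summing, $\varphi^1[\Lambda_n\centernot\longleftrightarrow\partial\Lambda_{3^{N-1}}]\le\sum_{i\ge i_0}e^{-c'3^i}\le e^{-c''n}$ uniformly in $N$; letting $N\to\infty$ yields $\varphi^1[\Lambda_n\centernot\longleftrightarrow\infty]\le e^{-c''n}$. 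Finally, $\varphi^1[\Lambda_1\longleftrightarrow\infty]\ge 1-e^{-c''}>0$, and by translation invariance (Proposition~\ref{prop: inf vol limits}) $\varphi^1[\Lambda_1\longleftrightarrow\infty]\le|\Lambda_1|\,\varphi^1[0\longleftrightarrow\infty]$, whence $\varphi^1[0\longleftrightarrow\infty]>0$.

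The only serious obstacle is the pair of circuit estimates used at the start; everything downstream is routine. The difficulty there is that monotonicity in the domain \eqref{eq:MON} runs the wrong way: the increasing event underlying the open-circuit estimate is more likely under the wired \emph{box} measure than under the full-lattice $\varphi^1$, and the decreasing event underlying the dual-circuit estimate is more likely under the full-lattice $\varphi^0$ than under the free \emph{box} measure, so the exponential decay of $p_n$ and $q_n$ (which are box/strip quantities) does not pass to $\ZZ^2$ by mere inclusion of events. Overcoming this requires exploring the cluster of the origin --- respectively the outermost dual circuit --- and using that the boundary edges it exposes are closed, so that Proposition~\ref{free-closed} (equivalently Lemma~\ref{free-empty-vertex}) lets one replace the induced boundary conditions by genuinely free ones and apply the RSW/renormalisation estimates conditionally. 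This is precisely the delicate point already treated in the proof of Lemma~\ref{dual-circuit}.
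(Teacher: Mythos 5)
Your proposal is correct and rests on the same key ingredient as the paper, namely the infinite-volume circuit estimates of Lemma~\ref{dual-circuit}, and your first bound is identical to the paper's. For the second bound the paper reasons by contraposition: if $\Lambda_n\centernot\longleftrightarrow\infty$ under $\varphi^1$ there is a closed dual circuit surrounding $\Lambda_n$, which forces a dual one-arm event at some scale $k\ge n$ around a translated centre, and the union bound over $k$ together with $\varphi^1[\mathcal C_m]\ge 1-e^{-tm}$ gives the exponential decay; you instead build the primal connection to infinity constructively by chaining open circuits across dyadic annuli joined by radial crossings, and union-bound the failure events. These are two faces of the same estimate and yield the same rate; the paper's version is a touch shorter since it avoids having to glue the radial crossings to the circuits and to handle the base event ``$\Lambda_n$ reaches $\gamma_{i_0}$'' separately. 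One small thing to tighten if you keep the chaining version: the event (iii) should be phrased as ``$\Lambda_n\longleftrightarrow\partial\Lambda_{3^{i_0}}$ inside $\Lambda_{3^{i_0}}$'' (rather than connection to the circuit $\gamma_{i_0}$, whose existence is itself part of $G$), so that its failure is exactly a dual circuit in the annulus $\Lambda_{3^{i_0}}\setminus\Lambda_n$ of aspect ratio $\ge 3$, to which the circuit estimate applies; as written the event is conditional on (i) holding.
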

In order to prove Proposition \ref{cor:3}, we require an intermediate lemma. Similarly as above, let $\mathcal{C}^*_n$, be the event that there is a circuit in $\omega^*\cap(\Lambda_{2n}\setminus \Lambda_n)$ surrounding the origin.

\begin{lem}\label{dual-circuit} 
Assume that {\bf (DiscontCrit)} occurs. Then there exists $t>0$ such that for every $n\geq 1$, we have $\varphi^0[\mathcal{C}^*_n] \geq 1- e^{-tn}$ and $\varphi^1[\mathcal{C}_n] \geq 1- e^{-tn}$.
\end{lem}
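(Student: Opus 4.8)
The plan is to prove the two statements symmetrically: the first (existence of a dual circuit under $\varphi^0$) will use that {\bf (DiscontCrit)} forces dual crossings in long rectangles under the free measure to be very likely, and the second (existence of a primal circuit under $\varphi^1$) will use the dual statement. First I would reduce the circuit event to crossings of four long rectangles. A circuit in $\omega^* \cap (\Lambda_{2n} \setminus \Lambda_n)$ surrounding the origin exists provided the four rectangles $[-2n,-n]\times[-2n,2n]$, $[n,2n]\times[-2n,2n]$ (vertical dual crossings) and $[-2n,2n]\times[-2n,-n]$, $[-2n,2n]\times[n,2n]$ (horizontal dual crossings) are each crossed in $\omega^*$ in the "hard" direction; by the FKG inequality for $\varphi^0$ (which applies to decreasing events in $\omega$, equivalently increasing events in $\omega^*$), the probability of the intersection is at least the product of the four probabilities, so it suffices to show that $\varphi^0[H^*_{[-2n,-n]\times[-2n,2n]}] \geq 1 - e^{-tn}$ for some $t>0$, i.e. that the dual horizontal crossing of a $n \times 4n$ rectangle fails with exponentially small probability.

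The complement of $H^*_{[-2n,-n]\times[-2n,2n]}$ is the event that $\omega$ contains a top-bottom open crossing of that rectangle, which, after relabelling, is an event of the form $V_R$ for a rectangle $R$ with the $4n$ side vertical and the $n$ side horizontal. I would bound $\varphi^0[V_R]$ by comparing with the free infinite-volume measure via \eqref{eq:MON}, then invoke the \eqref{eq: RSW} estimate of Proposition \ref{prop: RSW} for $\varphi^0_{p,a}$: RSW relates the probability of a vertical ("hard-direction") crossing of a long rectangle to a power of the probability of a horizontal ("easy-direction") crossing of a comparable rectangle, and the latter is controlled by $\varphi^0[H_n]$, which decays exponentially by the {\bf (DiscontCrit)} hypothesis $\varphi^0_{\Lambda_{2n}}[H_n] \leq e^{-cn}$. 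Chaining: $\varphi^0[V_R] \leq c_\rho^{-1} \varphi^0[H_{R'}]^{c_\rho}$ for a suitable aspect ratio, and $\varphi^0[H_{R'}]$ is bounded above by a constant times $\varphi^0[H_m]$ for $m$ proportional to $n$ (by a standard gluing/union bound that splits a long easy-direction crossing into boundedly many square crossings), which is at most $e^{-c'n}$. This gives $\varphi^0[H^*_{[-2n,-n]\times[-2n,2n]}] \geq 1 - e^{-tn}$, and feeding this into the four-rectangle FKG argument yields $\varphi^0[\mathcal{C}^*_n] \geq 1 - e^{-tn}$.

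For the second statement, $\varphi^1[\mathcal{C}_n] \geq 1 - e^{-tn}$, the same four-rectangle reduction applies with primal crossings and the FKG inequality of Proposition \ref{prop: FKG} for $\varphi^1$; here I would use the other half of {\bf (DiscontCrit)}, namely $\varphi^1_{\Lambda_{2n}}[H_n] \geq 1 - e^{-cn}$, together with \eqref{eq:MON} and \eqref{eq: RSW} applied to $\varphi^1_{p,a}$, to show that the primal hard-direction crossing of a long rectangle fails with exponentially small probability under $\varphi^1$. The one point requiring care — and the main obstacle — is the dual side: the dual of the dilute random cluster measure is not itself a dilute random cluster measure (this is precisely difference (2) flagged before the proof of Proposition \ref{prop: bridge events}), so I cannot simply quote duality of RSW but must phrase the bound on $\varphi^0[H^*_{\cdot}]$ in terms of the probability of the complementary \emph{primal} crossing event $V_R$ and apply the (already established) RSW estimate and \eqref{eq: RSW} for the primal measure to that. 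Since Proposition \ref{prop: RSW} is stated for the free and wired measures and its proof already handles dual crossings, the exponential decay of $\varphi^0[V_R]$ follows as above, and no new RSW-type input is needed. All the gluing arguments are standard (cf. \cite[Section 5]{DCT}), so the proof is a matter of assembling these ingredients.
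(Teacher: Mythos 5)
Your opening reduction (four rectangles tiling the annulus, the FKG inequality for the dilute random cluster measure, and planar duality converting dual crossings into complements of primal crossings) is the same as the paper's first step, but the proposal has two genuine gaps.

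The first is geometric. The circuit $\mathcal{C}^*_n$ requires the \emph{hard}-direction dual crossings of the four $n\times 4n$ rectangles; their complements are the \emph{easy}-direction primal crossings, e.g.\ the left--right crossing $H_{[-2n,-n]\times[-2n,2n]}$ over distance $n$, and this is exactly the event the paper bounds. You instead bound the complement of $H^*_{[-2n,-n]\times[-2n,2n]}$, which is the primal \emph{hard} crossing $V_R$ of the tall rectangle; showing that the dual \emph{easy} crossings hold with high probability does not produce a circuit. The distinction matters because the primal easy crossing is the more likely of the two events, and showing that \emph{it} decays exponentially is where the work lies.

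The second gap is the decisive one. Your chain ultimately rests on the claim that the relevant crossing probability under the infinite-volume measure $\varphi^0$ decays exponentially ``by the {\bf (DiscontCrit)} hypothesis $\varphi^0_{\Lambda_{2n}}[H_n]\le e^{-cn}$'' together with \eqref{eq:MON}. But \eqref{eq:MON} gives $\varphi^0_{\Lambda_{2n}}\preceq\varphi^0$ on increasing events in the box (free boundary conditions at finite distance are the \emph{least} favourable), so the finite-volume bound transfers in the wrong direction and says nothing about $\varphi^0[H_n]$; the symmetric problem occurs on the wired side, where $\varphi^1\preceq\varphi^1_{\Lambda_{2n}}$ gives no lower bound on $\varphi^1[H_n]$. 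This finite- versus infinite-volume issue is precisely what the paper's proof is built to overcome: it bounds $\varphi^0_{\Lambda_{4n}}[H_{[-2n,-n]\times[-2n,2n]}]$ by extracting a point-to-point connection at the cost of a polynomial factor, concatenating $4k$ translated and reflected copies via FKG and \eqref{eq:MON} to produce a long strip crossing, and comparing with the strip density $p_{4n}$, which is known to decay exponentially in the {\bf (DiscontCrit)} case from the proof of Proposition \ref{prop:quad}. It then passes from $\Lambda_{4n}$ to infinite volume by iterating circuits over dyadic scales, using Lemma \ref{free-closed}, \eqref{eq:DMPRC}, and \eqref{eq:MON} to show that conditioning on an outer dual circuit is no worse than free boundary conditions at that scale. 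Neither the strip-density supermultiplicativity step nor the multi-scale passage to infinite volume appears in your proposal, and without them the argument does not close.
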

\begin{proof}
We will show the assertion in the case of $\varphi^0$, with the case of $\varphi^1$ being similar.
We start by showing that $\varphi^0_{\Lambda_{4n}}[\mathcal{C}^*_n] \geq 1- e^{-tn}$, and then proceed to show that the same holds at infinite volume.

Note that if the rectangles $[-2n,-n]\times[-2n,2n]$ and $[n,2n]\times[-2n,2n]$ have dual vertical crossings, while $[-2n,2n]\times[-2n,-n]$ and $[-2n,2n]\times[n,2n]$ have dual horizontal crossings, then $\mathcal{C}^*_n$ occurs. By the $\pi/2$ rotational symmetry of the measure $\varphi^0_{\Lambda_{4n}}$, the FKG inequality, and duality, it suffices to show that $\varphi^0_{\Lambda_{4n}}[H_{[-2n,-n]\times[-2n,2n]}]$ decays exponentially fast to $0$.

To this end, recall that $p_n$ decays exponentially by our assumption that {\bf (DiscontCrit)} occurs. It follows from the union bound that there are vertices $x$ and $y$ on the left and right side of $[-2n,-n]\times[-2n,2n]$, respectively, such that 
\begin{equs}
\varphi^0_{\Lambda_{4n}}[x\longleftrightarrow y \text{ in } [-2n,-n]\times[-2n,2n]]\geq \frac{\varphi^0_{\Lambda_{4n}}[H_{[-2n,-n]\times[-2n,2n]}]}{(4n+1)^2}.
\end{equs}
Moreover, for every $k\geq 1$, we can create a horizontal crossing of the rectangle $[0,4kn]\times [0,4n]$ by combining $4k$ translations and reflections of the event $\{x\longleftrightarrow y \text{ in } [-2n,-n]\times[-2n,2n]\}$, as appropriate. Using the FKG inequality, \eqref{eq:MON}, and the reflection symmetry of the measure $\varphi^0_{S_{4n}}$, we obtain
\begin{equs}
\varphi^0_{S_{4n}}[H_{[0,4kn]\times [0,4n]}]\geq\left(\frac{\varphi^0_{\Lambda_{2n}}[H_{[-2n,-n]\times[-2n,2n]}]}{(4n+1)^2}\right)^{4k}.
\end{equs}
By the finite energy property, there exists a constant $c>0$ such that  
\begin{equs}
\phi^0_{[0,4kn]\times[-4n,8n]}[H_{[0,4kn]\times [0,4n]}]\geq e^{-cn} \phi^0_{S_{4n}}[H_{[0,4kn]\times [0,4n]}],
\end{equs}
hence taking $k$th roots and sending $k$ to infinity, we obtain 
\begin{equs}
\varphi^0_{\Lambda_{4n}}[H_{[-2n,-n]\times[-2n,2n]}]\leq (4n+1)^2 p_{4n}^{1/4}.
\end{equs}
This implies that $\varphi^0_{\Lambda_{2n}}[H_{[-2n,-n]\times[-2n,2n]}]$ decays exponentially, as desired.

To prove the assertion at infinite volume, consider some $k\geq 1$, and note that 
\begin{equs}
\varphi^0_{\Lambda_{2^k n}}[\mathcal{C}^*_{n}]\geq \varphi^0_{\Lambda_{2^k n}}\left[\bigcap_{i=1}^k \mathcal{C}^*_{2^{k-i} n}\right]=\varphi^0_{\Lambda_{2^k n}}\left[\mathcal{C}^*_{2^{k-1} n}\right]\prod_{i=2}^k\varphi^0_{\Lambda_{2^k n}}\Big[\mathcal{C}^*_{2^{k-i}n} \mid \bigcap_{j=1}^{i-1} \mathcal{C}^*_{2^{k-j} n} \Big].
\end{equs}
By Lemma~\ref{free-closed} and \eqref{eq:DMPRC}, \eqref{eq:MON}, we have 
\begin{equs}
\varphi^0_{\Lambda_{2^k n}}\Big[\mathcal{C}^*_{2^{k-i}n} \mid \bigcap_{j=1}^{i-1} \mathcal{C}^*_{2^{k-j} n} \Big]\geq \varphi^0_{\Lambda_{2^{k-i+2} n}}[\mathcal{C}^*_{2^{k-i}n}],
\end{equs}
and $\varphi^0_{\Lambda_{2^k n}}[\mathcal{C}^*_{2^{k-1} n}]\geq \varphi^0_{\Lambda_{2^{k+1}n}}[\mathcal{C}^*_{2^{k-1} n}]$, which implies that $\varphi^0_{\Lambda_{2^k n}}[\mathcal{C}^*_{n}]\geq 1-e^{-t'n}$ for some constant $t'>0$. Sending $k$ to infinity we obtain the desired assertion.
\end{proof}

We are now ready to prove Proposition~\ref{cor:3}.

\begin{proof}[Proof of Proposition~\ref{cor:3}]
For the first inequality we use that $\varphi^0[0\longleftrightarrow\partial \Lambda_n]\leq 1-\varphi^0[\mathcal{C}^*_{\lfloor n/2\rfloor}]$ and Lemma~\ref{dual-circuit}. For the second inequality, we observe that when $\Lambda_n$ is not connected to infinity, there is an open circuit in $\omega^*$ surrounding $\Lambda_n$. This implies that for some $k\geq n$, the dual edge $\{(k+1/2,1/2),(k+1/2,-1/2)\}$ is connected to distance $k$ in $\omega^*$, which in turn implies that the event $(k,0)+\mathcal{C}_{\lfloor k/2 \rfloor}$ does not happen. Hence the second inequality follows from Lemma~\ref{dual-circuit}.
\end{proof}

The following proposition, which gives polynomial bounds on one-arm events when there is continuous critical behaviour, is a standard consequence of Proposition \ref{prop:quad}. See \cite[Section 5]{HDC-IP}.
\begin{prop}\label{cor:a}
Assume that {\bf (ContCrit)} occurs. There exists $c>0$ such that for every $n\geq 1$,
\begin{equs}\label{prop: one arm}
\frac{c}{n}\leq \varphi^1[0\longleftrightarrow\partial\Lambda_n]\leq \frac1{n^c}.
\end{equs}
In particular, $\varphi^1[0\longleftrightarrow\infty]=0$ and $\varphi^1=\varphi^0$.
\end{prop}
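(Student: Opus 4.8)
The plan is to prove the two-sided polynomial bound \eqref{prop: one arm} directly from the quadrichotomy assumption {\bf (ContCrit)}, following the now-standard RSW-to-arm-exponents machinery as presented in \cite[Section 5]{HDC-IP}, and then extract the two corollary statements. The starting point is that, under {\bf (ContCrit)}, one has uniform (in $n$ and in the boundary condition $\xi$) bounds $c \leq \varphi^\xi_{\Lambda_{2n}}[H_n] \leq 1-c$; combined with the \eqref{eq: RSW} estimate of Proposition \ref{prop: RSW}, this upgrades to a box-crossing property: for every aspect ratio $\rho$ there is $c_\rho>0$ such that crossing probabilities of $\rho n \times n$ rectangles (in either direction, under wired, free, or any intermediate boundary conditions at macroscopic distance) lie in $[c_\rho, 1-c_\rho]$ for all large $n$. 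This is the only input one needs.

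From the box-crossing property I would first build circuits in annuli: using the FKG inequality \eqref{eq:FKGRC} to glue four long rectangle crossings, one gets that the probability of an open circuit in $\Lambda_{2n}\setminus\Lambda_n$ surrounding the origin is bounded below by a constant $c'>0$, uniformly in $n$ and under any boundary conditions outside $\Lambda_{2n}$ (for the lower bound one uses free boundary conditions, which are dominated by everything via Proposition \ref{prop: domination} and \eqref{eq:MON}); similarly the probability of a dual circuit is bounded below by a constant. For the \textbf{lower bound} $\varphi^1[0\leftrightarrow\partial\Lambda_n]\geq c/n$ I would use a second-moment argument: let $N$ be the number of vertices on, say, $\Bot[\Lambda_n]$ connected to $\partial\Lambda_{2n}$ (or a similar configuration adapted to the wired measure); the box-crossing property gives $\varphi^1[N]\geq c_1 n$ and a standard gluing-plus-FKG argument gives $\varphi^1[N^2]\leq c_2 n^2$, so by Cauchy--Schwarz $\varphi^1[N>0]\geq \varphi^1[N]^2/\varphi^1[N^2]\geq c_3$, and then quasi-multiplicativity across a logarithmic number of dyadic scales yields $\varphi^1[0\leftrightarrow\partial\Lambda_n]\geq c/n$. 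For the \textbf{upper bound} $\varphi^1[0\leftrightarrow\partial\Lambda_n]\leq n^{-c}$ I would use the dual circuits: on dyadic annuli $\Lambda_{2^{k+1}}\setminus\Lambda_{2^k}$ the events ``there is a dual circuit surrounding the origin'' are decreasing, each has probability $\geq c'>0$ by the above, and by the spatial Markov property \eqref{eq:DMPRC} together with \eqref{eq:MON} and Lemma \ref{free-closed} the conditional probabilities (conditioning on the more favourable event that the outer annuli are free of such circuits) remain bounded below; since the presence of a dual circuit in any one annulus blocks $0 \leftrightarrow \partial\Lambda_n$, multiplying over $\log_2 n$ scales gives $\varphi^1[0\leftrightarrow\partial\Lambda_n]\leq (1-c')^{\log_2 n}=n^{-c}$.

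Finally, the ``in particular'' statements follow quickly. Sending $n\to\infty$ in the upper bound gives $\varphi^1[0\leftrightarrow\infty]=0$. To conclude $\varphi^1=\varphi^0$: the two infinite-volume measures of Proposition \ref{prop: inf vol limits} are both translation-invariant and ergodic, and $\varphi^0\preceq\varphi^1$ by Proposition \ref{prop: domination}; a coupling argument (Strassen) together with the fact that $\varphi^1$-a.s. there is no infinite cluster, so that $\varphi^1$-a.s. there are infinitely many dual circuits surrounding any fixed box (by Borel--Cantelli from the uniform circuit lower bound and \eqref{eq:MON}), forces the two measures to agree on any local event — this is the standard argument for uniqueness of the random cluster measure in the absence of an infinite cluster, and it adapts verbatim since only FKG, finite energy, the spatial Markov property, and monotonicity in the domain are used.

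The main obstacle I anticipate is not the combinatorial geometry — the gluing, second-moment, and dyadic-scale arguments are routine once the box-crossing property is in hand — but rather being careful about \emph{which} boundary conditions the various estimates are stated for, since in the dilute random cluster setting conditioning on edges being closed does not reproduce the free measure but only something dominated by it (Lemma \ref{free-empty-vertex} and Proposition \ref{free-closed}). Every time one conditions on a circuit (primal or dual) and restarts the argument inside, one must check that the induced boundary conditions are handled by \eqref{eq:MON}, \eqref{eq:DMPRC}, and Proposition \ref{free-closed} rather than by a naive appeal to the free measure; this is exactly the kind of bookkeeping already flagged in the proof of Proposition \ref{cor:3} and Lemma \ref{dual-circuit}, so the adaptation is conceptually clear but requires attention.
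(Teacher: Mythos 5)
Your proposal is correct and follows essentially the same route as the paper, which simply declares the proposition a standard consequence of the quadrichotomy and refers to \cite[Section 5]{HDC-IP} for the RSW-to-arm-exponent machinery you spell out (box-crossing, FKG gluing of circuits, second moment for the lower bound, dual circuits across dyadic scales for the upper bound, and uniqueness of the infinite-volume measure in the absence of an infinite cluster). Your closing remark about handling boundary conditions via \eqref{eq:MON}, \eqref{eq:DMPRC} and Proposition \ref{free-closed} rather than a naive appeal to the free measure is precisely the adaptation the paper has in mind.
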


The following proposition makes rigorous the intuitively clear statement that at $p_c$, only {\bf (ContCrit)} or {\bf (DiscontCrit)} can occur. We remark that the reverse is also true, namely {\bf (ContCrit)} and {\bf (DiscontCrit)} can only occur at $p_c$ and not any other value, but this requires to first establish sharpness of the phase transition, Theorem~\ref{thm: exp dec}, which is proved in Section~\ref{sec: subcrit}.

\begin{prop}\label{prop: pc behaviour}
Let $a \in (0,1)$. If $p=p_c(a)$, then either {\bf (ContCrit)} or {\bf (DiscontCrit)} occurs. Furthermore, the set of $a\in (0,1)$ for which {\bf (DiscontCrit)} occurs at $p_c(a)$ is open.
\end{prop}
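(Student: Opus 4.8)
The plan is to rule out {\bf (SubCrit)} and {\bf (SupCrit)} at $p=p_c(a)$ and then to establish openness of the discontinuous set. The quadrichotomy (Proposition~\ref{prop:quad}) will then force {\bf (ContCrit)} or {\bf (DiscontCrit)}. First I would rule out {\bf (SupCrit)} at $p=p_c$: if {\bf (SupCrit)} held, then by Proposition~\ref{cor:2} we would have $\varphi^0_{p_c,a}[0\longleftrightarrow\infty]>0$; but by monotonicity in $p$ (Proposition~\ref{prop: domination}) and left-continuity considerations one can run {\bf (SupCrit)} at a slightly smaller $p'<p_c$. More precisely, {\bf (SupCrit)} is an event that only involves finite-volume crossing probabilities $\varphi^0_{\Lambda_{2n}}[H_n]\geq 1-e^{-cn}$, and these are continuous (indeed analytic) in $p$ on the compact family of boundary-determined measures; since {\bf (SupCrit)} is a \emph{robust} (open) condition in the sense that it survives a small perturbation of $p$ — one keeps $1-e^{-c'n}$ for a slightly worse constant $c'$ — it would hold on an interval $(p_c-\epsilon, p_c]$, contradicting $\varphi^0_{p',a}[0\longleftrightarrow\infty]>0$ for $p'<p_c$, which contradicts the definition of $p_c$. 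Symmetrically, if {\bf (SubCrit)} held at $p_c$, the same perturbation argument shows it holds on $[p_c, p_c+\epsilon)$, so by Proposition~\ref{cor:1} we would get $\varphi^1_{p',a}[0\longleftrightarrow\infty]=0$ for some $p'>p_c$, again contradicting the definition of $p_c$. The delicate point here is justifying that {\bf (SubCrit)} and {\bf (SupCrit)} are robust under small changes of $p$: one should argue via the renormalisation inequalities of Lemma~\ref{lem: renormalisation inequality for strip densities} — more concretely, the dichotomy between (i) $\inf_n p_n>0, \inf_n q_n>0$ and (ii) exponential decay of both $p_n,q_n$ is stable because $p_n$ and $q_n$ are each monotone and continuous (analytic on $(0,1)$) in $p$, so the set of $p$ for which (ii) holds is open, as is the set for which (i) holds; thus the stability of {\bf (SubCrit)}/{\bf (SupCrit)} follows once one also controls the boundary-condition issue, which is handled exactly as in the quadrichotomy proof.

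For the second part — openness of the set $A_{\rm disc}:=\{a\in(0,1):\text{{\bf (DiscontCrit)} occurs at }(p_c(a),a)\}$ — the plan is as follows. Suppose {\bf (DiscontCrit)} occurs at $a_0$. Then by Lemma~\ref{dual-circuit} (applied at $p=p_c(a_0)$), we have $\varphi^0_{p_c(a_0),a_0}[\mathcal C^*_n]\geq 1-e^{-tn}$ for all $n$, and in particular $\varphi^0_{\Lambda_{2n}}[H_n]\leq e^{-cn}$ at $p_c(a_0)$. Now I would argue that for $a$ near $a_0$ with $p=p_c(a)$, the free crossing probability $\varphi^0_{\Lambda_{2n},p_c(a),a}[H_n]$ remains small: one uses that $p_c$ is continuous in $a$ (Proposition~\ref{prop: line cont}), that the finite-volume free crossing probability is jointly continuous in $(p,a)$, and — crucially — that exponential decay of $p_n$ is a robust/open condition in $(p,a)$ via the renormalisation inequality, exactly as in the bootstrap argument in the proof of Proposition~\ref{prop:quad}. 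Since the only alternative to {\bf (DiscontCrit)} at $p_c(a)$ is {\bf (ContCrit)} (by the first part of this proposition), and {\bf (ContCrit)} would force $\varphi^0_{\Lambda_{2n}}[H_n]$ bounded away from $0$, smallness of this quantity for $a$ near $a_0$ forces {\bf (DiscontCrit)} to persist. Hence $A_{\rm disc}$ is open.

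The main obstacle I expect is making the ``robustness'' arguments precise: {\bf (SubCrit)}, {\bf (SupCrit)}, {\bf (DiscontCrit)} are each defined by inequalities holding \emph{for all $n$ with a uniform constant $c$}, so one cannot simply invoke continuity at a single scale. The right way around this is not to perturb the finite-$n$ estimates directly but to perturb the \emph{input} to the renormalisation machinery: the dichotomy (i)/(ii) from the proof of Proposition~\ref{prop:quad} is governed by whether the strip densities $p_n,q_n$ are bounded below or decay, and these quantities are manifestly monotone and (on $(0,1)^2$) continuous — indeed analytic — in $(p,a)$ by analyticity of finite-volume random cluster quantities together with the monotone limits defining $p_n,q_n$. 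One then checks that the renormalisation inequalities of Lemmas~\ref{lem: dual relation of strip densities} and \ref{lem: renormalisation inequality for strip densities}, whose constants $C$ depend only on $d$ (not on $(p,a)$ in a compact neighbourhood), propagate an initial smallness of $p_{n_0}, q_{n_0}$ at a single large scale $n_0$ into exponential decay at all scales; so it suffices that $p_{n_0}(p,a)$ and $q_{n_0}(p,a)$ be small, which is an open condition on $(p,a)$. Combined with continuity of $a\mapsto p_c(a)$, this closes both parts of the argument. I would also double-check the boundary-condition subtlety flagged in the proof of Proposition~\ref{prop: bridge events} (conditioning on closed edges only dominates by the free measure rather than reproducing it, via Lemma~\ref{free-empty-vertex}), but this is already built into the cited renormalisation lemmas and requires no new input here.
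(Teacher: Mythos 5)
Your proposal follows the paper's route — a finite-size criterion makes the four quadrichotomy alternatives robust in $(p,a)$, contradicting the definition of $p_c$ for \textbf{(SubCrit)}/\textbf{(SupCrit)}, and giving openness of \textbf{(DiscontCrit)} in $a$ by continuity of $a\mapsto p_c(a)$. The paper's own proof is essentially a two-sentence pointer to \cite[Lemma 10 and Section 5.4]{DCT}, so you are reconstructing the underlying mechanism that the paper delegates to those citations.

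There is, however, a real gap in where you locate the finite-size criterion for the first half of the statement. To rule out \textbf{(SubCrit)} and \textbf{(SupCrit)} at $p_c$ you propose to ``perturb the input to the renormalisation machinery'' via Lemmas~\ref{lem: dual relation of strip densities} and~\ref{lem: renormalisation inequality for strip densities}, but both of these lemmas are stated \emph{under the hypothesis} that neither \textbf{(SubCrit)} nor \textbf{(SupCrit)} occurs; you cannot feed them a configuration assumed to be in \textbf{(SubCrit)} or \textbf{(SupCrit)} and hope to derive robustness of that very condition without circularity. Moreover the strip densities $p_n,q_n$ that enter those lemmas are tied to the \textbf{(ContCrit)}/\textbf{(DiscontCrit)} dichotomy; \textbf{(SubCrit)} is a statement about $\varphi^1_{\Lambda_{2n}}[H_n]$ and \textbf{(SupCrit)} about $\varphi^0_{\Lambda_{2n}}[H_n]$, not about $p_n$ or $q_n$. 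The correct tool is a separate single-scale criterion of the type in \cite[Lemma~10]{DCT}: there is a universal $c_0>0$ such that $\varphi^0_{\Lambda_{2n_0}}[H_{n_0}]\geq 1-c_0$ at a single scale $n_0$ already implies \textbf{(SupCrit)} (and dually for \textbf{(SubCrit)}), with no hypothesis on the other alternatives; this is proved directly from the RSW estimate (Proposition~\ref{prop: RSW}) and a gluing/annulus iteration, not from Lemma~\ref{lem: renormalisation inequality for strip densities}. Once that criterion is in hand, continuity of the single-scale crossing probability in $p$ (for fixed $a$) gives the contradiction with $p_c$, exactly as you intended. Your argument for the second part (openness of \textbf{(DiscontCrit)} in $a$) is fine: here we already know, from the first part, that at $p_c(a)$ only \textbf{(ContCrit)} or \textbf{(DiscontCrit)} can hold, so the renormalisation inequalities do apply and propagate single-scale smallness of $p_{n_0},q_{n_0}$ to exponential decay, and the bootstrap from the proof of Proposition~\ref{prop:quad} upgrades this to \textbf{(DiscontCrit)}; combined with continuity of $p_c(\cdot)$ and joint continuity of finite-volume quantities in $(p,a)$, openness follows.
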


\begin{proof}
The fact that at $p_c(a)$  {\bf (ContCrit)} or {\bf (DiscontCrit)} must occur follows from a finite size criterion, which implies that the set of points at which {\bf (SubCrit)} and {\bf (SupCrit)} hold are open. Indeed, let us show that {\bf (SubCrit)} does not hold at $p_c(a)$. Arguing as in \cite[Lemma 10]{DCT}, one can see that there exists $\delta>0$ such that if for some $p\in (0,1)$, $a\in (0,1)$ and some $k\geq 1$, $\varphi^1_{\Lambda_{4k},p,a}[\Lambda_k\longleftrightarrow \partial \Lambda_{2k}]<\delta$, then {\bf (SubCrit)} holds. If {\bf (SubCrit)} holds at $p_c(a)$, then $\varphi^1_{\Lambda_{4k},p_c(a),a}[\Lambda_k\longleftrightarrow \partial \Lambda_{2k}]<\delta$ for some $k$ large enough, and by continuity there exists $\varepsilon>0$ such that $\varphi^1_{\Lambda_{4k},p_c(a)+\varepsilon,a}[\Lambda_k\longleftrightarrow \partial \Lambda_{2k}]<\delta$. This would imply that {\bf (SubCrit)} holds at $(p_c(a)+\varepsilon,a)$, which is a contradiction. A similar consideration applied to dual crossings under the free measure shows that {\bf (SupCrit)} cannot happen at $p_c(a)$. 

In addition, a standard consequence of the renormalisation inequality in Lemma \ref{lem: renormalisation inequality for strip densities} is that there exists a $\delta'>0$ such that if $p_n<\delta'$ for some $n\geq 1$, then $p_n$ decays exponentially. When the latter happens we have that {\bf (DiscontCrit)} holds. Thus the set of points at which {\bf (DiscontCrit)} holds is open. 
\end{proof}

Finally, we establish that at $p_c(a)$, {\bf (ContCrit)} and {\bf (DiscontCrit)} correspond exactly to continuous and discontinuous critical points for the Blume-Capel model, respectively.
\begin{prop}\label{correspondence}
Let $\Delta\in \RR$ and $a=\frac{e^{\Delta}}{1+e^{\Delta}}$. If {\bf (ContCrit)} occurs at $p=p_c(a)$, then $\langle \sigma_0 \rangle^+_{\beta_c(\Delta),\Delta}=0$. If {\bf (DiscontCrit)} occurs at $p=p_c(a)$, then $\langle \sigma_0 \rangle^+_{\beta_c(\Delta),\Delta}>0$.
\end{prop}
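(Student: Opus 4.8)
The key identity is the Edwards--Sokal dictionary of Corollary~\ref{cor: ES correlations}, which translates the magnetisation $\langle \sigma_0 \rangle^+_{\beta_c(\Delta),\Delta}$ into the one-arm probability $\varphi^1_{p_c(a)}[0 \longleftrightarrow \infty]$ for the dilute random cluster model at the critical parameter. More precisely, for finite $\Lambda$ we have $\langle \sigma_0 \rangle^+_{\Lambda,\beta,\Delta} = \varphi^1_{\Lambda,p,a}[0 \longleftrightarrow \partial\Lambda]$ with $p = 1-e^{-2\beta}$; passing to the infinite volume limit via Proposition~\ref{prop: inf vol limits} and Proposition~\ref{prop: critical pts}, one gets $\langle \sigma_0 \rangle^+_{\beta_c(\Delta),\Delta} = \varphi^1_{p_c(a),a}[0 \longleftrightarrow \infty]$. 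So the statement reduces entirely to: if {\bf (ContCrit)} holds at $p_c(a)$ then $\varphi^1_{p_c(a)}[0 \longleftrightarrow \infty]=0$, and if {\bf (DiscontCrit)} holds at $p_c(a)$ then $\varphi^1_{p_c(a)}[0 \longleftrightarrow \infty]>0$.

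First I would record the translation of the magnetisation into a connectivity probability, being careful about the infinite-volume limit: by monotonicity in the domain \eqref{eq:MON} the finite-volume quantities $\varphi^1_{\Lambda_n}[0 \longleftrightarrow \partial\Lambda_n]$ decrease to $\varphi^1[0 \longleftrightarrow \infty]$, and combined with the fact that $\langle \sigma_0\rangle^+_{\Lambda_n}\to\langle\sigma_0\rangle^+$ (GKS monotonicity, as noted after Definition~\ref{def: crit point blume}) this gives the desired identification. Then both halves are immediate from earlier results: under {\bf (ContCrit)} at $p=p_c(a)$, Proposition~\ref{cor:a} gives $\varphi^1[0\longleftrightarrow\partial\Lambda_n]\leq n^{-c}\to 0$, hence $\varphi^1[0\longleftrightarrow\infty]=0$ and so $\langle\sigma_0\rangle^+_{\beta_c(\Delta),\Delta}=0$. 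Under {\bf (DiscontCrit)} at $p=p_c(a)$, Proposition~\ref{cor:3} gives $\varphi^1[\Lambda_n\centernot\longleftrightarrow\infty]\leq e^{-cn}$, so $\varphi^1[0\longleftrightarrow\infty]\geq \varphi^1[\Lambda_1 \longleftrightarrow \infty]>0$, whence $\langle\sigma_0\rangle^+_{\beta_c(\Delta),\Delta}>0$.

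The only genuine subtlety — and the step I would be most careful about — is the interchange of limits: Corollary~\ref{cor: ES correlations} is a finite-volume statement, whereas Propositions~\ref{cor:a} and~\ref{cor:3} are phrased in terms of the infinite-volume measure $\varphi^1 = \varphi^1_{p_c(a),a}$, so one needs that $\varphi^1_{\Lambda_n}[0\longleftrightarrow\partial\Lambda_n]\to \varphi^1[0\longleftrightarrow\infty]$. This is standard: the event $\{0\longleftrightarrow\partial\Lambda_n\}$ is decreasing in $n$ for the (coupled, via \eqref{eq:MON}) family and its intersection over $n$ is $\{0\longleftrightarrow\infty\}$, while $\varphi^1_{\Lambda_n}\Uparrow\varphi^1$ by Proposition~\ref{prop: inf vol limits}. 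Everything else is a direct invocation of results already in the excerpt, so no further obstacle arises.

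\begin{proof}
By Corollary~\ref{cor: ES correlations}, for every finite $\Lambda \ni 0$ we have $\langle \sigma_0 \rangle^+_{\Lambda,\beta_c(\Delta),\Delta} = \varphi^1_{\Lambda,p,a}[0 \longleftrightarrow \partial\Lambda]$ with $p = 1-e^{-2\beta_c(\Delta)} = p_c(a)$, the last equality by Proposition~\ref{prop: critical pts}. Letting $\Lambda = \Lambda_n$ and $n\to\infty$, the left-hand side converges to $\langle\sigma_0\rangle^+_{\beta_c(\Delta),\Delta}$, while the right-hand side converges to $\varphi^1_{p_c(a),a}[0\longleftrightarrow\infty]$ by Proposition~\ref{prop: inf vol limits} (and monotonicity of the events $\{0\longleftrightarrow\partial\Lambda_n\}$, using \eqref{eq:MON}). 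Hence
\begin{equs}
\langle\sigma_0\rangle^+_{\beta_c(\Delta),\Delta} = \varphi^1_{p_c(a),a}[0\longleftrightarrow\infty].
\end{equs}
If {\bf (ContCrit)} occurs at $p=p_c(a)$, Proposition~\ref{cor:a} gives $\varphi^1[0\longleftrightarrow\infty]=0$, so $\langle\sigma_0\rangle^+_{\beta_c(\Delta),\Delta}=0$. If {\bf (DiscontCrit)} occurs at $p=p_c(a)$, Proposition~\ref{cor:3} gives $\varphi^1[0\longleftrightarrow\infty]>0$, so $\langle\sigma_0\rangle^+_{\beta_c(\Delta),\Delta}>0$.
\end{proof}
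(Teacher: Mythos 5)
Your proof is correct and matches the paper's argument: both rely on the Edwards--Sokal identity $\langle\sigma_0\rangle^+ = \varphi^1[0\longleftrightarrow\infty]$ and then invoke Propositions~\ref{cor:a} and~\ref{cor:3} for the two cases. The only difference is that you spell out the finite-to-infinite-volume passage, which the paper simply recalls as known.
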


\begin{proof}
Recall that $\langle \sigma_0 \rangle^+=\varphi^1[0\longleftrightarrow\infty]$ from the Edwards-Sokal coupling. If {\bf (ContCrit)} occurs, then $\varphi^1[0\longleftrightarrow\infty]=0$ by Proposition~\ref{cor:a}. If {\bf (DiscontCrit)} occurs, then $\varphi^1[0\longleftrightarrow\infty]>0$ by Proposition~\ref{cor:3}. This proves the desired result.
\end{proof}

\section{Tricritical point in $d=2$ via crossing probabilities}\label{sec: tric 2}

\subsection{A direct proof from the quadrichotomy}

One of the importance consequences of Proposition \ref{prop: pc behaviour} is that it allows to deduce that the two point function decays to $0$ at $p_c(a)$ for any $a \in (0,1)$. This, together with the mapping described in Section \ref{sec: mapping bc-ising}, implies Theorem \ref{thm: existence} for $\Delta^+ = -\log 2$. 

\begin{proof}[Proof of Theorem \ref{thm: existence} in $d=2$ for $\Delta^+ = -\log 2$]

Let $\Delta \geq - \log 2$ (so that the corresponding Ising model in the mapping of Corollary \ref{corollary: correlations} is ferromagnetic) and set $a=\frac{e^\Delta}{1+e^\Delta}$. By Proposition \ref{prop: pc behaviour}, we know that either {\bf (ContCrit)} or {\bf (DiscontCrit)} occurs at $p_c(a)$. Using Propositions \ref{cor:3} and \ref{cor:a}, we obtain that in any case, $\lim_{|x|\rightarrow \infty} \phi^0_{p_c(a),a}[0 \conn x] = 0$. As a consequence of Corollary \ref{corollary: correlations} we have that
\begin{equs}
\lim_{|x| \rightarrow \infty}\langle \tau^1_0 \tau^1_x \rangle^{{\rm Ising},0}_{J(\beta_c(\Delta), \Delta)}
=
0.
\end{equs}

Recall from the discussion in Section \ref{sec: pf in 3d} that the result of \cite{AR} implies \eqref{eq: raoufi}, namely that
\begin{align} \label{eq: raoufi2}
\langle \tau^1_x \tau^1_y \rangle^{{\rm Ising}, 0}_{J(\beta_c(\Delta),\Delta)}
=
\langle \tau^1_x \tau^1_y \rangle^{{\rm Ising}, +}_{J(\beta_c(\Delta),\Delta)}.
\end{align}
Hence, by Corollary \ref{corollary: correlations}, translation invariance and Griffiths' inequality for $\langle \cdot \rangle^{{\rm Ising},+}_{J(\beta_c(\Delta),\Delta)}$, and \eqref{eq: raoufi2}, for any $x \in \mathbb{Z}^d$
\begin{equs}
\Big(\langle \sigma_0 \rangle^+_{\beta_c(\Delta), \Delta}\Big)^2
=
\Big(\langle \tau^1_0 \rangle^{{\rm Ising},+}_{J(\beta_c(\Delta),\Delta)} \Big)^2
\leq
\langle \tau^1_0 \tau^1_x \rangle^{{\rm Ising},+}_{J(\beta_c(\Delta), \Delta)}
=
\langle \tau^1_0 \tau^1_x \rangle^{{\rm Ising},0}_{J(\beta_c(\Delta), \Delta)}.
\end{equs}
Thus, taking limits as $|x|\rightarrow \infty$, we obtain $\langle \sigma_0 \rangle^+_{\beta_c(\Delta), \Delta} = 0$, as desired. 
\end{proof}

\subsection{A sufficient criterion for continuity } \label{subsec: sufficient criterion continuity}

We give a sufficient criterion to prove continuity in dimension $2$ that avoids the use of the coupling with Ising explained in Section \ref{sec: mapping bc-ising}, thus having the potential to extend beyond $\Delta^+ = - \log 2$. In the next subsection, we show that this is indeed the case.

Let $G=(V,E)$ be a finite subgraph of $\ZZ^2$, and let $\varepsilon>0$.
We denote by $\mu_{G,\beta,\Delta}^{+,\varepsilon}$ the probability measure defined on spin configurations $\sigma \in \{-1,0,1\}^{V}$ by
\begin{equs}
\mu_{G,\beta,\Delta}^{+,\varepsilon}(\sigma)
=
\frac{1}{Z_{G,\beta,\Delta}^{+,\varepsilon}} e^{-H_{G,\beta,\Delta}^{+,\varepsilon}(\sigma)}	
\end{equs}
where
\begin{equs}
H_{G,\beta,\Delta}^{+,\varepsilon}(\sigma)
=
-\beta\sum_{xy \in E} \sigma_x\sigma_y -\Delta\sum_{x \in V} \sigma_x^2 - \varepsilon \sum_{\substack{xy \in \mathbb{E}^d \\x \in V, y \in \ZZ^d\setminus V }} \sigma_x.	
\end{equs}
and $Z_{\Lambda,\beta,\Delta}^{+,\varepsilon}$ is the partition function. This corresponds to the Blume-Capel model with $\varepsilon$-boundary conditions.
 
When the phase transition is continuous, $\mu_{G}^{+,\varepsilon}[\sigma_0]$ converges to $\mu^+[\sigma_0]=0$ for every $\varepsilon>0$. On the other hand, when the phase transition is discontinuous, it is unclear whether this property holds. We introduce the following intermediary property, denoted by ${\bf (H)}$, where we assume this holds conditional on $\sigma_0 \neq 0$. This conditioning appears technical and is used throughout the following subsection on the Lee-Yang theory. 

\begin{defn}
For $\beta > 0$ and $\Delta \in \R$, we say ${\bf (H)}$ is satisfied if and only if
\begin{equs}
\lim_{n\to\infty} \mu^{+,\varepsilon}_{\Lambda_n,\beta,\Delta} \left[\sigma_0 \mid \sigma_0\neq 0\right] 
=
\mu^+_{\beta,\Delta} \left[\sigma_0 \mid \sigma_0\neq 0\right],
\qquad \forall \epsilon > 0.
\end{equs}
\end{defn}

Given $\delta\in (0,1)$, we now define the dilute random cluster measure $\varphi^{1,\delta}_{\Lambda,p,a}$ corresponding to $\mu^{+,\varepsilon}_{\Lambda,\beta,\Delta}$ for $\varepsilon=-\frac{1}{2}\log(1-\delta)$ via the Edwards-Sokal coupling. The measure $\varphi^{1,\delta}_{\Lambda,p,a}$ is defined by 
\begin{equs}
\varphi^{1,\delta}_{\Lambda,p,a} [\theta] =
\frac{\1[\theta \in \Theta_\Lambda^1]}{Z^{1,\delta}_{\Lambda,p,a}}  2^{k(\theta,\Lambda)}
\prod_{x\in V} \left(\frac{a}{1-a}\right)^{\psi_x}
\prod_{e\in E_{\psi,\Lambda}} r_e\left(\frac{p_e}{1-p_e}\right)^{\omega_e},
\end{equs}
where $Z^{1,\delta}_{\Lambda,p,a}$ is the normalisation constant. Here $r_e=\sqrt{1-p}$ if both endpoints of $e$ belong to $\Lambda$, and $r_e=\sqrt{1-\delta}$ otherwise; whilst $p_e=p$ if both endpoints of $e$ belong to $\Lambda$, and $p_e=\delta$ otherwise. Note that samples from $\varphi^{1,\delta}_{\Lambda, p, a}$ are supported on $\Theta^1_{\Lambda}$, although we stress that not all vertices in $\Lambda_n$ are required to be open. 

In the next lemma, we obtain a comparison between $\varphi^{1,\delta}_{\Lambda_n,p,a}$ and $\varphi^0_{\Lambda_n,p,a}$ which states that, up to an exponential rewiring cost that is proportional to $n$ and $\delta$, we may bound probabilities under $\varphi^0_{\Lambda_n,p,a}$ from below by probabilities under $\varphi^{1,\delta}_{\Lambda_n,p,a}$. We recall that $b(\Lambda_n)$ is the set of edges induced by $\Lambda_n$. We write $\partial_E \Lambda = \{ xy \in \EE^d : x \in \Lambda, y \in \ZZ^d \setminus \Lambda \}$ to denote the edge boundary of $\Lambda$.

\begin{lem}\label{comparison}
For every $\delta\in (0,1)$, $n \in \NN$, and event $A$ depending on $(\Lambda_n, b(\Lambda_n))$, we have
\begin{equs}
\varphi^{1,\delta}_{\Lambda_n,p,a}[A]\leq \left(\frac{1}{1-\delta}\right)^{12n+6}\varphi^0_{\Lambda_n,p,a}[A].
\end{equs}
\end{lem}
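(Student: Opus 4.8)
The plan is to compare the two measures $\varphi^{1,\delta}_{\Lambda_n,p,a}$ and $\varphi^0_{\Lambda_n,p,a}$ directly through their Radon--Nikodym derivative. Both measures live on $\Theta^1_\Lambda$ versus $\Theta^0_\Lambda$, but since $A$ depends only on the configuration inside $(\Lambda_n, b(\Lambda_n))$, what really matters is how the two weights compare on configurations of the bulk. The key observation is that $\varphi^{1,\delta}_{\Lambda_n,p,a}$ and $\varphi^0_{\Lambda_n,p,a}$ assign weights to a bulk configuration $\theta$ that differ only through: (i) the treatment of the boundary edges in $\partial_E\Lambda_n$ (which in the $1,\delta$ measure carry the modified parameters $r_e=\sqrt{1-\delta}$, $p_e=\delta$ and an open/closed status, whereas in the free measure they are simply absent/closed), and (ii) the cluster-counting factor $2^{k(\theta,\Lambda_n)}$, since wiring through $\partial_E\Lambda_n$ can merge clusters. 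The first step is therefore to write both partition-function-normalised weights, cancel the common bulk factors $\prod_{x}(a/(1-a))^{\psi_x}\prod_{e\in b(\Lambda_n)} r^{...}(p/(1-p))^{\omega_e}$, and track exactly the discrepancy.

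Next I would bound this discrepancy pointwise and then globally. For a fixed bulk configuration $\theta$, summing the $\varphi^{1,\delta}$-weight over all admissible states of the boundary edges $\partial_E\Lambda_n$ produces a factor that is, per boundary edge, at worst $(1-\delta)^{-1/2}$ times a bounded expression (from the $r_e$ term), together with a cluster factor that changes $k$ by at most the number of boundary edges touched. Since $|\partial_E \Lambda_n| = 4(2n+1) = 8n+4$, and one also has to account for the vertex-weight normalisation on $\partial\Lambda_n$ (the $1,\delta$ measure forces these vertices open while the free one does not), the total multiplicative cost of ``rewiring'' the boundary, summed against the respective partition functions, is at most $(1-\delta)^{-(12n+6)}$. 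The cleanest way to organise this is to exhibit an explicit coupling or, more simply, to bound $Z^{0}_{\Lambda_n}/Z^{1,\delta}_{\Lambda_n}$ from below by the contribution of configurations with all boundary edges closed — which, in the $1,\delta$ measure, carries weight $\prod_{e\in\partial_E\Lambda_n}\sqrt{1-\delta} = (1-\delta)^{(8n+4)/2}$ relative to the analogous free-measure term — and separately bound the ratio of numerators by $(1-\delta)^{-(8n+4)/2}$ times a factor coming from the forced-open boundary vertices.

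Concretely, the argument runs: fix $A$ measurable in $(\Lambda_n,b(\Lambda_n))$; then
\begin{equs}
\varphi^{1,\delta}_{\Lambda_n,p,a}[A]
= \frac{\sum_{\theta\in A} w^{1,\delta}(\theta)}{Z^{1,\delta}_{\Lambda_n,p,a}},
\qquad
\varphi^{0}_{\Lambda_n,p,a}[A]
= \frac{\sum_{\theta\in A} w^{0}(\theta)}{Z^{0}_{\Lambda_n,p,a}},
\end{equs}
where the sums over $\theta\in A$ are really sums over bulk configurations in $A$ with the boundary data summed/fixed as appropriate. One shows $w^{1,\delta}(\theta) \le (1-\delta)^{-(6n+3)} w^{0}(\theta)$ for every bulk configuration $\theta$ — this uses that every extra open boundary edge contributes a factor $\tfrac{\delta}{1-\delta}\sqrt{1-\delta}\cdot 2^{-1}$ (the $2^{-1}$ from the potential cluster merge) which is bounded by a constant, while each boundary edge contributes an unavoidable $\sqrt{1-\delta}$ that we absorb into the prefactor, and there are $8n+4$ boundary edges plus $4n+2$ or so boundary vertices forced open — and simultaneously $Z^{1,\delta}_{\Lambda_n,p,a} \ge (1-\delta)^{(6n+3)} Z^{0}_{\Lambda_n,p,a}$ by restricting the partition-function sum to configurations with all of $\partial_E\Lambda_n$ closed. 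Dividing the two bounds yields the factor $(1-\delta)^{-(12n+6)}$.

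\textbf{Main obstacle.} The delicate point is bookkeeping the cluster-counting exponent $k(\theta,\Lambda_n)$: wiring boundary edges can both merge bulk clusters (decreasing $k$, hence decreasing the $\varphi^{1,\delta}$ weight, which is favourable) and connect previously-isolated boundary vertices to the bulk, and one must check that the net effect never costs more than a bounded factor per boundary edge/vertex so that the exponent stays linear in $n$ with the claimed constant $12n+6$. Getting the constant exactly right (rather than just ``$Cn$'') requires being careful that the $\sqrt{1-\delta}$ per-edge factor, the $\delta/(1-\delta)$ per-open-edge factor, and the forced-open-vertex weights $a/(1-a)$ are apportioned correctly between numerator and denominator; I would handle this by treating the boundary edges one at a time via the finite-energy/spatial-Markov structure (Propositions on (SMP) and finite energy above), peeling off one edge per step and accumulating a factor $(1-\delta)^{-1/2}$ each time, for $8n+4$ edges, then similarly $4n+2$ (roughly) boundary vertices, totalling the exponent $12n+6$.
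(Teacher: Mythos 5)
Your proposal is correct and follows essentially the same route as the paper: compare the weights of a fixed bulk configuration under the two measures (summing over boundary-edge states, using that opening a boundary edge costs at most $\delta/(1-\delta)$ while the cluster factor $2^{k}$ can only decrease), and bound the partition-function ratio by restricting the wired sum to configurations with all of $\partial_E\Lambda_n$ closed, then divide; the paper apportions the exponent as $8n+4$ (numerator) plus $4n+2$ (denominator) rather than your $6n+3$ each, but both splits are valid. One small simplification: the vertex weights $a/(1-a)$ are a product over $x\in\Lambda_n$ only and hence cancel exactly between the two measures, so no bookkeeping for ``forced-open boundary vertices'' is needed — the entire discrepancy lives in the boundary edges and the cluster count.
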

\begin{proof}
Let $\theta\in \{0,1\}^{\Lambda_n}\times \{0,1\}^{b(\Lambda_n)}$ be a configuration on $\Lambda_n$. Given $\xi \in \Theta$ and $\theta' \in \Theta^\xi_{\Lambda_n}$, we write $\theta' \sim_{\Lambda_n} \theta$ if $\theta'|_{\Lambda_n \times b(\Lambda_n)} = \theta$, i.e.\ if the two configurations coincide exactly in $(\Lambda_n, b(\Lambda_n))$. Note that if $\theta',\theta'' \in \Theta^\xi_{\Lambda_n}$ are such that $\theta', \theta'' \sim_{\Lambda_n}\theta$, then they may only disagree on $\partial_E \Lambda_n$.

Let $Z^{1,\delta}_{\Lambda_n, p,a}[\theta]$ and $Z^{0}_{\Lambda_n, p, a}[\theta]$ be defined such that $\varphi^{1,\delta}_{\Lambda_n, p,a}[\theta] = \frac{Z^{1,\delta}_{\Lambda_n, p,a}[\theta]}{Z^{1,\delta}_{\Lambda_n, p,a}}$ and $\varphi^{0}_{\Lambda_n, p,a}[\theta] = \frac{Z^{0}_{\Lambda_n, p,a}[\theta]}{Z^{0}_{\Lambda_n, p,a}}$, respectively. Note that 
\begin{equs}
\varphi^{1,\delta}_{\Lambda_n, p, a}[\theta]
=
\frac{1}{Z^{1,\delta}_{\Lambda_n,p ,a}} \, \sum_{\substack{\theta^1 \in \Theta^1_{\Lambda_n} \\ \theta^1 \sim_{\Lambda_n} \theta }} Z^{1,\delta}_{\Lambda_n, p, a}[\theta^1] \quad \text{ and } \quad 
\varphi^{0}_{\Lambda_n, p, a}[\theta]
=
\frac{1}{Z^{0}_{\Lambda_n,p ,a}} \, \sum_{\substack{\theta^0 \in \Theta^0_{\Lambda_n} \\ \theta^0 \sim_{\Lambda_n} \theta } } Z^{0}_{\Lambda_n, p, a}[\theta^0],
\end{equs}
where we point out that in the second expression there is only one $\theta^0 \in \Theta^0_{\Lambda_n}$ such that $\theta^0 \sim_{\Lambda_n} \theta$ since the zero boundary conditions enforces closed edges. Thus, the proof is immediate if we can prove
\begin{equs} \label{eq: expwire claims}
\sum_{\substack{\theta^1 \in \Theta^1_{\Lambda_n} \\ \theta^1 \sim_{\Lambda_n} \theta }}Z^{1,\delta}_{\Lambda_n,p,a}[\theta^1]
\leq
2\left(\frac{1}{1-\delta}\right)^{8n+4} Z^{0}_{\Lambda_n,p,a}[\theta^0]
\quad \text{ and } \quad 
Z^{1,\delta}_{\Lambda_n,p,a}
\geq 
2(1-\delta)^{4n+2} Z^{0}_{\Lambda_n,p,a}.
\end{equs}

The first inequality of \eqref{eq: expwire claims} follows from three observations. First, note that for every $\theta^1\in \Theta^1_{\Lambda_n}$ and $\theta^0\in \Theta^0_{\Lambda_n}$ that coincide with $\theta$ on $\Lambda_n$, we have $k(\theta^1,\Lambda_n)\leq k(\theta^0,\Lambda_n)+1$, where the $+1$ term comes from the contribution of $\partial \Lambda_n$ to $k(\theta^1,\Lambda_n)$ in the extreme case when $\theta^1$ has all closed edges on $\partial_E \Lambda_n$. Second, note that $r_e\leq 1$ (so we can ignore any contribution coming from products of $r_e$). Finally, note that
\begin{equs}
\sum_{\omega \in \{0,1\}^{\partial_E \Lambda_n}}\prod_{e\in \partial_E \Lambda_n} \left(\frac{\delta}{1-\delta}\right)^{\omega_e}
=
\left(\frac{1}{1-\delta}\right)^{|\partial_E \Lambda_n|}
=
\left(\frac{1}{1-\delta}\right)^{8n+4}.
\end{equs}

For the second inequality of \eqref{eq: expwire claims}, we truncate the sum in $Z^{1,\delta}_{\Lambda_n,p,a}$ over all configurations $\theta^1$ whose edge projection is equal to $0$ for every edge in $\partial_E \Lambda_n$. Thus, for any such $\theta^1$, we have $k(\theta^1,\Lambda_n)= k(\theta^0,\Lambda_n)+1$. The estimate follows from this consideration together with the bound
\begin{equs}
\prod_{\substack{x\in \Lambda_n, y\not\in \Lambda_n\\ xy\in E_{\psi^1,\Lambda_n}}} r_{xy} 
\geq
(1-\delta)^{\frac{8n+4}2}
=
(1-\delta)^{4n+2}
\end{equs}
where $\psi^1$ is the projection of $\theta^1$ onto its first coordinate.
\end{proof}

We now show that {\bf (H)} is a sufficient condition for continuity.
\begin{prop} \label{prop: h implies continuity}
Let $\Delta \in \RR$. Assume that {\bf (H)} is satisfied at $(\beta_c(\Delta), \Delta)$. Then
\begin{equs}
\langle \sigma_0 \rangle^+_{\beta_c(\Delta), \Delta}
=
0.
\end{equs}
\end{prop}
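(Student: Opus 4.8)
The plan is to argue by contradiction, and to reformulate everything in terms of the dilute random cluster model via the Edwards--Sokal coupling. Suppose $\langle \sigma_0 \rangle^+_{\beta_c(\Delta),\Delta} > 0$; with $a = \frac{2e^\Delta}{1+2e^\Delta}$ and $p_c = p_c(a) = 1 - e^{-2\beta_c(\Delta)}$, Proposition~\ref{correspondence} (more precisely its contrapositive, via Proposition~\ref{prop: pc behaviour}) tells us that {\bf (DiscontCrit)} must occur at $p_c$. By Lemma~\ref{dual-circuit}, there is $t>0$ such that $\varphi^0_{p_c,a}[\mathcal{C}^*_n] \geq 1 - e^{-tn}$ for every $n$; that is, there is an open dual circuit in $\Lambda_{2n}\setminus\Lambda_n$ separating the origin from infinity with overwhelming probability under the free measure.

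The key mechanism is that {\bf (H)} forces the $\varepsilon$-boundary measure $\mu^{+,\varepsilon}_{\Lambda_n,\beta_c(\Delta),\Delta}$ to look like the free measure asymptotically, while Lemma~\ref{comparison} bounds the corresponding random cluster measure $\varphi^{1,\delta}_{\Lambda_n,p_c,a}$ (where $\delta = 1 - e^{-2\varepsilon}$) from above by $\varphi^0_{\Lambda_n,p_c,a}$ with an \emph{exponential} rewiring cost $(1-\delta)^{-(12n+6)}$ that is small when $\delta$ is small. First I would fix $\delta$ small enough that $e^{t/2} (1-\delta)^{-12} < 1$ wait, more carefully: I want the rewiring cost over scale $n$, namely $(1-\delta)^{-(12n+6)}$, to be beaten by the probability $e^{-tn}$ of the \emph{absence} of a dual circuit. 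So I would choose $\delta = \delta(t) \in (0,1)$ small enough that $(1-\delta)^{-12} < e^{t/2}$, i.e.\ $-12\log(1-\delta) < t/2$. Then, applying Lemma~\ref{comparison} to the decreasing event $A = (\mathcal{C}^*_{n/2})^c$ (which depends only on edges and vertices in $\Lambda_n$), we get
\begin{equs}
\varphi^{1,\delta}_{\Lambda_n,p_c,a}[(\mathcal{C}^*_{n/2})^c] \leq (1-\delta)^{-(12n+6)} \varphi^0_{\Lambda_n,p_c,a}[(\mathcal{C}^*_{n/2})^c] \leq (1-\delta)^{-(12n+6)} e^{-tn/2} \to 0
\end{equs}
as $n\to\infty$. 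Hence $\varphi^{1,\delta}_{\Lambda_n,p_c,a}[\mathcal{C}^*_{n/2}] \to 1$; the presence of an open dual circuit around the origin forces $\{0 \conn \partial \Lambda_{n/2}\}$ to fail, so $\varphi^{1,\delta}_{\Lambda_n,p_c,a}[0 \conn \partial \Lambda_{n/2}] \to 0$, and thus $\varphi^{1,\delta}_{\Lambda_n,p_c,a}[\psi_0 = 1, \text{ and } 0 \text{ is in a large cluster}] \to 0$. Translating back through the Edwards--Sokal coupling for $\varepsilon$-boundary conditions (the analogue of Corollary~\ref{cor: ES correlations}), this says $\mu^{+,\varepsilon}_{\Lambda_n,\beta_c(\Delta),\Delta}[\sigma_0 \mid \sigma_0 \neq 0] \to 0$ as $n\to\infty$ (since $\langle \sigma_0 \rangle^{+,\varepsilon}_{\Lambda_n}$, conditioned on $\sigma_0 \neq 0$, equals the probability that $0$'s cluster reaches the boundary, divided by... more precisely it is comparable to a connection probability that is being killed by the dual circuit).

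But by {\bf (H)}, the left-hand side converges to $\mu^+_{\beta_c(\Delta),\Delta}[\sigma_0 \mid \sigma_0 \neq 0]$, which under the assumption $\langle\sigma_0\rangle^+ > 0$ is strictly positive (it equals $\langle\sigma_0\rangle^+ / \langle \sigma_0^2\rangle^+ > 0$, using that $\langle\sigma_0^2\rangle^+ \in (0,1]$). This is the desired contradiction. The main obstacle I anticipate is the bookkeeping around the conditional expectation $\mu^{+,\varepsilon}_{\Lambda_n}[\sigma_0 \mid \sigma_0 \neq 0]$: one must verify carefully, via the $\varepsilon$-boundary Edwards--Sokal coupling, that this quantity is controlled by $\varphi^{1,\delta}_{\Lambda_n,p_c,a}[0 \conn \partial\Lambda_n \mid \psi_0 = 1]$ (or an analogous connection-to-boundary probability on the scale $n$), so that the dual-circuit estimate genuinely drives it to zero; one also needs that conditioning on $\sigma_0\neq 0$, equivalently on $\psi_0 = 1$, does not destroy the circuit estimate, which follows from finite energy (Proposition on finite energy) or from the fact that $\{\psi_0 = 1\}$ has probability bounded away from $0$ uniformly in $n$. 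A secondary point is to make sure Lemma~\ref{comparison} is applied with the correct identification $\delta = 1 - e^{-2\varepsilon}$ and that the event $(\mathcal{C}^*_{n/2})^c$ (or $\{0 \conn \partial \Lambda_{n/2}\}$) indeed depends only on $(\Lambda_n, b(\Lambda_n))$, which is clear for $n$ large since $\Lambda_{n/2} \subset \Lambda_n$.
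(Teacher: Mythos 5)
Your proposal is correct and follows essentially the same route as the paper: contradiction via Proposition~\ref{correspondence} to get {\bf (DiscontCrit)}, the dual-circuit estimate of Lemma~\ref{dual-circuit}, the exponential rewiring bound of Lemma~\ref{comparison} with $\delta$ chosen so that $(1-\delta)^{-12}<e^{t/2}$, the Edwards--Sokal identity $\mu^{+,\varepsilon}_{\Lambda_n}[\sigma_0\mid\sigma_0\neq 0]=\varphi^{1,\delta}_{\Lambda_n}[0\conn\Lambda_n^c\mid\psi_0=1]$, and finally {\bf (H)}. The only slip is cosmetic: the dual circuit in the annulus blocks $\{0\conn\Lambda_n^c\}$ rather than $\{0\conn\partial\Lambda_{n/2}\}$, and the conditioning on $\psi_0=1$ is absorbed exactly as you anticipate, since $\varphi^0_{\Lambda_n}[\psi_0=1]$ is bounded below uniformly in $n$.
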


\begin{proof}
Let $(\beta_c(\Delta), \Delta)$ be such that {\bf (H)} is satisfied and assume that  we have $\langle \sigma_0 \rangle^+_{\beta_c(\Delta),\Delta}>0$. By Proposition~\ref{correspondence}, {\bf (DiscontCrit)} occurs at $p=p_c(a)$ for $a=\frac{e^{\Delta}}{1+e^{\Delta}}$.
Let $t$ be the constant of Lemma~\ref{dual-circuit}. Then, by Lemma~\ref{comparison} and finite-energy, there exist $\delta,C>0$ such that
\begin{equs}
\varphi^{1,\delta}_{\Lambda_{2n}}[(\mathcal{C}^*_n)^c \mid \psi_0=1]
\leq 
C\varphi^{1,\delta}_{\Lambda_{2n}}[(\mathcal{C}^*_n)^c]
\leq 
Ce^{tn/2} \varphi^0_{\Lambda_{2n}}[(\mathcal{C}^*_n)^c] 
\leq 
Ce^{-tn/2}
\end{equs}
for every $n\geq 1$. 
By the Edwards-Sokal coupling,
\begin{equs}
\mu^{+,\varepsilon}_{\Lambda_{2n}}[\sigma_0 \mid \sigma_0\neq 0]=\varphi^{1,\delta}_{\Lambda_{2n}}[0\longleftrightarrow \Lambda_{2n}^c \mid \psi_0=1] \leq \varphi^{1,\delta}_{\Lambda_{2n}}[(\mathcal{C}^*_n)^c \mid \psi_0=1]\leq Ce^{-tn/2},
\end{equs}
where $\varepsilon=-\frac{1}{2}\log(1-\delta)$. 
Letting $n$ go to infinity and using Lemma~\ref{weak+limit} we obtain 
$\mu^{+}[\sigma_0 \mid \sigma_0\neq 0]=0$,
hence $\mu^+[\sigma_0]=0$, from which we obtain a contradiction.
\end{proof}

\subsection{ $\Delta^+ = -\log 4$ via Lee-Yang theory}

In order to prove Theorem \ref{thm: existence} in two dimensions with $\Delta^+=-\log{4}$, we need the following general statement of Lee-Yang type on the complex zeros of partition functions, which works for any finite graph. 
\newcommand{\Ising}{\operatorname{Ising}}
\newcommand{\BC}{\operatorname{BC}}

Let us first define the Blume-Capel model with arbitrary magnetic field. Given a finite graph $G=(V, E)$ and a \emph{real} magnetic field $h:V\rightarrow\mathbb{R}$, we denote by $\mu_{G,\beta,\Delta}^{h}$ the probability measure defined on spin configurations $\sigma \in \{-1,0,1\}^{V}$ by
\begin{equs}
\mu_{G,\beta,\Delta}^{h}(\sigma)
=
\frac{1}{Z_{G,\beta,\Delta}^{h}} e^{-H_{G,\beta,\Delta}^{h}(\sigma)}	\end{equs}
where
\begin{equs}
H_{G,\beta,\Delta}^{h}(\sigma)
=
-\beta\sum_{xy \in E} \sigma_x\sigma_y -\Delta\sum_{x \in V} \sigma_x^2 -  \sum_{x \in V} h_x\sigma_x.	
\end{equs}
and $Z_{\Lambda,\beta,\Delta}^{h}$ is the partition function. 

One can extend the definition of $\mu_{G,\beta,\Delta}^{h}$ to a \emph{complex} magnetic field $h:V\rightarrow\mathbb{C}$, which is in general a complex measure, as long as the partition function $Z_{G,\beta, \Delta}^h$ does not vanish. Our aim is to study the complex zeros of $Z_{G,\beta, \Delta}^h$ and, more generally, the complex zeros of partition functions of events. Given an event $\cE$, we define $Z^{h}_{G,\beta,\Delta}[\cE]$ as
\[
Z_{G,\beta,\Delta}^h[\cE]:=\sum_{\sigma\in \cE} \exp{\left\{\sum_{xy\in E} \beta\sigma_x\sigma_y +\Delta\sum_{x\in V} \sigma^2+\sum_{x\in V} h_x\sigma_x\right\}}.
\]
In the following lemma we study the zeros of the partition function $Z_G^h[\sigma_A\neq 0]$ with complex magnetic field $h$. Here, $A \subset V$ and $\sigma_A = \prod_{x\in A}\sigma_x$. The proof is based on an adaptation of \cite{dunlop1977zeros}.

\begin{lem}\label{lem:Lee-Yang}
Let $\beta>0$ and $\Delta\geq -\log4$. Consider a finite graph $G=(V, E)$ and let $A\subset V$. Let $h:V\rightarrow\mathbb{C}$ be a magnetic field such that for each $x\in V$, we have $\Re{(h_x)}\geq |\Im{(h_x)}|$. Then $Z^{h}_{G,\beta,\Delta}[\sigma_A\neq 0]\neq 0$.
\end{lem}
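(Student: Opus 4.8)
The plan is to place $Z^{h}_{G,\beta,\Delta}[\sigma_A\neq 0]$ inside the Lee--Yang framework for ferromagnetic pair interactions, following the strategy of \cite{dunlop1977zeros}. Expanding the definition of $Z^{h}_{G,\beta,\Delta}[\sigma_A\neq 0]$ over spin configurations,
\[
Z^{h}_{G,\beta,\Delta}[\sigma_A\neq 0]=\sum_{\sigma:\ \sigma_A\neq 0}\ \prod_{xy\in E}e^{\beta\sigma_x\sigma_y}\ \prod_{x\in V}e^{\Delta\sigma_x^{2}}e^{h_x\sigma_x},
\]
which is a system of ferromagnetic pair interactions ($\beta>0$) whose single-site weight at a vertex $x$, viewed as a function of $h_x$, equals $g_A(h):=2e^{\Delta}\cosh h$ when $x\in A$ (the $\pm 1$ Ising single-site function) and $g(h):=2e^{\Delta}\cosh h+1$ when $x\notin A$. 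So the proof reduces to two ingredients: (i) locating the complex zeros of $g$ and $g_A$; and (ii) a propagation statement transferring zero-freeness from the single sites to the full partition function through the ferromagnetic couplings.

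For (i) one computes directly. The zeros of $g_A$ lie on $i(\tfrac{\pi}{2}+\pi\ZZ)$, hence outside the closed sector $\Sigma_{\pi/4}:=\{h\in\CC:\ \Re h\ge|\Im h|\}$. For $g$, $g(h)=0$ forces $\cosh h=-(2e^{\Delta})^{-1}$; writing $h=a+ib$ this is $\sinh a\sin b=0$ and $\cosh a\cos b=-(2e^{\Delta})^{-1}$, so either $a=0$ and the zeros are purely imaginary---which occurs when $(2e^{\Delta})^{-1}\le 1$, i.e.\ $\Delta\ge-\log 2$---or $b\in\pi+2\pi\ZZ$ and the zeros are $\pm\operatorname{arccosh}\!\big((2e^{\Delta})^{-1}\big)+i\pi\pmod{2\pi i}$, which occurs when $-\log 4\le\Delta<-\log 2$. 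In the second regime the zero of smallest argument is $\operatorname{arccosh}\!\big((2e^{\Delta})^{-1}\big)+i\pi$, whose argument is $\arctan\!\big(\pi/\operatorname{arccosh}((2e^{\Delta})^{-1})\big)$; since $\operatorname{arccosh}\!\big((2e^{\Delta})^{-1}\big)\le\operatorname{arccosh}(2)=\log(2+\sqrt3)<\pi$ for $\Delta\ge-\log 4$, this argument is bounded below by some fixed $\theta_0>\tfrac{\pi}{4}$. Hence for $\Delta\ge-\log 4$ both $g$ and $g_A$ are zero-free on the sector $\Sigma_{\theta_0}:=\{|\arg h|<\theta_0\}\supsetneq\Sigma_{\pi/4}$, with a quantitative margin over $\Sigma_{\pi/4}$.

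For (ii) I would invoke the sector version of the classical ferromagnetic Lee--Yang argument (Asano contraction / Lieb--Sokal, as adapted by Dunlop): write $Z^{h}_{G,\beta,\Delta}[\sigma_A\neq 0]$ as an iterated Asano contraction of the single-site polynomials along the edges of $G$, using that each gluing of a ferromagnetic bond ($c=\beta\ge 0$) sends a polynomial nonvanishing on a product of sectors $\Sigma_{\theta}\times\Sigma_{\theta}$ (with $\tfrac{\pi}{4}<\theta\le\tfrac{\pi}{2}$) to one nonvanishing on a product of only slightly smaller sectors, while merging the copies of a given vertex uses only that $\Sigma_{\theta}$ is a convex cone (closed under addition of neighbour fields and positive rescaling) for $\theta\le\tfrac{\pi}{2}$. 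The threshold $\Delta\ge-\log 4$ is precisely what makes the single-site sector $\Sigma_{\theta_0}$ large enough that after this controlled degradation across all the bonds the surviving sector still contains $\Sigma_{\pi/4}$. The main obstacle is exactly this quantitative bookkeeping---controlling how much the opening angle shrinks under a single Asano contraction and checking that the bound is uniform over the graph---which is the heart of the adaptation of \cite{dunlop1977zeros}.

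As a self-contained alternative that stays inside this paper's toolbox, one may first apply the combinatorial mapping of Section~\ref{sec: mapping bc-ising}, extended to include a magnetic field (a routine modification of Lemma~\ref{lem: comb mapping}): up to a positive multiplicative constant, $Z^{h}_{G,\beta,\Delta}[\sigma_A\neq 0]$ equals the partition function of the Ising model on $\ell(G)$ with couplings $\beta/4$ on $E_1$ and $(\Delta+\log 2)/2\ge-(\log 2)/2$ on $E_2$, magnetic field $h_x/2$ at both $(x,0)$ and $(x,1)$, restricted to $\{\tau^{0}_x=\tau^{1}_x:\ x\in A\}$. Viewing each pair $(\tau^{0}_x,\tau^{1}_x)$ as a single super-site carrying the two-component measure proportional to $e^{(\Delta+\log 2)\tau^{0}_x\tau^{1}_x/2}$ (with the equality constraint when $x\in A$), whose generating function along the diagonal field $(h_x/2,h_x/2)$ is proportional to $2e^{\Delta}\cosh h_x+1$, one then runs a multicomponent ferromagnetic Lee--Yang argument with single-site input governed by the same function $g$; the constraint $\Delta\ge-\log 4$ is again exactly what keeps the zeros of this super-site generating function out of $\Sigma_{\pi/4}$. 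That $-\log 4$ is the mean-field tricritical value (cf.\ the Remark above) is consistent with its being the sharp threshold for this argument.
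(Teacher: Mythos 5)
There is a genuine gap. You correctly identify the single-site weights $g(h)=2e^\Delta\cosh h+1$ (for $x\notin A$) and $g_A(h)=2e^\Delta\cosh h$ (for $x\in A$), and you correctly locate their zeros and note that for $\Delta\geq-\log 4$ they lie outside $\Sigma_{\pi/4}$ with a margin $\theta_0>\pi/4$. But the part of the argument that matters---transferring zero-freeness from the single sites to the full partition function through the ferromagnetic couplings---is simply asserted. The ``sector version'' of the Asano contraction / Lieb--Sokal theorem you invoke, which would take a multi-affine polynomial nonvanishing on a product of sectors $\Sigma_\theta\times\Sigma_\theta$ to one nonvanishing on slightly smaller sectors, is not a standard result and you do not prove it; you explicitly flag the required quantitative bookkeeping as ``the main obstacle.'' The classical Lee--Yang machinery in the form of Newman or Lieb--Sokal requires the single-spin generating function to be zero-free on the open right half-plane (equivalently, zeros on the imaginary axis), and this is exactly what fails in the interesting range $-\log 4\leq\Delta<-\log 2$, where $g$ has zeros at $\pm\operatorname{arccosh}(\tfrac12 e^{-\Delta})+i\pi+2\pi i\mathbb{Z}$ with nonzero real part. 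The same problem appears verbatim in your proposed alternative via the Ising lift on $\ell(G)$: for $\Delta<-\log 2$ the $E_2$-couplings are antiferromagnetic, and grouping $(x,0),(x,1)$ into a super-site yields a single-super-site generating function proportional to the same $g$, so one is again stuck needing a sector-propagation lemma that does not exist off the shelf. In short, the proposal is a plan that reduces to an unproven key lemma, not a proof.

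The paper's actual proof, following Dunlop, avoids the issue entirely: it writes $|Z^h_{G,\beta,\Delta}[\sigma_A\neq 0]|^2$ as a sum over pairs $(\sigma,\sigma')$, substitutes $\nu_x=e^{-i\pi/4}(\sigma_x+i\sigma'_x)$, expands the edge and field exponentials in power series so that all coefficients are non-negative (this is where $\Re(h_x)\geq|\Im(h_x)|$ is used), and reduces the claim to checking that the explicit single-site moment sums $\sum_{\nu\in S}\nu^{n}\overline{\nu}^{n'}e^{\Delta|\nu|^2}$ over $S=S_{\mathrm{Ising}}$ or $S=S_{\mathrm{BC}}$ are non-negative; this final check is precisely where the threshold $\Delta\geq-\log 4$ enters. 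No zero-location of $g$, $g_A$ and no contraction lemma is needed. If you wanted to pursue your route, you would first have to prove the sector-preservation lemma, and I do not believe it is true in the generality you need.
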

\begin{proof}
Let $\cE=\{\sigma_A\neq 0\}$. We first express $\left|Z_{G,\beta,\Delta}^h[\cE]\right|^2$ as a sum over pairs of configurations:

\begin{equs}
\left|Z_{G,\beta,\Delta}^h[\cE]\right|^2=\sum_{\sigma, \sigma'\in \cE}\exp\left\{\beta \sum_{xy\in E}(\sigma_x\sigma_y + \sigma'_x\sigma'_y) + \Delta \sum_{x\in V} (\sigma_x^2+{\sigma'_x}^2) +\sum_{x\in V} (h_x\sigma_x+\overline{h_x}\sigma'_x)\right\}.
\end{equs}
We now express our duplicated system as follows.
For each vertex $x$ we define $\nu_x:=e^{-i\pi/4}\cdot(\sigma_x+i\sigma'_x)\in \mathbb{C}$. We then have 
\[
\sigma_x\sigma_y+\sigma'_x\sigma'_y = \frac{ (\sigma_x+i\sigma'_x)(\sigma_y-i\sigma'_y)+(\sigma_x-i\sigma'_x)(\sigma_y+i\sigma'_y)}{2}= \frac{\nu_x\overline{\nu_y}+\overline{\nu_x}\nu_y}{2},
\]
\[\sigma_x^2+{\sigma'_x}^2 = \left| \nu_x \right|^2,\]
and 
\begin{align*}
h_x\sigma_x+\overline{h_x}\sigma'_x &= \Re(h_x)(\sigma_x+\sigma'_x)+i\Im(h_x)(\sigma_x-\sigma'_x)= 
\Re(h_x)\frac{\nu_x+\overline{\nu_x}}{\sqrt{2}}+\Im(h_x)\frac{\overline{\nu_x}-\nu_x}{\sqrt{2}}
\\& =
\nu_x\frac{\Re(h_x)-\Im(h_x)}{\sqrt{2}}+\overline{\nu_x}\frac{\Re(h_x)+\Im(h_x)}{\sqrt{2}}.
\end{align*}

Putting this together we obtain
\begin{align*}
    \left|Z_{G,\beta,\Delta}^h[\cE]\right|^2 &= \sum_{\nu\in S^{V\setminus A}_{\textup{BC}}\times S_{\ising}^A}   \left(\prod_{xy\in E} \exp\left\{\beta \nu_x\overline{\nu_y}/2\right\}\cdot\exp\left\{\beta \overline{\nu_x}\nu_y/2\right\}
    \right)
    \cdot
    \left(\prod_{x\in V} e^{\Delta |\nu_x|^2}\right)
    \\&
    \cdot\left(\prod_{x\in V} \exp\left\{\nu_x\frac{\Re(h_x)-\Im(h_x)}{\sqrt{2}}\right\}\cdot\exp\left\{\overline{\nu_x}\frac{\Re(h_x)+\Im(h_x)}{\sqrt{2}}\right\} \right),
\end{align*}
where the sets $S_{\Ising}$ and $S_{\BC}$ are given by 
\[
S_{\Ising}:=\sqrt{2}\cdot \{1, i, -1, -i\},
\qquad
S_{\BC}:=\tfrac{1}{\sqrt{2}}\cdot \{0, 2, 2i, -2, -2i, 1+i, -1+i, -1-i, 1-i\}.
\]
Now, since both $\Re{(h_x)}\pm \Im{(h_x)}$ are non-negative, when we expand all exponentials, except for $e^{\Delta|\nu_x|^2}$, as $e^s=\sum s^k/k!$, and exchange summations, we get 
\begin{equation}\label{eq:partition-squared-expanded}
    \left|Z_{G,\beta,\Delta}^h[\cE]\right|^2 = \sum_{n, n':V\rightarrow \mathbb{Z}_{\geq 0}} f(n, n')\cdot \prod_{x\in V} \left(\sum_{\nu_x} \nu_x^{n_x}\overline{\nu_x}^{n'_x} e^{\Delta |\nu_x|^2}\right),
\end{equation}
where $\nu_x$ ranges over $S_{\Ising}$ for $x\in A$ and over $S_{\BC}$ otherwise, and all factors $f(n, n')$ are non-negative with $f(\mathbf{0}, \mathbf{0})=1$. We have that for $x\in A$, the sum is equal to 
\begin{equs}
\sum_{\nu_x} \nu_x^{n_x}\overline{\nu_x}^{n'_x} e^{\Delta |\nu_x|^2} =  4\cdot \sqrt{2}^{n_x+n'_x}\cdot e^{2\Delta}\1_{\{4|n_x-n'_x\}},
\end{equs}
whereas for $x\in V\setminus A$ we have 
\begin{equs}
\sum_{\nu_x} \nu_x^{n_x}\overline{\nu_x}^{n'_x} e^{\Delta |\nu_x|^2} 
&= 
(4e^{2\Delta} + 4e^{\Delta}+1)\1_{\{n_x=0,n'_x=0\}}
\\
& \qquad \qquad +  \1_{\{4|n_x-n'_x, (n_x,n_x')\neq (0,0)\}}\cdot \left[ 4\cdot \sqrt{2}^{n_x+n'_x}\cdot e^{2\Delta}+4e^\Delta\cdot (-1)^{\frac{n_x-n'_x}{4}}\right].    
\end{equs}

Since $n_x-n'_x\equiv 4\mod 8$ implies $n_x+n'_x\geq 4$, all these sums are strictly positive if $\Delta \geq -\log{4}$, except when $n_x+n'_x=4$ and $\Delta=-\log 4$, in which case the sum is equal to $0$. This means that all summands on the right hand side of \eqref{eq:partition-squared-expanded} are non-negative. Since $f(\mathbf{0},\mathbf{0})=1$, the contribution from $(n,n'):=(\mathbf{0},\mathbf{0})$ is strictly positive, hence $Z_{G,\beta,\Delta}^h[\cE]\not=0$.
\end{proof}

In what follows, we condition on $\sigma_0\neq 0$ and show that property $({\bf H})$ holds at $(\beta_c(\Delta),\Delta)$ for every $\Delta\geq -\log{4}$, using the above lemma. 
 
\begin{prop}\label{weak+limit}
Let $\beta>0$ and $\Delta\geq -\log{4}$. Then {\bf (H)} is satisfied, i.e.
\begin{equs}
\lim_{n\to\infty} \mu^{+,\varepsilon}_{\Lambda_n,\beta,\Delta} \left[\sigma_0 \mid \sigma_0\neq 0\right] 
=
\mu^+_{\beta,\Delta} \left[\sigma_0 \mid \sigma_0\neq 0\right], \qquad \forall \epsilon > 0.
\end{equs}
\end{prop}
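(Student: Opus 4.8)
The plan is to turn on a uniform external magnetic field $h\geq 0$, reduce {\bf (H)} to the statement that the model has a unique limiting behaviour whenever $h>0$ — which is precisely where Lemma~\ref{lem:Lee-Yang} has teeth — establish that uniqueness via analyticity of the pressure in $h$, and recover the borderline case $h=0$ by a monotone limit. Concretely, let $g(\varepsilon,h,n):=\mu^{+,\varepsilon}_{\Lambda_n,\beta,\Delta}[\sigma_0\mid\sigma_0\neq 0]$ computed in the presence of an extra uniform field $h$; equivalently, $g(\varepsilon,h,n)$ is $[\sigma_0\mid\sigma_0\neq 0]$ under the magnetic‑field measure $\mu^{h'}_{\Lambda_n,\beta,\Delta}$ with $h'_x=h+\varepsilon\,|\{y\in\ZZ^d\setminus\Lambda_n:xy\in\EE^d\}|$. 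By the monotonicity input preceding the proposition (GKS and FKG, applied here to the measure conditioned on $\{\sigma_0\neq 0\}$), $g$ is non‑decreasing in $\varepsilon$ and in $h$ and non‑increasing in $n$, and $g(\beta,0,n)=\mu^+_{\Lambda_n,\beta,\Delta}[\sigma_0\mid\sigma_0\neq 0]$. Setting $m(\varepsilon;h):=\lim_n g(\varepsilon,h,n)=\inf_n g(\varepsilon,h,n)$, continuity and monotonicity of $g$ in $h$ give $g(\varepsilon,0,n)=\inf_{h>0}g(\varepsilon,h,n)$, whence
\begin{equs}
m(\varepsilon;0)=\inf_n\inf_{h>0}g(\varepsilon,h,n)=\inf_{h>0}m(\varepsilon;h).
\end{equs}
Therefore it suffices to show that, for each fixed $h>0$, $m(\varepsilon;h)$ does not depend on $\varepsilon$: then $m(\varepsilon;0)$ is $\varepsilon$‑independent, and specialising to $\varepsilon=\beta$ identifies its value with $\lim_n\mu^+_{\Lambda_n,\beta,\Delta}[\sigma_0\mid\sigma_0\neq 0]=\mu^+_{\beta,\Delta}[\sigma_0\mid\sigma_0\neq 0]$, i.e.\ with {\bf (H)}.

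Fix $h>0$. Since $h'_x=h+\varepsilon d_x$ with $d_x\geq 0$ real, Lemma~\ref{lem:Lee-Yang} with $A=\emptyset$ shows that the partition functions $Z^{\mathrm{bc},h}_{\Lambda_n,\beta,\Delta}$ — for the $0$, $+\varepsilon$ and $+$ boundary conditions alike — do not vanish for $h$ in the open cone $\mathcal C=\{z\in\CC:\Re z>|\Im z|\}\supset(0,\infty)$, so the finite‑volume pressures $\psi^{\mathrm{bc}}_n(h):=|\Lambda_n|^{-1}\log Z^{\mathrm{bc},h}_{\Lambda_n,\beta,\Delta}$ are holomorphic on $\mathcal C$. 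Because $\Re\psi^{\mathrm{bc}}_n=|\Lambda_n|^{-1}\log|Z^{\mathrm{bc},h}_{\Lambda_n,\beta,\Delta}|$ is bounded above on compacts of $\mathcal C$ (trivially) and $\psi^{\mathrm{bc}}_n$ at $h=1$ lies in $[0,C]$ (the all‑zero configuration forces $Z\geq 1$), the Borel--Carath\'eodory theorem (plus a chaining argument along $\mathcal C$) yields local uniform boundedness of $(\psi^{\mathrm{bc}}_n)_n$ on $\mathcal C$. For real $h>0$ the limit $\psi(h)=\lim_n\psi^{\mathrm{bc}}_n(h)$ exists and is boundary‑condition‑independent (the $\varepsilon$‑term and all boundary interactions are $O(|\partial\Lambda_n|)=o(|\Lambda_n|)$), so by Vitali's theorem $\psi^{\mathrm{bc}}_n\to\psi$ locally uniformly on $\mathcal C$ and $\psi$ is real‑analytic on $(0,\infty)$. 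Differentiating, $\partial_h\psi^{\mathrm{bc}}_n(h)=|\Lambda_n|^{-1}\sum_{x\in\Lambda_n}\langle\sigma_x\rangle^{\mathrm{bc},h}_{\Lambda_n,\beta,\Delta}\to\psi'(h)$, and a standard Ces\`aro argument using domain monotonicity and translation invariance identifies the limit of this average with the infinite‑volume one‑point function; hence $\langle\sigma_0\rangle^{0,h}_{\beta,\Delta}=\psi'(h)=\langle\sigma_0\rangle^{+,h}_{\beta,\Delta}$ for every $h>0$. Since the infinite‑volume measures $\mu^{0,h}_{\beta,\Delta}$ and $\mu^{+,h}_{\beta,\Delta}$ are translation invariant, satisfy $\mu^{0,h}_{\beta,\Delta}\preceq\mu^{+,h}_{\beta,\Delta}$, and agree on every one‑point function, any monotone (Strassen) coupling of them is concentrated on the diagonal, whence $\mu^{0,h}_{\beta,\Delta}=\mu^{+,h}_{\beta,\Delta}$. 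As each $\mu^{+,\varepsilon,h}_{\beta,\Delta}:=\lim_n\mu^{h'}_{\Lambda_n,\beta,\Delta}$ is a Gibbs measure for the field‑$h$ specification and satisfies $\mu^{0,h}_{\beta,\Delta}\preceq\mu^{+,\varepsilon,h}_{\beta,\Delta}\preceq\mu^{+,h}_{\beta,\Delta}$, it coincides with $\mu^{+,h}_{\beta,\Delta}$; in particular $m(\varepsilon;h)=\mu^{+,h}_{\beta,\Delta}[\sigma_0\mid\sigma_0\neq 0]$ is independent of $\varepsilon$, as needed.

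The conceptually decisive step is the reduction in the first paragraph: {\bf (H)} is a statement at $h=0$, which lies on the \emph{boundary} of the Lee--Yang non‑vanishing region, so it must be transported to $h>0$, where Lemma~\ref{lem:Lee-Yang} controls a genuine open set and the classical Lee--Yang route to uniqueness of the Gibbs state applies. Beyond that, the only mildly delicate points are the interchange of the limits/infima over $n$ and $h$ (which is exactly why the monotonicity of the conditional magnetization in $h$ — again FKG for the measure conditioned on $\{\sigma_0\neq 0\}$ — is needed) and the local uniform boundedness of the pressures, handled above by Borel--Carath\'eodory; both are routine.
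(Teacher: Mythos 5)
Your proposal takes a genuinely different route from the paper, but it contains a gap at the exact step that the paper's argument is designed to handle. The paper does not use the pressure at all: it works directly with the finite-volume conditional ratio $W^{h}_{\Lambda_n}=Z^{+,h}_{\Lambda_n}[\sigma_0=-1]/Z^{+,h}_{\Lambda_n}[\sigma_0=1]$, uses Lemma~\ref{lem:Lee-Yang} with $A=\{0\}$ and a second auxiliary field $\eta$ at the origin to obtain the uniform bound $|W^h_{\Lambda_n}|\le e^{2\pi}$ on the cone, and then applies Vitali to the sequence $(W^h_{\Lambda_n})_n$ directly: the known convergence for real $\varepsilon\ge\beta$ propagates to all $h$ in the cone, so the $n\to\infty$ limit is identified for every real $\varepsilon>0$ without ever passing $h\to 0$. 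This way the $n$-limit and the analytic continuation in $h$ are never interchanged.

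The gap in your argument is precisely the interchange you dismiss as routine. You claim $g(\varepsilon,h,n)$ is non-increasing in $n$ for all $\varepsilon>0$, and you need this twice: to write $m(\varepsilon;h)=\inf_n g(\varepsilon,h,n)$ and to justify $m(\varepsilon;0)=\inf_n\inf_{h>0}g=\inf_{h>0}\inf_n g=\inf_{h>0}m(\varepsilon;h)$. But the domain monotonicity of the $\varepsilon$-boundary measure (conditioned on $\sigma_0\neq 0$) only follows from the paper's GKS/FKG input for $\varepsilon\ge\beta$: the restriction of $\mu^{+,\varepsilon,h}_{\Lambda_n}$ to $\Lambda_{n-1}$ is a mixture of measures with boundary spins $\eta\in\{-1,0,1\}^{\partial\Lambda_{n-1}}$ interacting with strength $\beta$, and for $\varepsilon<\beta$ the case $\eta\equiv 1$ gives a measure that dominates $\mu^{+,\varepsilon,h}_{\Lambda_{n-1}}$ rather than being dominated by it, so the comparison fails. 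Without monotonicity in $n$ for small $\varepsilon$, your argument only yields the easy inequality $\limsup_n g(\varepsilon,0,n)\le\inf_{h>0}M(h)=\mu^+[\sigma_0\mid\sigma_0\ne 0]$ (via $g(\varepsilon,0,n)\le g(\varepsilon,h,n)$), and the lower bound $\liminf_n g(\varepsilon,0,n)\ge\mu^+[\sigma_0\mid\sigma_0\ne 0]$ is exactly the non-trivial content of {\bf (H)}: a priori the $+\varepsilon$-boundary state could converge to $\mu^0$, whose conditional magnetization is $0$, in the discontinuous regime. Your pressure/Lee--Yang analysis for fixed $h>0$ is sound and does give uniqueness of the field-$h$ Gibbs state, but uniqueness at $h>0$ cannot by itself control the $h\to 0^+$ limit of the $\varepsilon$-boundary conditional magnetization without some uniformity in $n$ near $h=0$, which is exactly what the paper's Vitali-on-$W^h_{\Lambda_n}$ argument provides and what your proof does not establish.
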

\begin{proof}
Note that for every $\varepsilon\geq \beta$,  $\mu^{+,\varepsilon}_{\Lambda_n}\left[\cdot \mid \sigma_0\neq 0\right]$ stochastically dominates $\mu^{+}\left[\cdot \mid \sigma_0\neq 0\right]_{\Lambda_n}$. Here, we use the monotonicity of the conditional measure in $\varepsilon$ and $G$, which follows the GKS and FKG inequalities (derived for the conditional measures themselves), respectively. On the other hand, the restriction of $\mu^{+,\varepsilon}_{\Lambda_n}\left[\cdot \mid \sigma_0\neq 0\right]$ on $\Lambda_{n-1}$ is stochastically dominated by $\mu^+_{\Lambda_{n-1}}\left[\cdot \mid \sigma_0\neq 0\right]$.
This implies that
\begin{equs}
\lim_{n\to \infty} \mu^{+,\varepsilon}_{\Lambda_n,\beta,\Delta} \left[\sigma_0 \mid \sigma_0\neq 0\right]= \mu^+_{\beta,\Delta} \left[\sigma_0 \mid \sigma_0\neq 0\right]
\end{equs}
for every $\varepsilon\geq \beta$. Hence it suffices to show that $\mu^{+,\varepsilon}_{\Lambda_n,\beta,\Delta} \left[\sigma_0 \mid \sigma_0\neq 0\right] $ converges to an analytic function of $\varepsilon>0$.

Note that 
\begin{equs}\label{w-expansion}
\mu^{+,\varepsilon}_{\Lambda_n,\beta,\Delta} \left[\sigma_0 \mid \sigma_0\neq 0\right]
= 
\frac{Z^{+,\varepsilon}_{\Lambda_n,\beta,\Delta}[\sigma_0=1]-Z^{+,\varepsilon}_{\Lambda_n,\beta,\Delta}[\sigma=-1]}{Z^{+,\varepsilon}_{\Lambda_n,\beta,\Delta}[\sigma_0=1]+Z^{+,\varepsilon}_{\Lambda_n,\beta,\Delta}[\sigma=-1]}
=
\frac{1-W^{\varepsilon}_{\Lambda_n,\beta,\Delta}}{1+W^{\varepsilon}_{\Lambda_n,\beta,\Delta}},
\end{equs}
where 
\begin{equs}
W^{\varepsilon}_{\Lambda_n,\beta,\Delta}=\frac{Z^{+,\varepsilon}_{\Lambda_n,\beta,\Delta}[\sigma_0=-1]}{Z^{+,\varepsilon}_{\Lambda_n,\beta,\Delta}[\sigma_0=1]}.
\end{equs}
For $\varepsilon\geq \beta$, $W^{\varepsilon}_{\Lambda_n,\beta,\Delta}$ converges to a constant, i.e.
\begin{equs}\label{w-limit}
\lim_{n\to\infty} W^{\varepsilon}_{\Lambda_n,\beta,\Delta}= \frac{1-\mu^+_{\beta,\Delta} \left[\sigma_0 \mid \sigma_0\neq 0\right]}{1+\mu^+_{\beta,\Delta} \left[\sigma_0 \mid \sigma_0\neq 0\right]}.
\end{equs}

We wish to show that $W^{\varepsilon}_{\Lambda',\beta,\Delta}$ remains bounded for $\varepsilon$ in a complex neighbourhood of $(0,\infty)$. For $h,\eta$ with $\Re(h)> |\Im(h)|$ and $\Re(\eta)> |\Im(\eta)|$, let $\mathbf{h}:\Lambda_n\to \mathbb{C}$ be the function which is defined by
\begin{equs}
\mathbf{h}_x=\eta \1_{x=0}+\sum_{\substack{y\in \ZZ^2\setminus \Lambda_n \\ y\sim x}}h\1_{x\neq 0}.
\end{equs}
Then we can write
\begin{equs}
Z^{\mathbf{h}}_{\Lambda_n,\beta,\Delta}[\sigma_0\neq 0]=e^{\eta} Z^{+,h}_{\Lambda_n,\beta,\Delta}[\sigma_0=1]+e^{-\eta} Z^{+,h}_{\Lambda_n,\beta,\Delta}[\sigma_0=-1]
\end{equs}
where we recall the definitions of the partition functions of the righthand side from Subsection \ref{subsec: sufficient criterion continuity}.
Note that $Z^{\mathbf{h}}_{\Lambda_n,\beta,\Delta}[\sigma_0\neq 0]\neq 0$ and $Z^{\mathbf{h}}_{\Lambda_n,\beta,\Delta}[\sigma_0=1]\neq 0$ by Lemma~\ref{lem:Lee-Yang}, where the latter holds because $Z^{\mathbf{h}}_{\Lambda_n,\beta,\Delta}[\sigma_0=1]$ coincides with a partition function on $\Lambda_n\setminus\{0\}$ with complex magnetic field still satisfying the assumptions of the lemma. It follows that $W^{h}_{\Lambda_n,\beta,\Delta}\neq -e^{2\eta}$.
Since for any $z\in \mathbb{C}$ with $|z|> e^{2\pi}$ there is some $\eta$ with $\Re(\eta)> |\Im(\eta)|$ such that $z=-e^{2\eta}$, we can conclude that
$\left\vert W^{h}_{\Lambda_n,\beta,\Delta}\right\vert \leq e^{2\pi}$.
It follows from \eqref{w-limit} and Vitali's theorem that 
\begin{equs}
\lim_{n\to\infty} W^{h}_{\Lambda_n,\beta,\Delta}= \frac{1-\mu^+_{\beta,\Delta} \left[\sigma_0 \mid \sigma_0\neq 0\right]}{1+\mu^+_{\beta,\Delta} \left[\sigma_0 \mid \sigma_0\neq 0\right]}
\end{equs}
for every $h\in \CC$ with $\Re(h)> |\Im(h)|$. The desired assertion follows readily from \eqref{w-expansion} by taking the limit as $n$ tends to infinity.
\end{proof}

\begin{proof}[Proof of Theorem \ref{thm: existence} for $\Delta^+ = -\log 4$]
This is a direct corollary of Propositions \ref{prop: h implies continuity} and \ref{weak+limit}.
\end{proof}

\begin{rem}
We expect the overall strategy above to extend to higher dimensions. Given that the tricritical point of the mean-field Blume-Capel model is $\Delta=-\log{4}$, see \cite{EllisOttoTouchette}, we expect that for any $\Delta<-\log{4}$ there is a sequence of graphs whose Lee-Yang zeroes have an accumulation point on the positive real line.
\end{rem}

\section{Subcritical sharpness for dilute random cluster on $\ZZ^d$}\label{sec: subcrit}

The aim of this section is to obtain an OSSS inequality for the dilute random cluster measure, which we then use to prove Theorem~\ref{sharpness}.

\subsection{Weakly monotonic measures and the OSSS inequality}\label{sec: OSSS}

In general, the dilute random cluster measure is not monotonic, as was already observed in \cite[p.12]{GG}. In this section, we show that the dilute random cluster measure satisfies a weaker notion of monotonicity, once we restrict to certain types of boundary conditions. This allows us to prove an OSSS inequality for the  dilute random cluster measure.

Let $\Lambda\subset \ZZ^d$ be a finite set of vertices, and let $n=|\Lambda|+|E_{\Lambda}|$. We define $D=\Lambda\sqcup E_{\Lambda}$, and we call it a \textit{domain}. We also define $S_{\Lambda}$ to be the set of sequences $(d_1,d_2,\ldots,d_k)$, $k\leq n$, such that each element of $D$ appears at most once in $(d_1,d_2,\ldots,d_k)$, and for every edge $e$ appearing in $(d_1,d_2,\ldots,d_k)$, all the endpoints of $e$ lying in $\Lambda$ appear in $(d_1,d_2,\ldots,d_k)$, and they all precede $e$ (if $e\in \partial_E \Lambda$, then only one endpoint lies in $\Lambda$). We write $U_{\Lambda}$ for the corresponding set of unordered objects, i.e.\ $\{d_1,\ldots,d_k\}$ belongs to $U_{\Lambda}$ whenever each element of $D$ appears at most once, and for every edge $e\in \{d_1,\ldots,d_k\}$, all the endpoints of $e$ lying in $\Lambda$ belong to $\{d_1,\ldots,d_k\}$.

A \textit{decision tree} is a pair $T = (d_1, (\phi_t)_{2\leq t\leq n})$, where $d_1\in D$, and for each $t>1$, the function $\phi_t$
takes a pair
$((d_1,\ldots, d_{t-1}), \eta_{(d_1,\ldots,d_{t-1})})$ as an input, where $d_i\in D$ and $\eta\in \{0,1\}^D$, and returns an element $d_t \in D \setminus \{d_1,\ldots, d_{t-1}\}$. For a comprehensive introduction to decision trees, we refer the reader to \cite{OD}. We call a decision tree \textit{admissible} if $d_1\in \Lambda$ and for every $t>1$, if $(d_1,\ldots,d_{t-1})\in S_{\Lambda}$, then $(d_1,\ldots,d_t)\in S_{\Lambda}$. In other words, an admissible decision tree starts always from a vertex, and queries the state of an edge $e$ only if its endpoints lying in $\Lambda$ have been queried at previous steps. 

Our result applies beyond the dilute random cluster measure to measures satisfying the following property.
We call a measure $\mu$ on $\{0,1\}^D$ \textit{weakly monotonic} if for every $\{d_1,\ldots,d_k\}\in U_{\Lambda}$ and every $\eta^1,\eta^2\in \{0,1\}^{\{d_1,\ldots,d_k\}}$ such that $\eta^1\leq \eta^2$ pointwise,
\begin{equs}
    \mu[\eta_{d_0}=1 \mid \eta^1] \leq \mu[\eta_{d_0}=1 \mid \eta^2]
\end{equs}
for every $d_0\in D$.

For an $n$-tuple $d_{[n]}=(d_1,\dots,d_n)$ and $t\leq n$, write $d_{[t]}=(d_1,\dots,d_t)$ and $\eta_{d_{[t]}}=(\eta_{d_1},\dots,\eta_{d_t})$. 
Let $T$ be an admissible decision tree and let $f:\{0,1\}^D\mapsto \RR$. Given a pair $(d,\eta)$ produced by $T$, we define
\begin{equs}
\tau_f(\eta)=\tau_{f,T}(\eta):=\min\big\{ t\ge1:\forall \eta'\in\{0,1\}^D,\quad \eta'_{d_{[t]}}=\eta_{d_{[t]}}\Longrightarrow f(\eta)=f(\eta') \big\}.
\end{equs}
When clear from context we simply write $\tau$ for this stopping time.

We can now state the main technical result of this section. 
\begin{thm}[OSSS inequality for weakly monotonic measures]\label{OSSS}
Let $\Lambda\subset \ZZ^d$ be a finite set of vertices, and let $f:\{0,1\}^D\mapsto [0,1]$ be an increasing function. For any weakly monotonic measure $\mu$ on $\Lambda$ and any admissible decision tree $T$,
\begin{equs}
    \textup{Var}_\mu(f)\leq \sum_{d_0 \in D} \delta_{d_0}(f,T) \textup{Cov}_\mu(f, \eta_{d_0}),
\end{equs}
where $\delta_{d_0}(f,T):=\mu[\exists t\leq \tau_f(\eta): d_t=d_0]$ is the revealment (of $f$) for the decision tree $T$. 
\end{thm}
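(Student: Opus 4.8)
The plan is to adapt the original OSSS inequality proof of O'Donnell, Saks, Schramm, and Servedio \cite{OSSS} — or more precisely the form used by Duminil-Copin, Raoufi, and Tassion \cite{DCRT} — to the constrained setting where the decision tree is \emph{admissible}, i.e.\ forced to respect the order constraint that an edge can only be queried once its endpoints in $\Lambda$ have been revealed. The key observation making this possible is that weak monotonicity is exactly the substitute for monotonicity that survives the conditioning along such a tree: whenever we condition on a set of already-revealed coordinates $\{d_1,\dots,d_k\}\in U_\Lambda$ that forms an admissible prefix, the conditional law of any unrevealed coordinate is stochastically monotone in the revealed values.

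First I would set up the standard OSSS telescoping. Sample $\eta\sim\mu$ and an independent copy $\omega$ of the coordinates in the order $d_1,d_2,\dots$ dictated by the decision tree $T$ (with $d_t=\phi_t(\cdot)$ depending on the revealed $\eta$-values), and define the hybrid configurations $\omega^{(t)}$ which agree with $\eta$ on $d_{[t]}$ and with $\omega$ on the complement (in the tree-ordering). Then $\mathrm{Var}_\mu(f)$ is controlled by $\mathbb{E}[f(\omega^{(0)})f(\omega^{(n)})]-\mathbb{E}[f]^2$, which telescopes into $\sum_{t}\mathbb{E}[(f(\omega^{(t)})-f(\omega^{(t-1)}))(\text{something involving }f(\omega^{(n)}))]$; reorganising by which coordinate $d_0\in D$ is being swapped at step $t$, the left factor vanishes unless $d_0$ is revealed before $\tau_f$, contributing the revealment $\delta_{d_0}(f,T)$, while the right factor is handled by noting $\omega^{(t)}$ and $\omega^{(t-1)}$ differ in a single coordinate. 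The two ingredients I need are: (a) the conditional expectation identity that lets one rewrite the per-step contribution as $\mathrm{Cov}_\mu(f,\eta_{d_0})$ up to the revealment factor, and (b) a sign/correlation inequality guaranteeing each per-step term is nonnegative so that the telescoping bound does not lose a constant. Ingredient (b) is where weak monotonicity enters: since $f$ is increasing and, conditionally on any admissible prefix, the law of the next coordinate is FKG-type monotone in the prefix values (this is precisely the weakly monotonic hypothesis, applied along the $U_\Lambda$-filtration), one gets $\mathrm{Cov}(f,\eta_{d_0}\mid\text{prefix})\geq 0$, and more importantly the cross terms in the telescoping have a definite sign.

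The main obstacle I anticipate is bookkeeping the admissibility constraint cleanly: unlike the product-measure case, here the order in which coordinates are resampled is not arbitrary — it must lie in $S_\Lambda$ — and the conditional distributions one encounters are genuinely dependent, so one cannot simply invoke independence to factor the hybrid expectations. The resolution is to observe that along an admissible decision tree, every prefix $(d_1,\dots,d_t)$ lies in $S_\Lambda$ by definition, hence every conditioning event $\{\eta^1\}$ or $\{\eta^2\}$ that appears is indexed by an element of $U_\Lambda$, which is exactly the class of events for which weak monotonicity is assumed. So the abstract OSSS argument goes through verbatim once one checks this compatibility; concretely I would prove the one-step inequality
\begin{equs}
\mathbb{E}_\mu\big[(f(\omega^{(t)})-f(\omega^{(t-1)}))\,g(\omega^{(n)})\big]\;\leq\;\delta_{d_t}\,\mathrm{Cov}_\mu(f,\eta_{d_t})
\end{equs}
for a suitable nonnegative functional $g$ built from $f$, using weak monotonicity to control the conditional covariance and the single-coordinate difference, and then sum over $t$ and regroup by $d_0=d_t$.

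Finally, I would remark that the only place the dilute random cluster structure is used downstream is to verify that $\varphi^\xi_{\Lambda,p,a}$ (for appropriate boundary conditions $\xi$) is weakly monotonic — this follows from the spatial Markov property \eqref{eq:DMPRC}, the conditioning identity \eqref{eq: conditioning}, and the monotonicity of the random cluster measure in its vertex set combined with Lemma~\ref{free-empty-vertex}; but that verification is logically separate from Theorem~\ref{OSSS} itself, which is purely a statement about abstract weakly monotonic measures and admissible decision trees.
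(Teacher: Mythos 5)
Your proposal is correct and follows exactly the route the paper takes: the paper's proof of Theorem~\ref{OSSS} consists of the single sentence that the argument is identical to that of Theorem~1.1 in \cite{DCRT}, and your sketch is precisely that adaptation, with the key compatibility check correctly identified — admissibility forces every revealed prefix to lie in $S_\Lambda$, so every conditioning event encountered in the telescoping is indexed by an element of $U_\Lambda$, which is exactly the class on which weak monotonicity is assumed.
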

\begin{proof}
The proof is similar to that of Theorem 1.1 in \cite{DCRT}. We include the details for readers convenience, and we highlight the places where we use that $\mu$ is weakly-monotonic and that $T$ is admissible.

Let $(\U_i)_{i\geq 1}$ and $(\V_i)_{i\geq 1}$ be two independent sequences of iid uniform $[0,1]$ random variables. Write $\mathbb P$ for the coupling between these variables (and $\mathbb E$ for its expectation). 
We define inductively the triple $(\mathbf{d},\bf X,\tau)$ as follows: for $t\geq 1$,  
\begin{align*}
 \mathbf{d}_{t}&=\begin{cases}
   d_1 & \text{if $t=1$}\\
   \phi_t(\mathbf{d}_{[t-1]},\bf X_{\mathbf{d}_{[t-1]}})&\text{if $t>1$}\end{cases}\quad\text{and}\quad \bf X_{\mathbf{d}_t}=\begin{cases} 1&\text{ if }\U_{t}\geq\mu[\eta_{\mathbf{d}_{t}}=0\,|\,\eta_{\mathbf{d}_{[t-1]}}=\bf X_{\mathbf{d}_{[t-1]}}],\\
   0&\text{ otherwise,}\end{cases}
  \end{align*}
and $\tau:=\min\big\{t\ge1:\forall x\in\{0,1\}^D,x_{\mathbf{d}_{[t]}}=\bf X_{\mathbf{d}_{[t]}}\Rightarrow f(x)=f(\bf X)\big\}$. Moreover, for $u\in [0,1]^n$ we let $F_{\bf d}(u)=x$, where $x$ is defined inductively as follows: for $1\le t\le n$
\begin{equation}\label{eq:ccc}
x_{d_t}:=\begin{cases} 1&\text{ if }u_{t}\ge\mu[\eta_{d_{t}}=0\,|\,\eta_{e_{[t-1]}}=x_{d_{[t-1]}}],\\
   0&\text{ otherwise}.
   \end{cases}
\end{equation}
Finally, for $0\le t\le n$, define $\Y^t:=F_\mathbf{d}(\mathbf W^t),$ where
$$\mathbf W^t:= \mathbf W^t (\U,\V) =(\V_1,\dots,\V_t,\U_{t+1},\dots,\U_\tau,\V_{\tau+1},\dots,\V_n).$$

Now Lemma 2.1 in \cite{DCRT} (which applies to general measures without any assumption of monotonicity) implies that $\bf X$ has law $\mu$ and is $\U$-measurable; whilst $\Y^{n}$ has law $\mu$ and is independent of $\U$. Therefore, 
\begin{equs}
{\rm Var}_\mu(f)\le \tfrac12\mu\big[|f-\mu[f]|\big]=\tfrac12\bbE\Big[\big|\,\bbE[f(\bf X)|\U]-\bbE[f(\Y^{n})|\U]\,\big|\Big]\le\tfrac12\mathbb E\big[|f(\bf X)-f(\Y^{n})|\big].
\end{equs}
Note that $f(\Y^{0})=f(\bf X)$, hence
\begin{equs}
  {\rm Var}_\mu(f)\le \tfrac12\mathbb E\big[|f(\Y^{0})-f(\Y^{n})|\big].
\end{equs}
By telescoping, the right-hand side of the previous inequality is at most
\begin{align*}\sum_{t=1}^{n}\mathbb E \big[|f(\Y^t)-f(\Y^{t-1})|\big]&= \sum_{t=1}^{n}\mathbb E\Big[\,|f(\Y^t)-f(\Y^{t-1})|\,\cdot\1_{t\leq \tau}\Big]\\
&=\sum_{d_0\in D} \sum_{t=1}^{n}\mathbb E\Big[\mathbb E\big[|f(\Y^t)-f(\Y^{t-1})| ~\big|~\U_{[t-1]}\big]\,\1_{t\leq \tau,\mathbf{d}_t=d_0}\Big],
\end{align*}
where we used that $\Y^t=\Y^{t-1}$ for any $t>\tau$.

Our aim now is to bound $\mathbb E\big[\,|f(\Y^t)-f(\Y^{t-1})| ~\big|~\U_{[t-1]}\big]$. To this end, observe that on the event $\{t\le \tau,\mathbf{d}_t=d_0\}$, if $\Y^t _{d_0}= \Y^{t-1}_{d_0}$ then $\Y^t = \Y^{t-1}$. Combining this observation with the fact that $f$ is increasing we obtain
\begin{align}
|f(\Y^t)-f(\Y^{t-1})| &=(f(\Y^t)-f(\Y^{t-1}) ) \,(\Y^t_{d_0} - \Y^{t-1}_{d_0} )\nonumber\\
&=f(\Y^{t-1})\Y^{t-1}_{d_0}+  f(\Y^t)\Y^t_{d_0}- f(\Y^{t-1})\Y^t_{d_0}- f(\Y^t)\Y^{t-1}_{d_0}.\label{eq:tt}
\end{align}

To handle the latter expression, we proceed in steps. First, note that 
for any measurable function $g$ and $t\le n$,
\begin{equs}\label{eq:bbb} 
\mathbb E[g(\Y^t)|\U_{[t]}]=\mu[g(\eta)],
\end{equs}
which follows from \cite[Lemma 2.1]{DCRT}. Applying \eqref{eq:bbb} to $g(\eta)=f(\eta)\eta_{d_0}$ gives that 
\begin{equs} \label{eq:a1}
\bbE[f( \Y^{t-1}) \Y^{t-1}_{d_0} \,|\, \U_{[t-1]}]= \mu[f(\eta)\omega_{d_0}]=\bbE[f( \Y^t) \Y^t_{d_0} \,|\, \U_{[t]}]=\bbE[f( \Y^t) \Y^t_{d_0} \,|\, \U_{[t-1]}],
\end{equs}
where for the last equality we average on $\U_t$.

Since $\mu$ is weakly-monotonic and $T$ is an admissible decision tree, for fixed $\U_{[n]}$ and $i$, $\Y^i$ is an increasing function of $(\V_i)$. Moreover, since $f$ and $\mathbf W_{d_0}$ are increasing functions of $(\V_i)$, we deduce that $f(\Y^{t-1})$ and $\Y^t_{d_0}$ are increasing functions of $(\V_i)$. The FKG inequality applied to the iid random variables $(\V_i)$ gives
\begin{equs}
\bbE[f(\Y^{t-1})\Y^t_{d_0}|\U_{[n]}]&\ge \bbE[f(\Y^{t-1})|\U_{[n]}]\bbE[\Y^t_{d_0}|\U_{[n]}].
\end{equs}
Averaging over $\U_{[t-1]}$ we obtain
\begin{equs}\label{eq:a2}
\bbE[f(\Y^{t-1})\Y^t_{d_0}|\U_{[t-1]}]&\ge \bbE\big[\bbE[f(\Y^{t-1})|\U_{[n]} \big]\bbE \big[\Y^t_{d_0}|\U_{[n]}]\,\big|\,\U_{[t-1]}\big]\nonumber\\
&= \bbE \big[f(\Y^{t-1})|\U_{[t-1]}\big]\bbE\big[\Y^t_{d_0}|\U_{[t-1]}\big]=\mu[f(\eta)]\mu[\eta_{d_0}],
\end{equs}
where we used that $\bbE[\Y^t_{d_0}|\U_{[n]}]$ is $\U_{[t-1]}$-measurable and \eqref{eq:bbb}. 

Similarly, $f(\Y^t)$ and $\Y^{t-1}_{d_0}$ are increasing functions of $(\V_i)$ and arguing as above we have
\begin{align*}\bbE[f(\Y^t)\Y^{t-1}_{d_0}|\U_{[t]}]&\ge \bbE\big[\bbE[f(\Y^t)|\U_{[n]}]\bbE[\Y^{t-1}_{d_0}|\U_{[n]}]\,\big|\,\U_{[t]}\big]\\
&= \bbE[f(\Y^t)|\U_{[t]}]\bbE[\Y^{t-1}_{d_0}|\U_{[t]}]=\mu[f(\eta)]\bbE[\Y^{t-1}_{d_0}|\U_{[t]}].\end{align*}
Taking expectation with respect to $\bbE[\,\cdot\,|\U_{[t-1]}]$ we obtain
$$\bbE[f(\Y^t)\Y^{t-1}_{d_0}|\U_{[t-1]}] \ge \mu[f(\eta)]\bbE[\Y^{t-1}_{d_0}|\U_{[t-1]}]=\mu[f(\eta)]\mu[\eta_{d_0}],$$
where we used \eqref{eq:bbb} for the last equality.

This inequality together with \eqref{eq:a2}, \eqref{eq:a1} and \eqref{eq:tt} give that 
\begin{equation}\mathbb E\big[\,|f(\Y^t)-f(\Y^{t-1})| ~\big|~\U_{[t-1]}\big]~\le~ 2\mathrm{Cov}_\mu (f, \eta_{d_0} ).\label{eq:g}\end{equation}
Since $\sum_{t=1}^{n}\mathbb P[t\le \tau,\mathbf{d}_t=d_0]=\delta_{d_0}(f,T),$
the desired result follows.
\noindent

\end{proof}

The following lemma, which is analogous to \cite[Lemma 3.2]{DCRT}, can be viewed as a sharp threshold result for the event $\{0\longleftarrow \partial \Lambda_n\}$.

\begin{lem}\label{cov-bound}
Let $\Lambda\subset \ZZ^d$ be a finite set containing $0$. For every weakly monotonic measure $\mu$ on $\Lambda$ and every $n\geq 1$, we have 
\begin{equs}
     \sum_{d\in D} \textup{Cov}_\mu(\1_{0\longleftrightarrow \partial \Lambda_n},\eta_d)\geq \frac{n}{4dQ_n}\mu[0\longleftrightarrow\partial \Lambda_n]\left(1-\mu[0\longleftrightarrow\partial \Lambda_n]\right),
\end{equs}
where $Q_n:=\max_{x\in \Lambda_n}\sum_{k=0}^{n-1}\mu[x\longleftrightarrow\partial \Lambda_k(x)]$.
\end{lem}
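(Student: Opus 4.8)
The strategy follows the sharp-threshold scheme of Duminil-Copin--Raoufi--Tassion \cite[Lemma 3.2]{DCRT}, adapted to the weakly monotonic setting. Write $\theta_n := \mu[0\longleftrightarrow\partial\Lambda_n]$ and abbreviate $X = \1_{0\longleftrightarrow\partial\Lambda_n}$. The idea is to express a differential-type inequality: the sum $\sum_{d\in D}\mathrm{Cov}_\mu(X,\eta_d)$ controls the ``derivative'' of $\theta_n$ under a monotone enhancement of the parameters, and this derivative is bounded below by a Russo-type formula involving pivotality. The combinatorial input is that if $0\longleftrightarrow\partial\Lambda_n$ occurs via an open path, that path must cross each of the $n$ dyadic ``annuli'' $\Lambda_{k+1}\setminus\Lambda_k$ for $0\le k\le n-1$, so for at least one scale $k$ the path spends few elements; more precisely, one argues that there exists a scale $k$ such that the expected number of elements of $D$ in the annulus that are pivotal for $X$ (or along the exploration) is at least of order $n/Q_n$ times $\theta_n(1-\theta_n)$, after summing over the $n$ scales and using that the ``cost'' of extending a connection across scale $k$ from any vertex $x$ is at most $\sum_{j<k}\mu[x\longleftrightarrow\partial\Lambda_j(x)] \le Q_n$.

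Concretely, first I would introduce, for each $x\in\Lambda_n$, the random variable counting the number of elements $d\in D$ that are ``pivotal'' for the event $\{0\longleftrightarrow\partial\Lambda_n\}$ in the sense of the weakly monotonic analogue of influence (i.e. changing $\eta_d$ can change the indicator), and relate $\sum_d \mathrm{Cov}_\mu(X,\eta_d)$ to the expected number of such pivotals via the standard identity $\mathrm{Cov}_\mu(X,\eta_d) \ge c\,\mu[d\text{ pivotal for }X]$ valid for weakly monotonic measures (using the FKG-type inequality \eqref{eq:FKGRC} and the conditional-monotonicity defining weak monotonicity, exactly as the covariance lower bound is obtained in \cite{DCRT}). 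Second, I would run a multiscale / gluing argument: condition on the exploration of the cluster of $0$; on the event $\{0\longleftrightarrow\partial\Lambda_n\}$, for each scale $k$ the outermost crossing of the annulus $\Lambda_{k+1}\setminus\Lambda_k$ starts at some vertex $x_k$, and the conditional probability that this local connection is realised is at most $\max_x \mu[x\longleftrightarrow\partial\Lambda_1(x)]$-type quantities, which telescope to give the denominator $Q_n$. Summing the resulting lower bounds on the number of pivotals over the $n$ disjoint annuli yields the factor $n$, and the $4d$ accounts for the number of directions/edges incident to a vertex needed to certify a local crossing. Third, I would assemble these to conclude $\sum_{d\in D}\mathrm{Cov}_\mu(X,\eta_d) \ge \frac{n}{4dQ_n}\,\theta_n(1-\theta_n)$.

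The main obstacle is carrying out the pivotality/influence estimate without genuine monotonicity of $\mu$: in the classical argument one uses Russo's formula and the FKG inequality freely, but here $\mu$ is only weakly monotonic, so the covariance $\mathrm{Cov}_\mu(X,\eta_d)$ must be lower-bounded by a conditional-pivotality probability using only the inequality $\mu[\eta_{d_0}=1\mid\eta^1]\le\mu[\eta_{d_0}=1\mid\eta^2]$ for $\eta^1\le\eta^2$ on admissible ordered sets, together with \eqref{eq:DMPRC} to localise. One must be careful that the ordering constraint in the definition of weak monotonicity (edges queried only after their endpoints) is respected throughout the exploration, which is why admissible decision trees and the sets $S_\Lambda$, $U_\Lambda$ were set up; the gluing step must only ever compare configurations that differ on ``downstream'' elements. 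Modulo this bookkeeping, each individual estimate is a routine adaptation of \cite[Lemma 3.2]{DCRT} and I would cite that reference for the parts that transfer verbatim, spelling out only the places where weak monotonicity (rather than monotonicity) is invoked.
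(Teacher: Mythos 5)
Your proposal names the right reference but mischaracterises the mechanism there, and in doing so you've dropped the key tool the paper sets up for precisely this step. The paper's proof does not lower-bound $\mathrm{Cov}_\mu(X,\eta_d)$ by a pivotality probability via a Russo-type formula, and neither does \cite[Lemma 3.2]{DCRT}. Instead, the argument applies the OSSS inequality (Theorem~\ref{OSSS}) directly: for each scale $k\in\{1,\dots,n\}$ one constructs an admissible decision tree $T(k)$ that first queries all of $\partial\Lambda_k$, then the edges between open vertices there, then grows the cluster of $\partial\Lambda_k$ by repeatedly querying a vertex adjacent to the explored set and, if it is open, the edges joining it to the explored open cluster. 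This makes $T(k)$ admissible (vertices before incident edges) and determines $\1_{0\longleftrightarrow\partial\Lambda_n}$. The revealment of a vertex $u$ under $T(k)$ is at most $\1_{u\in\partial\Lambda_k} + \sum_{x\sim u}\mu[x\longleftrightarrow\partial\Lambda_k]$ and of an edge $e=uv$ at most $\mu[u\longleftrightarrow\partial\Lambda_k]+\mu[v\longleftrightarrow\partial\Lambda_k]$; summing these bounds over $k=1,\dots,n$ and translating to annuli centred at $u$ gives $\sum_k\mu[u\longleftrightarrow\partial\Lambda_k]\le 2Q_n$. Plugging these revealments into Theorem~\ref{OSSS} for each $k$ and summing the $n$ resulting inequalities yields the stated bound, with the factor $4d$ coming from the degree of $\ZZ^d$ in the vertex and edge bookkeeping.

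The gap in your route is concrete: the inequality $\mathrm{Cov}_\mu(X,\eta_d)\ge c\,\mu[d\text{ pivotal}]$ that you invoke is not ``standard'' in the weakly monotonic setting, is not established anywhere in the paper, and is not the ingredient of \cite[Lemma 3.2]{DCRT}. The whole point of Theorem~\ref{OSSS} is to bypass the need for any such Russo-type comparison by bounding $\mathrm{Var}_\mu(f)$ directly by $\sum_d\delta_d(f,T)\,\mathrm{Cov}_\mu(f,\eta_d)$. Your plan mentions admissible decision trees only as bookkeeping for the pivotality step, but they are in fact the central object: the proof hinges on exhibiting, for each $k$, an admissible tree with small revealments, not on a pivotal-element count. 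Two smaller misstatements point to the same confusion: the annuli $\Lambda_{k+1}\setminus\Lambda_k$ are unit-width, not dyadic, and the argument does not select ``a scale $k$ such that\ldots'' --- it uses the OSSS bound for \emph{every} $k$ and sums, which is how the factor $n$ appears. To repair the proposal, replace the pivotality/Russo step with an application of Theorem~\ref{OSSS} per scale and supply the explicit decision tree and revealment bounds above.
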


\begin{proof}
For any $k\in \{1,2,\ldots,n\}$, we wish to construct an admissible decision tree $T=T(k)$ determining $\1_{0\longleftrightarrow\partial\Lambda_n}$ such that 
for every $u \in \Lambda_n$,
\begin{equs}\label{eq:deltau}
\delta_u(T)\leq \1_{u\in \partial \Lambda_k}+\1_{u\not\in \partial \Lambda_k} \sum_{\substack{x\in \Lambda_n \\ x\sim u}} \mu[x\longleftrightarrow\partial \Lambda_k]
\end{equs}
and for each edge $e=uv$,
\begin{equs} \label{eq:deltae}
\delta_e(T)\le \mu[u\longleftrightarrow \partial\Lambda_k]+\mu[v\longleftrightarrow \partial\Lambda_k].
\end{equs}    
Note that for $u\in \Lambda_n$,
\begin{equs}
\sum_{k=1}^n\mu[u\longleftrightarrow \partial\Lambda_k]
\leq \sum_{k=1}^n  \mu[u\longleftrightarrow \partial\Lambda_{|k-\|u\|_1 |}(u)] \leq 2Q_n
\end{equs}
from which the desired assertion follows by applying Theorem~\ref{OSSS} for each $T(k)$ with $f=\1_{0\longleftrightarrow\partial\Lambda_n}$ and then summing over $k\in\{1,2,\ldots,n\}$.

The decision tree $T$ is defined at follows. We first fix an ordering of the vertices and the edges. Then we explore the state of each vertex of $\partial \Lambda_k$, one at a time, according to the ordering, and once we have explored the state of all vertices in $\partial \Lambda_k$, we explore the state of the edges with both endpoints being open and both in $\partial \Lambda_k$. Let $F_0$ be the set of vertices explored thus far, namely $\partial \Lambda_k$, and $V_0$ the set of open vertices in $F_0$. Next, we explore the state of all the vertices $u\in \Lambda_n\setminus F_0$ adjacent to $F_0$, one at a time, according to the ordering, and if $u$ is open, then we explore the state of the edges connecting $u$ to $V_0$, according to the ordering. We let $F_1$ be the union of $F_0$ with the set of vertices $u\in \Lambda_n\setminus F_0$ adjacent to $V_0$, and $E_1$ be the set of open edges explored thus far. 

We then proceed inductively. Assuming we have defined $F_t$ and $E_t$ for some $t\geq 1$, we then consider the first vertex $u\in \Lambda_n\setminus F_t$ incident to $E_t$, and if $u$ is open, then we explore the state of the edges connecting $u$ to $E_t$, according to the ordering. We let $F_{t+1}=F_t \cup \{u\}$, and $E_{t+1}$ be the union of $E_t$ with the open edges between $u$ and $E_t$. As long as there is an open vertex incident to $E_{t+1}$, we keep exploring the cluster of $\partial \Lambda_k$. Otherwise, the exploration process stops, in which case the event $\{0\longleftrightarrow \partial \Lambda_n\}$ has been determined, and we can define our decision tree after that point arbitrarily.

It is clear from the construction that $T$ is an admissible decision tree and that \eqref{eq:deltae} and \eqref{eq:deltau} are satisfied. This concludes the proof.
\end{proof}

\subsection{A generalisation of the dilute random cluster}

In this section, we introduce the generalized dilute random cluster measure, which depends on three parameters $p,a,r$ and is defined as the standard dilute random cluster measure, except that we are not requiring $r=\sqrt{1-p}$. We show that for certain values of $r$, the generalized dilute random cluster measure is weakly monotonic, which together with Theorem~\ref{OSSS} will be used in the proof of subcritical sharpness for the two parameter dilute random cluster measure.

Given $a\in (0,1)$, $p\in (0,1)$, and $r>0$, we let $\varphi^{\xi}_{\Lambda,p,a,r}$ denote the measure on
$\Lambda$ with boundary condition $\xi$, that is,
\begin{equs}
\varphi^{\xi}_{\Lambda,p,a,r} [\theta] =
\frac{\1[\theta \in \Theta_\Lambda^\xi]}{Z^{\xi}_{\Lambda,p,a,r}} r^{|E_{\psi,\Lambda}|} 2^{k(\theta,\Lambda)}
\prod_{x\in V} \left(\frac{a}{1-a}\right)^{\psi_x}
\prod_{e\in E_{\psi,\Lambda}} \left(\frac{p}{1-p}\right)^{\omega_e},
\end{equs}
where $Z^{\xi}_{\Lambda,p,a,r}$ is the normalisation constant. 

\begin{prop}\label{weakly}
Let $a,p\in (0,1)$ and $r\geq \frac{2(1-p)}{2-p}$. Then the measure $\varphi^{\xi}_{\Lambda,p,a,r}$ is weakly monotonic.
\end{prop}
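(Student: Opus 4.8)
The plan is to verify the defining inequality of weak monotonicity directly, using the explicit form of $\varphi^{\xi}_{\Lambda,p,a,r}$. Fix $\{d_1,\dots,d_k\} \in U_\Lambda$ and configurations $\eta^1 \le \eta^2$ on $\{d_1,\dots,d_k\}$, and fix an element $d_0 \in D$. I would first reduce to the case where $\eta^1$ and $\eta^2$ differ in exactly one coordinate $d^* \in \{d_1,\dots,d_k\}$, by a standard telescoping argument along a monotone path connecting $\eta^1$ to $\eta^2$ inside $\{0,1\}^{\{d_1,\dots,d_k\}}$ (at each step one still has an element of $U_\Lambda$, and the conditional probabilities compose correctly). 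I would also separate the cases $d_0 \in \{d_1,\dots,d_k\}$ (where the conditional probability is $0$ or $1$ and monotonicity is immediate once one checks consistency, which holds because the compatibility constraints are themselves monotone) from the genuinely new case $d_0 \notin \{d_1,\dots,d_k\}$.

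The heart of the matter is then a two-point comparison. Writing $\mu = \varphi^{\xi}_{\Lambda,p,a,r}$ and $\eta = \eta_{\{d_1,\dots,d_k\}}$, the quantity $\mu[\eta_{d_0}=1 \mid \eta]$ is a ratio of partition-function-type sums over the unrevealed coordinates. To show it is monotone under flipping $d^*$ from $0$ to $1$, I would compare $\mu[\eta_{d_0}=1,\eta]/\mu[\eta_{d_0}=0,\eta]$ for the two values of $\eta_{d^*}$, i.e. establish an inequality of the Fortuin–Kasteleyn–Ginibre/Holley type at the level of the (non-product) weights $w(\theta) = r^{|E_{\psi,\Lambda}|} 2^{k(\theta,\Lambda)} \prod_x (a/(1-a))^{\psi_x} \prod_e (p/(1-p))^{\omega_e}$. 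There are two sub-cases according to whether $d^*$ is a vertex or an edge, and whether $d_0$ is a vertex or an edge; in each the ratio of weights factorizes, and the only non-monotone ingredient is the cluster count $2^{k(\theta,\Lambda)}$ together with the factor $r^{|E_{\psi,\Lambda}|}$. The condition $r \ge \tfrac{2(1-p)}{2-p}$ should be exactly what is needed to compensate: opening a vertex $d^*$ may allow new open edges (each contributing a factor $p/(1-p)$ times $r$) but each such edge, when it becomes open, can decrease $k$ by at most one, i.e. multiply $2^k$ by a factor in $\{1/2,1\}$; summing a geometric-type series over the possible new edges incident to the flipped vertex yields the threshold $r(2-p) \ge 2(1-p)$. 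I expect the cleanest route is to condition further on everything except the single relevant vertex (or edge) and $d_0$, reducing to a computation on a graph with at most a bounded number of active sites, exactly as in the proof of Lemma~\ref{free-empty-vertex}; indeed I anticipate invoking \cite[Proposition 5.5]{GG} (as was done there) for the part of the inequality that does not involve the flipped coordinate.

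Concretely, the key steps in order are: (1) telescope to a single-coordinate flip; (2) dispose of the trivial case $d_0 \in \{d_1,\dots,d_k\}$; (3) write the conditional probability as a ratio and, by the spatial Markov property \eqref{eq:DMPRC} applied to the revealed set $\{d_1,\dots,d_k\}$, reduce to comparing two dilute-random-cluster-type measures on the complementary domain that differ only through the boundary value at $d^*$; (4) when $d^*$ is a vertex, expand over the states of the (at most $2d$) edges incident to $d^*$ that are forced closed when $\psi_{d^*}=0$ but free when $\psi_{d^*}=1$, bound the change in $2^{k}$ by $2$ per newly-openable edge, and check that $r \ge \tfrac{2(1-p)}{2-p}$ makes the resulting comparison of the form "more $d_0$-connectivity on the right" hold; (5) when $d^*$ is an edge, the comparison is the standard random-cluster one (here no constraint on $r$ beyond positivity is needed, since adding an edge only helps connectivity). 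The main obstacle is step (4): making precise the interaction between the $r^{|E_{\psi,\Lambda}|}$ factor, the geometric sum over subsets of newly-active edges, and the at-most-unit decrease of the cluster count, so that the stated bound on $r$ emerges as the sharp threshold rather than a suboptimal one; I would handle this by the same conditioning trick used in Lemma~\ref{free-empty-vertex}, isolating a single vertex and a single "target" element $d_0$ and reducing everything to a finite, explicit inequality.
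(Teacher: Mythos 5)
Your proposal captures the right threshold and the correct technical mechanism (a finite-energy bound for $q=2$ FK), but it differs structurally from the paper's proof and has at least one real gap. The paper's route is to show that the \emph{vertex marginal} $\Psi_2:=\Psi^{\xi}_{\Lambda,p,a,r}[\cdot\mid\eta^2]$ stochastically dominates $\Psi_1:=\Psi^{\xi}_{\Lambda,p,a,r}[\cdot\mid\eta^1]$ via the Holley criterion of Lemma~\ref{lem: fkglattice}, by verifying \eqref{eq:stoch1} for the pair and \eqref{eq:stoch2} for $\Psi_1$, and then to lift the domination from the vertex marginal to the full joint measure ``as in the proof of Proposition~\ref{free-closed}''. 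That lifting is exactly what takes care of the case $d_0\in E_\Lambda$: once the vertex marginals are ordered, the conditional RC measures with ordered boundary conditions $\xi\cup\eta^1\le\xi\cup\eta^2$ are ordered too, and the two are combined. Your flattened telescoping argument would have to carry out the Holley comparison directly on the joint $(\psi,\omega)$ state space to handle an unrevealed edge $d_0$, which is substantially messier than you suggest; the separation you propose (by the type of $d^*$) does not by itself dispose of the type of $d_0$.

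Two more specific issues. First, the constant $\tfrac{2(1-p)}{2-p}$ does not come from ``summing a geometric-type series'': it is a single-shot finite-energy estimate, namely the maximum over local configurations of the conditional probability that a fixed incident edge is closed, $\max\bigl\{1-p,\ 1-\tfrac{p}{p+2(1-p)}\bigr\}=\tfrac{2(1-p)}{2-p}$, applied once per neighbour gained when a vertex is opened; the paper phrases this via the event $A(x)$ that all edges at $x$ are closed and a ratio of RC partition functions. Second, and more seriously, your plan to invoke \cite[Proposition 5.5]{GG} for the lattice-FKG condition \eqref{eq:stoch2} is off the mark here: that result concerns the vertex marginal of the genuine dilute random cluster measure with $r=\sqrt{1-p}$. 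In the present proposition $r$ is a free parameter in $\bigl[\tfrac{2(1-p)}{2-p},\infty\bigr)$, and the paper does not (and cannot safely) cite \cite[Proposition 5.5]{GG}; it instead reproves \eqref{eq:stoch2} for $\Psi_1$ by hand, combining the FKG inequality for the conditional RC measure with the same finite-energy bound. So while your conditioning instinct and the identification of the threshold are sound, the proposal as written leaves a genuine gap at the lattice-FKG step and understates the work needed to cover the edge-$d_0$ case.
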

\begin{proof}
Let $\{d_1,\ldots,d_k\}\in U_{\Lambda}$ and let $\eta^1,\eta^2\in \{0,1\}^{\{d_1,\ldots,d_k\}}$ be such that $\eta^1\leq \eta^2$. We wish to show that the vertex marginal $\Psi_2:=\Psi^{\xi}_{\Lambda,p,a,r}[\cdot \mid \eta^2]$ stochastically dominates the vertex marginal $\Psi_1:=\Psi^{\xi}_{\Lambda,p,a,r}[\cdot \mid \eta^1]$. Then the desired assertion follows as in the proof of Proposition~\ref{free-closed}.

It suffices to verify \eqref{eq:stoch1} and \eqref{eq:stoch2}. To this end, let $\Lambda'=\Lambda\setminus \{d_1,d_2,\ldots,d_k\}$, and let $x\in \Lambda'$. Then, for every configuration $\psi\in \{0,1\}^{\Lambda'}$, 
\begin{equs}
\frac{\Psi_2[\psi^{\{x\}}]}{\Psi_2[\psi_{\{x\}}]}
=r^{N^2_x} \frac{a}{1-a}  \frac{Z^{\textup{RC},\xi\cup\eta^2}_{\Lambda,\psi^{\{x\}}}}
{Z^{\textup{RC},\xi\cup\eta^2}_{\Lambda,\psi_{\{x\}}}}
\quad \text{and} \quad 
\frac{\Psi_1[\psi^{\{x\}}]}{\Psi_1[\psi_{\{x\}}]}
=r^{N^1_x} \frac{a}{1-a} \frac{Z^{\textup{RC},\xi\cup\eta^1}_{\Lambda,\psi^{\{x\}}}}{Z^{\textup{RC},\xi\cup\eta^1}_{\Lambda,\psi_{\{x\}}}},
\end{equs}
where $N_x^i$ is the number of neighbours of $x$ in $V_{\psi}\cup(V_{\xi\cup\eta^i}\setminus \Lambda')$. Define $A(x)$ to be the event that all edges incident to $x$ are closed, and note that 
\begin{equs}
\frac{Z^{\textup{RC},\xi\cup\eta^2}_{\Lambda,\psi^{\{x\}}}}
{Z^{\textup{RC},\xi\cup\eta^2}_{\Lambda,\psi_{\{x\}}}}= \frac{1}{\phi^{\textup{RC},\xi\cup\eta^2}_{\Lambda,\psi^{\{x\}}}[A(x)]}
\quad \text{and} \quad 
\frac{Z^{\textup{RC},\xi\cup\eta^1}_{\Lambda,\psi^{\{x\}}}}{Z^{\textup{RC},\xi\cup\eta^1}_{\Lambda,\psi_{\{x\}}}}=\frac{1}{\phi^{\textup{RC},\xi\cup\eta^1}_{\Lambda,\psi^{\{x\}}}[A(x)]}.
\end{equs}

It remains to show that 
\begin{equs}\label{stoch-ineq}
\phi^{\textup{RC},\xi\cup\eta^2}_{\Lambda,\psi^{\{x\}}}[A(x)]
\leq
r^{N_x^2-N_x^1} \phi^{\textup{RC},\xi\cup\eta^1}_{\Lambda,\psi^{\{x\}}}[A(x)].
\end{equs}
For this, recall that by the finite energy property for the random cluster model \cite[Theorem 3.1]{GGBook}, 
\begin{equs}
\phi^{\textup{RC},\xi\cup\eta^2}_{\Lambda,\psi^{\{x\}}}[A(x)]
\leq
c_p^{N_x^2-N_x^1} \phi^{\textup{RC},\xi\cup\eta^1}_{\Lambda,\psi^{\{x\}}}[A(x)],
\end{equs}
where $c_p=\max\left\{1-p,1-\frac{p}{p+2(1-p)}\right\}=\frac{2(1-p)}{2-p}$. 
Since $r\geq \frac{2(1-p)}{2-p}$, it follows that \eqref{stoch-ineq} holds. Thus we have verified \eqref{eq:stoch1}.

We wish to verify \eqref{eq:stoch2} for $\Psi^1$. It is not hard to see that 
\begin{equs}
\frac{\Psi^1\left[\psi^{\{x,y\}}\right]\Psi^1\left[\psi_{\{x,y\}}\right]}{\Psi^1\left[\psi^{\{x\}}_{\{y\}}\right]\Psi^1\left[\psi^{\{y\}}_{\{x\}}\right]}
=
r^{\1\{xy\in \E^d\}}\frac{Z^{\textup{RC},\xi\cup\eta^1}_{\Lambda,\psi^{\{x,y\}}}}{Z^{\textup{RC},\xi\cup\eta^1}_{\Lambda,\psi^{\{x\}}_{\{y\}}}} 
\frac{Z^{\textup{RC},\xi\cup\eta^1}_{\Lambda,\psi_{\{x,y\}}}}{Z^{\textup{RC},\xi\cup\eta^1}_{\Lambda,\psi^{\{y\}}_{\{x\}}}}. 
\end{equs}
As above, we have
\begin{equs}
\frac{Z^{\textup{RC},\xi\cup\eta^1}_{\Lambda,\psi^{\{x,y\}}}}{Z^{\textup{RC},\xi\cup\eta^1}_{\Lambda,\psi^{\{x\}}_{\{y\}}}} 
\frac{Z^{\textup{RC},\xi\cup\eta^1}_{\Lambda,\psi_{\{x,y\}}}}{Z^{\textup{RC},\xi\cup\eta^1}_{\Lambda,\psi^{\{y\}}_{\{x\}}}}=\frac{\phi^{\textup{RC},\xi\cup\eta^1}_{\Lambda,\psi^{\{y\}}_{\{x\}}}[A(y)]}{\phi^{\textup{RC},\xi\cup\eta^1}_{\Lambda,\psi^{\{x,y\}}}[A(y)]}.
\end{equs}
Let $B(x,y)$ be the event that all edges $xu\in \E^d\setminus\{xy\}$ are closed. By the FKG inequality 
\begin{equs}
\phi^{\textup{RC},\xi\cup\eta^1}_{\Lambda,\psi^{\{x,y\}}}[A(y)]\leq \phi^{\textup{RC},\xi\cup\eta^1}_{\Lambda,\psi^{\{x,y\}}}[A(y) \mid B(x,y)].
\end{equs}
Using the finite energy property we see that by closing $xy$ we get
\begin{equs}
\phi^{\textup{RC},\xi\cup\eta^1}_{\Lambda,\psi^{\{x,y\}}}[A(y) \mid B(x,y)]\leq \left(\frac{2(1-p)}{2-p}\right)^{\1\{xy\in \E^d\}} \phi^{\textup{RC},\xi\cup\eta^1}_{\Lambda,\psi^{\{y\}}_{\{x\}}}[A(y)],
\end{equs}
which completes the proof.
\end{proof}

The following result is a standard calculation similar to the case of the classical random cluster model \cite[Theorem 3.12]{GGBook}.

\begin{prop}
Let $a,p\in (0,1)$ and $r>0$. Let also $\Lambda\subset \ZZ^d$ finite. For every event $A$ that is $\Lambda$-measurable, we have
\begin{equs}
\frac{d\varphi^{\xi}_{\Lambda,p,a,r}[A]}{dp}=\frac{1}{p(1-p)}\sum_{e\in E_{\Lambda}} \textup{Cov}(A,\eta_e) 
\end{equs}
and
\begin{equs}
\frac{d\varphi^{\xi}_{\Lambda,p,a,r}[A]}{da}=\frac{1}{a(1-a)}\sum_{x\in \Lambda} \textup{Cov}(A,\eta_x).
\end{equs}
\end{prop}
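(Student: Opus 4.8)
The statement to prove is the differentiation formula for $\varphi^\xi_{\Lambda,p,a,r}[A]$ with respect to $p$ and $a$. This is a Russo-type formula for the generalized dilute random cluster measure.

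\textbf{Approach.} The plan is to differentiate the explicit expression for $\varphi^\xi_{\Lambda,p,a,r}[A] = \sum_{\theta \in A} \varphi^\xi_{\Lambda,p,a,r}[\theta]$ directly, treating it as a ratio of polynomials (after clearing denominators) in the variables $p$ and $a$. The key observation is that the weight of a configuration $\theta = (\psi,\omega)$ is, up to the normalisation $Z^\xi_{\Lambda,p,a,r}$, a monomial of the form $r^{|E_{\psi,\Lambda}|} 2^{k(\theta,\Lambda)} \prod_x (a/(1-a))^{\psi_x} \prod_e (p/(1-p))^{\omega_e}$, and one should compute $\frac{d}{dp}$ and $\frac{d}{da}$ of $\log$ of this weight, which produces clean linear expressions in $\sum_e \omega_e$ and $\sum_x \psi_x$.

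\textbf{Key steps.} First, for a single configuration $\theta$, write $W(\theta) = W(\theta;p,a,r)$ for the unnormalised weight, so that $\varphi^\xi_{\Lambda,p,a,r}[\theta] = W(\theta)/Z$ with $Z = \sum_{\theta'} W(\theta')$. Compute
\begin{equs}
\frac{\partial}{\partial p} \log W(\theta) = \frac{|o(\omega)|}{p(1-p)} = \frac{1}{p(1-p)} \sum_{e \in E_\Lambda} \omega_e,
\end{equs}
using that $\frac{d}{dp}\log\frac{p}{1-p} = \frac{1}{p(1-p)}$ (and noting the $r$-factor and $2^{k}$-factor are $p$-independent in this generalized model where $r$ is an independent parameter). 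Second, apply the quotient rule: for $F = F(p) = \sum_{\theta \in A} W(\theta)$ and $Z = \sum_\theta W(\theta)$,
\begin{equs}
\frac{d}{dp}\frac{F}{Z} = \frac{F' Z - F Z'}{Z^2} = \mathbb{E}[\mathbf{1}_A X] - \mathbb{E}[\mathbf{1}_A]\mathbb{E}[X] = \mathrm{Cov}(\mathbf{1}_A, X),
\end{equs}
where $X = \frac{1}{p(1-p)}\sum_{e\in E_\Lambda} \omega_e$, since $F' = \sum_{\theta\in A} W(\theta) \frac{\partial}{\partial p}\log W(\theta)$ and $Z' = \sum_\theta W(\theta)\frac{\partial}{\partial p}\log W(\theta)$. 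By linearity of covariance, $\mathrm{Cov}(\mathbf{1}_A, X) = \frac{1}{p(1-p)}\sum_{e\in E_\Lambda}\mathrm{Cov}(\mathbf{1}_A, \eta_e)$, which is the claimed formula (identifying $A$ with $\mathbf{1}_A$ as the paper does). Third, repeat verbatim for $a$: $\frac{\partial}{\partial a}\log W(\theta) = \frac{1}{a(1-a)}\sum_{x\in\Lambda}\psi_x$, and the same quotient-rule computation gives the second identity with $\sum_{x\in\Lambda}\mathrm{Cov}(A,\eta_x)$.

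\textbf{Main obstacle.} There is no serious obstacle here — this is a routine computation, which is precisely why the paper states it as ``a standard calculation similar to the case of the classical random cluster model'' and cites \cite[Theorem 3.12]{GGBook}. The only points requiring minor care are: (i) checking that $k(\theta,\Lambda)$ and $|E_{\psi,\Lambda}|$ (equivalently the $r$ and $2$ factors) genuinely do not depend on $p$ or $a$ in this generalized setting — they depend only on the combinatorics of $\theta$, so this is immediate; (ii) ensuring the differentiation commutes with the finite sum defining $\varphi^\xi_{\Lambda,p,a,r}[A]$, which is trivial since $\Lambda$ is finite so there are finitely many configurations and each weight is smooth in $(p,a)$ on $(0,1)^2$; and (iii) being consistent with the sign conventions in $\mathrm{Cov}$. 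I would therefore present the argument compactly, doing the $p$-derivative in detail and noting the $a$-derivative is identical.
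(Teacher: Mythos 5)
Your proof is correct and is precisely the standard calculation the paper cites from Grimmett's book (Theorem 3.12): differentiate the unnormalised weight logarithmically, apply the quotient rule, and identify the result as a covariance. You also rightly flag that in this generalisation the parameter $r$ (and hence the factor $r^{|E_{\psi,\Lambda}|}$) is independent of $p$, which is the one point where the computation differs from the ordinary dilute random cluster measure where $r=\sqrt{1-p}$; the paper's proposition relies on exactly this decoupling.
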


\subsection{Proof of subcritical sharpness}

The main result of this section is the following. 
\begin{thm}\label{sharpness}
Let $a\in (0,1)$. For every $p<p_c(a)$ there exists $c=c(p,a)>0$ such that for every $n\geq 0$,
\begin{equs}
\varphi^1_{\Lambda_n,p,a}[0\longleftrightarrow \partial \Lambda_n]\leq e^{-cn}.
\end{equs}
\end{thm}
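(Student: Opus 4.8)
The plan is to run the Duminil-Copin--Raoufi--Tassion sharpness machinery, adapted to the generalized dilute random cluster measure $\varphi^{\xi}_{\Lambda,p,a,r}$ for a suitable choice of $r$ so that weak monotonicity (Proposition \ref{weakly}) is available, and then transfer the resulting exponential decay back to the physical measure $\varphi^1_{\Lambda_n,p,a}$ where $r=\sqrt{1-p}$. First I would fix $a\in(0,1)$ and work with $r=r(p)$ chosen so that $r\ge \frac{2(1-p)}{2-p}$ on the relevant range of $p$; note that at the physical value $r=\sqrt{1-p}$ one has $\sqrt{1-p}\ge \frac{2(1-p)}{2-p}$ for all $p\in(0,1)$ (equivalently $(2-p)^2\ge 4(1-p)$, i.e.\ $p^2\ge 0$), so in fact the physical measure itself is weakly monotonic and one may simply take $r=\sqrt{1-p}$ throughout. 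Thus Proposition \ref{weakly} applies to $\varphi^1_{\Lambda_n,p,a}$ directly, and Theorem \ref{OSSS} together with Lemma \ref{cov-bound} is available with $\mu=\varphi^1_{\Lambda_n,p,a}$.

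The core of the argument is the standard differential-inequality step. Set $\theta_n(p):=\varphi^1_{\Lambda_n,p,a}[0\longleftrightarrow\partial\Lambda_n]$ and $S_n(p):=\sum_{k=0}^{n-1}\sup_{x}\varphi^1_{p,a}[x\longleftrightarrow\partial\Lambda_k(x)]$ (equivalently $Q_n$ as in Lemma \ref{cov-bound}, up to taking infinite-volume limits and using monotonicity \eqref{eq:MON}). Combining Lemma \ref{cov-bound} with the derivative formula $\frac{d}{dp}\varphi^{\xi}_{\Lambda,p,a,r}[A]=\frac{1}{p(1-p)}\sum_{e\in E_\Lambda}\mathrm{Cov}(A,\eta_e)$ from the last Proposition of Section \ref{sec: subcrit}, one obtains for the event $A=\{0\longleftrightarrow\partial\Lambda_n\}$ a bound of the shape
\begin{equs}
\theta_n'(p)\ \ge\ \frac{c}{Q_n}\,n\,\theta_n(p)(1-\theta_n(p)),
\end{equs}
after absorbing the $\frac{1}{p(1-p)}$ factor and the constant $4d$ into $c$; one must be slightly careful that Lemma \ref{cov-bound} controls $\sum_{d\in D}\mathrm{Cov}$, which includes the vertex terms, but the edge-only sum is bounded below by a constant multiple of the same quantity (or one uses the vertex derivative formula in $a$ in a complementary way, exactly as in \cite{DCRT}). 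A standard analysis of this inequality (the ``$\theta_n$-dichotomy'') then gives: either $S_n(p)$ stays bounded as $n\to\infty$ for all $p$ in an interval, forcing $\theta_n(p)\to 0$ stretched-exponentially and hence, by a bootstrap using finite energy and \eqref{eq:MON} exactly as in the proof of Proposition \ref{prop:quad}, exponentially; or else $\theta(p):=\lim_n\theta_n(p)>0$, which by Corollary \ref{cor: ES correlations}/Proposition \ref{prop: critical pts} means $p\ge p_c(a)$. Running this for $p<p_c(a)$ places us in the first alternative, yielding $\theta_n(p)\le e^{-cn}$.

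The main obstacle, and the place requiring genuine care rather than citation, is twofold: (i) checking that the OSSS/weak-monotonicity framework of Section \ref{sec: OSSS} is robust enough that Lemma \ref{cov-bound}'s covariance lower bound, which is phrased for a general weakly monotonic $\mu$ on a domain $D$, can indeed be fed into the $p$-derivative identity — this is where one needs the admissible decision tree to reveal vertices before edges, matching the structure of $S_\Lambda$ and $U_\Lambda$, so that the decision-tree revealments $\delta_{d_0}$ genuinely control both vertex and edge contributions; and (ii) the passage from finite volume to infinite volume in the definition of $Q_n$, together with the bootstrap from stretched-exponential to exponential decay, which relies on the finite-energy property (Proposition of Section \ref{sec:basic-properties}) and monotonicity in the domain \eqref{eq:MON} in precisely the same way as the argument sketched after Proposition \ref{prop:quad}. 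Neither is conceptually new — the entire scheme is that of \cite{DCRT} — but the bookkeeping around the vertex/edge bipartite structure of the dilute model is the only nontrivial adaptation, and I would present it carefully following Lemma \ref{cov-bound} and the decision tree constructed in its proof.
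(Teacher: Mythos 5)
Your observation that the physical choice $r=\sqrt{1-p}$ satisfies $r\ge\frac{2(1-p)}{2-p}$ (since this reduces to $p^2\ge 0$) is correct, and so the physical dilute random cluster measure is indeed weakly monotonic. But the conclusion you draw from it --- ``one may simply take $r=\sqrt{1-p}$ throughout'' --- breaks the differential-inequality step. The derivative identity you invoke, $\frac{d}{dp}\varphi^{\xi}_{\Lambda,p,a,r}[A]=\frac{1}{p(1-p)}\sum_{e}\textup{Cov}(A,\eta_e)$, is a partial derivative at \emph{fixed} $r$; if instead $r=\sqrt{1-p}$ varies with $p$, a direct computation (differentiate $\log\left(\big(\tfrac{p}{1-p}\big)^{\omega_e}\sqrt{1-p}\right)$) produces an additional term $-\frac{1}{2(1-p)}\sum_e\textup{Cov}\big(A,\1_{e\in E_{\psi,\Lambda}}\big)$, and for increasing $A$ that covariance is nonnegative by FKG, so the extra term is of the \emph{wrong sign} and cannot be absorbed into the lower bound you need. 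This is precisely why the paper introduces the third parameter $r$ and then freezes it at $r_0=\sqrt{1-p_0}$, differentiating only in $(p,a)$.

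There is a second, related gap: Lemma \ref{cov-bound} controls $\sum_{d\in D}\textup{Cov}(A,\eta_d)$, i.e.\ vertices \emph{and} edges, while the $p$-derivative alone produces only the edge sum. Your claim that ``the edge-only sum is bounded below by a constant multiple of the same quantity'' is unsubstantiated and I see no reason it should hold uniformly. The paper resolves this by moving along the specific curve $a(p)=\frac{p}{p+c_0(1-p)}$ (so $a(p_0)=a_0$), which is designed so that $a'(p)=\frac{a(1-a)}{p(1-p)}$; then the chain rule at fixed $r_0$ gives $\theta_k'(p)=\frac{1}{p(1-p)}\sum_{d\in D}\textup{Cov}(\1_{0\leftrightarrow\partial\Lambda_k},\eta_d)$ exactly, matching Lemma \ref{cov-bound}. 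You also do not address the final transfer step: the family $\varphi^1_{\Lambda_{2n},p,a(p),r_0}$ is not a physical dilute random cluster measure except at $p=p_0$, so after extracting the sharp threshold $p_1$ from the DCRT differential inequality one must still show $p_1>p_0$ --- the paper does this by a stochastic domination argument comparing the artificial measure at some $p''>p_0$ against the physical subcritical measure $\varphi^1_{p',a_0}$ with $p_0<p'<p_c(a_0)$. Without these three ingredients the scheme does not close.
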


Before proving this result, we use it to prove Theorem~\ref{thm: exp dec}. 

\begin{proof}[Proof of Theorem~\ref{thm: exp dec} assuming Theorem \ref{sharpness}]
For $x\in \ZZ^d$ with $\lVert x \rVert_{\infty}=n$, we have 
\begin{equs}
\langle \sigma_0 \sigma_x \rangle^+_{\beta,\Delta}\leq \varphi^1_{p,a}[0\longleftrightarrow \partial \Lambda_n] \leq  \varphi^1_{\Lambda_n,p,a}[0\longleftrightarrow \partial \Lambda_n],
\end{equs}
by the Edwards-Sokal coupling and \eqref{eq:MON}. The assertion follows from Theorem~\ref{sharpness}.
\end{proof}

We are now ready to prove Theorem~\ref{sharpness}.
\begin{proof}[Proof of Theorem~\ref{sharpness}]
Let $a_0\in (0,1)$ and consider $p_0<p_c(a_0)$. We prove the assertion for the pair $(p_0,a_0)$.

Define $c_0=\frac{p_0(1-a_0)}{(1-p_0)a_0}$. For $n\geq 1$, $p\in (0,1)$, $a(p)=\frac{p}{p+c_0(1-p)}$, and $r_0=\sqrt{1-p_0}$, define 
\begin{equs}
\mu_n:=\varphi_{\Lambda_{2n},p,a(p),r_0}^1,\quad \theta_k(p):=\mu_k[0\longleftrightarrow \partial\Lambda_k],\quad \text{and} \quad S_n:=\sum_{k=0}^{n-1}\theta_k.
\end{equs}
Note that there exists $\varepsilon>0$ such that $r_0\geq \frac{2(1-p)}{2-p}$ for every $p\geq p_0-\varepsilon$, hence $\mu_n$ is weakly monotonic for every $p\geq p_0-\varepsilon$ by Proposition~\ref{weakly}. 
Note also that $a(p_0)=a_0$ and 
\begin{equs}
a'(p)=\frac{a(p)(1-a(p))}{p(1-p)},  
\end{equs}
hence
\begin{equs}
 \theta_k'(p)=&\frac{1}{p(1-p)}\sum_{e\in E_{\Lambda}}\textup{Cov}(\1_{0\longleftrightarrow \partial \Lambda_k},\omega_e)+\frac{a'(p)}{a(p)(1-a(p))}\sum_{x\in \Lambda}\textup{Cov}(\1_{0\longleftrightarrow \partial \Lambda_k},\psi_x)\\ =& \frac{1}{p(1-p)}\sum_{d\in D}\textup{Cov}(\1_{0\longleftrightarrow \partial \Lambda_k},\eta_d)\geq  \sum_{d\in D}\textup{Cov}(\1_{0\longleftrightarrow \partial \Lambda_k},\eta_d).   
\end{equs}
Now, the comparison between boundary conditions together with the fact that, for $k \leq n/2$, $\Lambda_{2k}(x)\subset\Lambda_{2n}$, imply that for $x\in\Lambda_n$,
\begin{equs}
\sum_{k=1}^{n-1}\mu_n[x\longleftrightarrow\partial\Lambda_k(x)]\leq 2\sum_{k\leq n/2}\mu_n[x\longleftrightarrow\partial\Lambda_k(x)] \leq 2\sum_{k\leq n/2}\mu_k[0\longleftrightarrow\partial\Lambda_k]\leq 2S_n.
\end{equs}
By Lemma~\ref{cov-bound}, there exists a constant $t>0$ such that
\begin{equs}
 \theta_n'(p)\geq t\frac{n}{S_n}\theta_n(p)
\end{equs}
for $p\leq 1-\varepsilon$, where we used that $1-\theta_n(p)\geq 1-\theta_1(p)$, and the latter is bounded away from $0$. Using Lemma 3.1 in \cite{DCRT} for $f_n=\theta_n/t$, we obtain the existence of a critical point $p_1>0$ such that the following holds: for every $p\in (p_0-\varepsilon,p_1)$ there exists $c=c(p)>0$ such that
\begin{equs}
\theta_n(p)\leq e^{-cn} \text{ for every } n\geq 0,
\end{equs}
and for every $p\in (p_1,1-\varepsilon)$,
\begin{equs} 
 \varphi^1_{p,a(p),r_0}[0\longleftrightarrow\infty]\geq t(p-p_1).
\end{equs}

Note that $\mu_n[0\longleftrightarrow\partial \Lambda_{2n}]\leq \theta_n(p)$, and $\varphi_{\Lambda_{n},p,a(p),r_0}^1[0\longleftrightarrow\partial \Lambda_{n}]$ is decreasing in $n$, hence $\varphi_{\Lambda_{n},p,a(p),r_0}^1[0\longleftrightarrow\partial \Lambda_{n}]$ decays exponentially for every $p<p_1$. To conclude it suffices to show that $p_1>p_0$.
To this end, consider some $p_0<p'<p_c(a_0)$. We wish to show that $\varphi_{p',a_0}^1$ stochastically dominates $\varphi_{p'',a(p''),r_0}^1$ for some $p''>p_0$. This implies that either $p''$ is subcritical or critical along the $(p, a(p))$ line, i.e.\ under the trajectory of measures $\{ \varphi^1_{p,a(p),r_0} : p \in [0,1]\}$. Hence, by the strict inequality, we have that $p_0$ is subcritical along the $(p,a(p))$ line. This, by the above, means that $p_0 < p_1$, as desired. In order to obtain the existence of such a $p''$, we may argue as in the proof of Proposition~\ref{prop:stochastic-domination-2}. Let $\Lambda'$ denote the graph of a single edge and let $\xi$ be any boundary condition. For every non-empty increasing event $A$,  we have that
\begin{equs}
    \varphi^{\xi}_{\Lambda',p',a_0}[A]> \varphi^{\xi}_{\Lambda',p'',a(p''),r_0}[A]
\end{equs}
for some $p''>p_0$. 
since the strict monotonicity of the function $p\mapsto \varphi^{\xi}_{\Lambda',p,a_0}[A]$ implies that
\begin{equs}
    \varphi^{\xi}_{\Lambda',p',a_0}[A]> \varphi^{\xi}_{\Lambda',p_0,a_0}[A],
\end{equs}
and $\varphi^{\xi}_{\Lambda',p,a,r_0}[A]$ is continuous as a function of $a$ and $p$. The desired result is obtained by proceeding as in the proof of Proposition \ref{prop:stochastic-domination-2}.
\end{proof}

\section{Further analysis of critical behaviour in dimension $2$}\label{sec: further}

In this section, we prove Theorem~\ref{thm: truncated}. The statements {\bf (OffCrit)}, {\bf (Discont)}, {\bf (Cont)}, and {\bf (TriCrit)} are almost direct consequences of the quadrichotomy in Proposition \ref{prop:quad}. We therefore first develop the tools to establish {\bf (Perc)} before writing the proof. Let us introduce the following notions. We say that the $\{0,-\}$ spins percolate if with positive probability there is an infinite connected component $\mathscr{C}\subset \Z^2$ such that $\sigma_x\in \{0,-\}$ for every $x\in \mathscr{C}$. Define
\begin{equs}
\cL_c
=
\{ (\Delta,\beta_c(\Delta)) : \{0,-\} \text{ spins do not percolate under } \mu^0\}. 	
\end{equs}

\subsection{A geometric argument for continuity}

We start by showing the first direction of {\bf (Perc)}. Let us first recall some standard facts. Consider a finite subset $A$ of $\ZZ^2$, which induces a connected graph. Let $C$ be the set of vertices $x\in A$ for which there exists an adjacent vertex $y\in \ZZ^d\setminus A$ such that $y$ can be connected to infinity in $\ZZ^d\setminus A$. In general, $C$ is not connected in $\ZZ^2$, but it is a {\it *-connected circuit}. The latter means that $C$ can be represented as a walk $w=(w(0),w(1),\ldots, w(n))$ such that $\lVert w(i)-w(i+1)\rVert_{\infty}=1$, $w(0)=w(n)$, and otherwise $w(i)\neq w(j)$ for $i\neq j$. Given a *-connected circuit $C$ surrounding $0$, we write $C^{\rm int}$ for the union of $C$ with the connected component of $\ZZ^2\setminus C$ that contains $0$.

We now prove the following fact, which is a modification of a geometric argument for the planar Ising model, see \cite{W09}.
\begin{prop}\label{prop:0-}
For every $(\Delta, \beta_c(\Delta)) \in \cL_c$, $\langle \sigma_0 \rangle^+_{\beta_c(\Delta),\Delta}=0$.
\end{prop}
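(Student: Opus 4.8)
The statement asserts that if $\{0,-\}$ spins do not percolate under the free measure $\mu^0_{\beta_c(\Delta),\Delta}$, then the phase transition at $\Delta$ is continuous, i.e.\ $\langle \sigma_0 \rangle^+_{\beta_c(\Delta),\Delta}=0$. The plan is to argue by contradiction: suppose $\langle \sigma_0 \rangle^+_{\beta_c(\Delta),\Delta}>0$. By Proposition~\ref{correspondence}, this means {\bf (DiscontCrit)} occurs for the dilute random cluster model at $p=p_c(a)$, and hence (by Lemma~\ref{dual-circuit}) the dual circuit event $\mathcal{C}^*_n$ has probability $\geq 1-e^{-tn}$ under $\varphi^0$. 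The goal is to convert this exponential abundance of dual circuits into a contradiction with the non-percolation hypothesis, following Werner's strategy.

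\textbf{Key steps.} First I would translate the dual circuit statement into the spin picture via the Edwards--Sokal coupling. Under $\mu^0$, a dual open circuit in $\omega^*$ surrounding the origin is closely related to a $\ast$-connected circuit of vertices where the spin is \emph{not} the ``majority'' sign; more precisely, if one conditions on the sign of the unique infinite cluster (or rather, works with the symmetry $\sigma\mapsto-\sigma$ of $\mu^0$), a dual circuit witnesses a $\ast$-connected circuit of $\{0\}\cup\{\text{minority sign}\}$ spins around $0$. The precise deterministic fact I would invoke is the one recalled just before the statement: for a finite connected $A$ containing $0$, the ``outer boundary'' $C$ of $A$ is a $\ast$-connected circuit, and I would use this in reverse, taking $A=C^{\mathrm{int}}$ for a suitable minority-spin circuit. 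Second, I would set up a Peierls-type / renormalization resummation: from $\varphi^0[\mathcal{C}^*_n]\geq 1-e^{-tn}$ for all $n$, by Borel--Cantelli along scales $\Lambda_{2^k}\setminus\Lambda_{2^{k-1}}$ one gets, $\mu^0$-almost surely, infinitely many nested $\ast$-connected circuits of $\{0,-\}$ spins (or $\{0,+\}$ by symmetry) around the origin. Third, since these circuits are nested and each is a connected (in the $\ast$-sense) set of $\{0,-\}$ spins, a standard diagonal/limiting argument produces an infinite $\ast$-connected set of $\{0,-\}$ spins: take one vertex on each circuit, or more carefully use that adjacent-scale circuits must touch (a circuit at scale $2^{k}$ and one at scale $2^{k+1}$ are linked through the annulus so their union is $\ast$-connected) to build an infinite $\ast$-connected cluster. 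Because $\ast$-percolation of $\{0,-\}$ is equivalent to ordinary percolation of $\{0,-\}$ on $\ZZ^2$ with diagonals, this contradicts the hypothesis that $\{0,-\}$ spins do not percolate under $\mu^0$. Hence $\langle\sigma_0\rangle^+_{\beta_c(\Delta),\Delta}=0$.

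\textbf{Main obstacle.} The delicate point is the precise dictionary between dual open circuits in the random-cluster configuration and $\ast$-connected circuits of minority/zero spins, together with keeping track of which sign is ``minority''. Under $\mu^0$ there is an exact $\pm$ symmetry, so one has to be careful: a dual circuit in $\omega^*$ separates the plane into two pieces, and on it the primal edges are closed, meaning spins may differ across it; it is not literally true that every vertex on the dual circuit has spin $0$ or a fixed minority sign. The correct statement is that the dual circuit, being made of closed edges, forces the interior cluster configuration to be ``disconnected from infinity'' in $\omega$, and then one uses that the union of $\{0\}$-vertices with vertices whose spin sign disagrees with the dominant sign of the exterior must contain a $\ast$-connected circuit surrounding $0$ (this is exactly the deterministic boundary fact recalled above, applied to $A$ = the $\omega$-cluster of a dual-circuit-enclosed region, or its complement). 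Making this combinatorial-topological correspondence airtight — including the case analysis on signs and ensuring the infinitely many circuits can be concatenated into one infinite $\ast$-cluster of $\{0,-\}$ spins (as opposed to $\{0,+\}$) — is where the real work lies; the probabilistic input (exponential decay of $\mathcal{C}_n^c$) is essentially already in hand from Lemma~\ref{dual-circuit}. I expect the argument to mirror \cite{W09} closely once this dictionary is fixed, with the $\{0,-\}$ versus $\{0,+\}$ ambiguity resolved by the global $\pm$ symmetry of $\mu^0$ (so that non-percolation of one minority color is equivalent to non-percolation of the other).
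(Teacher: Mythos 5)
Your proposal goes by contraposition (discontinuity $\Rightarrow$ percolation of $\{0,-\}$) through the random-cluster quadrichotomy and Lemma~\ref{dual-circuit}, whereas the paper's proof is the direct Werner conditioning argument and needs none of that machinery: on the event $\mathcal{A}_n$ that $0$ is not joined to $\partial\Lambda_n$ by a $\{0,-\}$-path, planar duality gives a $*$-connected circuit of $+$ spins around $0$; conditioning on the outermost such circuit $\cC_n$, the spatial Markov property and monotonicity in the domain yield $\langle \sigma_0 \1_{\mathcal{A}_n}\rangle^0 \geq \langle\sigma_0\rangle^+ \mu^0[\mathcal{A}_n]$, and since non-percolation of $\{0,-\}$ forces $\mu^0[\mathcal{A}_n]\to 1$ while $\langle\sigma_0\rangle^0=0$ by symmetry, one gets $\langle\sigma_0\rangle^+\leq 0$. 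Your route, even if completed, would be strictly harder, and as sketched it has genuine gaps.

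The first and most serious gap is the dictionary you flag yourself: a dual open circuit in $\omega^*$ only asserts that the primal edges crossing it are closed. In the Edwards--Sokal coupling the $\omega$-clusters separated by that circuit receive \emph{independent} uniform signs, so every vertex along and inside the circuit may perfectly well carry spin $+$; FK dual circuits are not low-temperature contours and carry no information about minority spins. The proposed repair (``the union of $0$-vertices with vertices disagreeing with the dominant exterior sign contains a $*$-circuit'') presupposes a dominant sign, which $\mu^0$ does not have without first proving an extremal decomposition. Second, even granting circuits of $\{0,-\}$ spins at every dyadic scale, nested disjoint circuits need not touch, so Borel--Cantelli alone does not produce one infinite connected set; you would need radial $\{0,-\}$-connections between consecutive annuli, which is an additional probabilistic input. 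Third, and fatally for the intended contradiction, your construction produces at best a $*$-connected infinite set of $\{0,-\}$ spins, while membership in $\cL_c$ is defined via \emph{ordinary} (nearest-neighbour) percolation of $\{0,-\}$; since $*$-connectivity is the weaker notion, $*$-percolation of $\{0,-\}$ is compatible with non-percolation of $\{0,-\}$, so no contradiction is reached. The duality runs the other way, exactly as the paper uses it: absence of an ordinary $\{0,-\}$-path forces a $*$-circuit of $+$ spins.
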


\begin{proof}
We fix some $(\Delta,\beta_c(\Delta))\in \cL_c$ and drop it from the notation. Let $\mathcal{A}_n$ be the event that there is no path $\Pi$ connecting $0$ to $\partial \Lambda_n$ such that $\sigma_x\in \{0,-\}$ for every vertex $x\in \Pi$. On this event, there is a *-connected circuit of $\{+\}$ spins surrounding $0$ which is contained in $\Lambda_n$. Let $\cC_n$ denote the outermost such circuit in $\Lambda_n$. Then, for every $n\geq 1$
\begin{equs}
\langle \sigma_0 \rangle^0
&=
\langle \sigma_0 \1_{\sigma \notin \mathcal{A}_n} \rangle^0 + \langle \sigma_0 \1_{\sigma \in \mathcal{A}_n} \rangle^0
=
\langle \sigma_0 \1_{\sigma \notin \mathcal{A}_n} \rangle^0  + \sum_C \langle \sigma_0 \1_{\cC_n=C} \rangle^0
\\
&=
\langle \sigma_0 \1_{\sigma \notin \mathcal{A}_n} \rangle^0  + \sum_C \langle \sigma_0 \rangle_{C^{\rm int}}^+ \mu^0[\cC_n=C]
\geq
\langle \sigma_0 \1_{\sigma \notin \mathcal{A}_n} \rangle^0  + \langle \sigma_0 \rangle^+ \mu^0[\mathcal{A}_n].
\end{equs}
In the second line we have partitioned the event $\mathcal{A}_n$ over all possibilities for $\cC_n$. In the third line we have used the spatial Markov property and the fact that the event $\{\cC_n=C\}$ is measurable with respect to the configuration on $C\cup (\ZZ^d\setminus C^{\rm int})$. In the final line we have used the monotonicity of the $+$ boundary conditions in the domain.

Taking limits as $n \rightarrow \infty$ and using that $\mu^0[\mathcal{A}_n]$ converges to $1$, we obtain that
$0 =\langle \sigma_0 \rangle^0	\geq\langle \sigma_0 \rangle^+$.
Since $\langle \sigma_0 \rangle^+\geq 0$, the desired assertion follows.
\end{proof}

\subsection{Infinite cluster and crossing probabilities}

Our aim now is to relate the existence of an infinite $\{0,-\}$ cluster with crossing probabilities, from which {\bf (Perc)} will follow. In order to do so, we first need to show that $\mu^0$ is mixing at the critical line. We remark that $\mu^0$ is not mixing when $\varphi^0[0 \longleftrightarrow \infty]>0$, since there is probability $1/2$ that there is an infinite cluster of $\{+\}$ spins, and probability $1/2$ that there is an infinite cluster of $\{-\}$ spins. For this reason, our proof utilises that $\varphi^0[0 \longleftrightarrow \infty]=0$ at the critical line (this holds thanks to Propositions \ref{cor:3} and \ref{cor:a}).

Let us introduce the following definition. For every $x\in \ZZ^2$, we define $\mathscr{C}_x=\{x\}\cup\{y\in \ZZ^2 :  x \text{ is connected to }  y \text{ by an open path in } \omega \}$ to be the cluster of $x$ in a configuration $(\psi,\omega)$. Note that $x\in \mathscr{C}_x$ even if $\psi_x=0$.

\begin{lem}\label{lem: varphi0 no inf cluster}
Let $\beta>0, \Delta\in \RR$ and let $p=1-e^{-2\beta}$, $a=\frac{e^{\Delta}}{1+e^{\Delta}}$. Assume that 
$\varphi^0_{p,a}[0 \longleftrightarrow \infty]=0$.
Then $\mu^0_{\beta,\Delta}$ is mixing, i.e.\ for all events $A,B$ 
\begin{equs}
   \lim_{\|x\|_\infty\rightarrow \infty} \mu^0_{\beta,\Delta}[A \cap \tau_x B]
   =
   \mu^0_{\beta,\Delta}[A] \mu^0_{\beta,\Delta}[B].
\end{equs}
\end{lem}

\begin{proof}
We fix $\beta,\Delta,p$ and $a$ and drop them from the notation. It suffices to show that $$\lim_{\|x\|_\infty \to\infty}\mu^0[A\cap\tau_x B]=\mu^0[A] \: \mu^0[B]$$
for all events $A, B$ depending on the spins $\sigma_x$, $x\in \Lambda_k$ for some $k\geq 1$. 

To this end, let us define $\mathscr{C}_k=\bigcup_{x\in \Lambda_k}\mathscr{C}_x$. Since $\varphi^0[0 \longleftrightarrow \infty]=0$, $\mathscr{C}_k$ is finite almost surely. Note that $\mathscr{C}_k$ spans a connected subgraph of $\ZZ^2$ (since $x \in \mathscr{C}_x$), hence almost surely there is dual circuit surrounding $\Lambda_k$.

Fix $\varepsilon>0$ and let $n>k$ be such that $\varphi^0[\mathcal{C}_{k,n}]\geq 1-\varepsilon$, where $\mathcal{C}_{k,n}$ denotes the event that there is a dual circuit surrounding $\Lambda_k$ which is contained in $\Lambda_n$. For every $x\in \Z^2$, we have 
\begin{equs}
\P^0[A \cap \cC_{k,n} \cap \tau_x B \cap \tau_x\cC_{k,n}]\leq \mu^0[A\cap \tau_x B]
\leq \P^0[A\cap \cC_{k,n} \cap \tau_x B\cap \tau_x\cC_{k,n}]+2\varepsilon.
\end{equs}
Above, $\P^0$ is the infinite volume coupling measure arising in the Edwards-Sokal coupling\footnote{We defined these measures in finite volume. However, one can define the infinite volume coupling (there is no ambiguity since there is no infinite cluster).}.
Consider some $\Vert x \Vert_{\infty}\geq 2n+1$, and a pair of configurations $\theta'=(\psi',\omega'),\theta''=(\psi'',\omega'') \in \Theta_{\Lambda_n}$, i.e.\ living on $\Lambda_n$, such that $\theta',\theta''\in\cC_{k,n}$. For ease of notation, we write $\theta'$ and $\tau_x \theta''$ to denote the events $\{ \theta|_{\Lambda_n} = \theta'\}$ and $\{ \theta|_{\tau_x \Lambda_n} = \tau_x \theta''\}$, respectively. Then
\begin{equs}
\P^0\left[A\cap \tau_x B \cap \theta'\cap\tau_x\theta''\right]
=
\varphi^0\left[\theta'\cap\tau_x\theta''\right]\P^0[A\mid\theta'] \, \P^0[B \mid \theta''].
\end{equs}
because conditioned on $\theta'$ and $\tau_x \theta''$, the spins on the vertices of $\Lambda_k$ and $\tau_x \Lambda_k$ are assigned independently, and the assignment is measurable with respect to $\theta'$ in the first case and with respect to $\theta''$ in the second case, due to the existence of the dual circuits.
By the ergodicity of $\varphi^0$, $\varphi^0[\theta'\cap\tau_x \theta'']$ converges to $\varphi^0[\theta'] \,\varphi^0[\theta'']$ as $\|x\|_\infty$ tends to infinity, hence summing over all possible $\theta$ and $\theta'$ we obtain 
\begin{equs}
\lim_{\|x\|_\infty \to\infty} \P^0[A\cap \cC_{k+1,n} \cap \tau_x B \cap \tau_x \cC_{k+1,n}]= \P^0[A\cap \cC_{k+1,n}] \, \P^0[B\cap \cC_{k+1,n}].
\end{equs}
Sending $n$ to infinity and $\varepsilon$ to $0$, we obtain that $\P^0[A\cap \cC_{k+1,n}]$ converges to $\mu^0[A]$, and $\P^0[B\cap \cC_{k+1,n}]$ converges to $\mu^0[B]$. The desired assertion follows readily.
\end{proof}

As a corollary of the above lemma and Propositions~\ref{cor:3} and \ref{cor:a} we obtain the following.

\begin{cor}\label{free mixing}
Let $\Delta\in \RR$. Then $\mu^0_{\beta_c(\Delta),\Delta}$ is mixing.
\end{cor}

Let $\mathcal{N}_{0,-}$ be the number of infinite clusters of $\{0,-\}$ spins. Since $\mu^0$ satisfies the finite energy property and the FKG inequality, a Burton-Keane argument implies the following.

\begin{lem}\label{Burton-Keane}
Let $\Delta\in \RR$ and $\beta>0$. Assume that $\mu^0_{\beta,\Delta}$ is ergodic. Then either $\mu^0_{\beta,\Delta}[\mathcal{N}_{0,-}=0]=1$ or $\mu^0_{\beta,\Delta}[\mathcal{N}_{0,-}=1]=1$.
\end{lem}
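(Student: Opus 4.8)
The plan is to run the standard Burton--Keane uniqueness-of-infinite-cluster argument for the dependent site percolation given by the $\{0,-\}$ spins under $\mu^0_{\beta,\Delta}$. The hypotheses we need are exactly the ones the ambient measure supplies: ergodicity (assumed), translation invariance (inherited from $\mu^0$, which is a limit of measures with symmetric boundary conditions on $\Lambda_n\Uparrow\ZZ^2$), the finite energy property, and positive association (FKG). I would first record that the event $\{0,-\}$-percolation is measurable with respect to the spin configuration and translation invariant, so by ergodicity $\mu^0_{\beta,\Delta}[\mathcal N_{0,-}\geq 1]\in\{0,1\}$; if this probability is $0$ we are in the first alternative and there is nothing more to prove. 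So assume $\mu^0_{\beta,\Delta}$-a.s.\ there is at least one infinite $\{0,-\}$ cluster.

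Next I would rule out $\mathcal N_{0,-}=k$ for finite $k\geq 2$. By ergodicity $\mathcal N_{0,-}$ is a.s.\ constant, so suppose it equals some finite $k\geq 2$ a.s. Pick a box $\Lambda_n$ large enough that with positive probability it meets at least two distinct infinite clusters. The finite energy property for $\mu^0$ (which follows from the corresponding finite energy statement in the dilute random cluster picture, or directly from the Blume--Capel Hamiltonian being a finite-range bounded interaction: conditionally on the configuration outside $\Lambda_n$, each spin in $\Lambda_n$ takes each of the values $\{-1,0,1\}$ with probability bounded below uniformly) lets us modify the spins inside $\Lambda_n$ to all equal $0$ at a bounded multiplicative cost; this merges those clusters, producing with positive probability a configuration with strictly fewer than $k$ infinite clusters, contradicting a.s.\ constancy of $\mathcal N_{0,-}$. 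Hence $\mathcal N_{0,-}\in\{1,\infty\}$ a.s.

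The remaining, and main, step is to exclude $\mathcal N_{0,-}=\infty$. Here one uses the classical Burton--Keane trifurcation argument. Call $x$ a trifurcation point (for the $\{0,-\}$ configuration) if $x$ lies in an infinite cluster and deleting $x$ (i.e.\ resetting $\sigma_x$ to $+1$, which in this setting removes $x$ from the $\{0,-\}$ percolation) splits that cluster into at least three distinct infinite pieces. If $\mathcal N_{0,-}=\infty$ with positive probability, then by finite energy (again: with positive conditional probability we can, inside a fixed box $\Lambda_n$, reroute three of the infinitely many infinite clusters so that they all pass through and meet only at the centre of $\Lambda_n$, and set the remaining spins in $\Lambda_n$ to $+1$ to keep them separated) the probability that a given vertex is a trifurcation point is bounded below by a positive constant $\delta>0$. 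By translation invariance the expected number of trifurcation points in $\Lambda_N$ is at least $\delta\lvert\Lambda_N\rvert$. On the other hand, a purely combinatorial/topological lemma (the Burton--Keane counting argument, using planarity or just the tree structure of trifurcations) shows that the number of trifurcation points inside $\Lambda_N$ is at most $C\lvert\partial\Lambda_N\rvert = O(N)$, since distinct trifurcation points can be associated injectively to disjoint boundary arcs. Comparing $\delta N^2 \lesssim N$ gives a contradiction for large $N$, so $\mathcal N_{0,-}=\infty$ is impossible and $\mathcal N_{0,-}=1$ a.s.

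I expect the main obstacle to be bookkeeping in the finite-energy surgery steps rather than anything conceptual: one must check that the $\{0,-\}$ cluster structure behaves well under flipping a single spin to $+1$ (it genuinely removes that vertex from the percolation, so ``deleting a vertex'' is legitimate), and that finite energy holds for $\mu^0_{\beta,\Delta}$ with a uniform lower bound (this is immediate since the Blume--Capel interaction is finite-range and bounded, or can be quoted from the finite-energy property of the associated dilute random cluster measure together with the Edwards--Sokal coupling). Given these, the trifurcation count and the contradiction are entirely standard, so I would present the argument by citing the Burton--Keane scheme (e.g.\ as in \cite[Chapter 8]{FV} or the original reference) and only spelling out the two finite-energy surgeries specific to the $\{-1,0,1\}$-valued setting.
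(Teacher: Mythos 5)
Your proof is correct and follows the same route the paper takes: the paper dispenses with this lemma in a single sentence ("Since $\mu^0$ satisfies the finite energy property and the FKG inequality, a Burton-Keane argument implies the following"), and your proposal is precisely that Burton-Keane argument unwound — ergodicity giving a.s.\ constancy of $\mathcal N_{0,-}$, finite-energy surgery (setting a box to $0$) ruling out finite $k\geq2$, and the trifurcation-count contradiction ruling out $\infty$. The only cosmetic remark is that, as you implicitly observe, FKG is not actually invoked anywhere in the classical Burton--Keane scheme; the load-bearing hypotheses are translation invariance, ergodicity, and finite energy, all of which you supply.
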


Let $H^{0,-}_n$ be the event that there is a horizontal crossing $\gamma$ in $\Lambda_n$ such that $\sigma_x\in \{0,-\}$ for every vertex $x\in \gamma$. Define $V^{0,-}_n$, $H^+_n$ and $V^+_n$ analogously.
The following result tells us that when $\mu^0_{\beta,\Delta}$ is ergodic and there is a unique infinite cluster of $\{0,-\}$ spins, crossing probabilities go to $1$. It is an adaptation of Zhang's argument and its proof is given in \cite[Proposition 2.1]{HDC-IP}.

\begin{lem}\label{cross_1}
Let $\Delta\in \RR$ and $\beta>0$. Assume that  $\mu^0_{\beta,\Delta}[\mathcal{N}_{0,-}= 1]=1$. Then 
\begin{equs}
\lim_{n\to\infty}\mu_{\beta,\Delta}^0[H^{0,-}_n]=1.
\end{equs}
\end{lem}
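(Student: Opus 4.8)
\textbf{Proof plan for Lemma~\ref{cross_1}.}

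The plan is to adapt Zhang's symmetry argument exactly as in \cite[Proposition 2.1]{HDC-IP}, using the ergodicity of $\mu^0_{\beta,\Delta}$ (established, at the critical line, by Corollary~\ref{free mixing}, and otherwise assumed via the hypothesis $\mu^0[\mathcal{N}_{0,-}=1]=1$ together with the Burton--Keane framework of Lemma~\ref{Burton-Keane}), the finite energy property, and the FKG inequality. First I would argue by contradiction: suppose that $\mu^0_{\beta,\Delta}[H^{0,-}_n]$ does not tend to $1$. Since $H^{0,-}_n$, $V^{0,-}_n$, $H^+_n$, $V^+_n$ are increasing events (in the appropriate partial orders on $\{0,-\}$- and $\{+\}$-connections) and $\mu^0$ is invariant under the $\pi/2$ rotation and the reflections of $\ZZ^2$, all four events have the same probability; by the square-root trick (a standard consequence of FKG), if $\mu^0[H^{0,-}_n]$ stays bounded away from $1$ along a subsequence, then each of the four one-sided ``crossing from a side to itself'' events for both $\{0,-\}$ and $\{+\}$ spins has probability bounded away from $1$ along that subsequence — and hence, again by FKG and a union bound, with probability bounded below there is simultaneously a $\{+\}$-circuit in an annulus blocking $\{0,-\}$-crossings and (in a dual/disjoint region) a $\{0,-\}$-circuit blocking $\{+\}$-crossings.

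The key combinatorial input, which is the heart of Zhang's argument, is the following: on the event that in the box $\Lambda_n$ there are both a $\{+\}$-crossing in the top-bottom direction and a $\{0,-\}$-crossing in the left-right direction (which must then intersect or, in the $*$-connectivity bookkeeping, cross each other), one derives a contradiction with the uniqueness of the infinite cluster. Concretely, I would use the box $\Lambda_n$ together with four translated/reflected copies glued around it (as in \cite{HDC-IP}), so that four $\{0,-\}$-crossings of long thin rectangles, each occurring with probability bounded away from $0$, can be glued by FKG into a circuit of $\{0,-\}$-spins surrounding $\Lambda_n$; simultaneously one produces, with probability bounded away from $0$, a $\{+\}$-circuit surrounding the same box. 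Two such surrounding circuits of opposite type cannot coexist. Pushing $n\to\infty$ and using that the relevant probabilities stay bounded below along the subsequence forces, with positive probability, the coexistence of an infinite $\{+\}$-cluster and an infinite $\{0,-\}$-cluster, each separating the other — contradicting $\mu^0[\mathcal{N}_{0,-}=1]=1$ (and the analogous statement for $\{+\}$, which follows by symmetry of the spin flip $+\leftrightarrow-$ composed with the observation that $\{0,-\}$-percolation and $\{+\}$-$*$-percolation are dual in the planar sense).

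Thus the steps, in order, are: (i) record that $\mu^0_{\beta,\Delta}$ is ergodic and has the finite energy and FKG properties, so Lemma~\ref{Burton-Keane} applies and, under the hypothesis, $\mathcal{N}_{0,-}=1$ a.s.; (ii) set up the symmetry of $H^{0,-}_n, V^{0,-}_n, H^+_n, V^+_n$ under lattice isometries; (iii) assume $\liminf_n \mu^0[H^{0,-}_n]<1$ and apply the square-root trick to get lower bounds bounded away from $0$ on one-sided crossing events for both spin types along a subsequence; (iv) glue, via FKG, thin-rectangle crossings into surrounding circuits of each type, as in Zhang's construction; (v) conclude that with positive probability both an infinite $\{+\}$-cluster and an infinite $\{0,-\}$-cluster exist and mutually separate, contradicting uniqueness; (vi) hence $\mu^0[H^{0,-}_n]\to 1$. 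The main obstacle — though it is essentially bookkeeping rather than a new idea — is step (iv)–(v): one must be careful about the interface geometry, since $\{0,-\}$-connectivity in $\ZZ^2$ is the planar dual of $\{+\}$-$*$-connectivity, so the circuits of one type and blocking paths of the other live on slightly shifted graphs, and the gluing must respect this. This is precisely the adaptation that \cite[Proposition 2.1]{HDC-IP} carries out for the nearest-neighbour planar Ising model, and since the only properties used are ergodicity, finite energy, FKG, and lattice symmetry — all of which hold here for $\mu^0_{\beta,\Delta}$ — the argument transfers verbatim.
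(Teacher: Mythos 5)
Your high-level plan coincides with the paper's: the paper gives no proof of Lemma~\ref{cross_1} beyond the citation to \cite[Proposition 2.1]{HDC-IP} and the remark that it is an adaptation of Zhang's argument, and you propose exactly the same route using ergodicity (via Corollary~\ref{free mixing}), the finite energy and FKG properties, and Burton--Keane (Lemma~\ref{Burton-Keane}). So in that sense the approach matches.

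However, your detailed reconstruction contains an error that would break the argument if carried out as written. In step (ii) you assert that $H^{0,-}_n$, $V^{0,-}_n$, $H^+_n$, $V^+_n$ ``all have the same probability'' because $\mu^0$ is invariant under the $\pi/2$ rotation and the reflections of $\ZZ^2$. Lattice isometries do give $\mu^0[H^{0,-}_n]=\mu^0[V^{0,-}_n]$ and $\mu^0[H^{+}_n]=\mu^0[V^{+}_n]$, but there is no symmetry of the Blume--Capel model under $\mu^0$ that exchanges the spin class $\{0,-\}$ with the spin class $\{+\}$: the spin flip $+\leftrightarrow-$ sends $\{0,-\}$ to $\{0,+\}$, not to $\{+\}$. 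Hence $\mu^0[H^{0,-}_n]\neq\mu^0[H^{+}_n]$ in general. For the same reason your later justification of uniqueness of the infinite $\{+\}$-$*$-cluster ``by symmetry of the spin flip $+\leftrightarrow-$'' does not hold. What Zhang's argument actually uses here is the purely \emph{topological} planar duality between $\{0,-\}$-crossings and $\{+\}$-$*$-crossings (a $\{0,-\}$-horizontal crossing of $\Lambda_n$ fails if and only if a $\{+\}$-$*$-vertical crossing occurs), which is not a distributional equality; and if one needs uniqueness of the infinite $\{+\}$-$*$-cluster, it must be obtained separately, e.g.\ by applying Burton--Keane directly to $\{+\}$-$*$-percolation under $\mu^0$ (legitimate because $\mu^0$ has finite energy, FKG, and ergodicity for $\{+\}$-$*$-events as well). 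With these corrections the sketch lines up with the intended Zhang argument, but as stated the symmetry-based equalities are false and the contradiction you describe is not reached.
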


\subsection{Proof of Theorem~\ref{thm: truncated}}

\begin{proof}[Proof of Theorem~\ref{thm: truncated}]
Since $\beta,\Delta$ are fixed, we drop them from the notation. Note that x is a standard consequence of the renormalisation inequalities of Lemma \ref{lem: renormalisation inequality for strip densities}. We first establish {\bf (OffCrit)} and {\bf (Discont)}. For $\beta<\beta_c(\Delta)$, we have $\langle \sigma_0 ; \sigma_x\rangle^+=\langle \sigma_0  \sigma_x\rangle^+$. In this case, the assertion follows from Theorem~\ref{thm: exp dec}. 

Let $\beta\geq \beta_c(\Delta)$ and $\Delta\in \RR$ be such that $\langle \sigma_0 \rangle^+>0$. Recall that $\mathcal{C}_n$ is the event that there is an open circuit in $\omega\cap(\Lambda_{2n}\setminus \Lambda_n)$. Also recall that Propositions~\ref{cor:2}, \ref{cor:3} and Propositions~\ref{dual-circuit}, \ref{sup-circuit} state that there exists $c>0$ such that for every $n\geq 1$, 
 \begin{equs}\label{eq:ineq}
 \varphi^1[\Lambda_n\centernot\longleftrightarrow\infty]\leq e^{-cn} \quad \text{and} \quad \varphi^1[\mathcal{C}_n]\geq 1- e^{-cn}.
 \end{equs}

On the one hand, for every $\lVert x \rVert_{\infty}>4n$ we have
 \begin{equs}
 \langle \sigma_0 \sigma_x \rangle^+ = \varphi^1[0\longleftrightarrow x]\leq \left(\varphi^1_{\Lambda_{2n}}[0\longleftrightarrow \partial \Lambda_{2n}]\right)^2
 \end{equs}
 by the Edwards-Sokal coupling and \eqref{eq:MON}.
 On the other hand, when each of $\mathcal{C}_n$, $\{0\longleftrightarrow \mathcal{S}\}$,  and $\{\Lambda_n\longleftrightarrow\infty\}$ happens, where $\mathcal{S}$ denotes the exterior most circuit in $\omega\cap (\Lambda_{2n}\setminus \Lambda_{n})$, $0$ is connected to infinity. Hence,
 \begin{equs}
 \langle \sigma_0 \rangle^+=& \varphi^1[0\longleftrightarrow \infty]\geq \varphi^1[\mathcal{C}_n, 0\longleftrightarrow \mathcal{S},  \Lambda_n\longleftrightarrow\infty] \\
 \geq& (1-e^{-cn})\varphi^1[0\longleftrightarrow \mathcal{S} \mid \mathcal{C}_n]-e^{-cn} \\ \geq& (1-e^{-cn})\varphi^1_{\Lambda_{2n}}[0\longleftrightarrow \partial \Lambda_{2n}]-e^{-cn}\\
 \geq& \varphi^1_{\Lambda_{2n}}[0\longleftrightarrow \partial \Lambda_{2n}] -2e^{-cn}
 \end{equs}
 by \eqref{eq:ineq}, \eqref{eq:DMPRC} and \eqref{eq:MON}, and the latter is positive for every $n$ large enough. 
 Thus 
 \begin{equs}
 \langle \sigma_0 ; \sigma_x\rangle^+ &\leq \left(\varphi^1_{\Lambda_{2n}}[0\longleftrightarrow \partial \Lambda_{2n}]\right)^2-\left(\varphi^1_{\Lambda_{2n}}[0\longleftrightarrow \partial \Lambda_{2n}]-2e^{-cn}\right)^2
 \\ &=
 4e^{-cn}\varphi^1_{\Lambda_{2n}}[0\longleftrightarrow \partial \Lambda_{2n}]-4e^{-2cn} \leq 
 4e^{-cn}.
 \end{equs}
 This proves the desired assertion for $\beta\geq \beta_c(\Delta)$ such that $\langle \sigma_0 \rangle^+>0$.
 
We now establish {\bf (Cont)}. Let $\beta=\beta_c(\Delta)$ and assume that $\langle \sigma_0 \rangle^+=0$. Recall that $\langle  \sigma_0 \sigma_x \rangle^+=\varphi^1[0\longleftrightarrow x]$, hence it suffices to bound the latter. For the upper bound, we use Proposition~\ref{prop: one arm}. For the lower bound, if $2n-1\leq \lVert x \rVert_{\infty}\leq 2n$, then one way for $0$ to be connected to $x$ is if both $\mathcal{C}_n$ and $x+\mathcal{C}_n$ happen, $0$ is connected to $\partial \Lambda_{2n}$, and $x$ is connected to $x+\partial \Lambda_{2n}$. Then the assertion follows from Propositions~\ref{prop:quad} and \ref{prop: one arm} and the FKG inequality.

Finally, we prove {\bf (Perc)}. The reverse implication follows from Proposition~\ref{prop:0-}. For the forward implication, let us assume that $\langle \sigma_0 \rangle^+=0$. Then {\bf (ContCrit)} for the dilute random cluster model (i.e.\ in the sense of Proposition \ref{prop:quad}) occurs at $p=p_c(a)$ by Proposition~\ref{correspondence}.
It follows from the monotonicity in the domain and the $\pi/2$ rotational symmetry that for every $n\geq 1$, $\varphi^0[V_n]\geq c$.
Since we have probability $1/2$ to colour a vertical crossing with $\{+\}$ spins, it follows that for every $n\geq 1$,
$\mu^0[V^+_n]\geq c/2$.
Note that when $V^+_n$ happens, the event $H^{0,-}_n$ does not happen, hence
\begin{equs}
\mu^0[H^{0,-}_n]\leq 1-\mu^0[V^+_n]\leq 1- c/2.
\end{equs}
It follows from Corollary~\ref{free mixing} and Lemmas~\ref{Burton-Keane} and \ref{cross_1} that the $\{0,-\}$ spins do not percolate under $\mu^0$, as desired.
\end{proof}

\bibliographystyle{alpha}
\bibliography{surveybis.bib}

\end{document}